\newcommand{\R}{\ensuremath{\mathbb{R}}}
\newcommand{\F}{\ensuremath{\mathbb{F}}}
\newcommand{\N}{\ensuremath{\mathbb{N}}}
\newcommand{\mz}{\ensuremath{\mathbb Z}}
\newcommand{\Z}{\ensuremath{\mathbb Z}}
\newcommand{\mn}{\ensuremath{\mathbb N}}
\newcommand{\mr}{\ensuremath{\mathbb R}}
\newcommand{\mq}{\ensuremath{\mathbb Q}}
\newcommand{\Q}{\ensuremath{\mathbb Q}}
\newcommand{\A}{\ensuremath{\mathbb A}}
\newcommand{\mc}{\ensuremath{\mathbb C}}
\newcommand{\C}{\ensuremath{\mathbb C}}
\newcommand{\mf}{\ensuremath{\mathbb F}}
\newcommand{\fa}{\ensuremath{\mathfrak a}}
\newcommand{\fb}{\ensuremath{\mathfrak b}}
\newcommand{\cH}{\mathcal{H}}
\newcommand{\cW}{\mathcal{W}}
\newcommand{\cF}{\mathcal{F}}
\newcommand{\TC}{\mathcal{T}}
\newcommand{\mymod}{\ensuremath{\negthickspace \negmedspace \pmod}}
\newcommand{\shortmod}{\ensuremath{\negthickspace \negthickspace \negthickspace \pmod}}
\newcommand{\intR}{\int_{-\infty}^{\infty}}
\newcommand{\sumstar}{\sideset{}{^*}\sum}
\newcommand{\sumprime}{\sideset{}{'}\sum}
\DeclareMathOperator{\sgn}{sgn}
\DeclareMathOperator{\cond}{\mathfrak{c}}
\renewcommand{\P}{\mathbb{P}}
\DeclareMathOperator{\Stab}{Stab}
\DeclareMathOperator{\GL}{GL}
\DeclareMathOperator{\SL}{SL}
\DeclareMathOperator{\SO}{SO}
\DeclareMathOperator{\PGL}{PGL}
\DeclareMathOperator{\Vol}{Vol}
\newcommand{\real}{\mathop{\rm Re}}
\newcommand{\lcm}{\mathop{\rm lcm}}
\renewcommand{\bf}[1]{\mathbf{#1}}
\newcommand{\eps}{\varepsilon}
\newcommand{\psum}{\mathop{\sum\nolimits^+}}
\newcommand{\supp}{\mathop{\rm supp}}
\newcommand{\flrt}{{\rm flrt}}
\newcommand{\addchar}{\theta}
\theoremstyle{plain}		
	\newtheorem{mytheo}{Theorem} [section]
	\newtheorem{myprop}[mytheo]{Proposition}
	\newtheorem{mycoro}[mytheo]{Corollary}
     \newtheorem{mylemma}[mytheo]{Lemma}
	\newtheorem{mydefi}[mytheo]{Definition}
	\newtheorem{myremark}[mytheo]{Remark}
\theoremstyle{remark}
	\newtheorem*{myexam}{Example}
\numberwithin{equation}{section}
\numberwithin{figure}{section}
\begin{document}
\title[Fourth moment along cosets and the Weyl bound]{The fourth moment of Dirichlet $L$-functions along a coset and the Weyl bound}
\author{Ian Petrow} 
\email{ian.petrow@math.ethz.ch}
\address{ ETH Z\"urich\\
Department of Mathematics\\ 
R\"amistrasse 101\\
8092 Z\"urich\\
Switzerland}

\author{Matthew P. Young} 
\email{myoung@math.tamu.edu}
\address{Department of Mathematics\\
	  Texas A\&M University\\
	  College Station\\
	  TX 77843-3368\\
		U.S.A.}

\subjclass[2010]{11M06 (primary) 11F11 11F12 11F66}

 \thanks{The first author was supported by Swiss National Science Foundation grant PZ00P2\_168164.  This material is based upon work supported by the National Science Foundation under agreement No.\ DMS-170222 (M.Y.).  Any opinions, findings and conclusions or recommendations expressed in this material are those of the authors and do not necessarily reflect the views of the National Science Foundation.
 }

 \begin{abstract}
 We prove a Lindel\"of-on-average upper bound for the fourth moment of Dirichlet $L$-functions of conductor $q$ along a coset of the subgroup of characters modulo $d$ when $q^*|d$, where $q^*$ is the least positive integer such that $q^2|(q^*)^3$. As a consequence, we finish the previous work of the authors and establish a Weyl-strength subconvex bound for all Dirichlet $L$-functions with no restrictions on the conductor. 
 \end{abstract}

 \maketitle
 
\section{Introduction} 
\subsection{The Weyl bound and cubic moments}

This paper continues the previous work of the authors \cite{PetrowYoung} on the Weyl bound for Dirichlet $L$-functions of cube-free conductor.  In the present paper, we remove the cube-free hypothesis and establish the following theorem without any restrictions on the conductor of $\chi$. 
\begin{mytheo}
\label{thm:WeylBound}
For any primitive Dirichlet character $\chi$ modulo $q$ and $\eps>0$, we have 
 \begin{equation}
  L(1/2+it, \chi) \ll_{\eps} (q(1+|t|))^{1/6+\varepsilon}.
 \end{equation}
\end{mytheo}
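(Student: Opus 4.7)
The plan is a short positivity argument based on the Lindel\"of-on-average fourth moment bound along a coset (the principal theorem of this paper, indicated in the abstract), combined with the cube-free case established in the authors' prior work \cite{PetrowYoung}. Given a primitive character $\chi$ modulo $q$, I would choose a divisor $d$ of $q$ with $q^* \mid d$ and form the family
\begin{equation*}
\mathcal{F} = \{\,\chi\psi : \psi \text{ a Dirichlet character modulo } d\,\},
\end{equation*}
consisting of $\varphi(d)$ primitive characters of conductor $q$ (primitivity is preserved for each $\chi\psi$ because $\chi$ is primitive and $d \mid q$). The main coset fourth moment bound then gives
\begin{equation*}
\sum_{\chi' \in \mathcal{F}} \left|L\!\left(\tfrac12+it,\chi'\right)\right|^4 \ll_\epsilon \varphi(d)\,(q(1+|t|))^\epsilon,
\end{equation*}
and since every summand is nonnegative, retaining only the term with $\psi$ trivial yields the pointwise estimate
\begin{equation*}
\left|L\!\left(\tfrac12+it,\chi\right)\right|^4 \ll_\epsilon \varphi(d)\,(q(1+|t|))^\epsilon.
\end{equation*}

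The next step is to optimize $d$. From the definition, $q^* \geq q^{2/3}$, with equality precisely when every prime-power exponent appearing in $q$ is divisible by $3$. Taking $d = q^*$ gives $\varphi(d) \asymp q^*$, and when $q^* = q^{2/3}$ the Weyl bound follows on taking a fourth root. For general $q$, where $q^{2/3} < q^* \leq q$, I would close the shortfall by factoring $q = q_0 q_1$ into its cube-free and cube-full parts and combining the coset bound (which is strongest on the cube-full factor $q_1$) with the cube-free Weyl bound of \cite{PetrowYoung} (available on the factor $q_0$), exploiting compatibility of the two methods under the Chinese Remainder Theorem factorization of $\chi$. The cube-free case already handled in \cite{PetrowYoung} is recovered when $q_1 = 1$, and the pure prime-power case $q = p^{3k}$ is immediate from the coset bound alone.

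The main obstacle is entirely in the proof of the coset fourth moment bound itself, which is the technical heart of the paper and demands an approximate functional equation for $|L|^2$, Poisson summation converting the off-diagonal to a dual sum, and delicate estimates on short character sums in which the divisibility $q^* \mid d$ is decisive. Given that bound, the deduction of the Weyl bound reduces to the positivity step above, with the only remaining subtlety being the clean reconciliation of the coset and cube-free bounds at intermediate conductors.
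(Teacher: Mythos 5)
Your proposal identifies the correct ingredients in the paper's toolkit, but it assembles them into an argument that cannot close, and it departs substantially from the route the paper actually takes.

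First, a genuine technical gap at the outset: Theorem \ref{thm:fourthmoment} is an integral bound over $t\in[-T,T]$; passing to the pointwise estimate $|L(1/2+it,\chi)|^4 \ll \varphi(d)(q(1+|t|))^\epsilon$ requires an extra step (a local average or a Mellin/Fourier gadget), which at best costs a factor of $1+|t|$ in the $t$-aspect. This is repairable, but it is not automatic.

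The more serious problem is that your method only succeeds when $q^* \asymp q^{2/3}$. If $q$ is squarefree (for instance a prime), then $q^*=q$, and positivity from the coset fourth moment gives only $|L(1/2+it,\chi)| \ll q^{1/4+\epsilon}$, which is convexity, not Weyl. Your proposal to close this gap by factoring $q=q_0q_1$ into cube-free and cube-full parts and applying the two methods on the two factors does not work: the Dirichlet character $\chi$ factors under CRT as $\chi_0\chi_1$, but the $L$-function $L(s,\chi)$ does \emph{not} factor as $L(s,\chi_0)L(s,\chi_1)$. There is a single analytic object whose conductor mixes all primes of $q$; one cannot prove subconvexity "prime by prime" via positivity arguments applied to different moment families at different places. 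This is exactly why the Weyl problem at general conductor is hard, and why the paper does not attempt your decomposition.

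The paper's actual proof of Theorem \ref{thm:WeylBound} is quite different: it uses a \emph{cubic} moment of $\GL_2\times\GL_1$ central values, $\sum_{m|q}\sum_{\pi\in\cH_{it_j}(m,\overline{\chi}^2)} L(1/2,\pi\otimes\chi)^3$ together with the sixth power $|L(1/2+it,\chi)|^6$ of the Eisenstein contribution (Theorems \ref{thm:mainthmMaassEisenstein}, \ref{thm:mainthmHybridVersion}). The nonnegativity comes from Guo's theorem on $L(1/2,\pi\otimes\chi)\geq 0$, and dropping all terms but the Eisenstein one gives $|L(1/2+it,\chi)|^6 \ll q^{1+\epsilon}(1+|t|)^{1+\epsilon}$, hence Weyl, for \emph{all} $q$. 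Theorem \ref{thm:fourthmoment} enters only much deeper in the machinery: after a reciprocity formula converts the cubic moment into a weighted fourth moment $\sum_\psi |L(1/2,\psi)|^4 g(\chi,\psi)$, one needs the coset fourth moment precisely to control the contribution of the few $\psi$ where $|g(\chi,\psi)|$ is abnormally large (which occur in cosets of small subgroups). So the coset fourth moment is an auxiliary estimate for the dual moment, not the family one drops to by positivity.

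In short: your plan is not a different valid proof but a misassignment of the roles of the paper's main theorems, and it breaks down already for squarefree conductors.
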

 In another language, for any Hecke character $\chi$ over $\Q$, we have $L(1/2,\chi) \ll_{\eps} C(\chi)^{1/6+\eps}$ where $C(\chi)$ is the analytic conductor of $\chi$.

As in our previous work \cite{PetrowYoung} and that of Conrey and Iwaniec \cite{CI}, Theorem \ref{thm:WeylBound} is  based on Lindel\"of-on-average upper bounds for two closely-related cubic moments, see Theorems \ref{thm:mainthmMaassEisenstein} and \ref{thm:mainthmHybridVersion} below. Let $\cH_{it}(m,\psi)$ denote the set of Hecke-Maass newforms of conductor $m$, central character $\psi$, and spectral parameter $it$. A key new idea in \cite{PetrowYoung} was the shape of the family of automorphic forms into which we embed $\chi$, motivated by the following fact: If $\chi$ is a primitive character modulo $q$, $m\mid q$ and $\pi \in \cH_{it}(m,\overline{\chi}^2)$, then $\pi \otimes \chi \in \cH_{it}(q^2,1)$, see \cite[Prop.\ 3.8(iii)]{JacquetLanglands} or \cite[Thm.\ 3.1(ii)]{AtkinLi}. 
\begin{mytheo}
\label{thm:mainthmMaassEisenstein}
There exists a $B>2$ such that for all primitive $\chi$ modulo $q$ not quadratic and $\eps>0$ we have 
\begin{equation}\label{maththmequation}
 \sum_{|t_j| \leq T}  \sum_{m|q} \sum_{\pi  \in \mathcal{H}_{it_j}(m, \overline{\chi}^2)} L(1/2, \pi  \otimes \chi)^3 
 + 
 \int_{-T}^{T} 
 |L(1/2 + it, \chi)|^6  dt
 \ll_{\varepsilon} T^{B} q^{1+\varepsilon}.
\end{equation}
\end{mytheo}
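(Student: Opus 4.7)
The strategy follows the spectral cubic moment method of Conrey and Iwaniec \cite{CI}, as adapted by the authors for cube-free conductors in \cite{PetrowYoung}, with the fourth-moment-along-a-coset theorem of the present paper supplying the new arithmetic input needed to handle arbitrary $q$. The plan is first to open each $L(\half,\pi\otimes\chi)^3$ (and analogously $|L(\half+it,\chi)|^6$) via an approximate functional equation, producing a three-fold smooth Dirichlet sum in variables $n_1,n_2,n_3$ of effective length $\asymp q^{3/2}T^{3/2}$ with coefficients $\lambda_{\pi\otimes\chi}(n_1)\lambda_{\pi\otimes\chi}(n_2)\lambda_{\pi\otimes\chi}(n_3)$ on the discrete side and $d_3(n)\chi(n)$-type coefficients on the Eisenstein side. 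Using the cited Atkin--Li fact that $\pi\otimes\chi\in\cH_{it_j}(q^2,1)$ for $\pi\in\cH_{it_j}(m,\bar\chi^2)$ with $m\mid q$, the left-hand side of \eqref{maththmequation} is majorized by a positive spectral sum over the full spectrum at level $q^2$ with trivial central character, combined with the corresponding continuous-spectrum integral. Applying the Kuznetsov trace formula for $\Gamma_0(q^2)$ then averages the Hecke eigenvalues; the diagonal term contributes within the target bound, leaving an off-diagonal sum of Kloosterman sums $S(a,b;c)$ with $q^2\mid c$, weighted by Bessel transforms of a test function.

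Next, I would apply Poisson summation in one Dirichlet variable (say $n_3$) to convert the Kloosterman-sum expression into a complete exponential sum modulo $c$. Evaluating this sum prime by prime --- using the classical Gauss-sum manipulations of \cite{PetrowYoung} at primes $p$ with $p\|q$ or $p^2\|q$, and a more delicate $p$-adic stationary-phase analysis at higher prime powers --- the arithmetic weight collapses, after extracting Gauss factors, to a sum of the shape $\sum_\psi|L(\half+it',\chi\psi)|^4$ in which $\psi$ ranges over a coset of Dirichlet characters modulo some $d$ satisfying $q^*\mid d$. At this point the main fourth-moment-along-a-coset theorem applies and bounds the inner sum by $\ll d\,T^A\ll q^{1+\eps}T^A$. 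Summing over the remaining variables with standard dyadic bookkeeping of the smooth weights and test-function transforms then yields the claimed $T^Bq^{1+\eps}$ estimate.

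The principal obstacle is the dual-evaluation step at prime powers $p^k\|q$ with $k\geq 3$. In the cube-free situation of \cite{PetrowYoung}, the local Kloosterman sums and their Poisson duals live modulo divisors of $q$, and the relevant $p$-adic exponential sums reduce to elementary Gauss or Sali\'e sums; the dual modulus never exceeds $q$. For higher prime powers the $p$-adic stationary-phase computation is substantially more intricate: one must identify the critical-point character sum, track the sign characters and Gauss factors that accompany it, and verify that the dual modulus inflates to exactly $q^*$. The crucial check is that the emergent arithmetic sum is recognizable \emph{precisely} as a fourth moment along a coset of characters modulo some $d$ with $q^*\mid d$, so that the main theorem applies as a black box. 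This matching --- verifying that the local computations assemble globally into the coset structure required by the main theorem --- is the technical heart of the reduction; once it is in hand, the remainder of the argument is a routine extension of the cube-free case.
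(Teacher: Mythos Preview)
Your high-level outline follows the right general shape, but two points are misdescribed, and the second one hides the real content of the argument.

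First, the Kuznetsov formula in \cite{PetrowYoung} is applied at level $q$ with nebentypus $\overline{\chi}^2$, not at level $q^2$ with trivial character. The family $\{\pi\otimes\chi:\pi\in\cH_{it_j}(m,\overline{\chi}^2),\ m\mid q\}$ is a proper subfamily of $\cH_{it_j}(q^2,1)$ of size $\asymp q$ rather than $\asymp q^2$; overextending to the full level-$q^2$ spectrum would lose a factor of $q$ at the outset and the bound $q^{1+\eps}$ would be out of reach.

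Second, and more importantly, the dual moment that emerges after the approximate functional equation, Kuznetsov, and Poisson steps is \emph{not} directly a fourth moment along a coset. What actually appears (see \eqref{eq:fourthmomentIntro}) is
\[
\sum_{\psi \shortmod q}|L(\tfrac12,\psi)|^4\,g(\chi,\psi),
\]
a sum over \emph{all} primitive $\psi$ modulo $q$, weighted by the explicit two-variable character sum $g(\chi,\psi)$ of \eqref{eq:gdef}; note also the $L$-values are $L(\tfrac12,\psi)$, not $L(\tfrac12,\chi\psi)$. The coset structure does not fall out of stationary phase. It comes from a separate structural analysis of $g(\chi,\psi)$ at prime powers (Theorems \ref{thm:gboundevencase} and \ref{thm:gboundoddcase}), which shows that $|g(\chi,\psi)|/p^\beta$ is governed by $\rho(\Delta,p^{\lfloor\beta/2\rfloor})$ with $\Delta\equiv(\ell_\chi\overline{\ell_\psi})^2+4$. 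Thus $g$ is large only when $\Delta$ is highly $p$-divisible, and such $\psi$ lie in at most two cosets of the subgroup of characters modulo $q/a$ for each $a\mid q/\tilde q$. One then stratifies the $\psi$-sum by $a$, bounds each stratum by (size of $g$)$\times$(coset fourth moment), applies Theorem \ref{thm:fourthmoment} to each coset, and closes with the elementary combinatorial inequality \eqref{eq:exponentbound} balancing the bound $p^{m(\alpha,\beta)}$ on $g$ against the coset modulus $\lcm(q/a,q^*)$. This stratification and the prime-power evaluation of $g(\chi,\psi)$ are the technical heart of the reduction (Sections \ref{section:miscCharacterSums}--\ref{section:ZpropertiesandtheCubicMomentBound}); your proposal has no analogue of them, and without them there is no way to see why Theorem \ref{thm:fourthmoment} with $q^*\mid d$ is exactly what is needed.
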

\begin{mytheo}
 \label{thm:mainthmHybridVersion}
For all primitive $\chi$ modulo $q$, $\delta, \eps >0$, and $T \gg q^{\delta}$ we have
\begin{equation}
\label{eq:MaassEisensteinFormsBoundHybridVersion}
 \sum_{T \leq t_j < T+1}\sum_{m|q}  \sum_{\pi  \in \mathcal{H}_{it_j}(m, \overline{\chi}^2)} L(1/2, \pi  \otimes \chi)^3 
 + 
 \int_{T}^{T+1} 
 |L(1/2 + it, \chi)|^6  dt
 \ll_{\delta, \varepsilon} T^{1+\varepsilon} q^{1+\varepsilon}.
\end{equation}
\end{mytheo}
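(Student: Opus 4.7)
The plan is to follow the cubic-moment framework of our previous paper \cite{PetrowYoung}, with the main fourth-moment bound along a coset (the principal result announced in the abstract) as the new input that removes the cube-free hypothesis on the conductor. The overall architecture is identical to that of Theorem \ref{thm:mainthmMaassEisenstein}; the improvement to the sharp $T^{1+\varepsilon}$ dependence comes from using a spectral weight localized to the short window $[T,T+1]$ throughout.

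First I would apply a smooth approximate functional equation to each $L(1/2,\pi\otimes\chi)^3$ (and to $|L(1/2+it,\chi)|^6$), paired with a nonnegative spectral test function $h(t)$ concentrated on $[T,T+1]$ and positive on the support of the sum. By the Atkin--Li theorem cited in the introduction, $\pi\otimes\chi\in\cH_{it}(q^2,1)$ for every $m\mid q$, so the analytic conductor is uniformly of size $q^2(1+T)^2$ and the cubic Dirichlet polynomial has effective length $\asymp (qT)^{3/2}$. The left-hand side of \eqref{eq:MaassEisensteinFormsBoundHybridVersion} is thereby reduced to a spectrally weighted sum of cubic-divisor convolutions of Hecke eigenvalues, together with a parallel Eisenstein contribution absorbed by the $|L|^6$ integral.

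Next I would apply the Kuznetsov formula level-by-level for each $m\mid q$ with nebentypus $\overline{\chi}^2$, and sum over $m$. The diagonal contribution yields the desired main term of size $T^{1+\varepsilon}q^{1+\varepsilon}$. The off-diagonal is a sum of twisted Kloosterman sums $S_{\overline{\chi}^2}(a,b;c)$ with $m\mid c$, weighted by the Bessel--Kuznetsov transform of $h$; the short spectral localization of $h$ forces stationary phase at $\sqrt{ab}/c\asymp T$, with rapid decay elsewhere. Applying Poisson summation in the long variable opens the Kloosterman sums and dualizes the length. After tracking conductors, the off-diagonal becomes an average of $|L(1/2+it,\psi)|^4$ with $\psi$ running over Dirichlet characters lying in a coset of the subgroup of characters modulo some divisor $d\mid q^2$. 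A direct conductor calculation shows $q^\ast\mid d$ in every case, matching precisely the hypothesis of the paper's main fourth-moment theorem, which is then invoked to close out the bound.

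The main obstacle, and the source of the sharp $T^{1+\varepsilon}$ saving over the $T^B$ of Theorem \ref{thm:mainthmMaassEisenstein}, is the stationary-phase analysis of the Bessel--Kuznetsov transform against the width-$1$ spectral weight at height $T$. The bulk of this analytic core was established in \cite{PetrowYoung}; the new effort here lies in the arithmetic bookkeeping through Poisson dualization and in verifying the coset condition $q^\ast\mid d$ uniformly for all $m\mid q$, rather than just cube-free $q$. The hypothesis $T\gg q^\delta$ enters only to absorb logarithmic losses from the Kuznetsov inversion and from small-modulus Weil-type error terms, and so plays no essential role beyond separating scales.
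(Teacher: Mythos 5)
Your high-level architecture is the right one, and essentially matches what the paper does: the analytic machinery (approximate functional equation, Kuznetsov, Poisson summation, the dual moment formula, and the archimedean analysis yielding $T^{1+\varepsilon}$ in a window of width $1$) was already established in \cite{PetrowYoung}, and the present paper's contribution is to supply the missing arithmetic input at prime cubes and higher powers. Concretely, the paper proves \cite[Conj.\ 8.2]{PetrowYoung} (restated here as Lemma \ref{lemma:ZpropertiesCubic}), which the earlier paper had shown implies Theorems \ref{thm:mainthmMaassEisenstein} and \ref{thm:mainthmHybridVersion} without any cube-free restriction. So the reduction step you sketch is correct in spirit.

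However, there is a genuine gap in your account of how the coset fourth-moment theorem is invoked, and it hides the actual content of the paper. You claim the off-diagonal dualizes to a fourth moment of $|L(1/2+it,\psi)|^4$ with $\psi$ in a single coset modulo a modulus $d\mid q^2$ satisfying $q^\ast\mid d$, and that a "direct conductor calculation" verifies this. This is not what happens. The dual moment (see \eqref{eq:fourthmomentIntro}) is over all primitive $\psi\pmod q$, weighted by the complete character sum $g(\chi,\psi)$ of \eqref{eq:gdef}. The difficulty is that $|g(\chi,\psi)|$ can be exceptionally large — as large as $p^{1/2}q$ for $q=p^3$ — on certain $\psi$, and the set of such $\psi$ is not a single coset with $q^\ast$ dividing its modulus. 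Rather, for each $a\mid q/\tilde q$ one has a subgroup $H_a=\{\psi\pmod{q/a}\}$ and a weight $M(a,q)$ controlling the maximal size of $|g(\chi,\psi)|/q$ on the cosets of $H_a$ where $v_p(\Delta_p)=v_p(a)$, and in general $q^\ast\nmid q/a$. This is precisely why Theorem \ref{thm:fourthmoment} is stated with $\lcm(d,q^\ast)$ rather than requiring $q^\ast\mid d$. The proof then hinges on the combinatorial inequality \eqref{eq:exponentbound},
$$ m(\alpha,\beta)+\max\bigl(\beta-\alpha,\lceil \tfrac{2\beta}{3}\rceil\bigr)\leq\beta, $$
which balances the size of $g(\chi,\psi)$ against the fourth-moment bound $\lcm(q/a,q^\ast)$ coset by coset. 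Establishing the sharp estimates on $m(\alpha,\beta)$, i.e.\ on $|g(\chi,\psi)|$ as a function of the Postnikov invariants $\ell_\chi,\ell_\psi$ (Theorems \ref{thm:gboundevencase} and \ref{thm:gboundoddcase}, built on the $p$-adic character-sum lemmas of Section \ref{section:miscCharacterSums}), is the substantive new work that your "arithmetic bookkeeping" elides, and your proposed direct-conductor shortcut would fail without it: with $q^\ast\mid d$ not holding, the large-sieve-style argument alone is insufficient.
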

These two theorems, with the additional hypothesis that $q$ is cube-free, appeared as Theorems 1.1 and 1.2 of \cite{PetrowYoung}. That $\pi \otimes \chi$ has trivial central character is crucial because we may then rely on deep results of Guo \cite{Guo}, which guarantee that $L(1/2,\pi \otimes \chi)\geq 0$. We then may conclude Theorem \ref{thm:WeylBound} by a standard positivity argument.

The reader may wonder
why the cube-free hypothesis arose in our previous work and how we are able to remove it in this paper. In order to answer these questions, we briefly recall the proof of Theorems 1.1 and 1.2 of \cite{PetrowYoung}.

The proof of Theorem \ref{thm:mainthmMaassEisenstein} begins with several standard steps to estimate \eqref{maththmequation}. We apply an approximate functional equation to expand $L(1/2,\pi \otimes \chi)$ as a finite sum, and apply the Bruggeman-Kuznetsov formula and Poisson summation. The result is a sum of complete character sums. The archimedean integral can be treated by the method of stationary phase, and the non-archimedean sum by an explicit elementary calculation. By Mellin inversion, the result of these steps is that the cubic moment \eqref{maththmequation} is transformed to a main term plus a reciprocal ``dual moment'' of the rough shape 
\begin{equation}
\label{eq:fourthmomentIntro}
 \sum_{\psi \shortmod{q}} |L(1/2, \psi)|^4 g(\chi,\psi),
\end{equation}
where $g(\chi, \psi)$ is defined by 
\begin{equation}
\label{eq:gdef}
 g(\chi,\psi) = \sum_{u,t \shortmod{q}} \chi(t) \overline{\chi}(t+1) \overline{\chi}(u) \chi(u+1) \psi(ut-1).
\end{equation}

The existence of such a formula was first noticed in the case that $\chi$ is quadratic by the first author in \cite{PetrowMotohashi}. There have been several other examples of such reciprocal dual moments that have underpinned many other results in the literature. For instance, Motohashi \cite{Motohashi} proved 
 a formula of the rough shape $$ \int w(t) |\zeta(1/2+it)|^4 \,dt  \leftrightarrow \sum_{t_j} \sum_{\pi \in \cH_{it_j}(1,1)} \check{w}(t_j) L(1/2 , \pi)^3,$$ with an explicit transform $\check{w}$ of the test function $w$. See also \cite{MichelVenkateshGL2} for an elegant geometric proof of a special case and \cite{Nelson} for the generalization of their work to a wide class of test functions. In \cite{Young4th} the second author derived a similar duality in $q$-aspect:
$$\sum_{\chi \shortmod{p}} |L(1/2,\chi)|^4  \leftrightarrow \sum_{t_j} \sum_{\pi \in \cH_{it_j}(1,1)}\lambda_\pi(p) L(1/2 , \pi)^3$$ 
We also mention the more recent papers \cite{AndersenKiral, BlomerKhan, Frolenkov, Zacharias} giving additional reciprocity results for moments of $L$-functions.

To prove the estimates in Theorems \ref{thm:mainthmMaassEisenstein} and \ref{eq:MaassEisensteinFormsBoundHybridVersion}, it suffices to show for all $\eps>0$ that 
\begin{equation}
\label{eq:introDesiredFourthMomentBound}
 \sum_{\psi \shortmod{q}} |L(1/2, \psi)|^4 g(\chi,\psi) \ll_{\eps} q^{2+\eps}.
\end{equation}
The sum $g(\chi,\psi)$ is multiplicative, so it suffices to consider $g(\chi,\psi)$ for $q$ a prime power.
If $q=p$ is prime then the bound $g(\chi,\psi)\ll p$ follows from the theory of $\ell$-adic sheaves and trace functions, and in particular the Riemann hypothesis of Deligne. If $q=p^2$ then $g(\chi,\psi) \ll p^2$ by an elementary calculation (see \cite[\S 9.2]{PetrowYoung}). In these cases, we have for all $\eps>0$
\begin{equation*}
\sum_{\psi \shortmod{q}} |L(1/2, \psi)|^4 g(\chi,\psi) \ll_{\eps} q^{1+\eps} \sum_{\psi \shortmod{q}} |L(1/2, \psi)|^4 \ll_{\eps} q^{2+\eps}
\end{equation*}
by a standard large sieve-type inequality. This suffices to finish the proof of Theorems \ref{thm:mainthmMaassEisenstein} and \ref{eq:MaassEisensteinFormsBoundHybridVersion} in the case that $q$ is cube-free. 

If $q=p^3$ with $p \equiv 1 \pmod 4$ then (surprisingly!) there exist $2(p-1)$ characters $\psi$ modulo $q$ such that $|g(\chi,\psi)| = p^{1/2}q$. 
These $2(p-1)$ ``singular'' characters $\psi$ form two cosets of the subgroup of characters modulo $p$ sitting inside the group of all characters modulo $p^3$. So, we need to bound for two choices of $\alpha$ primitive modulo $p^3$ the sum
\begin{equation*}
\sum_{\psi \shortmod{p}} |L(1/2, \psi . \alpha)|^4g(\chi,\psi) \leq p^{\frac{1}{2}}q \sum_{\psi \shortmod{p}} |L(1/2, \psi . \alpha)|^4 .
\end{equation*}
At this point, applying the Burgess bound individually to each $L(1/2,\psi. \alpha)$ gives $\ll_{\eps} q^{2+\eps}p^{3/4}$, while over-extending to all characters modulo $p^3$ and using a large-sieve bound gives $q^{2+\eps}p^{1/2}$. Neither of the bounds is sufficient. We would need a bound of the strength 
\begin{equation}\label{eq:fourthmomentsubgroupprimecubedIntro}
\sum_{\psi \shortmod{p}} |L(1/2, \psi . \alpha)|^4 \ll_{\eps} p^{2.5+\eps}
\end{equation}
for all $\eps>0$, which already gives a subconvex bound (though not even as strong as the Burgess bound), so one needs a treatment of moments of the rough shape \eqref{eq:fourthmomentsubgroupprimecubedIntro} that goes beyond a large-sieve type inequality.
 We solve this problem of bounding fourth moments of Dirichlet $L$-functions along cosets by proving the following theorem.

\subsection{The fourth moment problem along subgroups}
Let $T \geq 1$, and $q, d \geq 1$ be integers with $d|q$. Let $q^* = \prod_{p^{\beta}||q} p^{\lceil \frac{2\beta}{3} \rceil}$, so that $q^*$ is the least positive integer so that $q^2|(q^*)^3$.
\begin{mytheo}
\label{thm:fourthmoment}
For all primitive $\chi$
 modulo $q$ and $\eps>0$ we have
\begin{equation}
\label{eq:fourthmoment}
 \int_{-T}^{T} \sum_{\psi \shortmod{d}} |L(1/2+it, \psi.\chi)|^4 \ll_{\eps} T \lcm(d, q^*) (qT)^{\varepsilon}.
\end{equation}
\end{mytheo}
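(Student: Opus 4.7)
My proof strategy follows the classical Heath-Brown unconditional fourth moment script, adapted both to the coset structure (averaging over characters mod $d$ rather than the full dual group mod $q$) and to the twist by the primitive character $\alpha$. I would apply the approximate functional equation to $|L(\tfrac12+it,\psi\alpha)|^4 = |L\bar L|^2$, using a uniform cutoff driven by the envelope analytic conductor $q(1+|t|)$, which is valid for every $\psi \pmod d$ since $\cond(\psi\alpha) \mid q$. The expansion yields a sum over quadruples $(n_1,m_1,n_2,m_2)$ with $n_i m_i \ll q(1+|t|)^{1+\eps}$, weighted by $(\psi\alpha)(n_1n_2)\overline{(\psi\alpha)(m_1m_2)}/\sqrt{n_1m_1n_2m_2}$, by oscillations $(m_1m_2/n_1n_2)^{it}$, and by smooth analytic weights. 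Orthogonality over $\psi \pmod d$ produces a factor $\varphi(d)$ and collapses the character sum to the indicator of the congruence $n_1n_2 \equiv m_1m_2 \pmod d$ (with coprimality to $d$). Integrating $t \in [-T,T]$ then concentrates the remaining oscillation near $n_1 n_2 = m_1 m_2$, and I split into the exact diagonal $n_1n_2=m_1m_2$ and the off-diagonal $n_1n_2 - m_1m_2 = rd$ with $r\ne 0$, where the effective range is $|r| \ll q/d$. The diagonal is elementary: setting $N = n_1n_2 = m_1m_2$, the number of factorizations is $\tau(N)^2 \ll N^\eps$, and summing $1/N$ over $N \le qT$ gives only a $(qT)^\eps$ factor; multiplied by $\varphi(d)$ and by the $T$ from the $t$-integration, this yields a diagonal contribution $\ll Td(qT)^\eps$, already within the target since $d \le \lcm(d,q^*)$.

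The off-diagonal is the main obstacle, and this is where the exponent $q^*$ must emerge. To exploit the $\alpha$-twist, a natural route is to open $\alpha(n)$ via its Gauss sum expansion $\alpha(n) = \tau(\alpha)^{-1}\sum_{b\pmod q} \bar\alpha(b)\,\e{bn}{q}$ (and similarly for $\bar\alpha(m)$), converting the $\alpha$-twisted shifted-convolution of divisor functions into additively-twisted shifted divisor sums. Applying Voronoi summation (or equivalently the delta-method) to these then transforms them into complete character sums modulo $q$ against Kloosterman sums and Gauss sums of $\alpha$. The source of the $q^*$ lies in the $p$-adic evaluation of these complete sums at each prime power $p^{\beta} \| q$: using Postnikov's formula to express $\alpha$ restricted to the congruence subgroup $1 + p^{\lceil\beta/2\rceil}(\mathbb Z/p^{\beta}\mathbb Z)$ as an additive character, the relevant complete sums reduce to exponential sums with essentially cubic phase, whose $p$-adic stationary-phase evaluation saves a factor $p^{\beta}/p^{\lceil 2\beta/3\rceil}$ per prime, aggregating globally to a saving of $q/q^*$ over the trivial bound. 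Combined with the effective range $|r| \ll q/d$ and the length of the resulting dual sums, the off-diagonal then contributes $\ll Tq^*(qT)^\eps$; adding the diagonal gives the claimed $T\lcm(d,q^*)(qT)^\eps$ bound.

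The hard part is thus the $p$-adic analysis of the complete character sums modulo each prime power component of $q$, where the exponent $\lceil 2\beta/3\rceil$ defining $q^*$ must be recovered precisely. This is the dual-side manifestation of the singular behaviour of the character sums $g(\chi,\psi)$ from Section 9 of \cite{PetrowYoung} at $q = p^3$, which is exactly the obstruction to the cube-free argument of \cite{PetrowYoung} and which Theorem \ref{thm:fourthmoment} is designed to control on average.
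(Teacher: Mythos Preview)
Your reduction to a shifted divisor sum with a twist by $\alpha(n)\overline{\alpha}(n+h)$, $d\mid h$, is correct and matches the paper (Section~\ref{section:reductionofFourthMomentToShiftedSums}). The gap is in the treatment of the off-diagonal. Your plan is to resolve the resulting complete sums purely by $p$-adic stationary phase, asserting that the sums have ``essentially cubic phase'' and that this yields exactly the saving $q/q^*$. This assertion is not justified, and in fact the paper explicitly shows that any pointwise approach of this type falls short.

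Concretely: after Postnikov conductor-dropping and Poisson/Voronoi, the complete sums that actually arise are hyper-Kloosterman sums $\mathrm{Kl}_3(\ell_\chi h',\overline{c}n_1,\overline{c}n_2;q_2/h_{q_2})$ together with ordinary Kloosterman sums modulo $c$ (see \eqref{eq:SchihWithKl3}). The sharpest pointwise bound on $\mathrm{Kl}_3$ is Deligne's $\ll (q_2/h_{q_2})^{1+\varepsilon}$; there is no additional ``cubic stationary phase'' saving to be had, and the paper records exactly what this gives in Corollary~\ref{cor:Schihbound2}: the off-diagonal is $\ll N^{3/4}\,\frac{Hq}{d^2}(qN)^{\varepsilon}$, which meets the target only when $d\gg q^{3/4}$, not the required $d\gg q^{2/3}$. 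The remarks following Corollaries~\ref{coro:SchihBoundTchiTriviallyBounded} and~\ref{cor:Schihbound2} make this limitation explicit.

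What your proposal is missing is the spectral step. The paper gains the remaining range $q^{2/3}\ll d\ll q^{3/4}$ not by a better character-sum estimate but by expanding $\mathrm{Kl}_3$ into Gauss sums over $\eta\pmod{q_2/h_{q_2}}$, then applying the Bruggeman-Kuznetsov formula for $\Gamma_0(q/h_{q_2})$ at the cusp pair $(\infty,0)$ to the $c$-sum. This produces a genuine spectral reciprocity to a cubic moment of twisted automorphic $L$-functions $L(1/2,\overline{\pi}\otimes\eta)^3$ (see \eqref{eq:sketchSpecRecipForm} and \eqref{eq:SMaassLfunctions}), which is then controlled by H\"older and the spectral large sieve (Theorem~\ref{thm:fourthmomentSpectralBound}). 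Without this automorphic input, the argument cannot close in the critical range.
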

Note that the set of characters $\{ \psi.\chi : \psi \pmod{d} \}$ is a coset of the subgroup of characters modulo $d$ inside the group of all characters modulo $q$. 
For example, if $q=p^3$ and $d=p^2$, \eqref{eq:fourthmoment} is a Lindel\"of-on-average upper bound, and more than suffices to establish the required estimate \eqref{eq:fourthmomentsubgroupprimecubedIntro}. This proves Theorems \ref{thm:mainthmMaassEisenstein} and \ref{eq:MaassEisensteinFormsBoundHybridVersion} in the case $q=p^3$.

In fact, Theorem \ref{thm:fourthmoment} is strong enough to establish 
\eqref{eq:introDesiredFourthMomentBound}, and hence
Theorems \ref{thm:mainthmMaassEisenstein} and \ref{eq:MaassEisensteinFormsBoundHybridVersion} in general. To see this, we perform an exhaustive calculation of the complete sums $g(\chi,\psi)$ in Sections \ref{section:miscCharacterSums} and \ref{section:gchipsiBehavior}, culminating in Theorems \ref{thm:gboundevencase} and \ref{thm:gboundoddcase}. These two theorems form one of the main achievements of this paper, describing completely the structure of the cosets of singular characters $\psi$ for which $|g(\chi,\psi)|$ is exceptionally large.

 Theorem \ref{thm:fourthmoment} may  be viewed as a $q$-aspect variant on Iwaniec's \cite{IwaniecZeta} short interval fourth moment bound
\begin{equation}
\label{eq:zetafourthmoment}
 \int_T^{T + T^{2/3}} |\zeta(1/2+it)|^4 dt \ll_{\eps} T^{2/3+\varepsilon}.
\end{equation}
See Section \ref{section:subfamilies} below for more discussion on why these results are analogous.
Iwaniec proves a number of other bounds on moments of zeta beyond \eqref{eq:zetafourthmoment}, and it would be interesting to prove $q$-aspect variants of those bounds also. The second moment problem along cosets has been studied in some cases by Nunes \cite{Nunes} and recently by Mili\'{c}evi\'{c} and White  \cite{MilicevicWhite}.

There are many other works in the literature on different variants of the fourth moment problem for Dirichlet $L$-functions and the zeta function. To name just a few, we mention
\cite{IwaniecZeta} \cite{HeathBrownFourthMoment} 
\cite{JutilaMotohashiCrelle}
\cite{Young4th}
\cite{BlomerMili}
\cite{KMS}
\cite{BHKM}.  Many of these papers focus on the problem of obtaining an \emph{asymptotic} formula for the fourth moment, 
which leads to some difficult analytic problems
that may be circumvented in the proof of Theorem \ref{thm:fourthmoment},
which is an upper bound.  
The asymptotic moment problem requires solving a shifted convolution sum in an unbalanced range, where the shift is very large compared to the length of summation. Since we are only interested in an upper bound, a simple Cauchy-Schwarz argument is able to completely sidestep this unbalanced problem (see Section \ref{section:reductionofFourthMomentToShiftedSums}). 

It is also interesting to compare the subgroup structure of the family of Dirichlet characters appearing in \eqref{eq:fourthmoment} with the thin Galois orbits studied in \cite{KMN}.

\subsection{Shifted divisor sum with character}\label{subsec:shifteddivisorwchar}
The main problem faced in the proof of Theorem \ref{thm:fourthmoment} is a strong bound on a shifted divisor sum with characters.    We now discuss this problem. 
Suppose that $w(x) = w_N(x)$ is a smooth weight function supported on $x \asymp N$. 
Let $\chi$ be a primitive Dirichlet character modulo $q$. For $h \geq 1$, consider 
\begin{equation}
\label{eq:SCSplain}
 \sum_{n} \chi(n+h) \tau(n+h) \overline{\chi}(n) \tau(n) w(n).
\end{equation}
For analytic reasons, it is preferable to study a closely-related variant of the form
\begin{equation}
\label{eq:SchihDef}
S(\chi,h):= \sum_{n \geq 1} \chi(n+h) \tau(n+h) \overline{\chi}(n) \sum_{n_1 n_2 = n} w(n_1, n_2),
\end{equation}
where $w(x,y)$ is smooth of compact support.  One can always apply a partition of unity to write \eqref{eq:SCSplain} as a short linear combination of sums of this form.  We suppose $w(x, y)$ is supported on $x \asymp N_1$, $y \asymp N_2$, with $N_1 N_2 = N$.  We also assume
\begin{equation*}
 h \ll N,
\end{equation*}
to avoid the more analytically difficult unbalanced shifted divisor sum.

We will also gain additional savings summing over $h$.  Let
\begin{equation}
 S(\chi) = \sum_{h \equiv 0 \shortmod{d}} 
 \sum_{n \geq 1} \chi(n+h) \tau(n+h) \overline{\chi}(n) \sum_{n_1 n_2 = n} w(n_1, n_2, h),
\end{equation}
where $w$ is part of a smooth family of functions of $x, y, h$, supported on $x  \asymp N_1$, $y \asymp N_2$, and $h \ll H \ll N$.  
The range relevant for proving Theorem \ref{thm:fourthmoment} is $N \ll q$.
We suppose that $w$ satisfies
\begin{equation}
\label{eq:wderivativebounds}
x^j y^k r^{\ell} w^{(j,k,\ell)}(x,y,r) \ll_{j,k,\ell} 1.
\end{equation}
Note that we can view $S(\chi)$ as a sum over $h$ of $S(\chi, h)$, provided we allow $w(x,y)$ appearing in \eqref{eq:SchihDef} to also depend on $h$.

\begin{mytheo}
\label{thm:ShiftedSumBounds}
 Suppose $d|q$ and $q^2|d^3$.   Then for all $\eps>0$
\begin{equation}
\label{eq:SchiFinalBound}
 S(\chi) \ll_{\eps} N  \Big(1 + \frac{H}{q}\Big) (qN)^{\varepsilon}.
\end{equation}
\end{mytheo}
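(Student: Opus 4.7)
The plan is to dualize both divisor structures in $S(\chi)$ — the $\tau(n+h)$ factor and the constraint $n = n_1 n_2$ — after first detecting the shift condition via Fourier analysis on $d \mid m - n_1 n_2$. Concretely, set $m = n+h$, so that
\begin{equation*}
S(\chi) = \sum_{\substack{m, n_1, n_2 \\ d \mid m - n_1 n_2 \\ 0 < m - n_1 n_2 \ll H}} \chi(m)\tau(m)\overline{\chi}(n_1 n_2)\, w(n_1, n_2, m - n_1 n_2),
\end{equation*}
and separate the three-variable weight $w$ via Mellin inversion in each argument so that, up to rapidly convergent integrals which may be truncated at essentially no cost, the $m$-dependence decouples from the $(n_1, n_2)$-dependence. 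Detect the congruence by orthogonality of additive characters:
\begin{equation*}
\mathbf{1}_{d \mid k} = \frac{1}{d} \sum_{d' \mid d}\, \sumstar_{b \pmod{d'}} e\!\left(\frac{bk}{d'}\right),
\end{equation*}
which splits $S(\chi)$ into contributions indexed by $d' \mid d$ and primitive residues $b \pmod{d'}$.

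For each such $(d', b)$, the summand factors as a product of $\Sigma_1 := \sum_m \chi(m) \tau(m) e(bm/d') U(m)$ and $\Sigma_2 := \sum_{n_1, n_2} \overline{\chi}(n_1 n_2) e(-bn_1 n_2/d') V(n_1, n_2)$ for appropriate smooth cutoffs $U,V$. Apply a Voronoi-type summation formula for $\tau$ twisted by $\chi$ and the additive character $e(b\cdot/d')$ to $\Sigma_1$: since $d' \mid d \mid q$, the effective modulus is $\lcm(q, d') = q$, and the dual sum has length $\asymp q^2/N$ with a coefficient of Gauss- or Kloosterman-sum type modulo $q$. For $\Sigma_2$, apply Poisson summation in both $n_1$ and $n_2$ modulo $q$; the dual frequencies occupy a box of volume $\asymp q^2/N$, again with complete exponential-sum coefficients modulo $q$. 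The analytic restriction $0 < m - n_1 n_2 \ll H$, carried through Mellin, imposes a mild truncation on the dual frequencies that ultimately produces the factor $1 + H/q$ in the final bound.

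Collecting both dualizations expresses $S(\chi)$ as a double sum over dual variables of products of complete exponential sums modulo $q$, weighted by smooth archimedean integrals. Bound these complete sums prime-by-prime, using Weil's theorem at squarefree parts and direct computation at prime-power parts, in exact analogy with the evaluation of $g(\chi,\psi)$ carried out in Sections~\ref{section:miscCharacterSums} and \ref{section:gchipsiBehavior}. Summing over dual frequencies against the smooth cutoffs, and trivially over $d' \mid d$ and $b$, then yields the claimed bound $S(\chi) \ll N(1 + H/q)(qN)^{\varepsilon}$.

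The main obstacle is the $p$-adic analysis at primes $p^\beta || q$ with $\beta$ large: there $\chi$ is deeply ramified, the standard Voronoi summation must be replaced by its prime-power generalization, and the resulting arithmetic factors can be exceptionally large on a sparse set of dual parameters. The hypothesis $q^2 \mid d^3$ — equivalently $p^{\lceil 2\beta/3 \rceil} \mid d$ for every prime $p \mid q$ — is precisely the arithmetic balance needed to ensure that these factors exhibit square-root cancellation on average; any weaker divisibility would produce a family of ``singular'' dual frequencies of exceptional size, exactly analogous to the singular characters $\psi$ appearing in $g(\chi, \psi)$ at $q = p^3$ described in the introduction.
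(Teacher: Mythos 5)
Your proposal correctly identifies the first several moves (detect the shift condition, separate the weight, dualize $\tau(m)\chi(m)$ via a Voronoi-type formula and $\overline\chi(n_1n_2)$ via Poisson, arrive at complete exponential sums modulo $q$ with dual variables of total volume $\asymp q^2/N$), and it also correctly identifies $q^2\mid d^3$ as the condition that controls the arithmetic factors. But the final step — ``bound these complete sums prime-by-prime \dots and summing over dual frequencies \dots then yields the claimed bound'' — is where the argument breaks. Bounding the exponential sums individually, even with the optimal Weil/Deligne square-root cancellation, is provably insufficient. The paper carries out exactly this intermediate estimate: after the conductor-dropping evaluation of the complete sum $T_\chi$ (Lemma~\ref{lemma:Tchi}), which reduces it to a hyper-Kloosterman sum ${\rm Kl}_3$ of small modulus, applying the Deligne bound to ${\rm Kl}_3$ gives (Corollary~\ref{cor:Schihbound2})
\begin{equation*}
S(\chi) \ll \Big(N^{3/4}\frac{Hq}{d^2} + N\frac{H}{q}\Big)(qN)^{\varepsilon},
\end{equation*}
and the paper explicitly remarks that this is \emph{not} sufficient: it matches the target $N(1+H/q)(qN)^\varepsilon$ only for $d$ much larger than the minimal size $d\asymp q^{2/3}$ permitted by $q^2\mid d^3$. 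Your sketch stops at precisely this point, so it would prove the theorem only in a strictly smaller range of $d$ than required.

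The missing ingredient is spectral. After isolating the ${\rm Kl}_3$ and expanding it in multiplicative characters $\eta$ (Lemma~\ref{lemma:Kl3FourierTransform}) so as to resolve the $\overline{c}$ in its argument, the paper applies the Bruggeman--Kuznetsov formula (Theorem~\ref{thm:KuznetsovTraceFormulainf0}) to the residual Kloosterman sum over the modulus $c$, trading it for a moment of twisted automorphic $L$-functions $L(1/2,\overline\pi\otimes\eta)$ attached to level $q/(h,q_2)$ with central character $\eta^2$. This converted sum is then estimated by H\"older's inequality together with a spectral large sieve (Lemma~\ref{lemma:spectralboundOneFourierCoefficient}) and, crucially, a new twisted fourth-moment bound for automorphic $L$-functions (Theorem~\ref{thm:fourthmomentSpectralBound}), whose proof requires a careful twist-class analysis (Lemma~\ref{lem:twistclasses}). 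It is this extra layer of cancellation over the dual moduli $c$ — invisible to pointwise exponential-sum estimates — that closes the gap between $d\gg q^{3/4}$ (say) and the full range $d\gg q^{2/3}$. Without it, the proof does not succeed.

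Two secondary remarks. First, your opening move (detecting $d\mid h$ by additive characters modulo $d'$ and factoring $\Sigma_1\Sigma_2$) is a reasonable alternative to the paper's reduction, which instead applies an approximate functional equation to $\tau(n+h)\chi(n+h)$ as a function of $n+h$ (Lemma~\ref{lemma:divisorfunctionAFE}); the difference is cosmetic. Second, ``in exact analogy with the evaluation of $g(\chi,\psi)$'' conflates two different character sums: the singular structure of $g(\chi,\psi)$ controls the cubic-moment side of this paper, while the shifted-divisor side instead involves $T_\chi$, Ramanujan sums, and ${\rm Kl}_3$. The phenomena are philosophically parallel but the actual computations (Lemma~\ref{lemma:Tchi}, Corollary~\ref{coro:TchiEvaluation}) are distinct from those in Section~\ref{section:gchipsiBehavior}.
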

Applying an approximate functional equation and orthogonality of characters, Theorem \ref{thm:fourthmoment} follows quickly from Theorem \ref{thm:ShiftedSumBounds}. The reduction step is detailed in Section \ref{section:reductionofFourthMomentToShiftedSums}. 

A pleasant technical feature of Theorem \ref{thm:ShiftedSumBounds} is that the bound in \eqref{eq:SchiFinalBound} does not include any factors that are sensitive to the current progress towards the Ramanujan conjecture (typically, the spectral analysis of shifted convolution sums with an individual shift  will give rise to such dependence).  The work of Blomer and Mili\'{c}evi\'{c} \cite{BlomerMili} also has this feature, which in the proof arose from a clever arrangement of H\"{o}lder's inequality after the spectral decomposition of the shifted convolution sum, and we were able to adapt their idea to our present setting.

\subsection{A sketch}
\label{section:sketch}
Recall that $q$ is our main parameter, and $d$ is an integer with $d|q$ and $q^2|d^3$.  This hypothesis ensures that $d$ and $q$ share the same set of prime factors.
In this sketch, we restrict ourselves to the special case $q=p^3$ and $d=p^2$, which illustrates the nature of the argument in a relatively simple situation.

A main idea of the proof of Theorem \ref{thm:ShiftedSumBounds} is that the sum $S(\chi)$ exhibits a conductor dropping phenomenon: writing $h=h'p^2$, we have 
\begin{equation}\label{eq:conductordrop}
\overline{\chi}(n) \chi(n+h) = \chi(1+h'\overline{n}p^2) = e_p(\ell_\chi h' \overline{n})
\end{equation}
for some integer $\ell_\chi$ with $(\ell_\chi,p)=1$, since $\chi$ has conductor $p^3$. Thus 
\begin{equation}\label{eq:SCSkloostermanfractions}
S(\chi) \approx \sum_{h' \ll \frac{H}{p^2}} \sum_{n \asymp N} \tau(n+h'p^2)\tau(n) e_p(\ell_\chi h' \overline{n}).
\end{equation}
(In this sketch, we use the symbol $\approx$ merely to mean that the left hand side may be transformed into an expression resembling the right hand side, plus an acceptable error term.) Observe that in \eqref{eq:SCSkloostermanfractions}, there is no possible cancellation in the interior sum when $p\mid h'$. However, these terms make a contribution of at most $\ll_{\eps} N \frac{H}{q}p^\eps$, which is acceptable, so we may assume that $(h',p)=1$ from here on. This step corresponds to the factorization $q=q_1q_2$ in Corollary \ref{coro:TchiEvaluation}, i.e. in the present sketch we may assume that $q=q_2=p^3$ and $h_{q_2} =p^2$.  

Next we solve the shifted convolution problem in \eqref{eq:SCSkloostermanfractions}. There are many ways to do this, and we opt to use an approximate functional equation-type formula for the divisor function of the rough form 
\begin{equation}
\label{eq:divisorfunctionAFEsketchVersionFAKE}
 \tau(n+h) \approx \sum_{c \ll \sqrt{N}} \frac{S(n+h,0;c)}{c},
\end{equation}
a method which is similar to using either the delta method or the circle method.  After using \eqref{eq:divisorfunctionAFEsketchVersionFAKE} to separate $n$ and $h$, we apply a double Poisson summation (i.e. Voronoi summation) to the sum over $n$.

It is technically more convenient not to use the formula \eqref{eq:conductordrop} at the outset, and instead to first apply an approximate functional equation-type formula similar to \eqref{eq:divisorfunctionAFEsketchVersionFAKE} for $\tau(n)\chi(n)$ (see Lemma \ref{lemma:divisorfunctionAFE}). We then use the conductor dropping formula \eqref{eq:conductordrop} in the course of computing the complete character sums that result from Poisson summation (see also the remarks following Corollary \ref{cor:Schihbound2}). 

Either way, the result of these steps is a formula of the shape 
\begin{equation}
\label{eq:SchiBoundIntroSketch}
  S(\chi) \approx \sum_{\substack{c \ll \sqrt{N} \\ (c,p) = 1}} \frac{N}{c^2 p^2}
 \sum_{\substack{h' \ll \frac{ H}{p^2} \\ (h',p) = 1}} \sum_{n_1 n_2 \ll \frac{(cp)^2}{N}} 
 S(p^2 h', - \overline{p}^2 n_1 n_2 ;c)
 \mathrm{Kl}_3(\ell_{\chi} h', \overline{c} n_1, \overline{c} n_2;p).
\end{equation}
The formula \eqref{eq:SchiBoundIntroSketch} is a simplified form of \eqref{eq:SchihWithKl3}.
Note that the dual sum after Poisson summation is of length $\frac{c^2 p^2}{N} \ll p^2$, while the original length was of size $N \ll q = p^3$, so this represents a significant savings.
At this point, if one uses the Weil bound for Kloosterman sums and Deligne's bound for hyper-Kloosterman sums, we obtain only $S(\chi) \ll p N^{3/4} \frac{H}{p^2} = H \frac{N^{3/4}}{p}$, which is far from what is needed for Theorem \ref{thm:ShiftedSumBounds} or even the weaker goal of \eqref{eq:fourthmomentsubgroupprimecubedIntro}.

To go further, we apply spectral methods from the theory of automorphic forms to the sum over $c$ in the guise of the Bruggeman-Kuznetsov formula (see section \ref{section:automorphicforms}). We first must resolve the $\overline{c}$ inside the argument of the ${\rm Kl}_3$, and do so by expanding into multiplicative characters, i.e. using the formula \begin{equation*}
 \mathrm{Kl}_3(x,y,z;p) = \frac{1}{\varphi(p)} \sum_{\eta \shortmod{p}} \tau(\overline{\eta})^3 \eta(xyz),
\end{equation*}
when $(xyz, p) = 1$ (see Lemma \ref{lemma:Kl3FourierTransform} for the general version). This leads to
\begin{equation}\label{eq:readyforBKsketch}
  S(\chi) \approx \frac{N}{p^3} 
  \sum_{\eta \shortmod{p}} \eta(\ell_{\chi}) \tau(\overline{\eta})^3
 \sum_{\substack{h' \ll \frac{H}{p^2}}} \sum_{n_1 n_2 \ll \frac{(cp)^2}{N}}  \eta(h'n_1 n_2) 
 \sum_{\substack{c \ll \sqrt{N} \\ (c,p) = 1}} \frac{\overline{\eta}^2(c)}{c^2 }
  S(p h', - \overline{p} n_1 n_2 ;c),
\end{equation}
where we also used $S(p^2 h', -\overline{p}^2 n_1 n_2 ;c) = S(ph', -\overline{p} n_1 n_2 ;c)$. Now we may apply the Bruggeman-Kuznetsov formula for $\Gamma_0(p)$ with central character $\eta^2$ at the cusps $\infty,0$  to the sum over $c$ in \eqref{eq:readyforBKsketch}. After some careful analysis of test functions, we obtain a spectral reciprocity formula for $S(\chi)$ of the rough shape
\begin{equation}\label{eq:sketchSpecRecipForm}
 S(\chi) \approx 
 \frac{N}{p^2} 
  \sum_{\eta \shortmod{p}} \eta(\ell_{\chi}) \frac{\tau(\overline{\eta})^3}{p^{3/2}}
  \sum_{t_j \ll 1} 
  \sum_{\pi \in \mathcal{H}_{it_j}(p, \eta^2)} \epsilon(\pi)_{\rm fin} \overline{\lambda_\pi}(p)
 L(1/2, \overline{\pi} \otimes \eta)^3   + ({\rm Hol.}) + ({\rm Eis.}),
\end{equation}
where $({\rm Hol.})$ and $({\rm Eis.})$ represent similar contributions from holomorphic cusp forms and Eisenstein series, respectively, and $\epsilon(\pi)_{\rm fin}$ is the finite part of the root number of $\pi$. 
See \eqref{eq:SMaassLfunctions} for the closest cousin to \eqref{eq:sketchSpecRecipForm}.

Applying H\"older's inequality,
we are reduced to the problem of bounding
\begin{equation}\label{eq:4thSketch}
 \sum_{\eta \shortmod{p}} 
  \sum_{t_j \ll 1} 
  \sum_{\pi \in \mathcal{H}_{it_j}(p, \eta^2)} |
 L(1/2, \overline{\pi} \otimes \eta)|^4.
\end{equation}
Using that $\overline{\pi} \otimes \eta \in \mathcal{H}_{it_j}(p^2, 1)$, we can bound this with a standard spectral large sieve inequality for level $p^2$.  The restriction to $q=p^3$ and $d=p^2$ in this sketch has led to \eqref{eq:4thSketch} being an overly-simplistic fourth moment problem.  In Theorem \ref{thm:fourthmomentSpectralBound} below, we bound the more general and difficult moment that arises.  See the remarks following Theorem \ref{thm:fourthmomentSpectralBound} for further discussion on this independently-interesting problem.

It is also instructive to compare the above sketch with Motohashi's spectral decomposition of the (smoothed) fourth moment of the zeta function over a short interval.  Theorem 5.1 of \cite{Motohashi} gives, roughly,
\begin{equation}
\label{eq:MotohashiFormulaSketch}
\int_{T}^{T+H} |\zeta(1/2+it)|^4 dt 
\leftrightarrow (\text{main term}) + \frac{H}{\sqrt{T}} \sum_{t_j \ll \frac{T}{H}} L(1/2,u_j)^3 t_j^{-1/2} \sin(\theta_j),
\end{equation}
where $\theta_j \sim t_j \log t_j$.  Motohasi derives \eqref{eq:MotohashiFormulaSketch} from an exact formula for the weighted fourth moment of zeta, and the sequence of steps used in the proof is similar to that presented in the above sketch.  In particular, the dual family of Maass cusp forms arises from a spectral decomposition of the shifted divisor problem.  In Motohashi's case, the shifted divisor problem includes a $t$-aspect oscillatory factor, as in $\sum_{n} \tau(n) \tau(n+h) (\frac{n+h}{n})^{iT}$; this should be compared with \eqref{eq:SCSplain}.  The fact that $h$ is small means that $(\frac{n+h}{n})^{iT} \approx \exp(iT \frac{h}{n})$, which is an archimedean analog of the conductor-dropping phenomenon of \eqref{eq:conductordrop}.  
It is pleasant to compare \eqref{eq:MotohashiFormulaSketch}
with \eqref{eq:sketchSpecRecipForm}; taking $H = T^{2/3}$ gives the closest comparison.  The archimedean oscillatory factor $\sin(\theta_j)$ is analogous to the argument of $\tau(\overline{\eta})^3$, which is in line with Stirling's approximation, and the analogy between Gauss sums and the gamma function.

\subsection{Remarks on close-knit families}
\label{section:subfamilies} 
 A key idea going into the proof of Theorem \ref{thm:WeylBound} is the shape of the family of automorphic forms in Theorems \ref{thm:mainthmMaassEisenstein} and \ref{thm:mainthmHybridVersion}.  This is yet another example of the by now well-known and powerful technique of deforming in a family of automorphic forms or $L$-functions (see \cite{SarnakShinTemplier} for more discussion).
To this end, we now offer some brief remarks on families in an ad-hoc context,  which may be  useful for interpreting the moment problems considered in this article and our previous work \cite{PetrowYoung}.

To fix ideas, let us work in the context of some ambient family of automorphic forms $\cF$. 
 Let $\pi_0 \in \cF$ and suppose that one wishes to prove subconvexity for $L(\pi_0, 1/2)$. A typical strategy is to choose a sub-family $\mathcal{F}_0 \subseteq \cF$ containing $\pi_0$, and consider, for example, a second moment of $L$-functions of the form $\sum_{\pi \in \mathcal{F}_0} |L(\pi, 1/2)|^2$. 
It is then advantageous to choose the family $\mathcal{F}_0$ to have high spectral  completeness, while at the same time to be as small as possible.   

A natural way to quantify the closeness of two automorphic forms or representations is through the quantity 
\begin{equation}
D(\pi_1,\pi_2) :=\frac{C(\pi_1 \otimes \overline{\pi_2})}{C(\pi_1 \otimes \overline{\pi_1})^{1/2} C(\pi_2 \otimes \overline{\pi_2})^{1/2}},
\end{equation}
where $C$ is the analytic conductor. Given a family $\cF_0$, one can reasonably speak of the diameter of $\cF_0$ with respect to $D(\pi_1,\pi_2)$. Alternatively, one can define a sub-family $\cF_0\subseteq \cF$ by $\cF_0= \cF_0(r)= \{ \pi \in \cF : D(\pi,\pi_0) \leq r\}$. 
Such small families fit into the framework of harmonic families of \cite{SarnakShinTemplier}, since the analytic conductor is a local invariant and Rankin-Selberg convolutions may be computed locally. Informally, we call families with small $D(\pi_1,\pi_2)$ `close-knit'.

Working locally we can be a bit more precise. Let $k$ be a non-archimedean local field with finite residue field. It follows easily from much more general work of Bushnell-Henniart \cite{BushnellHenniartRamification} (see \cite[Thm.\ 1]{LapidOnAnIneqOfBH}) that the function 
\begin{equation}
d(\pi_1,\pi_2) := c(\pi_1 \otimes \overline{\pi_2}) - \tfrac{1}{2} c(\pi_1 \otimes \overline{\pi_1})- \tfrac{1}{2} c(\pi_2 \otimes \overline{\pi_2}),
\end{equation}
where $c$ is the conductor exponent, defines a pseudometric on the space of irreducible supercuspidal representations of $\GL_n(k)$.

We now consider some simple examples. Let $\cF$ be the set of Dirichlet characters and let $\pi_0 = \chi \in \cF$ be of conductor $q$.  For $d \mid q$, the set $\cF_\chi(d) = \{ \chi.\psi : \psi \pmod{d} \}$ is an example of a close-knit family of diameter $d$ around $\chi$.  The family $\cF_\chi(d)$ is precisely the family considered in Theorem \ref{thm:fourthmoment} (see also \cite{Nunes} and \cite{MilicevicWhite}).
  
 Considering the archimedean aspect, one finds many examples of families grouped according to $D(\pi_1,\pi_2)$ in the literature. The short-interval $t$-aspect integral
 found in \eqref{eq:zetafourthmoment} is such an instance.  To describe a slightly more advanced example, let $\cF$ be the set of Hecke-Maass eigenforms for $\SL_2(\Z)$. Given $u \in \cF$, write $t_u$ for its spectral parameter. For parameters $1 \ll \Delta \ll T$, let $\cF_T(\Delta) = \{u \in \cF: T < t_u \leq T+\Delta\}.$  The conductor of $u \otimes u'$  is $\asymp (1+|t_u - t_{u'}|)^2(1+|t_u+t_{u'}|)^2 \ll \Delta^2 T^2$, so that the close-knit family $\cF_T(\Delta)$ has diameter $\ll \Delta^2$. The cubic moment of $L$-functions over this family $\sum_{u \in \cF_T(\Delta)} L( 1/2,u)^3$ was studied by Ivi\'c \cite{Ivic}, from which he derived Weyl-strength subconvexity in the spectral aspect.

The family of automorphic forms appearing in Theorem \ref{thm:mainthmMaassEisenstein} provides another example. Let $\cF = \mathcal{H}_{it}(q^2,1)$. For $\chi$ a primitive character modulo $q$ that is \emph{not} quadratic, consider the family of twists 
\begin{equation}
\cF_\chi:= \{ \pi \otimes \chi :  \pi \in \mathcal{H}_{it}(m,\overline{\chi}^2), \, m \mid q\} \subseteq  \cF.
\end{equation}
The family $\cF_\chi$ admits a simple interpretation in terms of local representation theory. 
The local components of $\pi \otimes \chi \in \cF_\chi$ are principal series at all finite places. Precisely, $(\pi \otimes \chi)_p \simeq \pi(\chi_p , \overline{\chi_p})$ for all $p < \infty$, where $\chi_p$ is a quasi-character of $\Q_p^\times$ whose restriction to $\Z_p^\times$ matches the restriction of $\chi$ to $\Z_p^\times$. Thus, the family $\cF_\chi$ could also have been described by specifying the local component at finitely many places of ramification to be a single principal series representation (up to unramified twists). Locally at $p$, we have $d(\pi_{1,p}, \pi_{2,p}) =0$ for any $\pi_1, \pi_2 \in \cF_\chi$, so the family $\cF_\chi$ is as close-knit as possible at finite places.

Another interesting example occurs for thin Galois orbits of Dirichlet $L$-functions; see \cite[pp. 6961-6963]{KMN} for more details.

It is illuminating to view many families of $L$-functions under this lens, and the authors hope that this way of thinking may lead to beneficial choices of families of $L$-functions for problems in analytic number theory.  

\subsection{Bounds on character sums}
 Theorem \ref{thm:WeylBound} leads to an improvement on the Burgess bounds for character sums in some ranges.
\begin{mytheo}
\label{prop:charactersum}
For all primitive Dirichlet characters $\chi$ modulo $q$, $x\geq 1$, and $\eps>0$ we have 
 \begin{equation}
  \sum_{n \leq x} \chi(n) \ll
  \begin{cases}
    x^{1/2} q^{11/64+\varepsilon} \\
    x^{8/15} q^{7/45 + \varepsilon}.
  \end{cases}
 \end{equation}
\end{mytheo}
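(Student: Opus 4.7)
The theorem follows from the Weyl bound (Theorem~\ref{thm:WeylBound}) by a standard hybrid argument combining Perron's formula with the Burgess averaging method, in the spirit of Heath-Brown. The principal obstacle is the proof of the Weyl bound itself, which is the content of the rest of the paper; the deduction of Theorem~\ref{prop:charactersum} from Theorem~\ref{thm:WeylBound} amounts to careful bookkeeping with exponents.

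First, applying Perron's formula to $\sum_{n \leq x}\chi(n)$, shifting the contour to the critical line $\real(s) = 1/2$, and estimating the resulting integral using the Weyl bound $|L(1/2+it,\chi)| \ll (q(1+|t|))^{1/6+\varepsilon}$, one balances the vertical integral against the truncation error $O(x^{1+\varepsilon}/T)$ at the optimal height $T \asymp x^{3/7} q^{-1/7}$ to obtain
\begin{equation*}
\sum_{n \leq x} \chi(n) \ll x^{4/7}\, q^{1/7+\varepsilon}.
\end{equation*}
This is sharper than the classical Burgess bound $x^{1-1/r} q^{(r+1)/(4r^2)+\varepsilon}$ for $x$ small relative to $q$, but weaker in the Burgess range.

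Second, one interpolates between the two approaches by using this Weyl-based input inside the Burgess scheme. Burgess' argument with parameter $r$ reduces matters to controlling an average
\begin{equation*}
\sum_{k \shortmod q} \Bigl|\sum_{n \leq H} \chi(n+k)\Bigr|^{2r}
\end{equation*}
of $2r$-th moments of short-interval character sums; replacing the trivial bound on the individual short sums by the estimate above, and optimizing jointly over the Burgess parameter $r$, the short length $H$, and the shift ranges, produces a family of hybrid bounds of the form $x^{\alpha} q^{\beta+\varepsilon}$. The two pairs $(\alpha,\beta) = (1/2,\,11/64)$ and $(8/15,\,7/45)$ asserted in the theorem are obtained at distinct optimization points and are sharp in complementary ranges of $x$, with transition near $x \asymp q^{47/96}$.

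The entire deduction is routine and essentially follows Heath-Brown's framework for converting $L$-function subconvexity into improved character-sum estimates. No further analytic obstacle arises beyond the Weyl bound itself.
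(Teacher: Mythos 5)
Your step 1 is fine as far as it goes, but your step 2 does not hold up, and it is also not the route the paper takes. The paper's argument is much simpler and more direct: choose a smooth cutoff $w$ with $w\equiv 1$ on $[0,x]$, $w\equiv 0$ on $[x+h,\infty)$, and $w^{(j)}\ll_j h^{-j}$, so that $S(\chi,w)=\sum_n\chi(n)w(n)$ is represented by a Mellin integral of $\widetilde w(s)L(s,\chi)$ that is effectively supported on $|{\rm Im}\,s|\ll x/h$; on the $1/2$-line the Weyl bound gives
\begin{equation*}
S(\chi,w)\ll x^{1/2}q^{1/6+\varepsilon}\Bigl(\frac{x}{h}\Bigr)^{1/6}.
\end{equation*}
The discrepancy $\sum_{n\le x}\chi(n)-S(\chi,w)=-\sum_{x<n\le x+h}\chi(n)w(n)$ is a short sum of length $\leq h$, and the paper bounds it by summation by parts and the Burgess bound $h^{1-1/r}q^{(r+1)/(4r^2)+\varepsilon}$ with $r=2$ (resp.\ $r=3$); optimizing $h$ then yields $x^{1/2}q^{11/64+\varepsilon}$ (resp.\ $x^{8/15}q^{7/45+\varepsilon}$). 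The two exponent pairs come from two choices of the Burgess parameter, not from two optimization points in a single interpolation scheme.

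The concrete problem with your step 2 is that it misdescribes how the Burgess moment
$\sum_{k\bmod q}\bigl|\sum_{n\le H}\chi(n+k)\bigr|^{2r}$
is used. That moment is estimated by expanding the $2r$-th power and invoking Weil's bound for complete character sums; it is \emph{not} obtained by applying a pointwise bound to each inner short sum and summing over $k$. Moreover, the bound you derived in step 1 applies to the \emph{initial} sum $\sum_{n\le x}\chi(n)$, not to shifted short sums $\sum_{n\le H}\chi(n+k)$; writing the latter as a difference of two initial sums only gives a bound growing with $k$, which is useless inside the moment. So ``replacing the trivial bound on the individual short sums by the estimate above'' is not a valid step, and it does not reconstruct the claimed exponents. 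Your step 1, done with a sharp Perron truncation balanced against the error $x^{1+\varepsilon}/T$, correctly gives $x^{4/7}q^{1/7+\varepsilon}$ --- but that is a different (and in the relevant range $x\gg q^{1/3}$, weaker) bound than the ones asserted, and it cannot be upgraded to them by the mechanism you propose. If instead you keep the smooth weight $w$ with the flexible parameter $h$ (rather than committing to a sharp cut at height $T$) and then pay for the tail with Burgess, the two stated bounds fall out by routine optimization in $h$.
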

Remarks. The former bound is better than the latter for $x \gg q^{47/96}$.  Recall the Burgess bound states $\sum_{y < n \leq y+x} \chi(n) \ll x^{1-\frac{1}{r}} q^{\frac{r+1}{4r^2} + \varepsilon}$, for $r=2,3$, and for any $r \geq 1$ if $q$ is cube-free (see \cite[Thm.\ 12.6]{IK}).  Theorem \ref{prop:charactersum} improves on the Burgess bounds with $y=0$ and $r = 2$ or $3$ in all non-trivial ranges.
\begin{proof}[Sketch of proof]
Let $0 < h < x$ be a parameter to be chosen later.
Let $w$ be a smooth weight function so that $w(t) = 1$ for $0 \leq t \leq x$, $w(t) = 0$ for $t \geq x + h$, and satisfying $w^{(j)}(t) \ll_j h^{-j}$, for all $t > 0$.  Then
\begin{equation*}
S(\chi,w) :=  \sum_{n=1}^{\infty} \chi(n) w(n) = \frac{1}{2 \pi i} \int_{(\sigma)} \widetilde{w}(s) L(s, \chi) ds.
\end{equation*}
Integration by parts shows that the integral may be essentially truncated at $\text{Im}(s) \ll x/h$.  Taking $\sigma = 1/2$ and using Theorem \ref{thm:WeylBound} gives a bound on the smoothed sum, showing $S(\chi, w) \ll x^{1/2} q^{1/6+\varepsilon} (x/h)^{1/6}$.  Next, we have $\sum_{n \leq x} \chi(n) = S(\chi, w) - \sum_{x<n \leq x+h} \chi(n) w(n)$.  For the latter sum, we may use summation by parts and the Burgess bound  with $r=2$ or $r=3$.  Choosing $h$ optimally then gives the two bounds.
\end{proof}
The interested reader may derive additional bounds for cube-free conductors using the Burgess bound for larger values of $r$ in the final step of the above proof.
The authors thank Roger Heath-Brown for suggesting the use of the Burgess bound on the short interval.

\subsection{Organization of the paper}
This paper is divided into two parts that are almost entirely independent of each other, and the notation is not necessarily consistent between the two parts.  The authors believe this is a feature and not a bug.

The first part of this paper is devoted to the cubic moment problem and its reduction to the bound on the fourth moment along subgroups (i.e., Theorem \ref{thm:fourthmoment}), and is contained in Sections \ref{section:miscCharacterSums}\textendash\ref{section:ZpropertiesandtheCubicMomentBound}.  Specifically, Section \ref{section:miscCharacterSums} contains a variety of character sum lemmas, Section \ref{section:gchipsiBehavior} has a full analysis of $g(\chi,\psi)$, and Section \ref{section:ZpropertiesandtheCubicMomentBound} finishes the proof of Theorem \ref{thm:WeylBound} given the veracity of Theorem \ref{thm:fourthmoment}.

The second part gives the proof of Theorem \ref{thm:fourthmoment}, and is contained in Sections \ref{section:reductionofFourthMomentToShiftedSums}\textendash\ref{section:spectralanalysisofshiftedsum}.  
Section \ref{section:reductionofFourthMomentToShiftedSums} briefly deduces the proof from the shifted sum bound (Theorem \ref{thm:ShiftedSumBounds}).
In Section \ref{section:automorphicforms}, we present the background from the theory of automorphic forms,with an emphasis on the use of canonically-normalized Fourier expansions in the style of \cite{MichelVenkateshGL2}. Section \ref{section:analyticnumbertheory} contains some tools from analytic number theory.  The proof of the shifted sum bound begins in earnest in Section \ref{section:harmonicanalysis} and is completed in Section \ref{section:spectralanalysisofshiftedsum}.

\subsection{Notation}
\begin{itemize}
\item We denote by $\N$ the set of natural numbers $\{1,2, \ldots\}$ without zero. 
\item For a finite abelian group $G$, we denote by $\widehat{G}$ its unitary dual. Exception: in Sections \ref{section:automorphicforms} and \ref{section:spectralanalysisofshiftedsum} we write $\widehat{\Z}=\prod_p \Z_p \simeq \varprojlim \Z/n\Z$. 
\item For $\pi$  a newform/automorphic representation on $\GL_2$ and $\chi$ a Dirichlet character, there are (at least) two standard conventions for the meaning of $L(s, \pi \otimes \chi)$.  One convention is that it equals the straightforward Dirichlet series $\sum_n \lambda_{\pi}(n) \chi(n) n^{-s}$, and the other is that it equals the automorphic $L$-function associated to the twist of $\pi$ by the Hecke character corresponding to $\chi$.  In this paper, all $L$-functions of the form $L(s,\pi \otimes \chi)$ use the automorphic definition. However, since the two conventions can only differ at Euler factors corresponding to primes dividing the conductor of $\pi$ or the conductor of $\chi$, all statements of theorems or lemmas involving $L(s,\pi \otimes \chi)$ remain equally valid using either convention.
\item For $\chi$ a Dirichlet character, we use $L(s,\chi)$ to denote the classically-defined Dirichlet series $\sum_n \chi(n) n^{-s}$.  If $\chi$ is primitive, this agrees with the automorphic convention.  If $\chi$ is not primitive, but is induced by $\chi^*$, then it is easy to convert between $L(s,\chi)$, and $L(s,\chi^*)$.
\end{itemize}

\subsection{Acknowledgements}
The authors thank Roger Heath-Brown, Rizwan Khan, Emmanuel Kowalski, Djordje Mili\'{c}evi\'{c}, and Lillian Pierce for comments and encouragement. We also thank the referees for many corrections and helpful suggestions.

\section{Character sums to prime power modulus}
\label{section:miscCharacterSums}
In this section we collect some lemmas that are useful for evaluating the character sums to prime-power modulus that arise in our work.  

\subsection{The Postnikov formula}
\begin{mylemma}
\label{lemma:Postnikov}
 Let $p$ be an odd prime, and $\beta \geq 2$.  There exists a unique 
  group homomorphism $\ell: \widehat{(\mz/p^\beta \mz)^{\times}} \rightarrow \mz/p^{\beta-1} \mz$, $ \chi \mapsto \ell_{\chi}$, such that the Postnikov formula holds: for each Dirichlet character $\chi$ modulo $p^\beta$  and $t\in \Z$ we have
 \begin{equation}
 \label{postnikov} \chi(1+pt) = e_{p^\beta}(\ell_\chi \log_p(1+pt)).
 \end{equation}
 The map $\ell$ is surjective, and
 for $1 \leq \alpha \leq \beta$  we have that $\ell_{\chi_1} \equiv \ell_{\chi_2} \pmod {p^{\beta-\alpha}}$ if and only if 
  $\chi_1 \overline{\chi_2}$ is a character modulo $p^\alpha$. 
 \end{mylemma}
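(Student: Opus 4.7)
The plan is to build the homomorphism $\ell$ from the $p$-adic logarithm, which trivializes the pro-$p$ part of $(\Z/p^{\beta}\Z)^{\times}$. First I would verify that the truncated series $\log_p(1+pt) = \sum_{k \geq 1} (-1)^{k-1}(pt)^k/k$ makes sense modulo $p^\beta$: since $p$ is odd, $v_p((pt)^k/k) = k - v_p(k) \to \infty$, so only finitely many terms contribute modulo $p^\beta$, and each term lies in $p\Z/p^\beta\Z$. Since $\log(XY)=\log(X)+\log(Y)$ is a formal power series identity, reducing modulo $p^\beta$ shows that $\log_p : U_1 \to p\Z/p^\beta\Z$ is a group homomorphism, where $U_1 = 1+p\Z/p^\beta\Z$. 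Using the inverse power series $\exp_p(py) = \sum_k (py)^k/k!$ (again well-defined and convergent modulo $p^\beta$ because $p$ is odd, so $v_p(k!) < k/(p-1) \leq k$), one obtains a two-sided inverse and hence an isomorphism of groups $\log_p : U_1 \xrightarrow{\sim} p\Z/p^\beta\Z$. Dividing by $p$ then gives $U_1 \cong \Z/p^{\beta-1}\Z$.

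Next I would define $\ell$. By the standard structure theorem for $(\Z/p^{\beta}\Z)^{\times}$, restriction of characters gives a split surjection $\widehat{(\Z/p^{\beta}\Z)^{\times}} \twoheadrightarrow \widehat{U_1}$ with splitting coming from the Teichm\"uller decomposition $(\Z/p^\beta\Z)^{\times} \cong \mu_{p-1}\times U_1$. Composing with the isomorphism $\widehat{U_1} \cong \Z/p^{\beta-1}\Z$ induced by $\log_p$ (identifying the character $1+pt \mapsto e_{p^\beta}(\ell \log_p(1+pt))$ with the class $\ell$) defines the homomorphism $\chi \mapsto \ell_\chi$. The Postnikov formula \eqref{postnikov} then holds by construction, and the map is surjective because the restriction and the $\log_p$ isomorphism are. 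Uniqueness of $\ell$ follows because $\log_p(U_1) = p\Z/p^\beta\Z$ fills out a cyclic subgroup of order $p^{\beta-1}$: if two choices $\ell,\ell'$ satisfy \eqref{postnikov} then $(\ell-\ell')\cdot p\Z \subseteq p^\beta\Z$, forcing $\ell\equiv\ell' \pmod{p^{\beta-1}}$.

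For the final equivalence, the key observation is that $\chi_1\overline{\chi_2}$ descends to a character modulo $p^\alpha$ if and only if it is trivial on the kernel $U_\alpha := 1+p^\alpha\Z/p^\beta\Z$ of reduction (the Teichm\"uller part maps isomorphically under any reduction). By the same truncation and invertibility argument as above, $\log_p$ restricts to an isomorphism $U_\alpha \xrightarrow{\sim} p^\alpha\Z/p^\beta\Z$. Therefore $\chi_1\overline{\chi_2}$ vanishes on $U_\alpha$ iff $e_{p^\beta}((\ell_{\chi_1}-\ell_{\chi_2})v)=1$ for all $v\in p^\alpha\Z/p^\beta\Z$, which is equivalent to $p^\alpha(\ell_{\chi_1}-\ell_{\chi_2}) \equiv 0 \pmod{p^\beta}$, i.e.\ $\ell_{\chi_1}\equiv\ell_{\chi_2} \pmod{p^{\beta-\alpha}}$.

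The main technical point to be careful with is the compatibility of moduli: one must check that $\ell_\chi \log_p(1+pt)$ is well-defined modulo $p^\beta$, even though $\ell_\chi$ is only defined modulo $p^{\beta-1}$ and $\log_p(1+pt)$ only modulo $p^\beta$, and this works precisely because $\log_p$ lands in $p\Z/p^\beta\Z$, so $p^{\beta-1}\cdot p\Z \subseteq p^\beta\Z$. The restriction to odd $p$ enters in two places (convergence of $\log_p$ and $\exp_p$ modulo $p^\beta$, and the existence of the Teichm\"uller splitting), both standard.
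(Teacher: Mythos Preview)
Your proof is correct and follows essentially the same approach as the paper: both establish that the $p$-adic logarithm identifies $U_1 = 1+p\Z/p^\beta\Z$ with $p\Z/p^\beta\Z \cong \Z/p^{\beta-1}\Z$, then define $\ell_\chi$ by dualizing restriction to $U_1$, and deduce the conductor criterion by restricting $\log_p$ to $U_\alpha$. The only cosmetic difference is that the paper shows $f = e_{p^\beta}\circ\log_p$ generates $\widehat{U_1}$ by computing its order directly (evaluating $f^{p^{\beta-2}}$ at $1+px$), whereas you invert $\log_p$ via the truncated $\exp_p$; your mention of the Teichm\"uller splitting is harmless but unnecessary, since only surjectivity of restriction (not a splitting) is used.
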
 

 \begin{proof}
  For $1\leq \alpha \leq \beta$, consider the reduction modulo $p^\alpha$ map $$(\Z/p^\beta \Z)^\times \to (\Z/p^\alpha\Z)^\times,$$ and denote its kernel by $U_\alpha$. 
  Let $e(x)$ be the continuous character of $\mq_p$ agreeing with $e^{2 \pi i x}$ for $x \in \mq$, and let
  $e_{p^\beta}(x) = e(p^{-\beta} x)$. 
  Let $\log_p: 1 + p \mz_p \rightarrow p \mz_p$ be the $p$-adic logarithm defined by the convergent power series expansion
  \begin{equation*}
   \log_p(1+x) = x - x^2/2 + x^3/3 \mp \dots.
  \end{equation*}
It is easy to check that $\log_p(1 + p^\beta \mz_p) \subseteq p^\beta \mz_p$, and in fact 
\begin{equation}
\label{eq:padiclogarithmLinearApproximation}
\log_p(1+p^\beta x) \equiv p^\beta x \pmod{p^{2\beta}},
\end{equation}
since $p$ is odd.
  
  Consider the map $f:U_1 \to S^1$ defined by
  \begin{equation*}
  \label{f} f:t \mapsto e_{p^{\beta}}(\log_p(t)).
  \end{equation*} 
 The function $f$ is well-defined by \eqref{eq:padiclogarithmLinearApproximation}, and is a group homomorphism since $\log_p(xy) = \log_p(x) + \log_p(y)$ for $x,y \in 1+p \mz_p$ (see e.g. \cite[Prop.\ 5.5]{Neukirch}).  We claim that $f$ has order $p^{\beta-1}$ in $\widehat{U_1}$. Indeed, if $t = 1+px \in U_1$, then we have $f(t)^{p^{\beta-2}}= e_{p^{2}}(\log_p(t)) = e_p(x)$, so $f^{p^{\beta-2}}$ is not trivial in $\widehat{U_1}$, yet $U_1$ has order $p^{\beta-1}$. Therefore $\widehat{U_1}$ is cyclic and $f$ is a generator.  Define $\ell_\chi$ to be the unique integer modulo $p^{\beta-1}$ such that 
 $\chi\vert_{U_1} = f^{\ell_\chi}$, 
which is equivalent to the Postnikov formula \eqref{postnikov}.  We easily see that $\ell$ is a group homomorphism.
Next we show this map is surjective.   The kernel of $\ell$ is the subgroup of characters trivial on $U_1$, which is isomorphic to $\widehat{(\mz/p\mz)^{\times}}$.  Hence by comparing cardinalities, we see $\ell$ is surjective.

We claim that $f \vert_{U_\alpha}$ has order $p^{\beta-\alpha}$ in the group $\widehat{U_\alpha}$. Indeed, writing $t=1+p^\alpha x$, we have $f(t)^{p^{\beta-\alpha-1}} = e_{p^\beta}(p^{\beta-\alpha-1} \log_p(t)) = e_{p^{\alpha+1}}(\log_p(t)) = e_p(x)$, showing the claim.
 Then
 $ \chi \vert_{U_\alpha} = f \vert_{U_\alpha}^{\ell_\chi},$ and we deduce that $\ell_{\chi} \equiv 0 \pmod{p^{\beta-\alpha}}$ if and only if $\chi \vert_{U_\alpha} = 1$, which in turn is equivalent to the condition that $\chi$ is a character modulo $p^\alpha$.  The final statement of the lemma now follows, since $\ell$ is a group homomorphism.
 \end{proof}

\subsection{Character sums}
A rational function $f \in \Z(t)$ is an equivalence class of pairs of polynomials $f_1/f_2$ with integer coefficients and $f_2$ not identically zero. An integer $t_0$ is said to be in the domain of $f$ if $f_2(t_0)\neq 0$ with $f=f_1/f_2$ written in lowest terms (i.e., with $f_1$ and $f_2$ coprime). Meanwhile, a rational function $f \in (\Z/p^\beta \Z)(t)$ is an equivalence class of pairs of polynomials $f_1/f_2$ with coefficients in $\Z/p^\beta\Z$ and with $p$ not dividing all of the coefficients of $f_2$. Similarly, $t_0 \in \Z/p^\beta\Z$ is said to be in the domain of $f$ if $p \nmid f_2(t_0)$ with $f=f_1/f_2$ in lowest terms. (Recall that in a commutative ring $A$, two elements $a,b \in A$ are called coprime if $(a)+(b)=A$.)
 If $p$ does not divide $x \in \Z/p^\beta \Z$ then we call $x$ a ``$p$-adic unit''.  
 The above notions also extend naturally to several variables. Lastly, in the character sums of the form $\sumstar_t \chi(f(t)) \psi(g(t))$ that we study in Sections \ref{section:miscCharacterSums} and \ref{section:gchipsiBehavior} of this paper, the $*$ is always taken to mean that we sum over those $t$ lying in the intersection of the domains of $f$ and $g$.

 Let $g \in \Z(t)$ be a rational function whose reduction $\overline{g}$ modulo $p^\beta$ exists.  Let $t_0$ be an integer whose reduction modulo $p^\beta$ lies in the domain of $\overline{g}$. Then, it is easy to see that $\frac{g^{(n)}(t_0)}{n!} \in \mz_p$ for all $n \geq 0$.
 In particular, this shows that 
\begin{equation}
\label{eq:rationalfunctionTaylorExpansion}
g(x_0 + p^\beta x_1) \equiv g(x_0) + p^\beta  g'(x_0) x_1  \pmod{p^{2\beta}}
\end{equation}
for any integer $x_0$ reducing to the domain of $\overline{g}$.

More generally,  suppose that
$p$ does not divide the whole denominator of $g \in \Z(t_1,\ldots,t_n)$ and $x_0 \in \Z^n$ reduces modulo $p^\beta$ to lie in the domain of $\overline{g}$. Then, the Taylor expansion of $g$ at $x_0$ has coefficients in $\Z_p$ and
 we have 
\begin{equation}
\label{eq:rationalfunctionTaylorExpansionMultiVariableVersion}
 g(x_0 + p^\beta x_1) \equiv g(x_0) + p^\beta g'(x_0) x_1 + p^{2\beta} \tfrac12 g''(x_0)[x_1] \pmod{p^{3\beta}},
\end{equation}
where $g'$ denotes the gradient of $g$, $g''$ is the Hessian matrix, and $A[x]= x^\intercal A x$ is the quadratic form associated to a square matrix $A$ and evaluated at $x$.

  For $f_i \in \Z(t_1,\ldots, t_n)$, $i = 1, \ldots d$, let  $f = (f_1, \ldots, f_d) \in \Z(t_1,\ldots, t_n)^{d}$
 be the associated $d$-tuple of rational functions.  
 For such an $f$ we have the associated $d \times n$ Jacobian matrix, which we denote by $f' \in M_{d \times n} (\Z(t_1,\ldots, t_n))$. Similarly, we have the logarithmic Jacobian $(\log f)'$, where the $ij$ entry is given by 
 $\partial_j f_i/f_i$.

Define an additive character $\addchar$ modulo $q = (q_1, \dots, q_d) \in \mn^d$ as a group homomorphism $\mz^d/q\mz^d \rightarrow \mc^{\times}$, lifted to $\mz^d$ by periodicity.
 By the Chinese remainder theorem, $\addchar$ can be expressed uniquely as
  $\addchar(n) = \addchar_1(n_1)\ldots \addchar_d(n_d)$, with
$\addchar_i(n) =e_{q_i}(a_{\addchar_i}n)$ for some $a_{\addchar_i} \in \mz$. 
If $q$ is diagonal, we may abuse notation and write simply $\addchar(n) = e_q(a_{\addchar} n)$ where $a_{\addchar} n$ is the standard scalar product. 
 
Likewise, a Dirichlet character modulo $q = (q_1, \dots, q_d)$ is a map $(\mz^d/q\mz^d)^{\times} \rightarrow \mc^{\times}$ extended to $\mz^d$ in the natural way.  Again, $\chi$ may be expressed uniquely as 
  $  \chi((n_1,\ldots, n_d)) = \chi_1(n_1) \cdots \chi_d(n_d)$, where
 $\chi_i$ is modulo $q_i$, $i=1, \ldots,d$.  
 If $p$ is odd, $q=(p^{\beta}, \dots, p^{\beta})$ with  $\beta \geq 2$, we  define $\ell_{\chi} = (\ell_{\chi_1}, \ldots, \ell_{\chi_d})$ with $\ell_{\chi_i}$ as in Lemma \ref{lemma:Postnikov}.  Note that  the Postnikov formula generalizes to give for $n = (n_1, \dots, n_d)$ with each $n_i \equiv 1 \pmod{p}$ the formula
  $\chi(n) = e_{p^{\beta}}(\ell_{\chi} \log_p(n))$, with the standard scalar product and where $\log_p(n) = (\log_p(n_1), \dots, \log_p(n_d))$.
  
  \begin{mylemma}
  \label{lemma:characterSumPrimePowerEvenExponent}
  Let $p$ be an odd prime, 
$\chi$ be a Dirichlet character modulo $(p^{2 \alpha}, \dots, p^{2 \alpha})$, 
$\addchar$ be an additive character modulo $(p^{2 \alpha}, \dots, p^{2 \alpha})$ and $f, g \in \Z(t_1,\ldots, t_n)^d$  as above.   Consider the congruence
\begin{equation}
  \label{eq:linearConditionVanishingSum}
  \ell_{\chi} (\log f)'(t_0) + a_{\addchar} g'(t_0) \equiv 0 \pmod{p^{\alpha}}. 
  \end{equation}
We have 
\begin{equation}
\label{eq:characterSumPrimePowerEvenExponent}
S:=\sumstar_{t \in \left( \Z/p^{2\alpha} \Z \right)^n } \chi(f(t)) \addchar(g(t)) = p^{n\alpha}
\sumstar_{\substack{t_0 \in \left( \Z/p^\alpha \Z \right)^n \\ 
\text{\eqref{eq:linearConditionVanishingSum} holds}}
} 
\chi(f(t_0)) \addchar(g(t_0)).
\end{equation}
The right hand side does not depend on the choice of 
lift of $t_0$ to $\Z_p^d$.
  \end{mylemma}
Remark.  This is a natural multi-variable generalization of \cite[Lem.\ 12.2]{IK}. 
  \begin{proof}
  Write $t=t_0+p^\alpha t_1$, and $\chi(f(t)) = \chi(f(t_0)) \chi(f(t)/f(t_0)).$ 
  Then, by the Postnikov formula \eqref{postnikov}, \eqref{eq:padiclogarithmLinearApproximation}, and \eqref{eq:rationalfunctionTaylorExpansionMultiVariableVersion}, we have
  $$\chi(f(t)/f(t_0)) = e_{p^{2\alpha}} (\ell_\chi \log_p( f(t)/f(t_0) ) )
  = e_{p^{\alpha}}(\ell_{\chi} (\log_{p} f)'(t_0) t_1)
  .$$ 
  Similarly, $\addchar(g(t)) = \addchar(g(t_0)) \addchar(g(t) - g(t_0))$, and 
  \begin{equation*}
  \addchar(g(t) - g(t_0)) =  e_{p^\alpha}(a_{\addchar} g'(t_0) t_1 ) 
 \end{equation*} 
 Then $$S= \sumstar_{t_0 \shortmod {p^\alpha}} \chi(f(t_0)) \addchar(g(t_0)) \sum_{t_1 \shortmod {p^\alpha}} e_{p^{\alpha}}( \ell_\chi(\log f)'(t_0) t_1 + a_{\addchar} g'(t_0) t_1).$$
The inner sum vanishes unless \eqref{eq:linearConditionVanishingSum} holds, 
  giving the formula stated in the lemma.  The proof shows that the right hand side of \eqref{eq:characterSumPrimePowerEvenExponent} is independent of choice of lifts.
  \end{proof}
 
  Next we generalize the odd exponent case of \cite[Lem.\ 12.3]{IK}.  To this end, we introduce multi-variable Gauss sums.  Let $L: \mz^n \rightarrow \mz$ be a linear form with integer coefficients, and $Q: \mz^{n} \rightarrow \mz$ be a quadratic form (see e.g. 
  \cite[Ch.IV Def.\ 1]{SerreCinA}). 
 Define
 \begin{equation}
  G_p(Q,L) = \sum_{t \in \F_p^n} e_p(Q[t] + L t).
 \end{equation}
  
 \begin{mylemma}
 \label{lemma:characterSumPrimePowerOddExponent}
  Let $p$ be an odd prime, 
$\chi$ be a Dirichlet character modulo $(p^{2 \alpha+1}, \dots, p^{2 \alpha+1})$, $\addchar$ be an additive character modulo $(p^{2 \alpha+1}, \dots, p^{2 \alpha+1})$,  and $f, g \in \Z(t_1,\ldots, t_n)^d$ as above. Then 
$$
S=
\sumstar_{t \in \left( \Z/p^{2\alpha+1} \Z \right)^n } \chi(f(t)) \addchar(g(t)) 
= 
p^{n\alpha}
\sumstar_{\substack{t_0 \in \left( \Z/p^\alpha \Z \right)^n 
\\ 
\eqref{eq:linearConditionVanishingSum} \text{ holds}
}
} 
\chi(f(t_0)) \addchar(g(t_0)) G_p(Q,L),
$$
where 
\begin{equation}
\label{eq:Mlinearformdef}
 L = p^{-\alpha} (\ell_{\chi} (\log f)'(t_0) + a_{\addchar} g'(t_0))
\end{equation}
and $Q$ is the quadratic form with associated matrix (in the standard basis for $\mz^n$) given by
\begin{equation}
\label{eq:QquadraticformDef}
 Q = \tfrac12 \ell_{\chi} (\log f)''(t_0) + \tfrac12 g''(t_0).
\end{equation}
The right hand side does not depend on the choice of 
lift of $t_0$ to $\Z_p^d$.
  \end{mylemma}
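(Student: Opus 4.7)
The approach parallels the proof of Lemma \ref{lemma:characterSumPrimePowerEvenExponent}, but must be carried one order higher in the Taylor expansion. I would begin by writing $t = t_0 + p^\alpha t_1$ with $t_0 \in (\Z/p^\alpha \Z)^n$ and $t_1 \in (\Z/p^{\alpha+1} \Z)^n$; this gives a bijection with $(\Z/p^{2\alpha+1} \Z)^n$. Applying the multivariable Taylor expansion \eqref{eq:rationalfunctionTaylorExpansionMultiVariableVersion} with $\beta = \alpha$ yields expansions of $f(t)$ and $g(t)$ modulo $p^{3\alpha}$, which is sharper than the target modulus $p^{2\alpha+1}$ as soon as $\alpha \geq 1$ (the boundary case $\alpha = 1$ giving equality).

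Next I would write $\chi(f(t)) = \chi(f(t_0)) \chi(f(t)/f(t_0))$ and apply the Postnikov formula \eqref{postnikov} to the second factor, noting $f(t)/f(t_0) \in 1 + p^{\alpha} \Z_p^d$. Combining with the Taylor expansion of $\log_p(f(t)/f(t_0))$ and a similar expansion of $\psi(g(t) - g(t_0))$, one obtains
\begin{equation*}
\chi(f(t)) \psi(g(t)) = \chi(f(t_0)) \psi(g(t_0)) \, e_{p^{\alpha+1}}(L_0 \cdot t_1) \, e_p(Q[t_1]),
\end{equation*}
where $L_0 = \ell_\chi (\log f)'(t_0) + a_\psi g'(t_0)$ and $Q$ is built from the Hessians, matching \eqref{eq:QquadraticformDef} (with the $a_\psi$ weighting of $g''(t_0)$ understood implicitly as in the analogous linear term).

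To finish, I would further decompose $t_1 = s_0 + p s_1$ with $s_0 \in (\Z/p \Z)^n$ and $s_1 \in (\Z/p^\alpha \Z)^n$. Since $Q[s_0 + p s_1] \equiv Q[s_0] \pmod p$, and since $e_{p^{\alpha+1}}(L_0 \cdot t_1) = e_{p^{\alpha+1}}(L_0 \cdot s_0) \, e_{p^\alpha}(L_0 \cdot s_1)$, the inner sum over $s_1$ collapses to $p^{n\alpha}$ precisely when the linear condition \eqref{eq:linearConditionVanishingSum} is satisfied, i.e. $L_0 \equiv 0 \pmod{p^\alpha}$. Under this condition, $L = p^{-\alpha} L_0$ as in \eqref{eq:Mlinearformdef} is well-defined modulo $p$, and the remaining sum over $s_0$ equals the Gauss sum $G_p(Q, L)$ by construction. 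This assembles into the stated identity.

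The main technical point is the independence of the right-hand side from the choice of integer lifts of $t_0$ (and hence of $f(t_0), g(t_0)$) to $\Z_p^d$. Replacing a lift of $t_0$ by $t_0 + p^\alpha u$ and rerunning the Taylor expansion shows that the phase change in $\chi(f(t_0)) \psi(g(t_0))$ is exactly absorbed by a matching change in the arguments of $G_p(Q, L)$; on the support where \eqref{eq:linearConditionVanishingSum} holds, these changes cancel by a change of variables in the Gauss sum. The principal obstacle, relative to the even-exponent case, is simply the bookkeeping required to retain the quadratic terms in the Taylor expansions (which vanished modulo $p^{2\alpha}$ in Lemma \ref{lemma:characterSumPrimePowerEvenExponent} but survive modulo $p^{2\alpha+1}$ here), and to verify that the two partial expansions of $\chi$ via Postnikov and of $\psi$ additively combine cleanly into a single quadratic Gauss sum.
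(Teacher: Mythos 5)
Your proof is correct and follows essentially the same route as the paper's: write $t=t_0+p^\alpha t_1$, apply the Postnikov formula together with the second-order Taylor expansion \eqref{eq:rationalfunctionTaylorExpansionMultiVariableVersion}, and extract both the linear-vanishing condition and the quadratic Gauss sum. Your explicit splitting $t_1=s_0+ps_1$ just spells out in more detail the step that the paper condenses into the observation that $t_1 \mapsto t_1 + p e_j$ leaves the quadratic phase unchanged, and your remark that $a_\psi$ should implicitly weight $g''(t_0)$ in \eqref{eq:QquadraticformDef} correctly identifies a small typographical omission in the statement.
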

\begin{proof}
 Write $t=t_0+p^\alpha t_1$, and $\chi(f(t)) = \chi(f(t_0)) \chi(f(t)/f(t_0)).$ 
  Then, by the Postnikov formula \eqref{postnikov}, \eqref{eq:padiclogarithmLinearApproximation}, and \eqref{eq:rationalfunctionTaylorExpansionMultiVariableVersion}, we have
  \begin{equation*} \chi(f(t)/f(t_0)) = e_{p^{2\alpha+1}} (\ell_\chi \log_p( f(t)/f(t_0) )  = e_{p^{\alpha+1}}( \ell_\chi (\log f)'(t_0) t_1) e_p(\tfrac12 \ell_{\chi} (\log f)''(t_0)[t_1] ).\end{equation*}
  Similarly, $\addchar(g(t)) = \addchar(g(t_0)) \addchar(g(t) - g(t_0))$, and 
  \begin{equation*}
  \addchar(g(t) - g(t_0)) =  e_{p^{\alpha+1}}(a_{\addchar} g'(t_0) t_1 ) e_p(\tfrac12 g''(t_0)[t_1]).
 \end{equation*} 
Changing variables $t_1 \rightarrow t_1 + p e_j$, where $e_j$ is the $j$-th standard basis vector, leaves the quadratic terms unchanged.  Hence the inner sum vanishes unless \eqref{eq:linearConditionVanishingSum} holds, in which case we obtain the claimed result.
\end{proof}

In view of Lemma \ref{lemma:characterSumPrimePowerOddExponent}, it will be useful to estimate quadratic Gauss sums.  
\begin{mylemma}
\label{lemma:quadraticGaussSum}
 Let $p$ be an odd prime, let $Q$ be a quadratic form over $\mf_p$, and $L$ a linear form, as above.  Let $V$ be the isotropic subspace of $Q$.  Let $r_Q$ denote the rank of $Q$.  Then $G_p(Q,L)$ vanishes unless $L \vert_V=0$, in which case
 \begin{equation*}
  |G_p(Q,L)|  = p^{\frac{r_Q}{2}}  p^{(n-r_Q)}.
 \end{equation*}
\end{mylemma}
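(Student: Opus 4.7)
The plan is to diagonalize the quadratic form $Q$ over $\F_p$ and factorize $G_p(Q,L)$ into a product of one-variable Gauss sums. Since $p$ is odd, the standard theory of quadratic forms over fields of characteristic not $2$ (e.g.\ \cite[Ch.\ IV]{SerreCinA}) provides a matrix $M \in \GL_n(\F_p)$ such that the change of variable $t = Ms$ transforms $Q$ into $Q[Ms] = \sum_{i=1}^{r_Q} a_i s_i^2$ with each $a_i \in \F_p^\times$, and such that the variables $s_{r_Q+1}, \ldots, s_n$ no longer appear. In these new coordinates, the isotropic subspace $V$ (the radical of the associated bilinear form) is exactly $\{s \in \F_p^n : s_1 = \cdots = s_{r_Q} = 0\}$, because the symmetric bilinear form associated with a diagonal quadratic form is itself diagonal with the same rank. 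Writing $LMs = \sum_{i=1}^n b_i s_i$, the sum factorizes as
\begin{equation*}
G_p(Q,L) = \prod_{i=1}^{r_Q}\Big(\sum_{s\in \F_p} e_p(a_i s^2 + b_i s)\Big) \cdot \prod_{i = r_Q+1}^n \Big(\sum_{s \in \F_p} e_p(b_i s)\Big).
\end{equation*}

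Next I would analyze the two products separately. Each factor with $i > r_Q$ equals $p$ when $b_i = 0$ and vanishes otherwise; the condition that $b_i = 0$ for every such $i$ is equivalent to $LM$ vanishing on $\{s_1 = \cdots = s_{r_Q} = 0\}$, that is, to $L\vert_V = 0$. Hence if $L\vert_V \neq 0$ the whole product is zero. Otherwise, for each $i \leq r_Q$ we complete the square (using $2 \in \F_p^\times$): $a_i s^2 + b_i s \equiv a_i(s + \overline{2 a_i} b_i)^2 - \overline{4 a_i} b_i^2 \pmod{p}$, so the $i$-th factor is a twist by a root of unity of the classical quadratic Gauss sum $\sum_{s \in \F_p} e_p(a_i s^2)$, which has absolute value $p^{1/2}$. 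Multiplying the $r_Q$ factors of size $p^{1/2}$ with the $n - r_Q$ factors of size $p$ gives $|G_p(Q,L)| = p^{r_Q/2} \cdot p^{n - r_Q}$, as claimed.

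There is no substantial obstacle here: once the diagonalization is in hand, the result is immediate from the evaluation of the classical one-variable quadratic Gauss sum. The only step requiring a moment's care is verifying that the radical of $Q$ corresponds to the coordinate subspace $\{s_1 = \cdots = s_{r_Q} = 0\}$ in the new basis, which follows from the transformation law $Q \mapsto M^{\top} Q M$ of the Gram matrix under change of basis, together with the observation that the kernel of a diagonal symmetric matrix is the span of the coordinate vectors on which it vanishes.
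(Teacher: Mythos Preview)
Your proof is correct and follows essentially the same approach as the paper: diagonalize $Q$ over $\F_p$, split off the radical $V$ (on which $Q$ vanishes and the sum over $V$ detects $L\vert_V=0$), and evaluate the remaining rank-$r_Q$ non-degenerate piece as a product of one-variable quadratic Gauss sums of absolute value $p^{1/2}$. The only difference is presentational\textemdash the paper phrases the decomposition abstractly as $\F_p^n=V\oplus U$, whereas you work in explicit diagonal coordinates and complete the square\textemdash but the content is identical.
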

\begin{proof}
 It is well-known that one can change basis for $\mf_p^n$ so that the quadratic form $Q$ is orthogonal with respect to this basis (e.g. see \cite[Ch.IV.1.4 Thm.\ 1]{SerreCinA}). 
In particular, we have $\mf_p^n = V \oplus U$ where $V$ is the isotropic subspace of $Q$, and $U$ is a complementary subspace.  Therefore, if $v \in V$ and $u \in U$, then $Q[v+u] = Q[u]$.  Using this basis to calculate the Gauss sum, we have
\begin{equation*}
 G_p(Q,L) = \Big(\sum_{v \in V} e_p(Lv) \Big) \Big( \sum_{u \in U} e_p(Q[u] + Lu) \Big).
\end{equation*}
Note that the sum over $v$ vanishes unless $L \vert_V =0$, while the sum over $u$ has absolute value $p^{r_Q/2}$, where $r_Q$ is the rank of the quadratic form, since $U$ has a basis on which $Q$ is diagonalized, and by the standard one-variable evaluation of quadratic Gauss sums.  This completes the proof.
\end{proof}

Motivated by an application (namely, Lemma \ref{lemma:someCharacterSumBoundUsefulforHhat}), we wish to mildly generalize Lemmas \ref{lemma:characterSumPrimePowerEvenExponent} and \ref{lemma:characterSumPrimePowerOddExponent} as follows. 
Let $p$ be an odd prime and suppose $1\leq \beta \leq \gamma$.  Let $f,g \in \Z(t_1, \ldots, t_n)$ with $p$ not dividing every coefficient of the denominators of $f,g$. Let $V$ be the subset of $ x \in (\Z/p^\beta \Z)^n$ for which $p^{\gamma-\beta}x$ modulo $p^\gamma$ lies in the domain of $f$ modulo $p^\gamma$. 
   Then $F(x) = f(p^{\gamma-\beta}x)$ defines a function $F: V \to \Z/p^\gamma\Z$ and we call $V$ the domain of $F$. Let us write $G$ for the same construction applied to $g$.

 These definitions extend component-wise, as follows. Let $\gamma=(\gamma_1, \dots, \gamma_d)$ with each $\gamma_i \geq \beta \geq 1$, and $p^\gamma = (p^{\gamma_1},\ldots, p^{\gamma_d}) $. 
Given $f=(f_1,\ldots,f_d), g = (g_1,\ldots, g_d) \in \Z(x_1,\ldots,x_n)^d$, define $F= (F_1, \ldots,F_d)$ and $G=(G_1, \dots, G_d)$ by $F_i(x)=f_i(p^{\gamma_i-\beta}x)$ and $G_i(x)=g_i(p^{\gamma_i-\beta}x)$ for all $1 \leq i \leq d$. Then $F,G$ define functions with domains given by the intersection of the domains of the $F_i, G_i$, as above. 
Let $\chi$ be a Dirichlet character modulo $p^\gamma$ and $\addchar$ an additive character modulo $p^\gamma$.  If $F$ and $G$ are two such $d$-tuples of rational functions, then the functions $\chi(F(t))$ and $\addchar(G(t))$ are well-defined on the domains of $F$ and $G$.

\begin{mylemma}
\label{lemma:characterSumPrimePowerEvenExponent2}
Let $\gamma= (\gamma_1, \ldots, \gamma_d)$ with each $\gamma_i \geq 2\alpha \geq 2$. Write $\beta = 2\alpha$. Let $p$ be an odd prime, $\chi$ a Dirichlet character modulo 
$(p^{\gamma_1}, \dots, p^{\gamma_d})$ , $\addchar$ an additive character modulo $(p^{\gamma_1}, \dots, p^{\gamma_d})$ , and $F,G$ as above.  
Define the congruence condition
\begin{equation}
\label{eq:linearConditionVanishingSumModified}
 \ell_{\chi} (\log f )'(p^{\gamma-\beta} t_0) + a_{\addchar} g'(p^{\gamma-\beta} t_0) \equiv 0 \shortmod {p^\alpha}.
\end{equation}
We have 
\begin{equation}
\label{eq:SdefFG1}
 S := \sumstar_{t \in (\mz/p^\beta \Z)^n} \chi(F(t)) \addchar(G(t)) = p^{n\alpha} 
 \sumstar_{\substack{t_0 \in \left( \Z/p^\alpha \Z \right)^n 
 \\ 
 \eqref{eq:linearConditionVanishingSumModified} \text{ holds}
 }
 } \chi(F(t_0)) \addchar(G(t_0)).
\end{equation}

  \end{mylemma}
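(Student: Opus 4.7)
The plan is to mirror the proof of Lemma \ref{lemma:characterSumPrimePowerEvenExponent} while tracking how the substitution $F_i(x) = f_i(p^{\gamma_i - \beta} x)$ interacts with the Taylor expansion and Postnikov's formula. As before, I would split $t = t_0 + p^\alpha t_1$ with $t_0, t_1 \in (\Z/p^\alpha \Z)^n$ (valid since $\beta = 2\alpha$), and factor $\chi(F(t)) = \chi(F(t_0))\,\chi(F(t)/F(t_0))$ and $\psi(G(t)) = \psi(G(t_0))\,\psi(G(t) - G(t_0))$.

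The central computation is the Taylor expansion of $F_i(t) = f_i(p^{\gamma_i - \beta} t_0 + p^{\gamma_i - \beta + \alpha} t_1)$. Applying \eqref{eq:rationalfunctionTaylorExpansionMultiVariableVersion} at scale $p^{\gamma_i - \beta + \alpha}$ gives
\begin{equation*}
F_i(t)/F_i(t_0) \equiv 1 + p^{\gamma_i - \beta + \alpha}\, (\log f_i)'(p^{\gamma_i - \beta} t_0)\, t_1 \pmod{p^{2(\gamma_i - \beta + \alpha)}}.
\end{equation*}
The hypothesis $\gamma_i \geq \beta$ enters decisively here: it ensures $2(\gamma_i - \beta + \alpha) \geq \beta$, so the quadratic Taylor remainder is negligible modulo $p^\beta$. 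After applying $\log_p$ (with $\log_p(1+x) \equiv x$ to leading order) and Postnikov's formula at modulus $p^\beta$, then pulling a factor of $p^\alpha$ out of the argument of the exponential to reduce the modulus, the character factor becomes
\begin{equation*}
\chi_i(F_i(t)/F_i(t_0)) = e_{p^\alpha}\!\left(\ell_{\chi_i}\, p^{\gamma_i - \beta}\, (\log f_i)'(p^{\gamma_i - \beta} t_0)\, t_1\right),
\end{equation*}
and analogously $\psi_i(G_i(t) - G_i(t_0)) = e_{p^\alpha}(a_{\psi_i}\, p^{\gamma_i - \beta}\, g_i'(p^{\gamma_i - \beta} t_0)\, t_1)$.

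Combining over $i$ and summing over $t_1 \in (\Z/p^\alpha\Z)^n$, the innermost sum is a complete exponential sum of a linear form in $t_1$ modulo $p^\alpha$; it vanishes unless the coefficient vector is $\equiv 0 \pmod{p^\alpha}$, in which case it equals $p^{n\alpha}$. The resulting congruence, once the $p^{\gamma_i - \beta}$ factors from the chain rule are absorbed into the notation $\ell_\chi (\log f)'(p^{\gamma - \beta} t_0)$, is precisely \eqref{eq:linearConditionVanishingSumModified}. Independence of the lifts then follows verbatim as in Lemma \ref{lemma:characterSumPrimePowerEvenExponent}, since the Postnikov expansion absorbs any change of lift modulo $p^\beta$. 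The proof is essentially routine bookkeeping once the scale factors are propagated; the main thing to watch is that $\beta = 2\alpha$ together with $\gamma_i \geq \beta$ is exactly what allows the argument to terminate at the linear term without leaving a residual quadratic Gauss sum, distinguishing this case from the odd-exponent variant Lemma \ref{lemma:characterSumPrimePowerOddExponent}.
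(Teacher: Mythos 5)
Your plan is the right one, and it is the paper's own (the paper states that the proof is identical to that of Lemma~\ref{lemma:characterSumPrimePowerEvenExponent}, with the scale tracked). But there is a gap in how you close the argument. You apply Postnikov at modulus $p^\beta$, so the $i$-th character factor comes out as $e_{p^\alpha}\!\bigl(\ell_{\chi_i}\, p^{\gamma_i - \beta}\, (\log f_i)'(p^{\gamma_i - \beta} t_0)\, t_1\bigr)$, and the linear congruence you derive carries a stray factor of $p^{\gamma_i - \beta}$ in the $i$-th component. This does not agree with \eqref{eq:linearConditionVanishingSumModified} --- for $\gamma_i > \beta$ the two congruences have genuinely different solution sets --- and the phrase ``absorbed into the notation'' names the discrepancy without resolving it.

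The culprit is the lemma's phrase ``modulo $(p^\beta,\ldots,p^\beta)$'': per the set-up immediately preceding the lemma, and crucially per the application in Lemma~\ref{lemma:someCharacterSumBoundUsefulforHhat} (where $\chi_1=\chi$ is taken primitive of conductor $p^\gamma$ with $\gamma>\beta$, hence is not a character modulo $p^\beta$ at all), $\chi$ and $\psi$ are characters modulo $p^\gamma = (p^{\gamma_1},\ldots,p^{\gamma_d})$, with $\ell_{\chi_i}$ the Postnikov exponent at that modulus. Applying Postnikov at modulus $p^{\gamma_i}$ instead gives
\begin{equation*}
\chi_i\bigl(F_i(t)/F_i(t_0)\bigr) = e_{p^{\gamma_i}}\!\bigl(\ell_{\chi_i}\, p^{\gamma_i - \beta + \alpha}\, (\log f_i)'(p^{\gamma_i - \beta} t_0)\, t_1\bigr) = e_{p^\alpha}\!\bigl(\ell_{\chi_i}\, (\log f_i)'(p^{\gamma_i - \beta} t_0)\, t_1\bigr),
\end{equation*}
the factor $p^{\gamma_i-\beta}$ cancelling against the enlarged modulus, and the resulting congruence is \eqref{eq:linearConditionVanishingSumModified} exactly as stated, with no stray factor. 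The quadratic Taylor remainder must now die modulo $p^{\gamma_i}$ rather than modulo $p^\beta$, i.e.\ one needs $2(\gamma_i - \beta + \alpha) \geq \gamma_i$, which is again precisely $\gamma_i \geq \beta$, the hypothesis you correctly identified as essential.
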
 
\begin{proof}
The proof is very similar to that of Lemma \ref{lemma:characterSumPrimePowerEvenExponent}.
We have
$$
\chi(F(t_0 + p^{\alpha} t_1)) = \chi(F(t_0)) \chi(\frac{F(t_0 + p^{\alpha} t_1)}{F(t_0)})
=
\chi(F(t_0)) e_{p^{\gamma}}(\ell_{\chi} \frac{F'}{F}(t_0) p^{\alpha} t_1),
$$
and similarly
$$
\addchar(G(t_0 + p^{\alpha} t_1)) = \addchar(G(t_0)) \addchar(a_{\addchar}  G'(t_0) p^{\alpha} t_1).
$$
Therefore,
$$
S = p^{n \alpha} \sumstar_{\substack{t_0 \shortmod{p^{\alpha}} \\ \eqref{eq:congruencecondition} \text{ holds}}} \chi(F(t_0))\addchar(G(t_0)),
$$
where \eqref{eq:congruencecondition} is the congruence condition
\begin{equation}\label{eq:congruencecondition}
\ell_{\chi} \frac{F'}{F}(t_0) p^{\alpha} t_1 + a_{\addchar} G'(t_0) p^{\alpha} t_1
\equiv 0 \pmod{p^{\gamma}}.
\end{equation}
Note that $G'(t_0) = p^{\gamma-\beta} f'(p^{\gamma-\beta} t_0)$, and likewise
$\frac{F'}{F}(t_0) = p^{\gamma-\beta} (\log f)'(p^{\gamma-\beta} t_0)$.  Hence the congruence condition \eqref{eq:congruencecondition} is seen to be the same as \eqref{eq:linearConditionVanishingSumModified}.
\end{proof}

   Similarly, the generalization of Lemma \ref{lemma:characterSumPrimePowerOddExponent} is given by: 
\begin{mylemma}
\label{lemma:characterSumPrimePowerOddExponent2}
Let $\gamma= (\gamma_1, \ldots, \gamma_d)$ with each $\gamma_i \geq \alpha \geq 1$. Write $\beta = 2\alpha+1$. Let $p$ be an odd prime, $\chi$ a Dirichlet character modulo 
$(p^{\gamma_1}, \dots, p^{\gamma_d})$, $\addchar$ an additive character modulo $(p^{\gamma_1}, \dots, p^{\gamma_d})$, and $F,G$ as above.  
We have 
\begin{equation}
\label{eq:SdefFG2}
 S := \sumstar_{t \in (\mz/p^\beta \Z)^n} \chi(F(t)) \addchar(G(t)) \\ = p^{n\alpha}
 \sumstar_{\substack{t_0 \in \left( \Z/p^\alpha \Z \right)^n 
 \\ 
 \eqref{eq:linearConditionVanishingSumModified} \text{ holds}
 }
 } 
 \chi(F(t_0)) \addchar(G(t_0))
  G_p(Q,L),
\end{equation}
  where
\begin{equation*}
L = p^{-\alpha} (\ell_{\chi} (\log f)'(p^{\gamma- \beta} t_0) + a_{\addchar} g'(p^{\gamma-\beta} t_0)),
\end{equation*}  
and $Q$ is the quadratic form with associated matrix (in the standard basis for $\mz^n$) given by
\begin{equation*}
Q=  \tfrac12 \ell_{\chi} p^{\gamma-\beta} (\log f)''(p^{\gamma-\beta} t_0) + \tfrac12 p^{\gamma-\beta} g''(p^{\gamma-\beta}t_0).
\end{equation*}
  \end{mylemma}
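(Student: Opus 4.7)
My plan is to adapt the proof of Lemma \ref{lemma:characterSumPrimePowerOddExponent} with additional bookkeeping to carry the rescaling $F_i(x) = f_i(p^{\gamma_i - \beta} x)$ and $G_i(x) = g_i(p^{\gamma_i - \beta} x)$ through the Taylor expansion.  First I would write $t = t_0 + p^\alpha t_1$ with $t_0 \in (\mz/p^\alpha\mz)^n$ and $t_1 \in (\mz/p^{\alpha+1}\mz)^n$, and use multiplicativity together with the Postnikov formula (Lemma \ref{lemma:Postnikov}) to obtain
\[
\chi(F(t))\psi(G(t)) = \chi(F(t_0))\psi(G(t_0)) \cdot e_{p^\beta}\bigl(\ell_\chi \log_p(F(t)/F(t_0)) + a_\psi (G(t) - G(t_0))\bigr).
\]

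Next I would apply the multivariable Taylor formula \eqref{eq:rationalfunctionTaylorExpansionMultiVariableVersion} at the point $p^{\gamma_i - \beta} t_0$ with increment $p^{\gamma_i - \beta + \alpha} t_1$, which gives, modulo $p^\beta$,
\[
\log_p\bigl(F_i(t)/F_i(t_0)\bigr) \equiv p^{\gamma_i - \beta + \alpha}(\log f_i)'(p^{\gamma_i - \beta} t_0)\, t_1 + \tfrac12 p^{2(\gamma_i - \beta + \alpha)}(\log f_i)''(p^{\gamma_i - \beta} t_0)[t_1],
\]
and analogously for $G_i(t) - G_i(t_0)$; the remainder $O(p^{3(\gamma_i - \beta + \alpha)})$ vanishes modulo $p^\beta$ since $\gamma_i \geq \beta$ and $\alpha \geq 1$.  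Summing over $i$ and extracting the common factor $p^\alpha$ from the linear part, the total exponent inside $e_{p^\beta}$ takes the form $p^\alpha \tilde C \cdot t_1 + \tfrac12 p^{2\alpha} \tilde Q[t_1]$, where $\tilde C$ agrees with the left-hand side of \eqref{eq:linearConditionVanishingSumModified} and $\tilde Q$ with twice the matrix $Q$ of the statement, provided one interprets the expressions $(\log f)'(p^{\gamma-\beta} t_0)$, $g'(p^{\gamma-\beta}t_0)$, $(\log f)''(p^{\gamma-\beta}t_0)$, $g''(p^{\gamma-\beta}t_0)$ in the $i$-th slot with the chain-rule weight $p^{\gamma_i - \beta}$ attached.

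Then I would split $t_1 = t_1' + p t_1''$ with $t_1' \in (\mz/p\mz)^n$ and $t_1'' \in (\mz/p^\alpha\mz)^n$.  The quadratic piece $\tfrac12 p^{2\alpha}\tilde Q[t_1]/p^\beta = \tfrac12 \tilde Q[t_1]/p$ depends only on $t_1 \bmod p$, hence only on $t_1'$.  The sum over $t_1''$ produces the factor $e_{p^\alpha}(\tilde C t_1'')$, which yields $p^{n\alpha}$ exactly when the congruence \eqref{eq:linearConditionVanishingSumModified} holds and vanishes otherwise.  Under the congruence, write $\tilde C = p^\alpha L$; the remaining $t_1'$-sum reduces to $\sum_{t_1' \in (\mz/p\mz)^n} e_p(L t_1' + Q[t_1']) = G_p(Q, L)$, yielding \eqref{eq:SdefFG2}.

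The main technical care is in the tracking of chain-rule factors: each linear contribution from the $i$-th component naturally carries a prefactor $p^{\gamma_i - \beta + \alpha}$, so after extracting $p^\alpha$ a weight $p^{\gamma_i - \beta}$ remains inside $L$; for the quadratic form considered modulo $p$, the natural weight $p^{2(\gamma_i - \beta)}$ coincides with $p^{\gamma_i - \beta}$ (both equal $1$ when $\gamma_i = \beta$ and vanish when $\gamma_i > \beta$), which matches the single $p^{\gamma - \beta}$ displayed in $Q$.  Apart from this bookkeeping, the argument is identical to the proof of Lemma \ref{lemma:characterSumPrimePowerOddExponent}.
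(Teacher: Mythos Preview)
Your overall approach---writing $t = t_0 + p^\alpha t_1$, Taylor expanding via \eqref{eq:rationalfunctionTaylorExpansionMultiVariableVersion} and Postnikov, then isolating the linear and quadratic parts in $t_1$---is exactly what the paper intends (the paper gives no separate proof, only the remark that it proceeds as in Lemma~\ref{lemma:characterSumPrimePowerOddExponent}).  Your analysis of the quadratic piece is correct: modulo $p$ the weight $p^{2(\gamma_i-\beta)}$ and the displayed $p^{\gamma_i-\beta}$ agree, both being $1$ for $\gamma_i=\beta$ and $0$ for $\gamma_i>\beta$.

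There is, however, a bookkeeping slip in your treatment of the linear form, caused by a typo in the lemma statement.  The hypothesis should read ``$\chi$ a Dirichlet character modulo $p^\gamma$'' (as in the paragraph preceding Lemma~\ref{lemma:characterSumPrimePowerEvenExponent2}, and as forced by the application in Lemma~\ref{lemma:someCharacterSumBoundUsefulforHhat} where $\chi_1$ is primitive of conductor $p^\gamma > p^\beta$), not modulo $(p^\beta,\ldots,p^\beta)$.  With $\chi_i$ modulo $p^{\gamma_i}$ the Postnikov exponential is $e_{p^{\gamma_i}}$ rather than $e_{p^\beta}$, so the linear contribution becomes
\[
e_{p^{\gamma_i}}\bigl(\ell_{\chi_i}\, p^{\gamma_i-\beta+\alpha}(\log f_i)'(p^{\gamma_i-\beta}t_0)\,t_1\bigr)
= e_{p^{\alpha+1}}\bigl(\ell_{\chi_i}(\log f_i)'(p^{\gamma_i-\beta}t_0)\,t_1\bigr),
\]
with \emph{no} residual $p^{\gamma_i-\beta}$ factor.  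This matches $L$ and the congruence \eqref{eq:linearConditionVanishingSumModified} exactly as written; your reinterpretation attaching chain-rule weights to $L$ is not what is meant, and would yield the wrong congruence in the proof of Lemma~\ref{lemma:someCharacterSumBoundUsefulforHhat} (compare \eqref{eq:y0conditionmakinglogderivativevanish}, where the $\ell_\chi$ term carries no factor of $p^{\gamma-\beta}$).
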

Since the proof is similar to those of Lemmas \ref{lemma:characterSumPrimePowerOddExponent} and \ref{lemma:characterSumPrimePowerEvenExponent2}, we omit the details.
For the sake of clarity, we remark that $\ell_{\chi} p^{\gamma-\beta} (\log f'')(p^{\gamma-\beta} t_0)$ 
is shorthand for 
$$\sum_{i=1}^{d} \ell_{\chi_i} p^{\gamma_i -\beta} (\log f_i)''(p^{\gamma_i-\beta} t_0),$$
and similarly for $g''$.
It will be useful later, in the proof of Lemma \ref{lemma:someCharacterSumBoundUsefulforHhat}, to observe that if $\gamma_i > \beta$ then the $i$-th component makes no contribution to the quadratic form $Q$.

The following lemma, with its easy proof omitted, will be helpful for solving the linear congruence in \eqref{eq:linearConditionVanishingSum} in future applications.
\begin{mylemma}
\label{lemma:linearalgebra}
Let $R$ be a commutative ring, with group of units $R^{\times}$.  Let $M = (a_{ij}) \in M_{2 \times 2}(R)$ with $a_{ij} \in R^{\times}$ for all $i,j$.  Then there is a solution to $(x_1,x_2).M = (0,0)$ with $x_1, x_2 \in R^{\times}$ if and only if $\det(M) = 0$, in which case the solutions are given by $x_1 a_{11} + x_2 a_{21} = 0$ (whence $x_1 = a_{21} r$, $x_2 = -a_{11} r$, for some $r \in R^{\times}$).
\end{mylemma}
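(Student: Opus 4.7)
The plan is to verify both directions of the equivalence by direct computation. The hypothesis that all four entries $a_{ij}$ are units in $R$ is crucial, because it lets us solve for one coordinate in terms of the other without worrying about zero divisors. I expect no serious obstacle; the entire argument is a short manipulation of the two linear equations encoded by $(x_1,x_2).M=(0,0)$, namely
\begin{equation*}
x_1 a_{11} + x_2 a_{21} = 0, \qquad x_1 a_{12} + x_2 a_{22} = 0.
\end{equation*}

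For the forward direction, I would assume a solution $(x_1,x_2)$ with $x_1,x_2\in R^\times$ exists. Using the first equation and the fact that $a_{21}$ is a unit, I solve $x_2 = -x_1 a_{11} a_{21}^{-1}$. Substituting into the second equation and multiplying through by $a_{21}$ gives $x_1(a_{12} a_{21} - a_{11} a_{22}) = 0$. Since $x_1$ is a unit, this forces $\det(M) = a_{11}a_{22} - a_{12}a_{21} = 0$. Moreover, setting $r = x_1 a_{21}^{-1} \in R^\times$, the same manipulation shows $x_1 = a_{21} r$ and $x_2 = -a_{11} r$, which matches the parametrization claimed in the lemma.

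For the reverse direction, I would assume $\det(M)=0$ and take any $r\in R^\times$, then set $x_1 = a_{21}r$ and $x_2 = -a_{11}r$. Both are units since each is a product of units. Checking the first equation gives $a_{21}r \cdot a_{11} - a_{11}r \cdot a_{21} = 0$ by commutativity, and checking the second gives $a_{21}r\cdot a_{12} - a_{11}r\cdot a_{22} = -r\det(M) = 0$. This produces the desired solution and shows that every element of the parametrized family satisfies $(x_1,x_2).M=(0,0)$. Combining the two directions gives the lemma.
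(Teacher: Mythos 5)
Your proof is correct and complete. The paper explicitly omits the proof of this lemma as ``easy,'' and the direct verification you give — solving the first equation for $x_2$ using that $a_{21}$ is a unit, substituting to force $\det(M)=0$, and checking both equations for the parametrized family in the reverse direction — is exactly the elementary computation the authors had in mind.
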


\subsection{Application}
 In \cite[Conj.\ 6.6]{PetrowYoung}, we left the estimation of a certain character sum as a conjecture, which we prove here aided by Lemmas \ref{lemma:characterSumPrimePowerEvenExponent2} and \ref{lemma:characterSumPrimePowerOddExponent2}.  

 \begin{mylemma}
 \label{lemma:someCharacterSumBoundUsefulforHhat}
Let $p$ be an odd prime, let $\chi$ be a multiplicative character of conductor $p^\gamma$, $\gamma \geq 2$, and suppose $\psi$ is a multiplicative character with conductor $p^\beta$, $1 \leq \beta < \gamma$.  Then
 \begin{equation}
 \label{eq:HhatCharacterSumBoundIntermediateCase}
   \sum_{y \shortmod{p^\beta}}
 \sum_{u\shortmod{p^\beta}}
  \overline{\chi}( u p^{2(\gamma-\beta)} y + 1)
\chi(1+p^{\gamma-\beta} y ) \chi(1 - p^{\gamma-\beta} u) \psi(u)
 \psi(y)  \ll p^{\beta}.
 \end{equation}
 \end{mylemma}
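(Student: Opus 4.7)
The plan is to apply Postnikov's formula (Lemma \ref{lemma:Postnikov}) to convert the three $\chi$-factors into a single additive character, then perform a $p$-adic change of variable that reduces the sum to a linear Gauss sum which vanishes. Setting $c = \gamma - \beta \geq 1$, Postnikov gives
$$\overline{\chi}(1 + p^{2c}uy)\,\chi(1 + p^c y)\,\chi(1 - p^c u) = e_{p^\gamma}\!\bigl(\ell_\chi \Phi(y,u)\bigr), \quad \Phi(y,u) = \log_p\!\tfrac{(1+p^c y)(1-p^c u)}{1 + p^{2c}uy},$$
and expanding $\log_p$ as a convergent power series and simplifying yields
$$\Phi(y,u) = p^c(y-u) - \tfrac{p^{2c}}{2}(y+u)^2 + \tfrac{p^{3c}}{3}(y^3-u^3) - \tfrac{p^{4c}}{4}(y^2-u^2)^2 + \cdots.$$
A direct check shows every coefficient lies in $p^c \Z_p$, so $\Phi/p^c \in \Z_p[y,u]$. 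Writing $\psi(u)\psi(y) = e_{p^\beta}(a_\psi(u+y))$ with $a_\psi$ a unit modulo $p^\beta$, the total phase of the summand becomes $e_{p^\beta}(Q(y,u))$, where
$$Q(y,u) = \ell_\chi(y-u) + a_\psi(y+u) + p^c R(y,u)$$
for some polynomial $R$ that is well-defined modulo $p^\beta$.

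Next I would substitute $s = y+u$, $t = y-u$, which is a bijection of $(\Z/p^\beta \Z)^2$ since $p$ is odd. In these coordinates $Q(s,t) = \ell_\chi t + a_\psi s + p^c \widetilde R(s,t)$. The crucial observation is that $\ell_\chi$ is a unit modulo $p$: since $\chi$ is primitive modulo $p^\gamma$, Lemma \ref{lemma:Postnikov} gives $\ell_\chi \not\equiv 0 \pmod p$, hence $\ell_\chi$ is invertible in $\Z/p^\beta \Z$. Therefore the map $\phi: t \mapsto t + p^c \widetilde R(s,t)/\ell_\chi$ is a bijection of $\Z/p^\beta\Z$ for each fixed $s$: writing $\widetilde R(s,t_1) - \widetilde R(s,t_2) = (t_1-t_2)\,q(s,t_1,t_2)$ turns the injectivity check $\phi(t_1) = \phi(t_2)$ into $(t_1-t_2)(1 + p^c q/\ell_\chi)=0$, where the second factor is a unit mod $p$.

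After applying this change of variable inside the sum over $t$, the phase $Q$ is replaced by $\ell_\chi t + a_\psi s$ and the sum factors as
$$S = \sum_{s,t\,\mathrm{mod}\,p^\beta} e_{p^\beta}(\ell_\chi t + a_\psi s) = \left(\sum_{s \,\mathrm{mod}\, p^\beta} e_{p^\beta}(a_\psi s)\right)\left(\sum_{t \,\mathrm{mod}\, p^\beta} e_{p^\beta}(\ell_\chi t)\right) = 0,$$
since both $\ell_\chi$ and $a_\psi$ are units modulo $p^\beta$. This gives $S=0$, hence $S \ll p^\beta$.

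The main technical check lies in the first step: verifying that the coefficients of $\Phi$ all have $p$-valuation $\geq c$ even for indices $k \geq p$ in the logarithm series (where the denominator $k$ contributes a negative valuation that must be absorbed by the factor $p^{kc}$), and that only finitely many terms survive modulo $p^\gamma$. The overall structure parallels Lemmas \ref{lemma:characterSumPrimePowerEvenExponent}--\ref{lemma:characterSumPrimePowerOddExponent2}: the vanishing is driven by the unit linear coefficient $\ell_\chi$ in front of $t$, which renders the analogous stationary-phase condition in the $t$-direction unsatisfiable.
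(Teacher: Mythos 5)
Your approach fails because you have misread the nature of $\psi$: it is a \emph{multiplicative} Dirichlet character of conductor $p^\beta$, not an additive character. The factor $\psi(u)\psi(y) = \psi(uy)$ is a multiplicative character evaluated on $uy$, so there is no identity of the form $\psi(u)\psi(y) = e_{p^\beta}(a_\psi(u+y))$. The paper's proof treats this by applying Lemmas \ref{lemma:characterSumPrimePowerEvenExponent2} and \ref{lemma:characterSumPrimePowerOddExponent2} with the data $f_2(y,u)=yu$, $\chi_2 = \psi$, and the resulting stationary-phase condition involves the \emph{logarithmic} derivative $(\log f_2)'=(y^{-1},u^{-1})$, which is nonlinear; the unique stationary point is found via Hensel's lemma, not via a linear change of variable.

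The failure is not merely cosmetic: your conclusion $S=0$ is actually false. Take $\beta = 1$. Since $\gamma\geq 2$, the factor $\overline{\chi}(up^{2(\gamma-1)}y+1)$ is identically $1$, and $\chi(1\pm p^{\gamma-1}x)=e_p(\pm\ell_\chi x)$, so the sum is
\begin{equation*}
S=\Bigl(\sum_{y\bmod p}e_p(\ell_\chi y)\psi(y)\Bigr)\Bigl(\sum_{u\bmod p}e_p(-\ell_\chi u)\psi(u)\Bigr),
\end{equation*}
a product of two Gauss sums of $\psi$ of absolute value $p^{1/2}$ each, so $|S|=p=p^\beta$ and the bound is sharp. (This is exactly the case $\beta=1$ treated at the end of the paper's proof.) If your argument were right it would show this quantity vanishes, which it does not. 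The Postnikov reduction of the three $\chi$-factors and the integrality estimates on $\Phi/p^c$ in the first half of your writeup are fine; what is missing is any legitimate handling of the genuinely multiplicative factor $\psi(uy)$, which is precisely where the paper's stationary-phase machinery (and, for $\beta$ odd, the resulting quadratic Gauss sum of modulus $p$) comes in.
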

\begin{proof}
 This is an instance of $S$ defined by \eqref{eq:SdefFG1}, \eqref{eq:SdefFG2}, where
$d=2$, $\gamma_1 = \gamma$, $\gamma_2 = \beta$,
 \begin{equation*}
  (f_1(y,u), f_2(y,u)) = \Big( \frac{(1+y)(1-u)}{1+yu}, yu \Big),
  \qquad
  (\chi_1, \chi_2) = (\chi, \psi),
 \end{equation*}
 $F_1(y,u) = f_1(p^{\gamma-\beta}y, p^{\gamma-\beta} u)$, $F_2(y,u) = f_2(y,u)$, and
of course the additive character is not present.  
A short calculation shows 
\begin{equation*}
 (\log f)'(p^{\gamma-\beta} t) = \begin{pmatrix}
              \frac{1-p^{\gamma-\beta}u}{(1+p^{\gamma-\beta}y)(1+p^{2(\gamma-\beta)}y u)} & \frac{-1-p^{\gamma-\beta}y }{(1-p^{\gamma-\beta}u)(1+p^{2(\gamma-\beta)}y u)} \\
              y^{-1} & u^{-1}
             \end{pmatrix}.
\end{equation*}

The summations in Lemmas \ref{lemma:characterSumPrimePowerEvenExponent2} and \ref{lemma:characterSumPrimePowerOddExponent2} run  over $t_0$ such that $$\ell_{\chi} (\log f)'(p^{\gamma-\beta} t_0) \equiv 0 \pmod{p^\alpha},$$ where $\alpha=\beta/2$ for $\beta$ even and $\alpha=(\beta-1)/2$ for $\beta \geq 3$ odd, so we write $t_0 = (u_0,y_0)$ and work out what this means in terms of conditions on $u_0$ and $y_0$. 
  Some simple algebra (cf. Lemma \ref{lemma:linearalgebra}) shows that this reduces to $u_0 \equiv - y_0 \pmod{p^\alpha}$, which uniquely determines $u_0$ in terms of $y_0$, and then
\begin{equation}
\label{eq:y0conditionmakinglogderivativevanish}
\frac{\ell_{\psi}}{y_0} \equiv \frac{-\ell_{\chi}}{1-p^{2(\gamma-\beta)} y_0^2} \pmod{p^\alpha},
\end{equation}
which uniquely determines $y_0 \pmod{p^\alpha}$, by Hensel's lemma.  Hence, when $\beta$ is even, $|S| \leq p^{\beta}$, by Lemma \ref{lemma:characterSumPrimePowerEvenExponent2}, giving the bound \eqref{eq:HhatCharacterSumBoundIntermediateCase}.

 Now consider the case that $\beta = 2\alpha+1 \geq 3$ is odd; it was already shown above that $u_0$ and $y_0$ are uniquely determined modulo $p^\alpha$, so the only remaining question is the size of the Gauss sum $G_p(Q,L)$.  It is easy to see that $Q$ is non-singular, since only the $f_2$-aspect enters into the calculation , and the Hessian of $\log{f_2}$ is diagonal with entries $-y_0^{-2}, -u_0^{-2}$.  Therefore, $|G_p(Q,L)| = p$, and \eqref{eq:HhatCharacterSumBoundIntermediateCase} follows immediately.
 
 Finally, we consider the case $\beta=1$.  In this case, we have $\chi(1+p^{\gamma-1}x) = e_{p}(\ell_{\chi} x)$, for any $x \in \mz$, so it is easy to directly evaluate \eqref{eq:HhatCharacterSumBoundIntermediateCase} as a product of two Gauss sums, giving the desired bound.
\end{proof}

 \subsection{The case $p=2$}

 The previous work in this section largely assumed $p \neq 2$.  The case $p=2$ has some minor differences, and for clarity we treat this case separately.
\begin{mylemma}
\label{lemma:PostnikovEvenp}
 Let $p=2$, and $\beta \geq 3$.  There exists a unique 
  group homomorphism $\ell: \widehat{(\mz/p^\beta \mz)^{\times}} \rightarrow \mz/p^{\beta-2} \mz$, $ \chi \mapsto \ell_{\chi}$, such that the Postnikov formula holds: for each Dirichlet character $\chi$ modulo $p^\beta$  and $t\in \Z$ we have
 \begin{equation}
 \label{postnikovEvenp} \chi(1+4t) = e_{p^\beta}(\ell_\chi \log_2(1+4t)).
 \end{equation}
 The map $\ell$ is surjective, and
 for $2 \leq \alpha \leq \beta$  we have that $\ell_{\chi_1} \equiv \ell_{\chi_2} \pmod {2^{\beta-\alpha}}$ if and only if 
  $\chi_1 \overline{\chi_2}$ is a character modulo $2^{\alpha}$.
 \end{mylemma}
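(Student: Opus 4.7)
My plan is to adapt the argument of Lemma \ref{lemma:Postnikov} to $p=2$ by replacing the filtration $U_1 \supset U_2 \supset \cdots$ used there with its tail starting at $U_2$, reflecting the well-known fact that the $2$-adic logarithm gives a group isomorphism $1+4\mz_2 \xrightarrow{\sim} 4\mz_2$ but fails to do so on all of $1+2\mz_2$. For $2 \leq \alpha \leq \beta$, let $U_\alpha$ denote the kernel of the reduction map $(\mz/2^\beta\mz)^\times \to (\mz/2^\alpha\mz)^\times$; standard structure results give that $U_2$ is cyclic of order $2^{\beta-2}$ and $(\mz/2^\beta\mz)^\times/U_2 \cong (\mz/4\mz)^\times$.

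First I would define $f: U_2 \to S^1$ by $f(t) = e_{2^\beta}(\log_2(t))$. Well-definedness (independence of the lift of $t$ to $1+4\mz_2$ modulo $2^\beta$) reduces to the claim that $\log_2(1+2^\beta s) \in 2^\beta\mz_2$ for all $s \in \mz_2$, which follows from a term-by-term valuation check on the defining series: each $(2^\beta s)^n/n$ has $2$-adic valuation $n\beta - v_2(n) \geq \beta$ for every $n \geq 1$, using $\beta \geq 3$. That $f$ is a group homomorphism is then immediate from $\log_2(xy) = \log_2(x) + \log_2(y)$ on $1+4\mz_2$.

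Second, I would show that $f|_{U_\alpha}$ has order exactly $2^{\beta-\alpha}$ in $\widehat{U_\alpha}$ for every $\alpha \geq 2$. The key input is the refined linear approximation $\log_2(1+2^\alpha x) \equiv 2^\alpha x \pmod{2^{\alpha+1}}$, valid for all $\alpha \geq 2$; this is slightly weaker than the congruence modulo $p^{2\beta}$ used in the odd $p$ case in \eqref{eq:padiclogarithmLinearApproximation}, but it holds because the quadratic term $(2^\alpha x)^2/2 = 2^{2\alpha-1}x^2$ lies in $2^{\alpha+1}\mz_2$ precisely when $\alpha \geq 2$, and the higher-order terms are easily seen to lie even deeper in $\mz_2$. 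Setting $t = 1+2^\alpha x$, this gives $f(t)^{2^{\beta-\alpha-1}} = e_{2^{\alpha+1}}(\log_2(1+2^\alpha x)) = e_2(x) = (-1)^x$, which is $-1$ for odd $x$, so $f|_{U_\alpha}$ has order at least $2^{\beta-\alpha}$; equality then follows from $|U_\alpha| = 2^{\beta-\alpha}$. Specializing to $\alpha = 2$ shows that $f$ generates the cyclic group $\widehat{U_2}$.

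Finally, I would define $\ell_\chi \in \mz/2^{\beta-2}\mz$ by $\chi|_{U_2} = f^{\ell_\chi}$, which yields the Postnikov identity \eqref{postnikovEvenp}; the homomorphism property of $\chi \mapsto \ell_\chi$ is then immediate, and surjectivity follows from a cardinality count since $\ker(\ell)$ consists of characters trivial on $U_2$, hence is identified with $\widehat{(\mz/4\mz)^\times}$ of order $2$. The final equivalence — $\ell_{\chi_1} \equiv \ell_{\chi_2} \pmod{2^{\beta-\alpha}}$ iff $\chi_1\overline{\chi_2}$ is a character modulo $2^\alpha$ — drops out by applying the order computation of the previous paragraph to $\chi_1\overline{\chi_2}$. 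The one place in the whole argument that requires care is the slight weakening of the linear approximation of $\log_2$ alluded to above; once this is verified, every remaining step proceeds in formal parallel with the odd prime case.
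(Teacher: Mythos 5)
Your proposal is correct and carries out exactly the approach the paper itself sketches: defining $f:U_2\to S^1$ by $f(t)=e_{2^\beta}(\log_2 t)$, verifying well-definedness via a valuation check, computing that $f|_{U_\alpha}$ has exact order $2^{\beta-\alpha}$ for $\alpha\geq 2$ using the linear approximation $\log_2(1+2^\alpha x)\equiv 2^\alpha x\pmod{2^{\alpha+1}}$, and then deducing the definition of $\ell_\chi$, its surjectivity, and the final congruence criterion. You have merely filled in the details the paper explicitly says are "easy to check," and the one caveat you flag (the weaker linear approximation for $\log_2$, valid only for $\alpha\geq 2$) is precisely what motivates working with $U_2$ in place of $U_1$, so it poses no obstacle.
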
 
The proof is very similar to the case $p > 2$, so we give only a brief outline of the proof.  Using the notation $U_{\alpha}$ from the proof of Lemma \ref{lemma:Postnikov}, define $f:U_2 \to S^1$ by $f(t) = e_{2^{\beta}}(\log_p(t))$.  One easily checks that $f$ is well-defined and has order $2^{\beta-2}$, so $\widehat{U_2}$ is cyclic generated by $f$.  Therefore, \eqref{postnikovEvenp} holds for some $\ell_{\chi}$. The final statement of the lemma is easy to check.

 \begin{mylemma}
 Let $p=2$.  Let 
$\chi$ be a Dirichlet character modulo $p^{\beta}$, and $\psi$ be an additive character modulo $p^{\beta}$,
where $\beta \geq 3$.
Let $f, g \in \Z(t_1,\ldots, t_n)^d$  as in Lemma \ref{lemma:characterSumPrimePowerEvenExponent}. 
Let $\alpha = \lfloor \frac{\beta-1}{2} \rfloor$.
Then 
\begin{equation}
\label{eq:characterSumPrimePowerEvenExponentp=2}
S:=\sumstar_{t \in \left( \Z/p^{\beta} \Z \right)^n } \chi(f(t)) \psi(g(t))  = p^{n\alpha}\sumstar_{\substack{t_0 \in \left( \Z/p^{\beta-\alpha} \Z \right)^n \\ \ell_{\chi} (\log f )'(t_0) + a_{\psi} g'(t_0) \equiv 0 \shortmod {p^\alpha}}} \chi(f(t_0)) \psi(g(t_0)),
\end{equation}
 where the star indicates that the sum runs over numbers for which $f_i(t) \in \mz_p^{\times}, g_i(t) \in \mz_p$.  The right hand side does not depend on the choice of lifts of $f(t_0)$ and $g(t_0)$ to $\Z_p^d$.
  \end{mylemma}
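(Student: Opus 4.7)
The plan is to follow the proof of Lemma \ref{lemma:characterSumPrimePowerEvenExponent} essentially verbatim, with the $p=2$ Postnikov formula (Lemma \ref{lemma:PostnikovEvenp}) in place of Lemma \ref{lemma:Postnikov}. The principal change is that for $p=2$ the Postnikov formula is only valid on the subgroup $U_2$ of elements congruent to $1$ modulo $4$ (rather than modulo $p$), which forces a less symmetric choice of splitting than in the odd-$p$ case.

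First I would decompose each coordinate of $t \in (\mz/p^{\beta}\mz)^n$ uniquely as $t = t_0 + p^{\beta-\alpha}t_1$ with $t_0 \in (\mz/p^{\beta-\alpha}\mz)^n$ and $t_1 \in (\mz/p^\alpha\mz)^n$. Then I would write $\chi(f(t)) = \chi(f(t_0))\chi(f(t)/f(t_0))$ and expand using \eqref{eq:rationalfunctionTaylorExpansionMultiVariableVersion} (with $\beta$ there replaced by $\beta-\alpha$) to get $f(t)/f(t_0) \equiv 1 + p^{\beta-\alpha}(\log f)'(t_0)\, t_1 \pmod{p^{2(\beta-\alpha)}}$, where the residue is interpreted in $\mz_2^\times$.

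Next, to apply Lemma \ref{lemma:PostnikovEvenp}, I need $1 + p^{\beta-\alpha}(\log f)'(t_0) t_1$ to lie in $U_2$, i.e.\ $\beta - \alpha \geq 2$, and to replace the $p$-adic logarithm by its linear term modulo $p^\beta$, I need $2(\beta-\alpha)-1 \geq \beta$, since for $p=2$ the quadratic term $(p^{\beta-\alpha} h)^2/2 = p^{2(\beta-\alpha)-1}h^2$ is only one factor of $p$ smaller than the linear term. The choice $\alpha = \lfloor (\beta-1)/2\rfloor$ is exactly what simultaneously guarantees both inequalities for $\beta \geq 3$: indeed $\beta - \alpha = \lceil (\beta+1)/2\rceil \geq 2$, and $\alpha \leq (\beta-1)/2$ gives $2(\beta-\alpha)-1 \geq \beta$. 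Verifying this compatibility is the small but key technical point of the proof. With these checks in hand, Postnikov yields
\[
\chi(f(t)/f(t_0)) = e_{p^{\beta}}\bigl(\ell_\chi \log_2(f(t)/f(t_0))\bigr) = e_{p^{\alpha}}\bigl(\ell_\chi (\log f)'(t_0)\, t_1\bigr).
\]
By the same Taylor argument applied to $g$, one has $\psi(g(t)) = \psi(g(t_0)) e_{p^{\alpha}}(a_\psi g'(t_0)\, t_1)$.

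Substituting these back into $S$, the inner sum over $t_1 \in (\mz/p^\alpha\mz)^n$ becomes a sum of a linear additive character, which vanishes unless the congruence $\ell_\chi (\log f)'(t_0) + a_\psi g'(t_0) \equiv 0 \pmod{p^\alpha}$ holds, and otherwise contributes exactly $p^{n\alpha}$. This gives the formula \eqref{eq:characterSumPrimePowerEvenExponentp=2}. Independence of the choice of lifts of $f(t_0), g(t_0)$ to $\mz_p^d$ follows exactly as in Lemma \ref{lemma:characterSumPrimePowerEvenExponent}, since changing a lift by $p^{\beta-\alpha}$ alters $\log_2 f(t_0)$ or $g(t_0)$ only in ways that $e_{p^\beta}(\ell_\chi \cdot)$ or $\psi$ cannot detect once the linear congruence is imposed.
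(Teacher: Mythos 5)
Your proof is correct and follows essentially the same approach as the paper: decompose $t = t_0 + 2^{\beta-\alpha}t_1$, apply the $p=2$ Postnikov formula (Lemma \ref{lemma:PostnikovEvenp}), and isolate the two inequalities $\beta-\alpha \geq 2$ (so $f(t)/f(t_0) \in U_2$) and $2(\beta-\alpha)-1 \geq \beta$ (so the quadratic term of $\log_2$ dies modulo $2^\beta$), both of which are satisfied by $\alpha = \lfloor(\beta-1)/2\rfloor$ when $\beta \geq 3$. Your explicit remark that the quadratic Taylor/log term is $(2^{\beta-\alpha}h)^2/2 = 2^{2(\beta-\alpha)-1}h^2$, one power of $2$ larger than in the odd-$p$ case, is exactly the observation the paper's terse proof is built around.
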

 Remark.  For our later purposes, this result is a suitable replacement for Lemmas \ref{lemma:characterSumPrimePowerEvenExponent} and \ref{lemma:characterSumPrimePowerOddExponent}.  In practice, the linear congruence almost entirely determines $t_0 \pmod{p^{\alpha}}$ (which then almost entirely determines $t_0 \pmod{p^{\beta-\alpha}}$, since $\beta - \alpha = \alpha + O(1)$, and $p^{O(1)} = O(1)$ for $p=2$). 
\begin{proof}
 Let $t = t_0 + 2^{\beta-\alpha} t_1$.  Since $\beta \geq 3$, we have $\alpha \leq \beta- 2$, so $\frac{f(t)}{f(t_0)} \equiv 1 \pmod{4}$.  Then
 \begin{equation*}
  \chi(f(t)) = \chi(f(t_0)) \chi(f(t)/f(t_0)) = 
  \chi(f(t_0))
  e_{2^{\beta}}(\ell_{\chi} \log_2(f(t)/f(t_0))).
 \end{equation*}
 Next we note
 \begin{equation*}
  \log_2(f(t)/f(t_0)) \equiv (\log f)'(t_0) 2^{\beta - \alpha} t_1
  \pmod{2^{\beta}},
 \end{equation*}
 under the assumption $2(\beta-\alpha) - 1 \geq \beta$, equivalently, $\alpha \leq \frac{\beta-1}{2}$.  Note $\frac{\beta-1}{2} \leq \beta -2$ since $\beta \geq 3$.  The rest of the proof then proceeds exactly as in Lemma \ref{lemma:characterSumPrimePowerEvenExponent}.
 \end{proof}

\begin{mylemma}
 The bound in Lemma \ref{lemma:someCharacterSumBoundUsefulforHhat} holds for $p=2$.
\end{mylemma}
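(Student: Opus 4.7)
The plan is to mirror the proof of Lemma \ref{lemma:someCharacterSumBoundUsefulforHhat} for odd $p$, substituting the $p=2$ character-sum reduction \eqref{eq:characterSumPrimePowerEvenExponentp=2} in place of Lemmas \ref{lemma:characterSumPrimePowerEvenExponent2} and \ref{lemma:characterSumPrimePowerOddExponent2}. I would first dispense with the small cases $\beta\in\{1,2\}$ by the trivial bound: the double sum has only $2^{2\beta}=O(1)$ terms, each of modulus $1$, so $|S|\ll 1\ll 2^\beta$ with an absolute implied constant.

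For $\beta\geq 3$, set $\alpha=\lfloor(\beta-1)/2\rfloor$. I would apply \eqref{eq:characterSumPrimePowerEvenExponentp=2} with $n=2$, the same rational functions $(f_1,f_2)=\bigl(\tfrac{(1+y)(1-u)}{1+yu},\,yu\bigr)$, and characters $(\chi_1,\chi_2)=(\chi,\psi)$ as in the odd-$p$ proof. The Jacobian calculation carries over verbatim, so the sum collapses to $2^{2\alpha}$ times a sum over $(u_0,y_0)\in(\Z/2^{\beta-\alpha}\Z)^2$ satisfying the linear congruence on $\ell_\chi(\log f)'(2^{\gamma-\beta}t_0)+a_\psi g'(2^{\gamma-\beta}t_0)$ modulo $2^\alpha$.

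The structural reduction of this congruence proceeds as before: the first column forces $u_0\equiv -y_0\pmod{2^\alpha}$, and substituting into the second column yields
\begin{equation*}
\ell_\psi\bigl(1-2^{2(\gamma-\beta)}y_0^2\bigr)+\ell_\chi y_0\equiv 0\pmod{2^\alpha}.
\end{equation*}
This is the step where I expect the main (minor) obstacle: Hensel's lemma must be invoked at the prime $2$. However, the derivative in $y_0$ equals $\ell_\chi-2^{2(\gamma-\beta)+1}\ell_\psi y_0$, which is \emph{odd} because the primitivity of $\chi$ modulo $2^\gamma$ together with Lemma \ref{lemma:PostnikovEvenp} forces $\ell_\chi$ to be odd. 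Hence the derivative is a $2$-adic unit, we are in the benign case of Hensel's lemma where no parity obstruction arises, and there is a unique lift $y_0\pmod{2^\alpha}$, which in turn determines $u_0\pmod{2^\alpha}$ uniquely.

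Finally, each solution $(u_0,y_0)\pmod{2^\alpha}$ lifts to exactly $2^{2(\beta-2\alpha)}\in\{4,16\}$ pairs modulo $2^{\beta-\alpha}$, since $\beta-2\alpha\in\{1,2\}$, so the number of summands is $O(1)$. Each summand has modulus $\leq 1$, so $|S|\ll 2^{2\alpha}\leq 2^\beta$, as required.
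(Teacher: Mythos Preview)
Your proposal is correct and follows exactly the approach the paper intends; the paper's own proof reads in full ``The proof is similar to the odd $p$ case, so we omit the details,'' and you have supplied those details accurately. Two cosmetic points: the sum has no additive character, so the $a_\psi g'$ term in your congruence should simply be absent; and the paper's $p=2$ reduction \eqref{eq:characterSumPrimePowerEvenExponentp=2} is stated for ordinary $f,g$ rather than the $F,G$ variants, so strictly speaking you are invoking its obvious $F,G$ extension---but this is immediate and does not affect your argument.
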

The proof is similar to the odd $p$ case, so we omit the details.

\section{The behavior of $g(\chi,\psi)$}
\label{section:gchipsiBehavior}

\subsection{Introductory lemmas}
Let $A \in \mz$, and let
\begin{equation*}
 Q(x) =  Q_A(x) = x^2 + Ax - 1 \in \mz[x].
\end{equation*}
For an odd prime $p$ and integer $\beta \geq 1$, define
\begin{equation*}
 r(A,p^\beta) = \# \{ x \mymod{p^\beta} : Q_A(x) \equiv 0 \mymod{p^\beta} \}.
\end{equation*}
Let $\Delta = A^2 + 4$ be the discriminant of $Q_A$.  By completing the square, note
\begin{equation}
\label{eq:QDeltaRelation}
 Q(x) = (x + \tfrac{A}{2})^2 - \tfrac{\Delta}{4}.
\end{equation}
We then have $r(A,p^\beta) = \rho(\Delta,p^\beta)$, where
\begin{equation*}
\rho(\Delta,p^\beta)  := \# \{ x \mymod{p^\beta} : x^2 \equiv \Delta \mymod{p^\beta} \}.
\end{equation*}

\begin{mylemma}
\label{lemma:nuDeltabound}
 Let $p$ be an odd prime, and $\beta \geq 1$.  If $p \nmid \Delta$, then
 \begin{equation*}
  \rho(\Delta, p^\beta) = \rho(\Delta, p)  = 1 + (\tfrac{\Delta}{p}).
 \end{equation*}
 If $p^\beta | \Delta$, then
\begin{equation*}
  \rho(\Delta, p^\beta) = 
  p^{\lfloor \beta/2 \rfloor} = 
  \begin{cases}
    p^{\frac{\beta}{2}}, \qquad &\text{$\beta$ even},\\
    p^{\frac{\beta-1}{2}}, \qquad &\text{$\beta$ odd}.
  \end{cases}
\end{equation*}
If $p| \Delta$, but $p^\beta \nmid \Delta$, then
\begin{equation}
\label{eq:rhoDeltaIntermediateCase}
 \rho(\Delta, p^\beta) \leq 2 (\Delta, p^\beta)^{1/2} \delta((p^\beta, \Delta) = \square).
\end{equation}
\end{mylemma}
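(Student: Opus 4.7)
The plan is to analyze each case by factoring out the common power of $p$ from $\Delta$ and reducing to a unit-valued congruence where Hensel's lemma applies. All three cases rest on the same simple observation: if $p^{\gamma} \| \Delta$ and $x^2 \equiv \Delta \pmod{p^{\beta}}$, then comparing $p$-adic valuations forces $p^{\lceil \gamma/2 \rceil} \mid x$ (when $\gamma < \beta$), so we may write $x = p^{m} y$ for the appropriate $m$ and study the congruence on $y$.

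For Case 1, where $p \nmid \Delta$, I would invoke Hensel's lemma directly. Any solution $x_0$ to $x^2 \equiv \Delta \pmod{p}$ satisfies $p \nmid 2x_0$ (since $p$ is odd and $p \nmid \Delta$), so each such $x_0$ lifts uniquely to a solution modulo $p^{\beta}$. The count modulo $p$ is $1 + \legen{\Delta}{p}$ by the definition of the Legendre symbol (treating residue and non-residue cases).

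For Case 2, when $p^{\beta} \mid \Delta$, the congruence becomes $x^{2} \equiv 0 \pmod{p^{\beta}}$. Writing $v_{p}(x) = k$, this forces $2k \geq \beta$, i.e.\ $k \geq \lceil \beta/2 \rceil$. The number of multiples of $p^{\lceil \beta/2 \rceil}$ modulo $p^{\beta}$ equals $p^{\beta - \lceil \beta/2 \rceil} = p^{\lfloor \beta/2 \rfloor}$, which is the claim.

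The main step is Case 3, where $p^{\gamma} \| \Delta$ with $1 \leq \gamma < \beta$. I would split on the parity of $\gamma$. If $\gamma$ is odd, then any solution would satisfy $v_{p}(x^{2})$ even, yet $v_{p}(\Delta) = \gamma$ is odd and $\gamma < \beta$ ensures this discrepancy cannot be absorbed modulo $p^{\beta}$; hence $\rho(\Delta, p^{\beta}) = 0$. Since in this case $(p^{\beta}, \Delta) = p^{\gamma}$ is not a square, the indicator $\delta((p^{\beta},\Delta) = \square)$ vanishes and the bound holds trivially. If $\gamma = 2m$ is even, write $\Delta = p^{\gamma} \Delta'$ with $(p, \Delta') = 1$ and $x = p^{m} y$; then $x^{2} \equiv \Delta \pmod{p^{\beta}}$ becomes $y^{2} \equiv \Delta' \pmod{p^{\beta - \gamma}}$. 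By Case 1 applied to this reduced congruence, the number of $y$ modulo $p^{\beta - \gamma}$ is $1 + \legen{\Delta'}{p} \leq 2$. Each lifts to $p^{\gamma - m} = p^{m}$ values of $y$ modulo $p^{\beta - m}$, which parameterize distinct $x$ modulo $p^{\beta}$. Therefore $\rho(\Delta, p^{\beta}) \leq 2 \cdot p^{m} = 2 (\Delta, p^{\beta})^{1/2}$, and the factor $\delta((p^{\beta},\Delta) = \square) = 1$ is correct since $\gamma$ is even.

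The principal obstacle is simply the bookkeeping of lifts in Case 3; once the substitution $x = p^{m} y$ is made and one tracks which residue class modulo which power of $p$ each variable lives in, the inequality falls out. No further input beyond Hensel's lemma and $p$-adic valuation arithmetic is needed.
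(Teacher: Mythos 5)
Your proof is correct and follows essentially the same route as the paper: Hensel's lemma for the unit case, direct counting when $p^\beta \mid \Delta$, and for the intermediate case the parity split on $\gamma = v_p(\Delta)$ followed by the substitution $x = p^{\gamma/2} y$ to reduce to a unit congruence modulo $p^{\beta-\gamma}$, then counting lifts. The bookkeeping of moduli and lifts matches the paper's argument, just with $\gamma/2$ written as $m$.
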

\begin{proof}
The case $p \nmid \Delta$ follows from Hensel's lemma.  The conclusion when $p^\beta| \Delta$ is easy to verify directly.

Now suppose $(\Delta, p^\beta) = p^{\alpha}$, with $1 \leq \alpha < \beta$. 
Write $\Delta = p^{\alpha} \Delta'$ with $(\Delta', p ) = 1$.  
It is easy to see that if $\alpha$ is odd then $\rho(\Delta, p^\beta) = 0$.  If $\alpha$ is even (which means $(p^\beta, \Delta) = \square)$ then we write $x = p^{\alpha/2} x_1$, say, where $x_1$ runs modulo $p^{\beta-\frac{\alpha}{2}}$.  Then $x_1$ solves the congruence
\begin{equation*}
 x_1^2 \equiv \Delta' \pmod{p^{\beta-\alpha}}.
\end{equation*}
By Hensel's lemma, there are $1+(\frac{\Delta'}{p})$ solutions $x_1 \pmod{p^{\beta-\alpha}}$ to this congruence, and so 
 in total there are most $2 p^{\alpha/2}$ values of $x_1$ modulo $p^{\beta-\frac{\alpha}{2}}$, giving \eqref{eq:rhoDeltaIntermediateCase}.
\end{proof}

\subsection{The bounds on $g(\chi,\psi)$}
Recall that $g(\chi,\psi)$ is defined by \eqref{eq:gdef}, and that both $\chi$ and $\psi$ are primitive characters modulo $q=p^\beta$.  Anticipating some future simplifications, we apply the simple change of variables $t \rightarrow t-1$ and $u \rightarrow u-1$ giving
\begin{equation}
\label{eq:gdefAlternateVersion}
g(\chi,\psi) = \sumstar_{t,u \shortmod{p^\beta}} \chi\Big(\frac{u(t-1)}{t(u-1)}\Big) \psi(ut-t-u),
\end{equation}
where we recall that the asterisk on the sum means that the sum is restricted to $u,t$ such that the denominator of $\frac{u(t-1)}{t(u-1)}$ is coprime to $p$.

\begin{myremark}\label{rem:gboundoddestprime}
Note that if $p=2$ and $q=p^{\beta}$, $\beta \geq 1$, then $g(\chi, \psi)$ trivially vanishes, since $t(t+1)$ is even for all $t \in \mz$. 
\end{myremark}

\begin{mytheo}
\label{thm:gboundevencase}
Suppose $q=p^\beta$ with $p$ odd and $\beta=2\alpha$.  Then
\begin{equation*}
|g(\chi,\psi)| \leq q \rho(\Delta, p^\alpha),
\end{equation*}
where $\Delta = A^2 + 4$ and $A \equiv \ell_{\chi} \overline{\ell_{\psi}} \pmod{p^{\beta-1}}$. 
\end{mytheo}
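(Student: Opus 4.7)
The plan is to apply Lemma \ref{lemma:characterSumPrimePowerEvenExponent} directly to the form \eqref{eq:gdefAlternateVersion} of $g(\chi,\psi)$, taking $n=2$ (variables $t,u$), $d=2$ with vector Dirichlet character $(\chi,\psi)$, rational functions
\[
f_1(t,u) = \frac{u(t-1)}{t(u-1)}, \qquad f_2(t,u) = ut - t - u,
\]
and the additive character playing no role (one takes the ``$g$'' and ``$a_\psi$'' in the lemma to be trivial). The star condition on $f_1, f_2$ being $p$-adic units matches the effective support of the summand. The lemma then produces a prefactor $p^{2\alpha} = q$ and reduces the problem to counting pairs $(t_0, u_0) \in (\Z/p^\alpha\Z)^2$ satisfying the star conditions together with the linear congruence $(\ell_\chi, \ell_\psi)(\log f)'(t_0, u_0) \equiv 0 \pmod{p^\alpha}$.

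A direct calculation gives
\[
(\log f)' = \begin{pmatrix} \dfrac{1}{t(t-1)} & -\dfrac{1}{u(u-1)} \\ \dfrac{u-1}{ut-t-u} & \dfrac{t-1}{ut-t-u} \end{pmatrix},
\]
so after clearing denominators the two congruences become
\begin{align*}
\ell_\chi(u_0 t_0 - t_0 - u_0) + \ell_\psi (u_0-1) t_0 (t_0-1) &\equiv 0 \pmod{p^\alpha},\\
-\ell_\chi (u_0 t_0 - t_0 - u_0) + \ell_\psi (t_0-1) u_0 (u_0-1) &\equiv 0 \pmod{p^\alpha}.
\end{align*}
Adding them yields $\ell_\psi (t_0-1)(u_0-1)(t_0 + u_0) \equiv 0 \pmod{p^\alpha}$. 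Since $\psi$ is primitive modulo $p^\beta$, the element $\ell_\psi$ is a unit by Lemma \ref{lemma:Postnikov}, and by the star condition $t_0-1$ and $u_0-1$ are units, forcing $u_0 \equiv -t_0 \pmod{p^\alpha}$.

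Substituting $u_0 \equiv -t_0$ back into either congruence and dividing through by the unit $\ell_\psi t_0$ collapses the system to the quadratic
\[
t_0^2 + A t_0 - 1 \equiv 0 \pmod{p^\alpha}, \qquad A \equiv \ell_\chi \overline{\ell_\psi} \pmod{p^{\beta-1}}.
\]
By Lemma \ref{lemma:nuDeltabound}, this has at most $r(A,p^\alpha) = \rho(\Delta, p^\alpha)$ solutions in $t_0$, each uniquely determining $u_0$ and contributing a summand of modulus $1$, which yields $|g(\chi,\psi)| \leq q\,\rho(\Delta, p^\alpha)$.

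The main obstacle is the two-variable linear elimination; once one notices that the antisymmetric structure of the two congruences makes the $\ell_\chi$-term cancel upon addition, the pleasant symmetry $u_0 \equiv -t_0$ drops out and the whole problem collapses to a one-dimensional quadratic count. The remaining bookkeeping consists in checking that the primitivity of $\psi$ and the starred sum together legitimize all the divisions by units performed along the way.
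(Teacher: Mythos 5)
Your proposal is correct and follows the paper's proof essentially line by line: both apply Lemma \ref{lemma:characterSumPrimePowerEvenExponent} with the same choice $(f_1,f_2)=\bigl(u(t-1)/(t(u-1)),\, tu-t-u\bigr)$, compute the same logarithmic Jacobian \eqref{eq:logderivativeAppearingInProofOfgbound}, extract $u_0 \equiv -t_0 \pmod{p^\alpha}$, and reduce to the quadratic congruence $t_0^2 + A t_0 - 1 \equiv 0 \pmod{p^\alpha}$. The only cosmetic difference is that the paper invokes the small linear-algebra Lemma \ref{lemma:linearalgebra} to solve the resulting $2\times 2$ system, whereas you eliminate by adding the two cleared congruences directly, which amounts to the same determinant-vanishing observation.
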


\begin{mytheo}
\label{thm:gboundoddcase}
Suppose $q=p^\beta$ with $p$ odd and $\beta=2\alpha+1$, $\alpha \geq 1$.  Then
\begin{equation}
\label{eq:gboundoddcase}
|g(\chi,\psi)| \leq 
\begin{cases}
2q, \qquad &p \nmid \Delta, \\
q p^{1/2} \delta(p^2|\Delta)  \rho(p^{-2} \Delta, p^{\alpha-1}), \qquad &p | \Delta.
\end{cases}
\end{equation}
where $\Delta = A^2 + 4$  with $A \equiv \ell_{\chi} \overline{\ell_{\psi}} \pmod{p^{\beta-1}}$.
\end{mytheo}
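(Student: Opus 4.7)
The plan is to apply Lemma \ref{lemma:characterSumPrimePowerOddExponent} to
\[
 g(\chi,\psi) = \sumstar_{t,u \shortmod{p^\beta}} \chi\!\left(\frac{t(u+1)}{u(t+1)}\right) \psi(tu-1),
\]
treating $(\chi,\psi)$ as a two-component Dirichlet character tuple applied to the rational functions $f_1 = t(u+1)/(u(t+1))$ and $f_2 = tu-1$, with $n=2$ and trivial additive character. Set $A \equiv \ell_\chi \overline{\ell_\psi} \pmod{p^{\beta-1}}$, legitimate since $\ell_\psi \in (\Z/p^{\beta-1}\Z)^\times$ by primitivity of $\psi$. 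A direct computation of the logarithmic gradients transforms the linear condition \eqref{eq:linearConditionVanishingSum} into $ut(A+t+1) \equiv A$ and $tu(A-u-1) \equiv A \pmod{p^\alpha}$; subtracting forces $t+u+2 \equiv 0 \pmod{p^\alpha}$, since $t,u$ are units in the $\sumstar$. Setting $s_0 := t_0 + 1$ (so $u_0 = -s_0-1$), the two equations collapse to $s_0 Q_A(s_0) \equiv 0 \pmod{p^\alpha}$. The factor $s_0 \equiv 0$ forces $u_0+1 \equiv 0$ and is excluded, so the surviving condition is $Q_A(s_0) \equiv 0 \pmod{p^\alpha}$, counted by $\rho(\Delta,p^\alpha)$ via Lemma \ref{lemma:nuDeltabound}.

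Next I would compute the $2\times 2$ quadratic form $Q$ from \eqref{eq:QquadraticformDef} at $(t_0,u_0) = (s_0-1, -s_0-1)$. Using $u_0t_0 - 1 = -s_0^2$ together with $Q_A(s_0) \equiv 0$, one verifies $\det Q \equiv c(2 - As_0) \pmod{p^\alpha}$ for some unit $c \in \Z_p^\times$. The algebraic identity $(2 - As_0)(2s_0+A) \equiv s_0 \Delta \pmod{p^\alpha}$, which follows directly from the completion-of-the-square relation $(2s_0+A)^2 = 4Q_A(s_0) + \Delta$, together with unit-ness of $s_0$, shows $v_p(\det Q) = v_p(2s_0+A) = v_p(\Delta)/2$ at every solution. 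If $p \nmid \Delta$, then $Q$ has full rank and Lemma \ref{lemma:quadraticGaussSum} gives $|G_p(Q,L)| = p$; combining with $\rho(\Delta, p^\alpha) \leq 2$ and the prefactor $p^{n\alpha} = p^{2\alpha}$ yields $|g(\chi,\psi)| \leq 2q$, proving the first case of \eqref{eq:gboundoddcase}.

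When $p \mid \Delta$, $Q$ has rank at most $1$, so $|G_p(Q,L)| \leq p^{3/2}$, but the Gauss sum vanishes unless $L$ vanishes on the isotropic line $V$ of $Q$. The main obstacle is pinning down this vanishing condition. I would show that $V$ is spanned by $(1,-1)$ (the direction preserving $t+u$) and that the gradient equals $\ell_\psi Q_A(s_0)/s_0^2$ times $(1/(s_0-1),-1/(s_0+1))$; pairing with $(1,-1)$ gives a unit multiple of $Q_A(s_0)/p^\alpha$ modulo $p$, so $L|_V \equiv 0 \pmod p$ precisely when $Q_A(s_0) \equiv 0 \pmod{p^{\alpha+1}}$. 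By the squaring identity $(2s_0+A)^2 \equiv \Delta \pmod{p^{\alpha+1}}$, such a lift exists iff $p^2 \mid \Delta$, and when it does the number of surviving $s_0$ modulo $p^\alpha$ is $\rho(\Delta, p^{\alpha+1})/p = \rho(p^{-2}\Delta, p^{\alpha-1})$. Multiplying out yields $|g(\chi,\psi)| \leq p^{2\alpha} \cdot p^{3/2} \cdot \delta(p^2|\Delta) \rho(p^{-2}\Delta,p^{\alpha-1}) = qp^{1/2}\delta(p^2|\Delta)\rho(p^{-2}\Delta,p^{\alpha-1})$, completing the proof.
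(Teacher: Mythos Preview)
Your proposal is correct and follows essentially the same route as the paper: apply Lemma~\ref{lemma:characterSumPrimePowerOddExponent}, solve the linear congruence to land on $Q_A(s_0)\equiv 0\pmod{p^\alpha}$, compute the rank of the Hessian form $Q$ via $\det Q \sim (2-As_0)$, and in the $p\mid\Delta$ case refine using the vanishing of $L$ on the isotropic line to upgrade the congruence to modulus $p^{\alpha+1}$. The only cosmetic differences are that you work in the original coordinates rather than after the shift $t\mapsto t-1,\,u\mapsto u-1$, and your identity $(2-As_0)(2s_0+A)\equiv s_0\Delta$ replaces the paper's direct computer-algebra evaluation of $\det Q$; you should still record explicitly that $Q$ has rank exactly $1$ (not $0$) when $p\mid\Delta$, and verify by the Hessian computation that the isotropic line really is spanned by $(1,-1)$.
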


\begin{proof}[Proof of Theorem \ref{thm:gboundevencase}]
The sum \eqref{eq:gdefAlternateVersion} falls into the template of Lemma \ref{lemma:characterSumPrimePowerEvenExponent}, with
\begin{equation*}
(f_1(t,u), f_2(t,u)) = \Big(\frac{u(t-1)}{t(u-1)}, tu-t-u \Big), \qquad (\chi_1,\chi_2) = (\chi, \psi).
\end{equation*}
No additive character is present, of course.  A short calculation gives
\begin{equation}
\label{eq:logderivativeAppearingInProofOfgbound}
(\log f)' = 
\begin{pmatrix}
 \frac{1}{t(t-1)} & \frac{-1}{u(u-1)} \\ \frac{u-1}{ut-t-u} &  \frac{t-1}{ut-t-u}
\end{pmatrix}.
\end{equation}
Note that the vanishing (mod $p^\alpha$) of the determinant of $(\log f)'$ is equivalent to 
\begin{equation}
\label{eq:utrelation}
u \equiv - t \pmod{p^\alpha},
\end{equation}
and that 
\begin{equation}
\label{eq:logderivativeAppearingInProofOfgboundSimplified}
(\log f)'\vert_{u=-t} = 
\begin{pmatrix}
 \frac{1}{t(t-1)} & \frac{-1}{t(t+1)} \\ \frac{t+1}{t^2} &  \frac{1-t}{t^2}
\end{pmatrix}.
\end{equation}

By Lemma \ref{lemma:linearalgebra}, 
the condition \eqref{eq:linearConditionVanishingSum} is seen to be equivalent to \eqref{eq:utrelation} combined with
\begin{equation}
\label{eq:linearConditiongchipsiPreliminary}
\frac{\ell_{\chi}}{t(t-1)} + \ell_{\psi} \frac{t+1}{t^2}  \equiv 0 \pmod{p^\alpha}.
\end{equation}
Simplifying \eqref{eq:linearConditiongchipsiPreliminary}, we obtain the equivalent congruence
\begin{equation}
\label{eq:linearConditiongchipsi}
t^2+A t   -1 \equiv 0 \pmod{p^\alpha}, \qquad A \equiv \ell_{\chi} \overline{\ell_{\psi}} \pmod{p^\alpha}.
\end{equation}
Hence, $|g(\chi,\psi)| \leq q \rho(\Delta, p^\alpha)$, as claimed.
\end{proof}

\begin{proof}[Proof of Theorem \ref{thm:gboundoddcase}]
The beginning steps of the proof are identical to those of Theorem \ref{thm:gboundevencase};  the linear congruences in both cases are the same, so we obtain that 
$$g(\chi, \psi) = p^{2\alpha} \sumstar_{\substack{t_0, u_0 \shortmod{p^\alpha} \\\text{\eqref{eq:utrelation} and \eqref{eq:linearConditiongchipsi} hold} }} \chi(f_1(t_0, u_0)) \psi(f_2(t_0, u_0)) G_p(Q,L),$$ 
 What is new is the presence of the quadratic Gauss sum $G_p$, so we next focus on this aspect.  Note that the quadratic form $Q$ present in $G_p(Q,L)$ is given with respect to the standard basis by
\begin{equation}\label{2Qdefs3}
2Q = \ell_{\chi} (\log f_1)'' + \ell_{\psi} (\log f_2)''
\end{equation}

Working in $\F_p$ until further notice, the Hessian of $\log{f_1}$ is
\begin{equation*}
\begin{pmatrix}
t^{-2} - (t-1)^{-2} & 0 \\ 0 & -u^{-2} + (u-1)^{-2}
\end{pmatrix}
=
\begin{pmatrix}
\frac{-2t+1}{t^2(t-1)^2} & 0 \\ 0 & \frac{-2t-1}{t^2(t+1)^2}
\end{pmatrix}
,
\end{equation*}
by simplifying with \eqref{eq:utrelation}.  The Hessian of $\log f_2$ is
\begin{equation*}
\begin{pmatrix}
-\frac{(u-1)^2}{(tu-t-u)^2} & \frac{-1}{(tu-t-u)^2} \\ \frac{-1}{(tu-t-u)^2} & -\frac{-(t-1)^2}{(tu-t-u)^2}
\end{pmatrix}
=
\begin{pmatrix}
\frac{-(t+1)^2}{ t^4} & -t^{-4} \\ -t^{-4} & \frac{-(t-1)^2}{t^4}
\end{pmatrix}.
\end{equation*}
Therefore, 
\begin{equation}
\label{eq:QformulaInProof}
2Q = \frac{-\ell_{\psi}}{t^2} 
\left[
\begin{pmatrix}
\frac{(t+1)^2}{ t^2} & t^{-2} \\ t^{-2} & \frac{(t-1)^2}{t^2}
\end{pmatrix}
+
\begin{pmatrix}
\frac{A(2t-1)}{(t-1)^2} & 0 \\ 0 & \frac{A(2t+1)}{(t+1)^2}
\end{pmatrix}
\right].
\end{equation}
Using a computer algebra package, we evaluate the determinant of the expression in square brackets above as
\begin{equation*}
1 + \frac{4A}{t} - \frac{2}{t^2} + \frac{A^2(4t^2 -1)}{(t^2-1)^2} = \frac{1}{t^2}(5 t^2 +4At - 3) = \frac{2-At}{t^2},
\end{equation*}
using $t^2-1 = -A t $.
Therefore, the determinant vanishes if and only if $t = 2/A$.

By Lemma \ref{lemma:quadraticGaussSum},  to determine the size of $|G_p(Q,M)|$ we need the rank of $Q$.
It is clear from \eqref{eq:QformulaInProof} that $Q$ does not have rank $0$.  Therefore, $Q$ 
has rank $1$ if the determinant vanishes, and 
rank $2$ otherwise.  

Next we note that the two algebraic equations $t^2 +At - 1=0$ and $t =2/A$ have a common solution in $\mf_p$ if and only if $A^2 + 4 =0$ in $\mf_p$, i.e. $p|\Delta$.
Hence, if $p \nmid \Delta$, then $|G_p(Q,M)| = p$, and so $|g(\chi,\psi)| \leq q \rho(\Delta, p^\alpha) \leq 2q$, as desired.
If $p|\Delta$, then $Q$ has rank $1$, so we obtain
\begin{equation*}
|g(\chi,\psi)| \leq p^{2\alpha} \sum_{\substack{t_0 \shortmod{p^\alpha} \\ t_0^2 + At_0 - 1 \equiv 0 \shortmod{p^\alpha}}} p^{3/2} = q p^{1/2} \rho(\Delta, p^\alpha). 
\end{equation*}

This bound is not as strong as \eqref{eq:gboundoddcase}; we will next gain some extra information by studying the behavior of the linear form $L$ restricted to the isotropic subspace of $Q$.
Note that $\Delta=0$ means $A^2 = -4$, whence $t=-A/2 = 2/A$ and so $t^2 = -1$.  Therefore $(t+1)^2 = 2t$ and $(t-1)^2 = -2t$,
and we can simplify \eqref{eq:QformulaInProof} as
\begin{equation*}
2Q = \ell_{\psi}
\left[
\begin{pmatrix}
-2t & -1\\ -1 & 2t
\end{pmatrix}
+
\begin{pmatrix}
\frac{A(2t-1)}{-2t} & 0 \\ 0 & \frac{A(2t+1)}{2t}
\end{pmatrix}
\right]
= \ell_{\psi} 
\begin{pmatrix}
-1 & -1 \\ -1 & -1
\end{pmatrix}.
\end{equation*}
Hence the isotropic subspace of $Q$ is spanned by the vector $(1,-1)^\intercal$.

Next we work out an easily-checked characterization for the linear form $L$ to be trivial on this isotropic subspace. By \eqref{eq:Mlinearformdef}, \eqref{eq:logderivativeAppearingInProofOfgboundSimplified}, and the above calculation of the isotropic subspace, this means
\begin{equation*}
p^{-\alpha}\Big(\frac{\ell_{\chi}}{t-1} + \frac{\ell_{\psi}(1+t)}{t}\Big)
- p^{-\alpha} \Big(\frac{-\ell_{\chi}}{t+1} + \frac{\ell_{\psi}(1-t)}{t}\Big) \equiv 0 \pmod{p},
\end{equation*}
which reduces to $t$ satisfying
\begin{equation*}
t^2 + At - 1 \equiv 0 \pmod{p^{\alpha+1}}.
\end{equation*}
Thus the number of $t_0$ to be estimated is
\begin{equation}
\label{eq:t0count}
\# \{ t_0 \mymod{p^\alpha} : (t_0 + A/2)^2 \equiv \Delta \mymod{p^{\alpha+1}},
\end{equation}
and we study this a bit more closely (which along the way will confirm this quantity is well-defined).
This count equals $\# \{ x \pmod{p^\alpha} : x^2 \equiv \Delta \pmod{p^{\alpha+1}}\}$.  Since $p | \Delta$, then $p|x$ also, so \eqref{eq:t0count} equals $\# \{ x_1 \pmod{p^{\alpha-1}} : x_1^2 \equiv \frac{\Delta}{p^2} \pmod{p^{\alpha-1}} \}$, which is well-defined.  Therefore, we obtain a more refined bound
\begin{equation*}
|g(\chi, \psi)| \leq q p^{1/2} \delta(p^2|\Delta)  \rho(p^{-2} \Delta, p^{\alpha-1}). \qedhere
\end{equation*}
\end{proof}

\section{Bounding the cubic moment}
\label{section:ZpropertiesandtheCubicMomentBound}
In this section, we prove \cite[Conj.\ 8.2]{PetrowYoung} assuming Theorem \ref{thm:fourthmoment}. Conjecture 8.2 of \cite{PetrowYoung} implies the cubic moment bounds (Theorems \ref{thm:mainthmMaassEisenstein} and \ref{thm:mainthmHybridVersion}) and hence the Weyl bound (Theorem \ref{thm:WeylBound}). The precise statement of \cite[Conj.\ 8.2]{PetrowYoung} appears as Lemma \ref{lemma:ZpropertiesCubic}, below. The proof of Theorem \ref{thm:fourthmoment} is deferred to Sections \ref{section:harmonicanalysis} and \ref{section:spectralanalysisofshiftedsum}.

We begin by reviewing the notation and re-stating this conjecture.   We have a Dirichlet series
\begin{equation}
\label{eq:ZdefCubic}
 Z(s_1, s_2, s_3, s_4) = \frac{1}{\varphi(q)} \sum_{\psi \shortmod{q}} \frac{L(s_1, \psi) L(s_2, \psi) L(s_3, \psi) L(s_4, \overline{\psi})}{\zeta^{(q)}(s_1 + s_4)} Z_{\text{fin}},
\end{equation}
where 
$Z_{\text{fin}} = Z_{\text{fin}}(\chi, \psi)$, 
$|Z_{\text{fin}}| = |\prod_{p | q} Z_{\text{fin}, p}|$, and
$Z_{\text{fin},p}$ is a certain Dirichlet series supported on powers of $p$.  Its precise definition is not necessary here, but rather we quote Lemma 7.1 from \cite{PetrowYoung}.
\begin{mylemma}
\label{lemma:ZfinBound}
Let $q=p^\beta$, and let $\chi=\chi_p$ be primitive modulo $q$. The series $Z_{{\rm fin}, p}$ converges absolutely when $\real(s_j) =\sigma_j >0$ for all $j=1,2,3,4$.
If $\sigma_j \geq \sigma > 1/2$ for all $j$, then
\begin{equation}
\label{eq:ZfinHalfLineBound}
Z_{{\rm fin},p}(s_1, s_2, s_3, s_4) \ll_{\sigma} \delta_{\psi} q^{1/2} |g(\chi,\psi)| + q^{3/2+\varepsilon},
\end{equation}
where $\delta_{\psi} = 1$ if $\psi$ is primitive, and $0$ otherwise.
If $\sigma_j \geq \sigma > 1$ for all $j$, and $\psi_p$ is the trivial character, then
\begin{equation}
\label{eq:ZfinOneLineBound}
Z_{{\rm fin},p}(s_1, s_2, s_3, s_4) \ll_{\sigma} q^{1+\varepsilon}.
\end{equation}
\end{mylemma}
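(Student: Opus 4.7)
The plan is to work directly from the definition of $Z_{\mathrm{fin},p}$ given in \cite{PetrowYoung}: this local Dirichlet series is obtained by isolating the $p$-part of the inner character sum produced when one applies approximate functional equations to the four $L$-functions in \eqref{eq:ZdefCubic} and executes the character orthogonality over $\psi \pmod{q}$. Concretely, $Z_{\mathrm{fin},p}$ is a finite linear combination, indexed by the $p$-adic valuations of the four summation variables, of complete character sums modulo $p^\beta$ that are variants of $g(\chi,\psi)$, weighted by powers of $p^{-s_j}$. My first step would be to write $Z_{\mathrm{fin},p}(s_1,s_2,s_3,s_4)$ out as such a sum, clearly identifying which term carries the ``primitive'' contribution $g(\chi,\psi)$ and which carry ``degenerate'' contributions where one or more of the variables has positive $p$-adic valuation.

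For \eqref{eq:ZfinHalfLineBound}, I would stratify the sum by the conductor of the local character $\psi_p$. On the top stratum, where $\psi_p$ is itself primitive modulo $q$, the relevant character sum is $g(\chi,\psi)$ and the associated $p$-power weights are $\leq q^{1/2+\varepsilon}$ when all $\sigma_j \geq 1/2$; this produces the $\delta_\psi q^{1/2}|g(\chi,\psi)|$ term. On all other strata, where $\psi_p$ has reduced conductor $p^\alpha$ with $\alpha<\beta$, I would replace $g(\chi,\psi)$ by its counterpart at the reduced level (which is a shorter but thicker character sum, of trivial size $\ll p^{\alpha+(\beta-\alpha)} = q$) and combine with the $p^{(\beta-\alpha)(1-\sigma)}$ factors coming from the additional free variables; since $\sigma \geq 1/2$, this gives $\ll q^{3/2+\varepsilon}$. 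The Postnikov-type character sum analysis from Section \ref{section:miscCharacterSums}, and in particular the evaluation via Lemmas \ref{lemma:characterSumPrimePowerEvenExponent2} and \ref{lemma:characterSumPrimePowerOddExponent2}, is the technical engine that yields the reduced-level bounds.

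For \eqref{eq:ZfinOneLineBound}, where $\psi_p$ is trivial and $\sigma_j > 1$, I would argue by absolute convergence. The coefficients of $Z_{\mathrm{fin},p}$ as a Dirichlet series in each $s_j$ are (up to the local character sums) bounded by divisor-type quantities of size $O(n^\varepsilon)$, so the full series is dominated by a product of local $\zeta_p$ factors, each bounded by $(1 - p^{-\sigma})^{-1} = O_\sigma(1)$. The residual factor of size $q^{1+\varepsilon}$ arises from the ``polar'' piece corresponding to $p^\beta = q$ appearing with unit coefficient once; this single non-convergent contribution is what forces the $q^{1+\varepsilon}$ rather than $q^\varepsilon$.

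The main obstacle is bookkeeping rather than any genuinely new estimate: one must carefully track which pieces of $Z_{\mathrm{fin},p}$ correspond to primitive $\psi_p$ versus reductions, and check that for each non-primitive stratum the savings in the character sum (reduced modulus) exactly balance the losses from the expanded free-variable sums so that the uniform bound $q^{3/2+\varepsilon}$ survives. Once the stratification is set up cleanly, each stratum's contribution follows from a routine application of Lemma \ref{lemma:characterSumPrimePowerEvenExponent} or \ref{lemma:characterSumPrimePowerOddExponent} combined with a geometric sum in $p^{-s_j}$, and no spectral input is required.
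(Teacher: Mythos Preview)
The paper does not prove this lemma at all: immediately before stating Lemma~\ref{lemma:ZfinBound}, the authors write ``Its precise definition is not necessary here, but rather we quote Lemma~7.1 from \cite{PetrowYoung}.'' The lemma is imported verbatim from the earlier paper, and no argument is given in the present one. So there is nothing in this paper to compare your proposal against.

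A couple of remarks on your sketch nonetheless. First, you invoke Lemmas~\ref{lemma:characterSumPrimePowerEvenExponent2} and \ref{lemma:characterSumPrimePowerOddExponent2} from Section~\ref{section:miscCharacterSums} as the ``technical engine'' for the reduced-conductor strata; but those lemmas are new to the present paper (they are developed here precisely to prove \cite[Conj.~6.6]{PetrowYoung}, see Lemma~\ref{lemma:someCharacterSumBoundUsefulforHhat}), so they cannot have been used in the proof of Lemma~7.1 of \cite{PetrowYoung}. The original argument there must rely on more elementary prime-power manipulations. Second, your stratification is by the conductor of $\psi_p$, but in \eqref{eq:ZfinHalfLineBound} the character $\psi$ is fixed and $\delta_\psi$ simply records whether it is primitive modulo $q$; there is no averaging over $\psi$ inside $Z_{\mathrm{fin},p}$ itself. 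The actual structure of $Z_{\mathrm{fin},p}$ in \cite{PetrowYoung} is a sum over $p$-power divisors of auxiliary variables (coming from the cubic moment setup), not over conductors of $\psi$, so your decomposition would need to be reformulated accordingly before it matches the object being bounded. If you want to reconstruct the proof, you should consult Section~7 of \cite{PetrowYoung} directly.
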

We remark that Lemma \ref{lemma:ZfinBound} appeared as \cite[Lem. 7.1]{PetrowYoung}, however there it was conditional on \cite[Conj. 6.6]{PetrowYoung} which has been proved here as Lemma \ref{lemma:someCharacterSumBoundUsefulforHhat}.

As in \cite{PetrowYoung}, it is helpful to treat the trivial character separately.  To this end, write $Z = Z_0 + Z_1$, where $Z_0$ is the contribution to $Z$ from the trivial character.
Now we state the main lemma.
\begin{mylemma}
\label{lemma:ZpropertiesCubic}
The functions $Z_0$ and $Z_1$ satisfy the following properties.  Firstly,
$Z_0$ is meromorphic for $\real(s_j) \geq \sigma > 1/2$ for all $j$ and
analytic for $\real(s_j) \geq \sigma >  1$ for all $j$.  
In this domain, it may only have polar divisors on the hyperplanes $s_j = 1$. 
In the region $\real(s_j) \geq \sigma >  1$ it satisfies the bound
\begin{equation}
\label{eq:Z0boundCubic}
Z_0(s_1, s_2, s_3, s_4) \ll_{\sigma} q^{\varepsilon}.
\end{equation}

Secondly, $Z_1$ is analytic for $\real(s_j) \geq \sigma > 1/2$ for all $j$, wherein it satisfies the bound 
\begin{equation}
\label{eq:Z1boundIntegralVersion}
\int_{-T}^{T} |Z_1(\sigma+it, \sigma+it, \sigma+it, \sigma-it)| dt \ll q^{3/2+\varepsilon} T^{1+\varepsilon},
\end{equation}
for $T \geq 1$.  
The same bounds stated for $Z_1$ also hold for $Z_0$ (in an even stronger form), provided $1/2 \leq \text{Re}(s_j) \leq 0.99$. 
\end{mylemma}

Remark.  
Theorem \ref{thm:fourthmoment} is the crucial new ingredient in the proof of Lemma \ref{lemma:ZpropertiesCubic} in the case that $q$ is not cube-free.
\begin{proof} 
  The holomorphic (resp. meromorphic) continuation of $Z_1$ (resp. $Z_0$) follows from the definition \eqref{eq:ZdefCubic} and Lemma \ref{lemma:ZfinBound}.  All the required properties of $Z_0$ follow from Lemma \ref{lemma:ZfinBound}, so we now focus on $Z_1$.

Supposing that $\real(s_j) > 1/2$ for $j=1,2,3,4$, we have 
\begin{equation}\label{Z1bound1}
Z_1(s_1,s_2,s_3,s_4)   \ll \frac{(1+|t|)^\eps}{q^{1-\varepsilon}} \sum_{\psi \neq \psi_0} | L(s_1,\psi)L(s_2,\psi)L(s_3,\psi)L(s_4,\overline{\psi})| \prod_{p \mid q} |Z_{{\rm fin},p}|.
\end{equation}
From Lemma \ref{lemma:ZfinBound} we have for $\chi_p,\psi_p$ the $p$-parts of $\chi, \psi$ respectively, each modulo $p^\beta$,
\begin{equation*}
|Z_{{\rm fin},p}| \ll p^{(\frac{3}{2}+\eps)\beta} \left(
 \delta_{\psi_p} \frac{|g(\chi_p,\psi_p)|}{p^\beta} +1 
\right).
\end{equation*}
Recall from Remark \ref{rem:gboundoddestprime} that $g(\chi_p,\psi_p) = 0$ if $p=2$, so for the forthcoming analysis of $Z_{\text{fin}, p}$ we largely assume $p$ is odd.
In \cite[Thm.\ 6.9]{PetrowYoung} it was shown that if $\beta =1$ then $|g(\chi_p,\psi_p)| \leq Cp$ for some absolute constant $C\geq 2$. On the other hand, when $\beta \geq 2$, we see from Theorems \ref{thm:gboundevencase} and \ref{thm:gboundoddcase} that $|Z_{{\rm fin},p}|$ is controlled by the quantity $\Delta_p  = A^2+4  = (\ell_{\chi} \overline{ \ell_{\psi}})^2 +4 \pmod{p^{\beta-1}}$ where $p^\beta \| q$. 
Therefore it is natural to parametrize the sum in \eqref{Z1bound1} over the possible values of the parameters $\Delta_p$. To this end, for $\beta-1 \geq \alpha \geq 0$ and $C$ the above absolute constant, let 
\begin{equation*}
m(\alpha,\beta) = \inf \{m \in \tfrac{1}{2}\Z:  \max_{\substack{ \psi_p \shortmod{p^\beta} \text{ prim.} \\ v_p(\Delta_p)=\alpha}} \frac{|g(\chi_p,\psi_p)|}{p^\beta} \leq Cp^m\},
\end{equation*}
which depends on $p$ and $\chi_p$, but we suppress this from the notation.  For $p=2$, $m(\alpha,\beta) =  -\infty$.
For $a \mid \frac{q}{\tilde{q}}$ with $\tilde{q} = \prod_{p \mid q}p$, let 
\begin{equation*}
 M(a, q) = \prod_{p^\beta  \| q} p^{m(\alpha,\beta)}, \quad \text{ where } \quad \alpha = v_p(a).
\end{equation*}
Write $\Delta = \Delta(\psi) \in [1, \frac{q}{\tilde{q}}]\subset  \Z$ 
with $\Delta \equiv \Delta_p \pmod{p^{\beta-1}}$ for each $p \mid q$. Note the condition that $v_p(\Delta_p)=\alpha$ for all $p \mid q$ is equivalent to $a \| \Delta$. 
Then we have 
\begin{equation}\label{Z1bound1.5}
Z_1(s_1,s_2,s_3,s_4)  \ll  q^{\frac{1}{2}+\eps}(1+|t|)^\eps  \sum_{a \mid \frac{q}{\tilde{q}}} M(a,q)  \sum_{\psi:   \Delta(\psi) \equiv 0 \shortmod{a}} | L(s_1,\psi)L(s_2,\psi)L(s_3,\psi)L(s_4,\overline{\psi})|,
\end{equation}
where we over-extended the condition $a \| \Delta$ to $\Delta \equiv 0 \pmod{a}$.
With an eye towards applying Theorem \ref{thm:fourthmoment}, we next break up \eqref{Z1bound1.5} over cosets. Let $G =  \{\psi \pmod q  \}$, and $H_a$ be the subgroup $H_a = \{\psi \pmod{q/a} \}$.  

Lemma \ref{lemma:Postnikov} implies that $\psi, \psi' \in G$ are in the same $H_a$-coset if and only if $\ell_{\psi} \equiv \ell_{\psi'} \pmod{p^{v_p(a)}}$ for each $p|q$.  Hence if $\psi,\psi'$ are in the same $H_a$-coset, then $\Delta(\psi) \equiv \Delta(\psi') \pmod{a}$.
Thus
\begin{multline*}
Z_1(s_1,s_2,s_3,s_4) \\ \ll q^{\frac{1}{2}+\eps}(1+|t|)^\eps  
\sum_{a \mid \frac{q}{\tilde{q}}} 
M(a,q)  
\sum_{\substack{\theta \in G/H_a \\ \Delta(\theta) \equiv 0 \shortmod a}}  \sum_{\eta \in H_a} | L(s_1,\eta.\theta)L(s_2,\eta.\theta)L(s_3,\eta. \theta)L(s_4,\overline{\eta.\theta})|.
\end{multline*}
Next, we introduce an integral as in \eqref{eq:Z1boundIntegralVersion}, and apply Theorem \ref{thm:fourthmoment} to find
\begin{equation}\label{Z1bound2}
\frac{1}{T^{1+\eps}}\int_{-T}^T |Z_1|\,dt  \ll q^{\frac{1}{2}+\eps}  \sum_{a \mid \frac{q}{\tilde{q}}} M(a,q)  \sum_{\substack{\theta \in G/H_a \\ \Delta(\theta) \equiv 0 \shortmod{a}}}\lcm(q/a , q^*),
\end{equation}
where $Z_1$ is shorthand for $Z_1(\sigma+it, \sigma+it, \sigma+it, \sigma-it)$.

The right hand side of \eqref{Z1bound2} is a multiplicative function of $q$, and so is the desired bound of $q^{3/2+\eps}$, so it suffices to work with $q= p^\beta$ an odd prime power, which we henceforth assume. 
Note that there are at most two $\theta \in G/H_{p^\alpha}$ satisfying the condition $\Delta \equiv 0 \pmod {p^\alpha}$. Indeed, $\Delta \equiv 0 \pmod {p^\alpha}$ means that $\ell_\chi^2 = -4 \ell_\psi^2 \pmod {p^\alpha}$, which has at most two solutions $\ell_\psi \pmod {p^\alpha}$, since $(\ell_\chi,p)=1$. 
Thus the right hand side of \eqref{Z1bound2} takes the form
\begin{equation}\label{Z1bound3}
 p^{(\frac{1}{2}+\eps) \beta} \sum_{\alpha=0}^{\beta-1}  p^{m(\alpha,\beta)+\max(\beta-\alpha, \lceil \frac{2 \beta}{3} \rceil )}.
\end{equation}
To finish the proof of the lemma, it suffices to show for all $0 \leq \alpha \leq \beta-1$ the inequality 
\begin{equation}
\label{eq:exponentbound}
 m(\alpha, \beta) + \max(\beta-\alpha, \lceil \tfrac{2 \beta}{3} \rceil ) \leq \beta.
\end{equation}

 By \cite[Thm.\ 6.9]{PetrowYoung}, and Theorems \ref{thm:gboundevencase} and \ref{thm:gboundoddcase}, we have
\begin{equation*}
\frac{|g(\chi_p, \psi_p)|}{p^\beta} \leq
\begin{cases}
 \rho(\Delta, p^{\beta/2})  & \text{ for } \beta \text{ even}, \\
 C & \text{ for } \beta = 1, \\
  p^{1/2} \rho(\frac{\Delta}{p^2}, p^{\frac{\beta-3}{2}}) & \text{ for } \beta  \text{ odd, $\beta \geq 3$, $p^{2}|\Delta$}, \\
  2 & \text{ for } \beta  \text{ odd, $\beta \geq 3$, $p^{2}\nmid \Delta$}.
\end{cases}
 \end{equation*}
 By Lemma \ref{lemma:nuDeltabound}, we get for $\beta$ even 
 \begin{equation}
 \label{eq:bdefjeven}
 m(\alpha,\beta) \leq 
 \begin{cases} 
 -\infty  & \text{ for } \alpha \text{ odd}, \alpha < \beta/2, \\
 \alpha/2  & \text{ for } \alpha \text{ even}, \alpha < \beta/2, \\
 \lfloor \beta/4 \rfloor  & \text{ for }  \alpha \geq \beta/2,
 \end{cases}
 \end{equation}
and for $\beta$ odd,
\begin{equation*}
  m(\alpha,\beta) \leq 
 \begin{cases} 
0 & \text{ for }  \alpha = 0, \\ 
-\infty & \text{ for }  \alpha \text{ odd,  $\alpha < \frac{\beta+1}{2}$}, \\
 \frac{\alpha-1}{2} & \text{ for } \alpha \text{ even,  $2\leq \alpha < \frac{\beta+1}{2}$}, \\
 \lfloor \frac{\beta+1}{4} \rfloor -\frac12  & \text{ for } \alpha \geq \frac{\beta+1}{2}.
 \end{cases}
 \end{equation*}

We proceed to prove \eqref{eq:exponentbound}. First suppose $\beta$ is even, so $m(\alpha, \beta)$ is bounded by \eqref{eq:bdefjeven}.  
If $\alpha \geq \beta/2$, then $\max(\beta-\alpha, \lceil 2 \beta/3 \rceil) = \lceil 2 \beta/3 \rceil$, and 
it reduces to checking $\lfloor \beta/4 \rfloor + \lceil 2\beta/3 \rceil \leq \beta$.  To show this last inequality, it suffices to check it for each $\beta \in \{0,2,4,6,8,10 \}$,
which may as well be done by brute force using a computer.
For $\alpha < \beta/2$, we have $\alpha/2 + (\beta-\alpha) \leq \beta$, as well as $\alpha/2 + \lceil 2 \beta/3 \rceil \leq \lfloor \beta/4 \rfloor + \lceil 2 \beta/3 \rceil \leq \beta$, so we are done.
Similarly easy arguments hold when $\beta$ is odd, and we omit the details. 
\end{proof}

\section{Reduction of Theorem \ref{thm:fourthmoment} to Theorem \ref{thm:ShiftedSumBounds}}
\label{section:reductionofFourthMomentToShiftedSums}
In this section, we prove Theorem \ref{thm:fourthmoment}, subject to the veracity of Theorem \ref{thm:ShiftedSumBounds}.  The rest of the paper is then devoted to the proof of Theorem \ref{thm:ShiftedSumBounds}.

First note that by positivity, to prove Theorem \ref{thm:fourthmoment}, it suffices to consider the case $q^*|d$, which means $q^2|d^3$.
By an approximate functional equation, dyadic partition of unity, and Cauchy's inequality applied on the dyadic sum, it suffices to show
\begin{equation*}
\mathcal{M}(N,d,q,T) 
:=
\intR w_0\Big(\frac{t}{T}\Big) \sum_{\psi \shortmod{d}} \frac{d}{\varphi(d)}
\Big| \sum_{n} w_N(n) \tau(n) \psi(n) \chi(n) n^{-it} \Big|^2 dt 
\ll 
NdT 
(qT)^{\varepsilon},
\end{equation*}
where $w_N(x)$ is a smooth function supported on $[N,2N]$, satisfying $w_N^{(j)}(x) \ll x^{-j}$, for all $j \geq 0$, and $w_0$ is a fixed smooth nonnegative function.  Moreover, we may assume
\begin{equation}
\label{eq:Nsize}
 N \ll (qT)^{1+\varepsilon}.
\end{equation}

Opening the square and executing the $\psi$ sum and $t$-integral, we have
\begin{equation*}
\mathcal{M}(N,d,q,T) 
= 
dT \sum_{m \equiv n \shortmod{d}} \tau(m) \chi(m) \tau(n) \overline{\chi}(n) \widehat{w_0}\Big(\frac{T}{2\pi} \log\Big(\frac{m}{n}\Big)\Big) w_N(m) w_N(n),
\end{equation*}
where $\widehat{w_0}(y) = \intR w_0(t) e(-ty) dt$ is the standard Fourier transform.
The contribution from the diagonal terms $m=n$ give a main term of size $O(NdT N^{\varepsilon})$, which is acceptable.  

Next consider the off-diagonal terms.  By symmetry, it suffices to consider the terms with $m>n$, in which case we write $m=n+h$, with $h \geq 1$, and $d|h$.  By the rapid decay of $\widehat{w_0}$, the sum over $h$ may be truncated at $h \ll H$ where $H = \frac{N}{T} (Nq)^{\varepsilon}$.  By the positivity in \eqref{eq:fourthmoment}, we may also assume $T \gg (qN)^{\varepsilon}$ so that $H \ll N$.
We also open $\tau(n) = \sum_{n_1 n_2=n} 1$ and employ dyadic partitions of unity to the sums over $n_1$ and $n_2$.
Let $\mathcal{M}_1(N_1, N_2,d,q,T)$ denote the contribution of these terms to $\mathcal{M}(N,d,q,T)$, where $N_1 N_2 \asymp N$ and $n_j \asymp N_j$ for $j=1,2$.  Then
\begin{equation*}
(dT)^{-1} \mathcal{M}_1(N,d,q,T) =S(\chi),
\end{equation*}
where the weight function $w(n_1, n_2, h)$ is given by
\begin{equation*}
 w(x,y,z) = \widehat{w_0}\Big(\frac{T}{2\pi} \log\Big(\frac{xy+z}{xy}\Big)\Big) w_N(xy + z) w_N(xy) \omega_{1}(x) \omega_{2}(y),
\end{equation*}
where $\omega_1$ and $\omega_2$ are part of the dyadic partitions of unity.  It is easy to check that $w(x,y,z)$ satisfies \eqref{eq:wderivativebounds}.

Theorem \ref{thm:ShiftedSumBounds}  will complete the proof of Theorem \ref{thm:fourthmoment}, since $\frac{H}{q} \ll \frac{N}{qT} (qN)^{\varepsilon} \ll (qT)^{\varepsilon}$.

  \section{Automorphic Forms}
  \label{section:automorphicforms}
 \subsection{Fourier expansion}\label{sec:fe}
In this section we recall the Fourier expansions of automorphic forms on $\GL_2$ over $\Q$. Using canonical inner products on Whittaker models as in \cite{MichelVenkateshGL2}, we obtain particularly pleasant normalizations of Fourier expansions and Bruggeman-Kuznetsov formulas. To discuss these, we work in greater generality than is strictly required for the other sections of this paper.

 Let $(\pi,V)$ be a standard generic automorphic representation of $\GL_2/\Q$ of conductor $\cond(\pi)$ and analytic conductor $C(\pi)$ (for a definition, see \cite[3.1.8]{MichelVenkateshGL2}). By ``standard'' here we mean, following \cite[2.2.1]{MichelVenkateshGL2}, that $\pi$ occurs in the spectral decomposition of the space of automorphic forms. In particular, it is abstractly unitarizable. For $q \in \mathbb{N}$, let $$ K_1(q) = \{ \left( \begin{smallmatrix} a & b \\ c & d \end{smallmatrix} \right) \in \GL_2(\widehat{\Z}) : c \in (q), \quad  d \in 1+(q)\}\subset \GL_2(\A).$$  The subspace of $V$ consisting of right $K_1(\cond(\pi))$-invariant vectors of minimal non-negative $\SO_2(\R)$-weight is $1$-dimensional (see \cite{Casselman}, \cite[\S2.2]{DeligneAntwerpII}). Any $\phi$ belonging to this $1$-dimensional subspace is called a \emph{newvector}. In Theorem \ref{thm:fe}, we make an explicit choice of a distinguished newvector in this $1$-dimensional space using canonical inner products on Whittakers models.

   To give the precise statement, we must set up some notation. Fix $\psi: \A/\Q \to \C^\times$ the unique additive character which coincides with $e(x)$ on $\R$. For any place $v$ of $\Q$, let us denote by $\psi_v$ the restriction of $\psi$ to $\Q_v$. 
 Let $\mathbb{X} = \PGL_2(\Q) \backslash \PGL_2(\A)$.
The space $\mathbb{X}$ has finite measure, which we normalize to be probability measure. Warning: Michel-Venkatesh use the push-forward measure on $\mathbb{X}$, under which it has volume $2\xi(2)=\pi/3$, see \cite[4.1.2]{MichelVenkateshGL2}.

Given $\pi = \bigotimes_v \pi_v$ unitary, for $v=p<\infty$ let $\phi_p:\Q_p^\times \to \C$ be a local newvector for $\pi_p$ in the Kirillov model $\mathcal{W}(\pi_p,\psi_p)$, normalized so that $\phi_p(1)=1$. Explicit formulas for $\phi_p$ are well-known, see e.g.~\cite[\S 2.4 Summary]{SchmidtRemarksonLocalNewforms} for a nice presentation. In particular, $\supp(\phi_p) \subseteq \Z_p$, $\phi_p(x)$ only depends on $|x|_p$, and $\lambda_\pi(n) = |n|^{1/2} \prod_{p} \phi_p(p^{v_p(n)})$ coincides with the $n$th Hecke eigenvalue of $\pi$  normalized so that the Ramanujan conjecture predicts that $|
\lambda_\pi(p)|\leq 2$. We also have $|\phi_p(x)|\leq |x|^{1/2}$ for all $x \in \Q_p^\times$ if $\pi_p$ is ramified, and in  particular,
\begin{equation}
\label{eq:RamanujanBoundRamifiedPrimes}
 |\lambda_{\pi}(p)| \leq 1 \qquad \text{if} \qquad p \mid \cond(\pi).
\end{equation}

   Following the notation in \cite[4.1.5]{MichelVenkateshGL2}, for $L$ a meromorphic function, we write $L^*(s_0)$ for the leading coefficient in the Laurent series of $L(s)$ at $s=s_0$. 
 By the analytic continuation of Rankin-Selberg $L$-functions, the series \begin{equation}\label{naiveRS} \mathscr{L}_\pi(s) := \sum_{n\geq 1} \frac{|\lambda_\pi(n)|^2}{n^s}\end{equation} admits a meromorphic continuation to $\real(s)>1/2$, with no poles except at $s=1$.  We have \cite{IwaniecSmallValuesofLaplace, HoffsteinLockhart} 
 \begin{equation}
 \label{eq:HLIw}
 \mathscr{L}_\pi^*(1) = C(\pi)^{o(1)}.\end{equation}

Lastly, for $\phi \in \pi$ recall from \cite[\S 2.2.2]{MichelVenkateshGL2} the canonical norm $\| \phi \|_{\it can}$ on the space of $\pi$. (Note that there is a missing factor of $\xi_F^*(1)/\xi_F(2)$ on the right hand side of \cite[(2.3)]{MichelVenkateshGL2}. That this factor is missing is suggested by the notational conventions for infinite products in section 4.1.5; also compare (2.3) to e.g.\ (4.16) or (4.28) for a precise check.)

 Let $W_{\lambda, \mu}(z)$ be the Whittaker function defined and normalized as in \cite[9.220.4]{GR} and $K_\nu$ be the standard $K$-Bessel function, as in \cite[9.235.2]{GR}.  Let $\alpha$ be the character of $\R^\times$ given by $x \mapsto |x|$ and $\sgn$ be the character of of $\R^\times$ given by $x \mapsto x /|x|$. 

 \begin{mytheo}\label{thm:fe}
 Let $\pi = \bigotimes_v \pi_v$ be a standard generic  automorphic representation of  $\GL_2/\Q$ with finite-order central character. 
 \begin{enumerate}
 \item For $( \begin{smallmatrix} y & x \\ & 1 \end{smallmatrix}) \in \GL^+_2(\R) \hookrightarrow \GL_2(\A)$, a newvector $\phi$ for $\pi$  admits a Fourier expansion of the form 
 $$ \phi\left( ( \begin{smallmatrix} y & x \\ & 1 \end{smallmatrix})\right) = c_\phi(y) +  \sum_{n \neq 0} \frac{\rho_\phi(n)}{|n|^{1/2}} W \left(  ny
 \right) e(nx),$$
 where $c_\phi(y)$ is a (possibly vanishing) constant term, the coefficients $\rho_\phi(n) = \rho_\phi(1) \lambda_\pi(|n|)$, the function $W$ is a minimal non-negative weight vector in the Kirillov model  $\cW = \cW(\pi_\infty, \psi_\infty)$ with  $\|W\|_{L^2(\cW)}^2=1$, and 
 \begin{equation}
 \label{eq:fcheckeevals} 
 \| \phi\|_{\it can}^2 = 2 \xi(2)|\rho_\phi(1)|^2 \mathscr{L}_\pi^*(1).
 \end{equation} 
 \item A Whittaker function $W$ satisfying the hypotheses of the previous point can be given explicitly as follows. 
 \begin{enumerate}
\item\label{wt0} 
If $\pi_\infty \simeq \pi(\alpha^{s}\sgn^\epsilon, \alpha^{-s}\sgn^\epsilon)$ with $\epsilon \in \{0,1\}$ 
and 
$ s \in i\R \cup (-1/2,1/2)$, 
then we have
 \begin{equation*}
 W\left( y 
 \right) = 
 (\sgn y)^\epsilon \left( \frac{ \cos \pi s}{\pi} \right)^{1/2} W_{0,s} (4 \pi |y|)  =  (\sgn y)^\epsilon \left( \frac{ \cos \pi s}{\pi} \right)^{1/2} 2 \sqrt{|y|} K_{s}(2 \pi |y|).
 \end{equation*}
 \item\label{wt1ps} 
 If $\pi_\infty \simeq \pi(\alpha^{s}\sgn, \alpha^{-s})$ with $s=it  \in i \R$, then we have
 $$ W\left( y 
 \right) = \left( \frac{ \sinh \pi t}{\pi t^{\sgn y}} \right)^{1/2} W_{\frac{1}{2}\sgn y,it} (4 \pi |y|).$$
 \item\label{wt1cs} If $\pi_\infty \simeq \pi(\alpha^{s}\sgn, \alpha^{-s})$ with $s  \in (-1/2,0)\cup (0,1/2)$, then we have
 $$ W\left(y 
 \right) = 
 \left(\frac{\cos \frac{\pi s}{2}}{(\frac{s}{2})^{\sgn y -1}}\right)^{1/2} W_{\frac{1}{2}\sgn y, \frac{s}{2}}(4 \pi |y|).$$
 \item\label{ds} If $\pi_\infty \simeq \sigma (\chi_1,\chi_2)$ with $\chi_1\chi_2^{-1} = \alpha^s \sgn^m$, $ m \in \{0,1\}$ and $s-m \in 1+ 2\Z_{\geq 0}$ or $(s,m)=(0,1)$, then writing $k=s+1$ we have 
 $$W\left( y 
 \right) = \Gamma(k)^{-1/2} W_{\frac{k}{2},\frac{k-1}{2}}(4 \pi y) \delta(y>0) = \left(\frac{(4 \pi y)^{k}}{\Gamma(k)}\right)^{1/2} e^{-2 \pi y} \delta(y>0).$$
 \end{enumerate}
There is a unique newvector $\phi$ in $\pi$ with $W$ as above, $\|\phi\|^2_{\it can} = 1$, and $\rho_\phi(1)>0$.
 \end{enumerate}
 \end{mytheo}

Remark 1: The Selberg eigenvalue conjecture predicts that the cases \eqref{wt0} and \eqref{wt1cs} above with $s$ real and non-zero never occur as local components of any automorphic representation, but one cannot at present rule out this possibility.

Remark 2: The explicit choice of $W$ given in part (2) of the above theorem is used later to justify the choice of normalization in Theorem \ref{thm:KuznetsovTraceFormula} (see the remarks following it). In addition, we believe it could be valuable to record Theorem \ref{thm:fe} for the sake of reference.

Remark 3: In addition to $\| \phi \|_{\it can}$, Michel-Venkatesh define an Eisenstein norm $\| \phi \|_{\rm Eis}$ on the space of $\pi$, see \cite[\S 2.2.1]{MichelVenkateshGL2}. Correcting \cite[(2.3)]{MichelVenkateshGL2} as pointed out following \eqref{eq:HLIw}, for a number field $F$ and $\phi \in \pi$, we have $\|\phi\|_{\it can}^2 = 2 \xi^*_F(1) \| \phi\|_{\rm Eis}^2 $ if $\pi$ is Eisenstein and non-singular, and $\|\phi\|_{\it can}^2 = \| \phi \|_{L^2(\mathbb{X}), {\rm push}}^2$ if $\pi$ is cuspidal and $\mathbb{X}$ is given the push-forward measure (cf.\ \cite[Lem.\ 2.2.3]{MichelVenkateshGL2}).

Let $S_*(q,\chi)$ denote either $S_{it_j}(q,\chi)$ or $S_k(q,\chi)$, the vector space of Maass (resp. holomorphic) cusp forms of level $q$, central character $\chi$ and spectral parameter $t_j$ (resp. weight $k$). There is a natural embedding $f \mapsto \phi_f$ of $S_*(q,\chi)$ in the space of automorphic forms. We have in particular for $f \in S_*(q,\chi)$ that  \begin{equation}\label{eq:classaut2}y^{\frac{k}{2}} f(x+iy) = \phi_f\left( ( \begin{smallmatrix} y & x \\ & 1 \end{smallmatrix})\right)\end{equation} and $\langle f,f \rangle= \|\phi_f\|^2_{L^2(\mathbb{X})},$ where the Petersson inner product is defined with respect to probability measure on $\Gamma_0(q)\backslash \mathcal{H}$ (for details, see e.g.\ \cite[Prop.\ 12.5]{KnightlyLi}). We continue to write $S_*(q,\chi)$ for the image of this space under the map $f \mapsto \phi_f$ despite the abuse of notation.

Let $\mathcal{E}_{it}(q,\chi)$ denote the vector space of Eisenstein series of level $q$, central character $\chi$ and spectral parameter $t$, as in \cite[\S 8.1]{YoungEisenstein}. As with cusp forms, the space $\mathcal{E}_{it}(q,\chi)$ embeds in the space of automorphic forms by $E \mapsto \phi_E$. The second author defined a formal inner product $\langle \cdot,\cdot\rangle_{\rm formal}$ on $\mathcal{E}_{it}(q,\chi)$ in loc.\ cit.\ by setting $\frac{1}{4\pi} \langle E_\fa, E_\fb\rangle_{\rm formal} = \delta_{\fa \fb}$ for Eisenstein series attached to singular cusps $\fa, \fb$, and extending linearly.  By chasing definitions in \cite[\S 5]{KnightlyLiKuznetsov} and \cite[(3.3), Lem.\  8.3]{YoungEisenstein}, one finds that $\frac{1}{4\pi} \langle E,E\rangle_{\rm formal}/\nu(q) = \| \phi_E\|^2_{\rm Eis}$, where $\nu(q) = [\SL_2(\Z) : \Gamma_0(q)]$.

If $f$ is an even (resp.~odd) weight $0$ Maass form or Eisenstein series $f$ of spectral parameter $t$, then Theorem \ref{thm:fe}\eqref{wt0} applies to $\phi_f$ with $s=2it$ and $\epsilon=0$ (resp.~$1$). If $f$ is a weight $k$ holomorphic cusp form, then Theorem \ref{thm:fe}\eqref{ds} applies to $\phi_f$.  Theorem \ref{thm:fe}\eqref{wt1ps} and \eqref{wt1cs} pertain to weight 1 Maass forms.

\begin{proof}[Proof sketch]  If $\pi$ is  generic, then $\phi \in \pi$ admits a Whittaker-Fourier expansion \begin{equation*}\phi(g) = \phi_N(g)+ \sum_{\alpha \in \Q^\times} W\left( (\begin{smallmatrix} \alpha & \\ & 1 \end{smallmatrix})g\right),\end{equation*} where $\phi_N(g) = \int_{\A/\Q} \phi(n(x)g)\,dx$ and $W$ is a global Whittaker function.  By expressing $W$ in terms of local Whittaker functions $W_v$ and restricting to $g= ( \begin{smallmatrix} y & x \\ & 1 \end{smallmatrix}) \times 1_{\rm fin}$ we derive the Fourier expansion found in part (1) of Theorem \ref{thm:fe}. From the product over $v<\infty$ of $W_v$ one extracts the Hecke eigenvalue $\lambda_\pi$. Following \cite[\S 2.2.2]{MichelVenkateshGL2}, the canonical norm $\| \phi \|_{ \it can}^2$ is given by a regularized infinite product of local norms on Whittaker models, which leads to the relation \eqref{eq:fcheckeevals}.

If $\pi_\infty$ is a unitary principal series, the formulas for the Whittaker function $W_\infty$  in part (2) of the Theorem can be derived from the explicit isometry between the induced model and the Whittaker model given by \cite[(3.10)]{MichelVenkateshGL2} and the integral formula \cite[3.384.9]{GR} for $W_{\lambda, \mu}(z)$. If $\pi_\infty$ is complementary series or discrete series, then \cite[(3.10)]{MichelVenkateshGL2} still intertwines the induced model and Whittaker model but may no longer be an isometry. In these cases, we may compute $\|W_\infty\|_{L^2(\cW)}^2$ by hand using \cite[7.611.4]{GR} for complementary series and using the definition of $\Gamma(s)$ for discrete series. \end{proof}

\subsection{Twisting}
Let $F$ be a non-archimedean local field, and $\pi$ an irreducible, admissible, generic representation of $\GL_2(F)$ with central character $\omega_\pi$. Writing $c$ for the conductor exponent, we say that $\pi$ is \emph{twist-minimal} if $c (\pi) \leq c(\pi \otimes \chi)$ for all quasi-characters $\chi$ of $F^\times$. 
The following lemma appears in e.g. 
\cite[Lem.\ 1.4]{BLS} or \cite[Lem.\ 2.7]{CorbettSaha}, and relies principally on \cite[Prop.\ 3.4]{TunnellLLC}. 
\begin{mylemma}\label{lem:twistformula}
For all quasi-characters $\chi$ of $F^\times$ we have 
\begin{equation*}
c(\pi \otimes \chi) \leq \max(c(\pi), c(\chi)+c(\omega_\pi\chi)),
\end{equation*}
with equality if $\pi$ is twist-minimal.
\end{mylemma}
 If $\pi$ is a global automorphic representation of $\GL_2$, then we say that $\pi$ is twist-minimal at $p$ if the associated local representation is twist-minimal. We say that $\pi$ is (globally) twist-minimal if it is twist-minimal at all primes dividing its conductor. In that case, we have
\begin{equation}\label{eq:twistformulaglobal}
\cond(\pi \otimes \chi) = [\cond(\pi), \cond(\chi)\cond(\omega_\pi \chi)],
\end{equation}
where $[m,n]$ denotes $\mathrm{lcm}(m,n)$.

 \subsection{Cusps} Our presentation of cusps and scaling matrices in this subsection is inspired by \cite[\S3.4.1]{NPS}. Here we restrict our attention to cusps with respect to Hecke congruence subgroups $\Gamma_0(q)$. For more general co-compact subgroups of $\GL_2^+(\R)$, see loc.~cit. 
 
 The group $\Gamma = \SL_2(\Z)$ acts transitively on $\P^1(\Q)$ by fractional linear transformations. Let $\Gamma_\infty = \{\pm ( \begin{smallmatrix} 1 & n \\ & 1\end{smallmatrix}): n \in \Z\}$ be the stabilizer of $\infty$ in $\Gamma$. Thus we may identify \begin{equation}\label{P1id}\P^1(\Q) \simeq \Gamma / \Gamma_\infty\end{equation} upon picking the base point of $\P^1(\Q)$ to be $\infty$. \begin{mydefi}\label{cuspdef}
 The set of left $\Gamma_0(q)$-orbits $$\mathcal{C}(q) := \Gamma_0(q) \backslash  \Gamma / \Gamma_\infty$$ is called the set of cusps of $\Gamma_0(q)$. A cusp $\fa$ may be identified with a $\Gamma_0(q)$-orbit in $\P^1(\Q)$ via the bijection \eqref{P1id}. The index $w_\fa : = [\Stab_\Gamma(\fa) : \Stab_{\Gamma_0(q)}(\fa)]$ of a cusp $\fa \in \mathcal{C}(q)$ is called the width of $\fa$.  \end{mydefi}
 The notion of width of a cusp in Definition \ref{cuspdef} matches the usual geometric intuition: choosing a fundamental domain $\mathcal{F}_q$ for $\Gamma_0(q) \backslash \cH$ to be a union of translates of the standard fundamental domain $\mathcal{F}$ for $\Gamma \backslash \cH$, the width $w_\fa$ is the number of translates of $\mathcal{F}$ that touch $\fa$ in $\mathcal{F}_q$. Another description of the width $w_\fa$ is that $w_\fa = [\Gamma_\infty: \Gamma_\infty \cap \tau^{-1} \Gamma_0(q) \tau]$, where $\tau \in \Gamma$ is any representative of $\fa$. 
 
 \begin{mydefi}\label{def:scalingmx} If $\tau \in \Gamma$ represents the cusp $\fa \in \mathcal{C}(q)$, then \begin{equation*}\sigma_\fa = \tau \left( \begin{smallmatrix} w_\fa & \\ & 1 \end{smallmatrix}\right)\end{equation*} is called a scaling matrix for $\fa$. \end{mydefi} 
 A scaling matrix for $\fa$ satisfies $\sigma_\fa \infty = \fa$ and $ \sigma_\fa^{-1} \Stab_{\Gamma_0(q)}(\fa) \sigma_\fa = \Gamma_\infty$, but in contrast to the definition given in \cite[(2.15)]{IwaniecClassical} does \emph{not} in general have determinant 1. 
Note also that the Definition \ref{def:scalingmx} of a scaling matrix is more restrictive than the definition in loc.~cit.\textemdash{}Iwaniec's definition would allow us to multiply $\tau$ on the right by any $(\begin{smallmatrix} 1 & x \\ & 1 \end{smallmatrix})$, $x\in \R$. 

For $\fa \in \mathcal{C}(q)$ and $\sigma_\fa$ a scaling matrix, a vector $\phi \in S_{*}(q,\chi)$ or $\mathcal{E}_{it}(q,\chi)$ 
 admits a Fourier expansion at $\fa$ of the shape 
 \begin{equation}
 \label{eq:fecusp} 
 \phi\left( \sigma_\fa( \begin{smallmatrix} y & x \\ & 1 \end{smallmatrix})\right) = (y^{\frac{k}{2}} f)\vert_{\sigma_\fa}(x+iy)=  c_{\phi, \fa}(y) + \sum_{n \neq 0} \frac{ \rho_\fa(n)}{|n|^{1/2}} W \left( ny  
 \right) e(nx),
 \end{equation}
 where $W \in \cW$ is given by the table in Theorem \ref{thm:fe}(2)and $c_{\phi, \fa}(y)$ is a possibly-zero constant term. In particular, a classical cusp form $f$ or non-holomorphic Eisenstein series admits a Fourier expansion of the form \eqref{eq:fecusp}. Sometimes we write $\rho_{\phi, \fa}(n)$ or $\rho_{f,\fa}(n)$ for $\rho_\fa(n)$ if we want to emphasize the dependence of $\rho_\fa$ on $\phi$ or $f$.
  \begin{mydefi}  
  The coefficients $\rho_\fa(n)$ appearing \eqref{eq:fecusp} are called the Fourier coefficients of $f$ at the cusp $\fa$ and depend on the choice of scaling matrix $\sigma_\fa$.
 \end{mydefi}
The Fourier coefficients $\rho_\fa(n)$ are given in terms of the local Whittaker models at primes dividing $nq$ (see e.g. \cite[\S3.4.2]{NPS}). The Fourier coefficients $\rho_\fa(n)$ may also be more explicitly computed in terms of the Hecke eigenvalues $\lambda_\pi(n)$ and other invariants of $\pi$ using the Jacquet-Langlands local functional equations at primes dividing $q$. 
\begin{myexam} Consider the cusps $\infty$ and $0$, and choose $\sigma_\infty = (\begin{smallmatrix} 1 & \\ & 1 \end{smallmatrix})$ and $\sigma_0 = (\begin{smallmatrix} & -1 \\ q & \end{smallmatrix})$. Then for a newvector $\phi$ of conductor $q$ we have \begin{equation}\label{rho0infty}\rho_0(n) = \epsilon(\pi)_{\rm fin} \overline{\rho_\infty(n)},\end{equation} where $\epsilon(\pi)_{\rm fin}$ is the finite root number of the representation $\pi$. It satisfies $|\epsilon(\pi)_{\rm fin}|=1$. The relation \eqref{rho0infty} also follows quickly from  \cite[Thm.\ 3(iii)]{AtkinLehner}, \cite[p.\ 296]{LiNewFormsandFE}.
\end{myexam} Warning: the coefficients $\rho_\fa(n)$ are in general not multiplicative, nor do they even satisfy the weaker condition $\rho_\fa(nm)\rho_\fa(1)= \rho_\fa(n)\rho_\fa(m)$ for pairs of coprime integers $m,n$.  
 
 \subsection{Kloosterman sums at arbitrary cusps}\label{subsec:kloostermansumsatcusps}
 \begin{mydefi}
 \label{def:allowedmoduli}
Let $\fa, \fb \in \mathcal{C}(q)$ and $\sigma_\fa, \sigma_\fb$ be scaling matrices. The set \begin{equation*} 
\mathcal{C}_{\fa \fb} = \{c > 0 : ( \begin{smallmatrix} * & * \\ c & * \end{smallmatrix}) \in  \sigma_\mathfrak{a}^{-1} \Gamma_0(q) \sigma_\mathfrak{b}\}
\end{equation*}
is called the set of \emph{allowed moduli}.
\end{mydefi}
Our change in definition of scaling matrices compared to \cite{IwaniecClassical} also causes an alteration of the definition of the set of allowed moduli, as well as the Kloosterman sum discussed below.  
As a consequence, the new definition has the
advantage that $\mathcal{C}_{\fa \fb} \subseteq \mathbb{N}$ for any cusps $\fa,\fb$. To help the reader translate between Definition \ref{def:allowedmoduli} and \cite{IwaniecClassical}, temporarily define $\mathcal{C}_{\fa \fb}^{\mz}$ to be defined as above, and let $\mathcal{C}_{\fa \fb}^{\text{Iw}}$ be as in \cite{IwaniecClassical}.  Then $\mathcal{C}_{\fa \fb}^{\mz} = (w_{\fb} w_{\fa}^{-1})^{1/2}\mathcal{C}_{\fa \fb}^{\text{Iw}} $.
\begin{myexam}
Take $\fa=\infty$, $\fb = 0$, $\sigma_\infty = (\begin{smallmatrix} 1 & \\ &1 \end{smallmatrix}),$ and $\sigma_0 =  (\begin{smallmatrix}  & -1 \\ q & \end{smallmatrix})$. Then $$\mathcal{C}_{\infty0} = \{ c q : (c,q) = 1, \thinspace c \geq 1 \}.$$
\end{myexam}
Let $\chi$ be an even Dirichlet character modulo $q$.  For $\fa \in \mathcal{C}(q)$ and $\sigma_\fa$ a scaling matrix, let $u_\fa \in \Gamma_0(q)$ be such that $\sigma_\fa^{-1} u_\fa \sigma_\fa = (\begin{smallmatrix} 1 & 1 \\ & 1 \end{smallmatrix})$. \begin{mydefi} If a Dirichlet character $\chi$ modulo $q$ satisfies $\chi(u_\fa)=1$, then we say that $\fa$ is \emph{singular} for $\chi$.\end{mydefi}
 
\begin{mydefi}
If $\fa, \fb$ are singular cusps for $\chi$, then the sum 
\begin{equation}\label{eq:kloostermanDefinition}
		S_{\mathfrak{a} \mathfrak{b}}  (m,n;c;\chi) =
		\sum_{\gamma = (\begin{smallmatrix} a & b\\ c &d \end{smallmatrix}) \in \Gamma_\infty \backslash \sigma_\mathfrak{a}^{-1} \Gamma_0(q) \sigma_\mathfrak{b} / \Gamma_\infty }
		\overline{\chi}(\sigma_\mathfrak{a} \gamma \sigma_\mathfrak{b}^{-1}) e\left(\frac{am + dn }{c} \right)
            \end{equation}     
            is called the Kloosterman sum attached to the cusps $\fa,\fb$. 
\end{mydefi}
If $|c | \not \in \mathcal{C}_{\fa\fb}$, then the sum appearing in \eqref{eq:kloostermanDefinition} is empty, hence $S_{\mathfrak{a} \mathfrak{b}}  (m,n;c;\chi)= 0$. 
Temporarily denote by $S_{\fa \fb}^\Z$ the Kloosterman sum appearing in \eqref{eq:kloostermanDefinition} and by $S_{\fa \fb}^{\text{Iw}}$ the sum appearing in e.g.~\cite[(3.13)]{IwaniecClassical}.  
If $\chi$ is an even Dirichlet character modulo $q$, then $S_{\fa \fb}^\Z$ and $S_{\fa \fb}^{\text{Iw}}$ are related by \begin{equation} 
\label{eq:Sabcomparison}
S_{\fa \fb}^\Z (m,n, c;\chi) = 
S_{\fa \fb}^{\text{Iw}}(m,n,  
\sqrt{w_{\fa} w_{\fb}^{-1}}
c
; \chi).
\end{equation}

\begin{myexam}
Take $\fa=\infty$, $\fb = 0$, $\sigma_\infty = (\begin{smallmatrix} 1 & \\ &1 \end{smallmatrix}),$ and $\sigma_0 =  (\begin{smallmatrix}  & -1 \\ q & \end{smallmatrix})$. Then (see \cite[(2.20)]{KiralYoungKlEis}) \begin{equation} \label{eq:zeroInfinityKloostermanSum}
            S_{\infty0}  (m,n;c q;\chi) = \overline{\chi}(c) S(\overline{q}m,n;c).
        \end{equation} 
\end{myexam}

 \subsection{The Bruggeman-Kuznetsov formula}\label{section:bkformula}
 Let \begin{equation*} 
 V(q) = \Vol \left( \Gamma_0(q) \backslash \cH \right) = \frac{\pi}{3} q \prod_{p \mid q} (1+p^{-1}),
 \end{equation*}
choose $\Phi \in C^\infty_c(\R_{>0})$, 
$\chi$ an even Dirichlet character modulo $q$, singular cusps $\fa,\fb$, and set
\begin{equation}\label{eq:GeneralSumOfKloostermanSum}
		\mathcal{K} = \sum_{c \in \mathcal{C}_{\mathfrak{a} \mathfrak{b} } } S_{\mathfrak{a} \mathfrak{b} }  (m,n;c;\chi) \Phi\Big(\sqrt{\frac{w_\fa}{w_\fb}} c\Big).
	\end{equation}
	
  Define the integral transforms 	\begin{equation}\label{eq:holomorphicIntegralTransformInMellin}
			2 \pi i^{-k} \mathcal{L}^{\rm hol} \Phi (k) 
			=  \frac{1}{2\pi i} \int_{(1)} \frac{2^{s }\Gamma\left(\frac{s + k-1}{2}\right)}{\Gamma\left(\frac{k+1-s}{2}\right)} 
				\widetilde{\Phi}(s + 1) (4\pi \sqrt{|mn|})^{-s} d s,  
		\end{equation}
		and
\begin{equation}
\label{eq:plusminusIntegralTransformInMellin}
 \mathcal{L}^\pm \Phi(t) = \frac{1}{2 \pi i} \int_{(2)} h_{\pm}(s,t) \widetilde{\Phi}(s +1) (4\pi \sqrt{|mn|})^{-s} d s,
\end{equation}
where
\begin{equation}
\label{eq:hplusminusDefinition}
h_{\pm}(s,t) = \frac{2^{s }}{2\pi^2} \Gamma(\tfrac s2  + it )\Gamma(\tfrac s2 -it) \begin{cases}
 \cos(\pi s/2), \qquad &\pm = + \\
  \cosh(\pi t) , \qquad &\pm = -,
\end{cases}
\end{equation}	
and $\widetilde{\Phi}(s) = \int_0^{\infty} \Phi(x) x^s \frac{dx}{x}$ denotes the Mellin transform of $\Phi$. 
\begin{mytheo}[Bruggeman-Kuznetsov Formula] \label{thm:KuznetsovTraceFormula}
		  Let $\Phi \in C^\infty_c(\R_{>0})$ and $\mathcal{K}$ be as in \eqref{eq:GeneralSumOfKloostermanSum}.  We have 
		\[
			\mathcal{K} = \mathcal{K}_{\rm Maass} + \mathcal{K}_{\rm Eis} + \mathcal{K}_{\rm hol}, 
		\]
		where 	
	\begin{equation}\label{eq:discreteKuznetsovSpectrum}
			\mathcal{K}_{\rm Maass}  =  \frac{4\pi }{V(q)} \sum_{t_j} \mathcal{L}^{\pm} \Phi(t_j) \sum_{\epsilon =0,1} (\pm 1)^\epsilon \sum_{\substack { f \in \mathcal{B}_{it_j}(q, \chi) \\ \text{ of parity } \epsilon}} \overline{\rho_{f,\mathfrak{a}}(m)} \rho_{f,\mathfrak{b}}(n), 
\end{equation}
\begin{equation}\label{eq:continuousKuznetsovSpectrum}
			\mathcal{K}_{\rm Eis} = \frac{4 \pi}{V(q)}
			\int_{-\infty}^\infty \mathcal{L}^{\pm}\Phi(t)  \sum_{\epsilon =0,1} (\pm 1)^\epsilon  \sum_{\substack{ E \in \mathcal{B}_{it, {\rm Eis}}(q,\chi) \\ \text{ of parity } \epsilon}}  \overline{\rho_{E,\mathfrak{a} }(m)}
			\rho_{E,\mathfrak{b} } (n )
			d t,
\end{equation}
where one takes  $+$ in $\pm$ (resp.\ $-$) if $mn>0$ (resp.\ $mn<0$), 
and
\begin{equation}
		\label{eq:holomorphicKuznetsovSpectrum}
			\mathcal{K}_{\rm hol} =  \frac{4 \pi}{V(q)} \sum_{k >0, \text{ even}} \mathcal{L}^{\rm hol}\Phi(k) 
			 \sum_{f \in \mathcal{B}_k(q,\chi) } \overline{\rho_{f, \mathfrak{a} }(m)}  \rho_{f, \mathfrak{b}}(n)
\end{equation}
		if  $mn>0$, and $ \mathcal{K}_{\rm hol}= 0$ if  $mn<0$.
	\end{mytheo}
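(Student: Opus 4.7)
The plan is to deduce Theorem \ref{thm:KuznetsovTraceFormula} from the classical Bruggeman-Kuznetsov formula (as presented in \cite{IwaniecClassical}) by means of three translations: (i) from Iwaniec's scaling-matrix conventions to the conventions of Definition \ref{def:scalingmx}, (ii) from the classical cusp Fourier coefficients to the $L^2$-normalized coefficients $\rho_{f,\fa}(n)$ of \eqref{eq:fecusp}, and (iii) from Bessel-function integral transforms to the Mellin-Barnes transforms $\mathcal{L}^{\pm}$ and $\mathcal{L}^{\rm hol}$.

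First I would write the classical formula. For a test function $\phi \in C_c^\infty(\R_{>0})$ set $\mathcal{K}^{\rm Iw} = \sum_{c \in \mathcal{C}_{\fa\fb}^{\rm Iw}} S_{\fa\fb}^{\rm Iw}(m,n;c;\chi) \phi(c)$. The formula in \cite{IwaniecClassical} expresses $\mathcal{K}^{\rm Iw}$ as a spectral sum in which Fourier coefficients appear with the classical normalization and the integral kernels are built from $J$- and $K$-Bessel functions (the exact shape depending on the sign of $mn$) and, for the holomorphic part, from $J_{k-1}$. I would now substitute $c \mapsto \sqrt{w_\fa w_\fb^{-1}}\, c$ throughout, using the identity \eqref{eq:Sabcomparison} to pass from $S_{\fa\fb}^{\rm Iw}$ to $S_{\fa\fb}^{\Z}$, and set $\Phi(c) = \phi(\sqrt{w_\fa w_\fb^{-1}}\, c)$, which converts the left side of the classical formula into the quantity $\mathcal{K}$ of \eqref{eq:GeneralSumOfKloostermanSum}.

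Next I would renormalize the Fourier coefficients. The classical coefficients $\rho_{f,\fa}^{\rm cl}(n)$ satisfy $\rho_{f,\fa}(n) = c_{\infty}(\pi_\infty)\, \rho_{f,\fa}^{\rm cl}(n)$, where $c_\infty(\pi_\infty)$ is the archimedean normalization constant read off from the factors $(\cos(\pi s /2)/\pi)^{1/2}$, $(\sinh(\pi t)/\pi t^{\sgn y})^{1/2}$, or $\Gamma(k)^{-1/2}$ appearing in Theorem \ref{thm:fe}(2), combined with the probability normalization of the measure on $\mathbb{X}$ versus the $\pi/3$ normalization used in Iwaniec's inner product (so that the classical Parseval identity yields the prefactor $4\pi/V(q)$ in \eqref{eq:discreteKuznetsovSpectrum}--\eqref{eq:holomorphicKuznetsovSpectrum}). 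After this substitution, the remaining coefficient in front of each spectral term depends only on the archimedean type of the representation, and gets absorbed into the definition of the integral transform.

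The final step is to identify those archimedean transforms with $\mathcal{L}^\pm$ and $\mathcal{L}^{\rm hol}$. For a test function $\Phi \in C_c^\infty(\R_{>0})$ write $\Phi(c) = \frac{1}{2\pi i}\int_{(\sigma)} \widetilde{\Phi}(s+1)(4\pi\sqrt{|mn|})^{-s} \left(\frac{c}{4\pi\sqrt{|mn|}}\right)^{-s}\! \frac{ds}{c}$, and insert this into the Bessel transform of $\Phi$. Using the standard Mellin transforms of $J_{2it}$, $K_{2it}$, and $J_{k-1}$ (see e.g. \cite[6.561]{GR}), the resulting contour integral reduces, after absorbing the archimedean factors $c_\infty(\pi_\infty)^2$ identified above, to the ratio of Gamma functions multiplied by $\cos(\pi s/2)$, $\cosh(\pi t)$, or $\Gamma((s+k-1)/2)/\Gamma((k+1-s)/2)$, exactly as defined in \eqref{eq:holomorphicIntegralTransformInMellin}--\eqref{eq:hplusminusDefinition}. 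Shifting contours to $\real(s) = 1$ or $2$ as required for convergence completes the derivation.

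The principal obstacle is bookkeeping: the canonical Whittaker normalizations of Theorem \ref{thm:fe}(2), the probability-measure convention on $\mathbb{X}$, and the modified scaling-matrix convention of Definition \ref{def:scalingmx} all contribute multiplicative constants that must conspire to produce the clean coefficient $4\pi/V(q)$ and the precise Gamma factors in $h_\pm$ and the holomorphic kernel. The cleanest way to verify the constants is to specialize to the unramified case $q=1$, $\fa=\fb=\infty$, and match against the classical Kuznetsov formula for $\SL_2(\Z)$, since all the spectral constants on both sides depend only on the archimedean data of $\pi$.
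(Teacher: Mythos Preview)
Your proposal is correct in spirit and matches what the paper does, though the paper is considerably more terse: it does not give a proof of Theorem~\ref{thm:KuznetsovTraceFormula} at all, but simply quotes it from \cite[Thm.~3.5]{KiralYoung5th} and appends a remark explaining the two normalization differences (the canonical Whittaker normalization of Theorem~\ref{thm:fe}, which gives $\nu_\fa(n) = 2(\sgn n)^\epsilon \rho_\fa(n)$ relative to Iwaniec's $\nu_\fa$, and the probability-measure convention, which produces the factor $4\pi/V(q)$). Your route via \cite{IwaniecClassical} is the same idea carried out from a slightly earlier starting point: since Iwaniec's transforms are written with Bessel kernels rather than in Mellin--Barnes form, you additionally have to perform the Mellin identification of $J_{2it}/\cosh(\pi t)$, $K_{2it}$, and $J_{k-1}$ with the kernels $h_\pm$ and the holomorphic Gamma ratio, whereas the source \cite{KiralYoung5th} already states the formula in exactly the Mellin form \eqref{eq:holomorphicIntegralTransformInMellin}--\eqref{eq:hplusminusDefinition}. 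Either way the content is bookkeeping, and your suggestion to pin down the constants by specializing to $q=1$, $\fa=\fb=\infty$ is sound.
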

	Above, $\mathcal{B}_*(q,\chi)$ denotes any orthonormal basis of $S_*(q,\chi)$ with respect to the probability measure on $\Gamma_0(q) \backslash \cH$, and $ \mathcal{B}_{it, {\rm Eis}}(q,\chi)$ denotes an orthonormal basis of $\mathcal{E}_{it}(q,\chi)$  with respect to the formal inner product divided by $V(q)$ \cite[\S2.2]{PetrowYoung}. 

Remark: The above formula is taken from \cite[Thm.\ 3.5]{KiralYoung5th}  and \cite[(10.2)]{YoungEisenstein} (which contains a typo: the factor of $4 \pi$ on the right hand side should be deleted), but has been normalized differently in two ways. First, we have defined the Fourier coefficients $\rho_\fa(n)$ using the canonical normalization of archimedean Whittaker models chosen in Theorem \ref{thm:fe}. One of the consequences of this choice is that $\rho_\fa(n)=\rho_\fa(-n)$ by definition (the factor $(\sgn n)^\epsilon$ is naturally part of the archimedean Whittaker function). Explicitly, we have $\nu_\fa(n) = 2 (\sgn n)^\epsilon \rho_\fa(n)$, where $\nu_\fa(n)$ is defined by \cite[(8.5)]{IwaniecSpectralBook}. Secondly, we have chosen probability measure on $\Gamma_0(q)\backslash \cH$ to define inner products, whereas most authors choose the push-forward measure from $\cH$. These two choices result in the appearance of the factor of $\frac{V(q)}{4\pi}$, which is natural, it being the leading constant in Weyl's law for $\Gamma_0(q) \backslash \cH$.

 \subsection{Explicit choice of basis} \label{section:basischoice}
Let $\cH_{it_j}(m,\chi)$ be the (finite) set of cuspidal automorphic representations $\pi$ with conductor $\cond(\pi)=m$, finite order central character $\omega_\pi = \chi$, and $ \pi_\infty \simeq \pi(\alpha^{it_j} \sgn^\epsilon, \alpha^{-it_j} \sgn^\epsilon)$.  One may alternatively (and equivalently) take $\cH_{it_j}(m,\chi)$ as in \cite[\S2.1]{PetrowYoung} to be the set of cuspidal Hecke-Maass newforms of level $m$, spectral parameter $t_j$, and central character $\chi$. 

Similarly, let $\cH_k(m,\chi)$ be the (finite) set of cuspidal automorphic representations $\pi$ with $\cond(\pi)= m$, finite order $\omega_\pi = \chi$ and $ \pi_\infty \simeq \sigma(\chi_1,\chi_2)$ with $\chi_1\chi_2^{-1} = \alpha^s \sgn^m$ for some $s-m \in 1+2 \Z_{\geq 0} $ satisfying $ s+1 = k$. One may also just as well take $\cH_k(m,\chi)$ as in \cite[\S2.1]{PetrowYoung} to be the set of cuspidal holomorphic newforms of level $m$, weight $k$, and central character $\chi$. All statements that follow involving $\cH_{*}(m,\chi)$ will hold equally well with either definition.

Finally, let $\cH_{it,{\rm Eis}}(m,\chi)$ be the (finite) set of pairs $(\mu_1,\mu_2)$ of unitary Hecke characters of $\Q$ such that the global principal series representation $\pi = \pi(\mu_1,\mu_2)$ (see e.g. \cite[\S 3.7]{Bump})  has $\cond(\pi)=m$, $\mu_1\mu_2=\chi$ of finite order, and $ \pi_\infty \simeq \pi(\alpha^{it} \sgn^\epsilon, \alpha^{-it} \sgn^\epsilon)$. One may also take $\cH_{it,{\rm Eis}}(m,\chi)$ to be the set of newform Eisenstein series of level $m$ and character $\chi$ as defined in \cite[\S2.2]{PetrowYoung}. Using the notation of loc.\ cit.\ \S 2.2, the bijection between these two definitions for $\cH_{it,{\rm Eis}}(m,\chi)$ is given by $$(\mu_1,\mu_2) \mapsto E_{\chi_1,\chi_2}(z, 1/2+it),$$ where $\chi_1$ and $\chi_2$ are the primitive Dirichlet characters corresponding to $\chi_1= \mu_1 \vert_{\widehat{\Z}^\times}$ and $\chi_2= \overline{\mu_1} \vert_{\widehat{\Z}^\times}$.  
 Either definition will make sense in what follows. 

If $\chi=1$ is trivial, we may use the shorthand $\cH_*(m) := \cH_*(m,1)$, as well as the shorthand $\cH_* := \bigcup_m \cH_*(m)$, where $* = it_j$, $k$, or $it, {\rm Eis}$.

  For $(\pi, V)$ a cuspidal representation of conductor $m$ and central character $\omega_\pi$, write 
  $$ \pi^{K_0(m \ell)} =  \{ \phi \in V : \pi(g) \phi = \omega_\pi(g)\phi \text{ for all } g \in K_0(m \ell )\}.$$ 
  The set of fixed vectors $\pi^{K_0(m\ell)}$ is also called an oldclass in the classical terminology, i.e.  $$S_*(\ell, f,\chi) = \pi^{K_0(m \ell)}, \quad \text{ via } f \mapsto \phi_f,$$ where $S_*(\ell, f,\chi)$ was the notation used in \cite[(2.5)]{PetrowYoung}.   
  
  As a first step in the construction of an orthonormal basis for $S_{*}(q,\chi)$, observe that forms $\phi \in S_{*}(q,\chi)$ that generate distinct irreducible cuspidal automorphic representations are necessarily orthogonal to each other. Thus, we have the orthogonal direct sum
 \begin{equation}\label{ALdecompautomorphic}
 S_{*}(q,\chi) =  \bigoplus_{m \ell= q} \bigoplus_{\pi \in \cH_*(m,\chi)} \pi^{K_0(m \ell)} .
 \end{equation}
By \eqref{ALdecompautomorphic}, the problem of choosing an orthonormal basis for $S_*(q,\chi)$ reduces to choosing orthonormal bases for the oldclasses $\pi^{K_0(m \ell)}$. 

 Write $\phi_d$ for the function $g \mapsto \phi( (\begin{smallmatrix} d & \\ & 1 \end{smallmatrix}) g )$ with $ (\begin{smallmatrix} d & \\ & 1 \end{smallmatrix}) \in \GL_2(\R) \hookrightarrow \GL_2(\A)$ in the first position. If $\phi$ is a newvector for $\pi$ of conductor $m$, we have by Atkin-Lehner-Li theory that 
 \begin{equation}
 \label{ALLcoords}
 \pi^{K_0(m \ell)} = {\rm span}\{\phi_d : d \mid \ell\}.
 \end{equation}  
 We write an orthonormal basis $\mathcal{B}(\ell,\pi)$ for $\pi^{K_0(m \ell)}$ in the coordinates \eqref{ALLcoords} as 
 \begin{equation*}
 \mathcal{B}(\ell,\pi)  = \{ \phi^{(\delta)} = \sum_{d | \ell } x_\delta(d)\phi_d : \delta \mid \ell\}
 \end{equation*}
  for some choice of coefficients $x_\delta(d)$.
  Thus an orthonormal basis for $S_*(q,\chi)$ is given by \begin{equation*}
  \mathcal{B}_*(q,\chi) = \bigcup_{m \ell= q} \bigcup_{\pi \in \cH_*(m,\chi)} \{ \phi^{(\delta)}  : \phi \text{ newvector for } \pi, \delta \mid \ell\}. 
  \end{equation*}
  Taking the Fourier expansion of the newvector $\phi$ at $\infty$ as in Theorem \ref{thm:fe}, we have that the Fourier coefficients at infinity of the forms $\phi_d$ and $\phi^{(\delta)}$ are related by
  \begin{equation}
\label{eq:nufvertdFormula}
 \rho_{\phi_d}(n) = d^{1/2} \rho_\phi(n/d),
 \qquad \text{and} \qquad
 \rho_{\phi^{(\delta)}}(n) = \sum_{d|\ell} d^{1/2} x_\delta(d) \rho_\phi(n/d),
\end{equation}
  where if $n/d$ is not an integer, we interpret $\rho_\phi(n/d)=0$. Since $\rho_\phi(n)$ are directly related to Hecke eigenvalues via \eqref{eq:fcheckeevals}, we also define
  \begin{equation}\label{lambda(delta)def}
  \lambda_\pi^{(\delta)} (n) = \sum_{d|\ell} d^{1/2} x_\delta(d) \lambda_\pi(n/d),
  \end{equation} where likewise $\lambda_\pi(n/d)=0$ if $n/d$ is not an integer. Note that we have $\rho_{\phi^{(\delta)}}(n) = \rho_\phi(1)  \lambda_\pi^{(\delta)} (n)$. 
  
  We denote by $\epsilon(\pi)_{-1}$ the parity of $\pi$ (in line with Iwaniec's notation $T_{-1}$ for the involution $f(z) \mapsto f(-\overline{z})$ on Hecke-Maass forms). Lastly, we set $\epsilon_\pi^{(\pm)} = (\pm 1)^{\epsilon(\pi)_{-1}}\epsilon(\pi)_{\rm fin}$, where $\epsilon(\pi)_{\rm fin}$ was defined in \eqref{rho0infty}. With these notations we have the following.

 \begin{mytheo}[Explicit Bruggeman-Kuznetsov Formula for cusps $\infty, 0$] \label{thm:KuznetsovTraceFormulainf0}
		  Let $\Phi \in C^\infty_c(\R_{>0})$.  We have 
		\begin{equation}
		\label{eq:Kdefinfinity0}
\mathcal{K} = \sum_{(c,q)=1 } \overline{\chi}(c)S(\overline{q} m,n;c)\Phi(q^{1/2} c) = \mathcal{K}_{\rm Maass} + \mathcal{K}_{\rm Eis} + \mathcal{K}_{\rm hol}, 
		\end{equation}
with notation as follows.  We have
\begin{equation}\label{eq:discreteKuznetsovSpectrumexplicit}
			\mathcal{K}_{\rm Maass} =
\frac{4\pi  }{V(q) } \sum_{t_j}  \mathcal{L}^\pm\Phi(t_j)  \sum_{\ell r = q} 
\sum_{{\pi} \in \mathcal{H}_{it_j}(r, \chi)}
\frac{ \epsilon_\pi^{(\pm)} }{ \mathscr{L}_\pi^*(1)} \sum_{\delta \mid \ell} \overline{\lambda}_{\pi}^{(\delta)}(|m|) \overline{\lambda}_{\pi}^{(\delta)}(|n|),
		\end{equation}
	\begin{equation}
	\label{eq:continuousKuznetsovSpectrumexplicit}
\mathcal{K}_{\rm Eis} 
=  \frac{4\pi  }{V(q) }
\int_{-\infty}^\infty  \mathcal{L}^{\pm}\Phi(t) 
\sum_{\ell r = q} 
\sum_{{\pi} \in \mathcal{H}_{it, {\rm Eis}}(r, \chi)}
\frac{ \epsilon_\pi^{(\pm)}}{2\pi \mathscr{L}_\pi^*(1)}
 \sum_{\delta \mid \ell} \overline{\lambda}^{(\delta)}_{{\pi}} (|m|)
\overline{\lambda}^{(\delta)}_{{\pi}} (|n| )
d t,
	\end{equation}
 where one takes  $+$ in $\pm$ (resp.\ $-$) if $mn>0$ (resp.\ $mn<0$), and
		\begin{equation}
	\label{eq:holomorphicKuznetsovSpectrumexplicitinf0}
			\mathcal{K}_{\rm hol} =  \frac{4\pi  }{V(q) } \sum_{k >0, \text{ even}} 
			\mathcal{L}^{\rm hol}\Phi(k) 
			\sum_{\ell r = q} \sum_{{\pi} \in \mathcal{H}_{k}(r, \chi)} 
			\frac{ \epsilon(\pi)_{\rm fin}}{ \mathscr{L}_\pi^*(1) }
			\sum_{\delta \mid \ell} \overline{\lambda}_{\pi}^{(\delta)}(|m|) \overline{\lambda}_{\pi}^{(\delta)}(|n|)
		\end{equation}
		if  $mn>0$, and $ \mathcal{K}_{\rm hol}= 0$ if  $mn<0$. 
 \end{mytheo}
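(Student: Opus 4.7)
The plan is to deduce this theorem from Theorem \ref{thm:KuznetsovTraceFormula} by specializing to $\fa=\infty$, $\fb=0$, with the scaling matrices $\sigma_\infty = I$ and $\sigma_0 = \left(\begin{smallmatrix} & -1 \\ q & \end{smallmatrix}\right)$. Since $w_\infty = 1$ and $w_0 = q$, the argument of $\Phi$ in \eqref{eq:GeneralSumOfKloostermanSum} becomes $c/\sqrt{q}$; substituting $c \mapsto c'q$ with $(c',q)=1$ and invoking the identity \eqref{eq:zeroInfinityKloostermanSum} converts the geometric side into the sum on the right-hand side of \eqref{eq:Kdefinfinity0}.

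For the spectral side, I will use the Atkin-Lehner-Li orthogonal decomposition \eqref{ALdecompautomorphic} to express the orthonormal basis $\mathcal{B}_*(q,\chi)$ as the union over pairs $(r,\ell)$ with $r\ell=q$ of bases $\mathcal{B}(\ell,\pi) = \{\phi^{(\delta)} : \delta \mid \ell\}$ for each $\pi \in \mathcal{H}_*(r,\chi)$. The task then reduces to computing the Fourier coefficients $\rho_{\phi^{(\delta)},\infty}(m)$ and $\rho_{\phi^{(\delta)},0}(n)$ in terms of Hecke eigenvalues. The coefficient at $\infty$ is immediate from \eqref{eq:nufvertdFormula}, \eqref{eq:fcheckeevals}, and \eqref{lambda(delta)def}, yielding $\rho_{\phi^{(\delta)},\infty}(n) = \rho_\phi(1)\lambda_\pi^{(\delta)}(n)$.

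The harder step is the expansion at the cusp $0$. My plan is to exploit the matrix identity
\begin{equation*}
\left(\begin{smallmatrix} d & \\ & 1 \end{smallmatrix}\right) \left(\begin{smallmatrix} & -1 \\ q & \end{smallmatrix}\right) = \left(\begin{smallmatrix} & -1 \\ r & \end{smallmatrix}\right) \left(\begin{smallmatrix} \ell & \\ & d \end{smallmatrix}\right),
\end{equation*}
valid for every $d \mid \ell$, which rewrites $\phi_d(\sigma_0 g)$ as the newvector $\phi$ evaluated at $\sigma_0^{(r)} \left(\begin{smallmatrix} \ell & \\ & d \end{smallmatrix}\right) g$, where $\sigma_0^{(r)} = \left(\begin{smallmatrix} & -1 \\ r & \end{smallmatrix}\right)$ is the natural scaling matrix at the cusp $0$ of level $r$. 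Factoring $\left(\begin{smallmatrix} \ell & \\ & d \end{smallmatrix}\right) = d \cdot \left(\begin{smallmatrix} \ell/d & \\ & 1 \end{smallmatrix}\right)$, the central scalar $d>0$ acts trivially (the central character is a finite-order Hecke character, hence trivial on positive reals at infinity), while the remaining diagonal acts on the Fourier expansion of $\phi$ at the cusp $0$ of level $r$. Applying the newform root-number identity \eqref{rho0infty} at level $r$ and collecting via \eqref{lambda(delta)def} gives $\rho_{\phi^{(\delta)},0}(n) = \epsilon_\pi \overline{\rho_\phi(1)}\,\overline{\lambda_\pi^{(\delta)}(n)}$.

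Substituting both Fourier coefficient formulas into \eqref{eq:discreteKuznetsovSpectrum}, \eqref{eq:continuousKuznetsovSpectrum}, and \eqref{eq:holomorphicKuznetsovSpectrum}, and using $|\rho_\phi(1)|^2 = \mathscr{L}_\pi^*(1)^{-1}$ from Theorem \ref{thm:fe}(1) (with $\rho_\phi(1)>0$, so that $(\overline{\rho_\phi(1)})^2 = \mathscr{L}_\pi^*(1)^{-1}$) produces exactly \eqref{eq:discreteKuznetsovSpectrumexplicit}, \eqref{eq:continuousKuznetsovSpectrumexplicit}, and \eqref{eq:holomorphicKuznetsovSpectrumexplicitinf0}. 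The main obstacle is the bookkeeping in the third paragraph: one must verify that the reindexing $d \mapsto \ell/d$ on Fourier expansions induced by the Fricke involution at level $q$ is consistent with the coefficients $x_\delta(d)$ defining $\mathcal{B}(\ell,\pi)$, so that no extraneous Hecke factors at primes dividing $\ell$ appear in $\rho_{\phi^{(\delta)},0}(n)$, and that the central character's behavior at the archimedean place is indeed trivial on the scalar $d$ for all four archimedean types listed in Theorem \ref{thm:fe}(2).
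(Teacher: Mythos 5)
Your reduction of the geometric side (the choice of scaling matrices, the identification of the allowed moduli $\mathcal{C}_{\infty 0}=\{cq:(c,q)=1\}$, and the use of \eqref{eq:zeroInfinityKloostermanSum}) is fine, as is the Fourier coefficient at $\infty$ via \eqref{eq:nufvertdFormula}. The problem is exactly the step you flag in your last paragraph as a "main obstacle": the claimed formula $\rho_{\phi^{(\delta)},0}(n)=\epsilon_\pi\,\overline{\rho_\phi(1)}\,\overline{\lambda_\pi^{(\delta)}(n)}$ does not follow, and in fact is false. Carrying out your own matrix identity carefully gives, for $\phi$ a newvector of conductor $r$, $q=r\ell$, $d\mid\ell$,
\begin{equation*}
\rho_{\phi_d,0}(n)\;=\;\epsilon_\pi\,\overline{\rho_\phi(1)}\,(\ell/d)^{1/2}\,\overline{\lambda_\pi\!\left(nd/\ell\right)},
\end{equation*}
supported on $(\ell/d)\mid n$. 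The Fricke action on the oldclass genuinely exchanges $d\leftrightarrow\ell/d$ rather than fixing $d$; classically, at level $p$ with $\pi$ of level $1$, one has $\phi_1|_{\sigma_0}=\phi_p$ and $\phi_p|_{\sigma_0}=\phi_1$, not $\phi_1\mapsto\phi_1$. Summing over $d\mid\delta$ yields $\rho_{\phi^{(\delta)},0}(n)=\epsilon_\pi\,\overline{\rho_\phi(1)}\sum_{d\mid\delta}x_\delta(d)(\ell/d)^{1/2}\,\overline{\lambda_\pi(nd/\ell)}$, which is \emph{not} $\epsilon_\pi\,\overline{\rho_\phi(1)}\,\overline{\lambda_\pi^{(\delta)}(n)}$: already for $\delta=1$ the former vanishes unless $\ell\mid n$, whereas $\lambda_\pi^{(1)}(n)=x_1(1)\lambda_\pi(n)$ is supported on all $n$.

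Thus the identity $\rho_{f,0}(n)=\epsilon_\pi\overline{\rho_{f,\infty}(n)}$ of \eqref{rho0infty}, valid for newvectors of conductor $q$, does not propagate termwise to the oldform vectors $\phi^{(\delta)}$, and the theorem cannot be proved by matching $\delta$-terms one at a time as your sketch tries to do. What must actually be verified is that the full basis-independent sum $\sum_{\delta\mid\ell}\overline{\rho_{\phi^{(\delta)},\infty}(m)}\,\rho_{\phi^{(\delta)},0}(n)$, expressed as a bilinear form in the inverse Gram matrix of $\{\phi_d:d\mid\ell\}$, equals $\epsilon_\pi\,\mathscr{L}_\pi^*(1)^{-1}\sum_{\delta\mid\ell}\overline{\lambda_\pi^{(\delta)}(|m|)}\,\overline{\lambda_\pi^{(\delta)}(|n|)}$ for the $\xi_\delta(d)$ basis. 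This needs an explicit computation interweaving the Gram matrix, the Hecke relations at $p\mid\ell$, and the involution $d\mapsto\ell/d$; your sketch neither supplies this computation nor reduces it to an obvious check, so the gap you identified is a genuine missing step, not bookkeeping. (The paper itself provides no proof of this theorem, so there is no "paper route" to compare against; what I have described is what any correct proof must confront.)
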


   There are various choices of basis for $\pi^{K_0(m\ell)}$ in the literature (see e.g. \cite{ILS} \cite{PetrowYoungPeterssonFormula} \cite{BlomerMili}), and it is not clear that there is any canonical choice for general level. Let $\xi_\delta(d)$ be the coefficients defined in \cite[Prop.\ 7.1]{PetrowPetersson}. The choice $x_\delta(d)= \xi_\delta(d)$ defines an orthonormal basis $\{\phi^{(\delta)}: \delta \mid \ell\}$ for $\pi^{K_0(m\ell)}$ (see \cite[Thm.\ 3.2]{SPYbasis} for a nice proof that avoids the Rankin-Selberg method). The coefficients $\xi_\delta(d)$ are given in terms of the divisors of $d$ and $\delta$ and the Hecke eigenvalues of $\pi$. Inspecting the definition of $\xi_\delta(d)$, one deduces the following lemma.
  \begin{mylemma}\label{lem:xigdBound} The coefficients $\xi_\delta(d)$ enjoy the following properties:
\begin{enumerate}
\item The coefficients $\xi_\delta(d)$ are supported on $d|\delta$.
\item The function $\xi_\delta(d)$ is jointly multiplicative in $\delta,d$.
\item We have
$
 \xi_\delta(d) \ll (\delta d)^{\varepsilon}.
$
\end{enumerate}
\end{mylemma}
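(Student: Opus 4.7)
The plan is to prove both assertions by direct inspection of the explicit formula for $\xi_\delta(d)$ given in \cite[Prop.\ 7.1]{PetrowPetersson}, reducing the global statement to a local one at each prime dividing $\delta d$.

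For part (1), I would first recall that the basis $\{\phi^{(\delta)}\}_{\delta \mid \ell}$ is obtained by Gram-Schmidt orthogonalization applied to the oldform spanning set $\{\phi_d\}_{d \mid \ell}$ of $\pi^{K_0(m\ell)}$, with the matrix $(\xi_\delta(d))$ recording the change of basis. The relevant Gram matrix $\langle \phi_d, \phi_e\rangle$ may be computed via a Rankin-Selberg unfolding in which the finite integral factors as a product over primes $p \mid \ell$, and the resulting local Gram matrix at $p$ depends only on $v_p(d)$, $v_p(e)$, and the local data of $\pi_p$. Consequently, for coprime $\ell = \ell_1 \ell_2$, the oldclass $\pi^{K_0(m\ell)}$ decomposes as a tensor product under which the Gram matrix, and hence its lower-triangular Gram-Schmidt inverse, splits as a product. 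Extracting the entries yields joint multiplicativity of $\xi_\delta(d)$ in $(\delta, d)$.

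By part (1), the bound in part (2) reduces to showing $|\xi_{p^a}(p^b)| \ll_\varepsilon p^{\varepsilon a}$ for each prime $p$ and $0 \leq b \leq a$. The formula in \cite[Prop.\ 7.1]{PetrowPetersson} writes $\xi_{p^a}(p^b)$ as a polynomial of bounded degree in the Hecke eigenvalues $\lambda_\pi(p^k)$ for $k \leq a$, divided by a positive normalizing quantity (essentially a local Petersson norm) that is bounded below uniformly in $\pi$. At primes $p \mid q(\pi)$, the bound \eqref{eq:RamanujanBoundRamifiedPrimes} gives $|\lambda_\pi(p^k)| \leq 1$, so $|\xi_{p^a}(p^b)| = O_a(1)$. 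At primes $p \nmid q(\pi)$, the Kim--Sarnak bound $|\lambda_\pi(p^k)| \leq (k+1) p^{(7/64)k}$ yields $|\xi_{p^a}(p^b)| \ll_\varepsilon p^{\varepsilon a}$. Taking the product over primes $p \mid \delta d$, and noting $\omega(\delta d) = O(\log(\delta d)/\log\log(\delta d))$, gives the stated bound.

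The only technical point requiring care is the uniform lower bound on the local normalizing denominator; this follows from non-vanishing of the local Euler factor of $L(s, \pi \otimes \widetilde{\pi})$ at $s=1$, combined with the above pointwise bounds on $\lambda_\pi(p^k)$, ensuring the local Gram matrix is bounded away from singularity. Since both tasks are purely mechanical once the formula in \cite{PetrowPetersson} is in hand, I do not anticipate any serious obstacle.
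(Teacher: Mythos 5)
Your strategy matches the paper's one-line proof in spirit: both amount to inspecting the explicit formula for $\xi_\delta(d)$ in \cite[Prop.\ 7.1]{PetrowPetersson}. For part (1) that formula is already presented as a product over primes, so joint multiplicativity is immediate by inspection; your re-derivation via a tensor decomposition of the local Rankin--Selberg Gram matrices is correct but substantially more work than the paper intends, and moreover relies on the implicit claim that the global Petersson inner product factors as a product of local contributions after normalization, which itself requires some care.

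For part (2) there is a genuine gap. You describe $\xi_{p^a}(p^b)$ as ``a polynomial of bounded degree in the Hecke eigenvalues $\lambda_\pi(p^k)$ divided by a positive normalizing quantity bounded below uniformly in $\pi$,'' and then invoke Kim--Sarnak to conclude $\ll_\varepsilon p^{\varepsilon a}$. But Kim--Sarnak gives $|\lambda_\pi(p^k)| \leq (k+1)p^{7k/64}$, so a polynomial in these quantities divided by a bounded denominator can be as large as a positive power of $p^{a}$, which is not $\ll p^{\varepsilon a}$. What actually makes the bound work is that in the explicit formula the Hecke eigenvalues occur divided by the corresponding half-integral powers of $p$, i.e.\ in combinations of the shape $\lambda_\pi(p^j)/p^{j/2}$; since $\theta = 7/64 < 1/2$ these are $O(1)$ individually, and the normalizing denominator is a local Euler-type factor of $\pi \otimes \widetilde{\pi}$ which the \emph{same} half-integral shifts keep bounded away from zero by an absolute constant. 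Your appeal to ``non-vanishing of the local Euler factor at $s=1$'' does not by itself deliver a uniform lower bound, since a nonzero factor can still degenerate as the Satake parameters approach the edge of the allowed region. Both points are resolved immediately once the formula from \cite{PetrowPetersson} is written out rather than described abstractly, which is what the paper's terse ``inspecting the definition'' is signalling.
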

As a consequence of Lemma \ref{lem:xigdBound}(1), the $\lambda_\pi^{(\delta)} (n)$ associated to $\xi_\delta(d)$ (see \eqref{lambda(delta)def}) is jointly multiplicative in $\delta,n$ since it is the Dirichlet convolution of jointly multiplicative functions.
 The coefficients $\xi_{\delta}(d)$ also give an orthonormal basis in the case of the Eisenstein series (see \cite[\S8]{YoungEisenstein} for details).

\section{Tools from analytic number theory}
\label{section:analyticnumbertheory}
\subsection{Gauss sums}
\label{section:GaussSums}
We will need estimates for Gauss sums of non-primitive Dirichlet characters. 
\begin{mylemma}
\label{lemma:GaussSums}
 Let $\chi$ be a Dirichlet character modulo $q$, induced by the primitive character $\chi'$ modulo $q'$.  For $n \in \mz$, let
 \begin{equation*}
  \tau(\chi,n) = \sum_{x \shortmod{q}} \chi(x) e_q(nx).
 \end{equation*}
Then
\begin{equation}
\label{eq:tauchinExactFormula}
 \tau(\chi,n) = \tau(\chi') \sum_{d|(n, q/q')} d \thinspace \overline{\chi'} \Big(\frac{n}{d}\Big) \chi'\Big(\frac{q}{dq'}\Big) \mu\Big(\frac{q}{dq'}\Big).
\end{equation}
In particular, $\tau(\chi) = \tau(\chi,1) = \mu(q/q') \chi'(q/q') \tau(\chi')$.
Moreover, if $\chi$ is any Dirichlet character modulo $q$, induced by $\chi'$ modulo $q'$ (including the trivial character with $q'=1$), we have
\begin{equation}
\label{eq:tauchinUpperBound}
 |\tau(\chi,n)| \leq (q')^{1/2} \Big(n, \frac{q}{q'}\Big).
\end{equation}
\end{mylemma}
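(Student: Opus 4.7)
The plan is to prove the exact formula \eqref{eq:tauchinExactFormula} by M\"obius inversion, then deduce the upper bound \eqref{eq:tauchinUpperBound} by reducing to prime-power moduli via the Chinese Remainder Theorem. Setting $m = q/q'$, the key observation is that since $\chi'(x) = 0$ whenever $(x,q')>1$, we have the clean identity $\chi(x) = \chi'(x)\,\delta((x,m)=1)$ for every $x \in \mz$. Expanding the indicator via $\delta((x,m)=1) = \sum_{d \mid (x,m)} \mu(d)$, swapping the order of summation, and substituting $x = dx_1$ yields
\begin{equation*}
\tau(\chi,n) = \sum_{d \mid m} \mu(d)\,\chi'(d) \sum_{x_1 \shortmod{q/d}} \chi'(x_1)\,e_{q/d}(nx_1).
\end{equation*}
Using $q/d = q'(m/d)$ and the periodicity of $\chi'$ modulo $q'$, I would then split $x_1 = x_{1a} + q' x_{1b}$ with $x_{1a} \shortmod{q'}$ and $x_{1b} \shortmod{m/d}$, factoring the inner sum as a product of a geometric sum over $x_{1b}$ (vanishing unless $(m/d) \mid n$, contributing $m/d$ otherwise) and the primitive Gauss sum $\tau(\chi', n/(m/d)) = \overline{\chi'}(n/(m/d))\,\tau(\chi')$. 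Reindexing via $d' = m/d$ gives exactly \eqref{eq:tauchinExactFormula}, and the $n=1$ specialization yields the stated formula for $\tau(\chi)$.

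For the bound \eqref{eq:tauchinUpperBound}, the critical fact is that both sides are multiplicative in the prime factorization of $q$. Writing $q = \prod_p p^{\beta_p}$ and $\chi = \prod_p \chi_p$ with each $\chi_p$ induced from $\chi'_p$ modulo $p^{\beta'_p}$, the Chinese Remainder Theorem gives $\tau(\chi,n) = \prod_p \tau(\chi_p, \tilde{n}_p)$, where $\tilde{n}_p \equiv n \cdot \overline{q/p^{\beta_p}} \shortmod{p^{\beta_p}}$; since the inverse is coprime to $p$, we have $(\tilde{n}_p, p^{\beta_p - \beta'_p}) = (n, p^{\beta_p - \beta'_p})$, so the right-hand side of \eqref{eq:tauchinUpperBound} factors identically. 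This reduces the bound to prime-power moduli $q = p^\beta$, $q' = p^{\beta'}$, $0 \leq \beta' \leq \beta$.

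In the prime-power case, I would partition $x \shortmod{p^\beta}$ as $x = x_0 + p^{\beta'} y$ with $x_0 \shortmod{p^{\beta'}}$ and $y \shortmod{p^{\beta-\beta'}}$; since $\chi'$ has period $p^{\beta'}$, the two variables separate. The $y$-sum becomes the geometric sum $\sum_y e_{p^{\beta-\beta'}}(ny)$, which vanishes unless $p^{\beta-\beta'} \mid n$ and otherwise contributes $p^{\beta-\beta'} = (n, p^{\beta-\beta'})$. The remaining $x_0$-sum collapses to $\tau(\chi', n/p^{\beta-\beta'})$, of size at most $(p^{\beta'})^{1/2}$ when $\beta' \geq 1$ (by the primitive Gauss sum identity) or at most $1$ when $\beta' = 0$ (the classical Ramanujan sum bound). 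The only (mild) obstacle is this last case, $\beta' = 0$, where $\chi$ is the principal character modulo $p^\beta$ and the primitive-character identity does not apply; but a direct inclusion-exclusion evaluation of the surviving Ramanujan sum gives the matching bound.
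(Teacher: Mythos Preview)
Your argument is correct and essentially matches the paper's: the exact formula is obtained by the standard M\"obius-inversion computation (which is precisely what the cited \cite[Lem.\ 3.2]{IK} does), and the paper then says \eqref{eq:tauchinUpperBound} ``follows easily'' from it when $q' \neq 1$, handling $q'=1$ separately as the Ramanujan sum $S(n,0;q)$. Your CRT reduction to prime powers and direct evaluation there is a clean way to execute that ``follows easily'' step; it bypasses the exact formula for the bound and instead computes $\tau(\chi,n)$ directly at each prime, but this is the same content and the same level of difficulty, with your prime-by-prime case $\beta'=0$ playing exactly the role of the paper's global case $q'=1$.
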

Remark.  \cite[Lem.\ 3.2]{IK} is relevant but has misprints, so we have included a proof.
\begin{proof}

By M\"obius inversion, 
\begin{equation*}
\tau(\chi,n)= \sum_{d|\frac{q}{q'}} \mu\Big(\frac{q}{dq'}\Big) \chi'\Big(\frac{q}{dq'}\Big)
\sum_{y \shortmod{q'd}} \chi'(y) e_{q' d}(ny)
.
\end{equation*}
Changing variables $y \rightarrow y + q'$ shows that the inner sum over $y$ vanishes unless $d|n$, in which case the sum over $y$ is a Gauss sum for $\chi'$ repeated $d$ times.  It is well-known that 
$$\sum_{y \shortmod{q'}} \chi'(y) e_{q'}(my) = \overline{\chi'}(m) \tau(\chi'),$$
valid for all $m \in \mz$.  This gives \eqref{eq:tauchinExactFormula}. 
 Finally, \eqref{eq:tauchinUpperBound} follows easily from \eqref{eq:tauchinExactFormula}.
\end{proof}

\begin{mycoro}\label{cor:GaussSums}
 Suppose $\chi$ is a character of prime power modulus $q=p^{\beta}$, $\beta \geq 1$ and conductor $q'$.  Let $n$ be an integer.  Then $\tau(\chi) \tau(\chi,n) = 0$ except when the following conditions hold:
\begin{enumerate}
 \item If $q'=q$ and $(n,q) = 1$. 
 \item If $q'=1$ and $q=p$.
\end{enumerate}
\end{mycoro}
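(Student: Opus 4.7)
The plan is to apply Lemma \ref{lemma:GaussSums} directly, using the prime-power hypothesis to exploit the vanishing of the Möbius function at prime powers $p^k$ with $k\geq 2$.

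Write $q = p^\beta$ and $q' = p^{\beta'}$ with $0 \leq \beta' \leq \beta$, so $q/q' = p^{\beta-\beta'}$. By Lemma \ref{lemma:GaussSums} we have
\[\tau(\chi) = \mu(p^{\beta-\beta'})\,\chi'(p^{\beta-\beta'})\,\tau(\chi'),\]
and since $\mu(p^k) = 0$ for $k \geq 2$, this forces $\tau(\chi) = 0$ unless $\beta - \beta' \in \{0,1\}$. Three cases remain: (a) $\beta' = \beta$ (i.e.\ $\chi$ is primitive); (b) $\beta' = \beta-1$ with $\beta'\geq 1$; (c) $\beta' = 0$ and $\beta = 1$. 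In case (b), the primitive character $\chi'$ is modulo $p^{\beta'}$ with $\beta' \geq 1$, so $\chi'(p) = 0$ and hence $\tau(\chi) = 0$. Thus we are reduced to cases (a) and (c).

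In case (a), formula \eqref{eq:tauchinExactFormula} collapses to $\tau(\chi,n) = \tau(\chi')\overline{\chi'}(n)$, which is nonzero precisely when $(n,q) = 1$. This yields exactly condition (1). In case (c), $\chi$ is the trivial character modulo $p$, so $\tau(\chi,n) = \sum_{x \in (\Z/p\Z)^\times} e_p(nx)$, which equals $p-1$ if $p \mid n$ and $-1$ otherwise; in either event nonzero. Since $\tau(\chi) = -1 \neq 0$ as well, $\tau(\chi)\tau(\chi,n)$ is always nonzero, giving condition (2).

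There is no real obstacle here; the argument is a short bookkeeping exercise once one observes that the Möbius factor in the formula of Lemma \ref{lemma:GaussSums} eliminates all but a handful of conductor configurations, and that among those the factor $\chi'(q/q')$ kills the remaining nontrivial intermediate case. The only subtlety is remembering that when $q' = 1$ the primitive $\chi'$ is the trivial character modulo $1$, for which $\chi'(p) = 1$, so case (c) genuinely survives.
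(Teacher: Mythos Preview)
Your proof is correct and follows essentially the same route as the paper's: both use the formula $\tau(\chi)=\mu(q/q')\chi'(q/q')\tau(\chi')$ from Lemma~\ref{lemma:GaussSums} (together with the Ramanujan-sum identity $\tau(\chi_0)=S(1,0;q)$ in the trivial-character case) to kill $\tau(\chi)$ unless $q'=q$ or $(q',q)=(1,p)$, and then analyze $\tau(\chi,n)$ in the surviving cases. The paper's version is simply terser, dispatching the intermediate range $1<q'<q$ in one line rather than separating your case~(b) from the $\mu$-vanishing.
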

\begin{proof}
 If $1 < q' < q$, then $\tau(\chi)=0$.  If $q'=q$, then $\tau(\chi,n) = 0$ unless $(n,q) = 1$.  If $q'=1$ then $\tau(\chi) = S(1,0;q)$, which vanishes unless $q=p$.
 \end{proof}

\subsection{Approximate functional equation for a divisor function times a character}
There are various ways to solve a shifted convolution/divisor problem, including the circle method, the delta symbol method, and via inner products with Poincare series.  
Here we prove a generalized form of \cite[Lem.\ 5.4]{Young4th}, which will be convenient for our purposes.
\begin{mylemma}
\label{lemma:divisorfunctionAFE}
Let $\chi$ be a primitive Dirichlet character modulo $q$.  
Let $G(s)$ be an even entire holomorphic function with rapid decay in vertical strips, satisfying $G(0) = 1$ (e.g. $G(s) = \exp(s^2)$).
Then
 \begin{equation}
 \label{eq:divisorfunctionAFE}
 \tau(n) \chi(n) = 
 \frac{2}{\tau(\overline{\chi})} \sum_{c=1}^{\infty} \frac{\chi(c)}{c}
 f\Big(\frac{c}{\sqrt{n}}\Big)
 \sumstar_{r \shortmod{c q}} \overline{\chi}(r) 
 e_{cq}(nr)
,
\end{equation}
where
\begin{equation*}
 f(x) = \frac{1}{2 \pi i} \int_{(1)} x^{-2s} L(1+2s, \chi_{0,q}) \frac{G(s)}{s} ds,
\end{equation*}
and where $\chi_{0,q}$ denotes the trivial character modulo $q$.
\end{mylemma}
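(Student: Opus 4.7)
The plan is to verify the identity by evaluating the right-hand side directly via Mellin inversion and an explicit character-sum computation. First, I would substitute the integral definition of $f$ into the right-hand side and interchange the $c$-summation with the $s$-integral; absolute convergence on the line $\real(s)=1$ (using the trivial bound $|\sumstar_{r\shortmod{cq}} \overline{\chi}(r)e_{cq}(nr)| \ll cq$) justifies the exchange and transforms the right-hand side into
\begin{equation*}
\frac{2}{\tau(\overline{\chi})}\cdot\frac{1}{2\pi i}\int_{(1)} n^{s}\frac{G(s)}{s} L(1+2s,\chi_{0,q})\, D(s)\, ds,
\end{equation*}
where $D(s):=\sum_{c\geq 1}\chi(c) c^{-1-2s}\sumstar_{r\shortmod{cq}}\overline{\chi}(r) e_{cq}(nr)$.

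The heart of the proof is the evaluation of $D(s)$ in closed form. Since $\chi(c)=0$ unless $(c,q)=1$, the Chinese remainder theorem factors the inner sum into a Ramanujan sum $c_c(n)$ times (after the linear change of variables $r_2\mapsto r_2 c$ in the residue class modulo $q$) a factor of $\overline{\chi}(c)\,\tau(\overline{\chi},n)$. Lemma \ref{lemma:GaussSums} applied to the primitive character $\overline{\chi}$ gives $\tau(\overline{\chi},n)=\chi(n)\tau(\overline{\chi})$ when $(n,q)=1$ and zero otherwise; consequently, both sides of the stated identity vanish when $(n,q)>1$, so I may assume $(n,q)=1$ henceforth. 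Using $\chi(c)\overline{\chi}(c)=1$ and the standard evaluation
\begin{equation*}
\sum_{\substack{c\geq 1\\(c,q)=1}}\frac{c_c(n)}{c^{w}} = \frac{\sigma_{1-w}(n)}{L(w,\chi_{0,q})} \qquad \text{for } (n,q)=1,
\end{equation*}
(obtained from $\sum_c c_c(n)/c^w=\sigma_{1-w}(n)/\zeta(w)$ by removing the Euler factors at primes dividing $q$), I arrive at $D(s)=\tau(\overline{\chi})\chi(n)\sigma_{-2s}(n)/L(1+2s,\chi_{0,q})$. After substitution, the $L$-factor cancels and so does the $\tau(\overline{\chi})$ in the prefactor.

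It remains to evaluate $2\chi(n)\cdot\frac{1}{2\pi i}\int_{(1)} n^{s}\sigma_{-2s}(n) G(s)/s\, ds$. Writing $n^{s}\sigma_{-2s}(n)=\sum_{ab=n}(b/a)^{s}$ reduces this to $2\chi(n)\sum_{ab=n} V(a/b)$, where $V(x):=\frac{1}{2\pi i}\int_{(1)} x^{-s} G(s)/s\, ds$. A contour shift to $\real(s)=-1$ picks up the simple pole at $s=0$ with residue $G(0)=1$, and the remaining integral on $\real(s)=-1$ equals the negative of the original by the substitution $s\mapsto -s$ together with the evenness of $G$; hence $V(x)$ equals $1$, $\thalf$, or $0$ according as $x<1$, $x=1$, or $x>1$. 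Pairing divisors $(a,b)\leftrightarrow (b,a)$ then gives $\sum_{ab=n}V(a/b)=\thalf\tau(n)$, and the right-hand side evaluates to $\chi(n)\tau(n)$, as desired. The main technical obstacle is the prime-by-prime character-sum factorization in the second paragraph; the rest is routine Mellin analysis.
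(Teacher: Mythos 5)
Your proof is correct, but it takes a genuinely different route from the paper's. The paper proves a more general functional-equation formula for the two-character divisor series $\lambda_{\chi_1,\chi_2}(n,s)=\sum_{ab=n}\chi_1(a)\overline{\chi_2}(b)(b/a)^{s-1/2}$, opens it as an additive-character sum (equation \eqref{eq:lambdaAdditiveCharacterFormula}), and obtains the factor $2$ by splitting a contour integral using the functional equation $\lambda_{\chi_1,\chi_2}(n,1-s)=\lambda_{\overline{\chi_2},\overline{\chi_1}}(n,s)$; specializing $\chi_1=\chi$, $\chi_2=\overline\chi$ gives the lemma. You instead verify the identity directly by evaluating the right-hand side: the CRT factorization $\sumstar_{r\bmod cq}\overline\chi(r)e_{cq}(nr)=c_c(n)\,\overline\chi(c)\,\tau(\overline\chi,n)$ and the classical evaluation $\sum_{(c,q)=1}c_c(n)/c^w=\sigma_{1-w}(n)/L(w,\chi_{0,q})$ collapse the $c$-sum in closed form, after which the $L$-factors and Gauss sums cancel and what remains is a Mellin integral that produces $\tfrac12\tau(n)$ by divisor-pairing. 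Your argument is more self-contained and elementary; the paper's buys a formula valid for two distinct primitive characters $\chi_1,\chi_2$, which is likely why they chose it (it is reused in the Eisenstein context). Both are fine here.

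One small slip to correct: you assert ``hence $V(x)$ equals $1$, $\thalf$, or $0$ according as $x<1$, $x=1$, or $x>1$.'' For a smooth, rapidly-decaying even $G$ with $G(0)=1$ (such as $\exp(s^2)$), $V$ is not a sharp cutoff; what the contour shift to $\real(s)=-1$ and the substitution $s\mapsto -s$ actually give is the exact reflection identity $V(x)+V(1/x)=1$ (and in particular $V(1)=\thalf$). Fortunately your pairing argument only uses this reflection identity, not the cutoff property, so the conclusion $\sum_{ab=n}V(a/b)=\thalf\tau(n)$ and hence the lemma still follow. You should simply replace the false cutoff claim with the statement $V(x)+V(1/x)=1$.
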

Remarks.    
\begin{enumerate}
\item The proof  of Lemma \ref{lemma:divisorfunctionAFE} gives an even more general formula than \eqref{eq:divisorfunctionAFE}. 
\item  It turns out to be highly convenient that $c$ runs over integers coprime to $q$. \item  It is not hard to check that $f(x)$ is smooth for $x > 0$, and satisfies the bound
\begin{equation}
\label{eq:fbound}
x^j f^{(j)}(x) \ll_{j,\varepsilon, A} 
\frac{x^{-\varepsilon} q^{\varepsilon}}{(1+x)^{A}}. 
\end{equation}
\end{enumerate}

\begin{proof}
 For Dirichlet characters $\chi_1, \chi_2$ to moduli $q_1, q_2$ respectively, define
 \begin{equation*}
 \lambda_{\chi_1, \chi_2}(n,s) :=\sum_{ab=n} \chi_1(a) \overline{\chi_2}(b) \Big(\frac{b}{a}\Big)^{s-\frac12},
\end{equation*}
 where the notation matches that in \cite{YoungEisenstein}.  Observe $\chi(n) \tau(n) = \lambda_{\chi, \overline{\chi}}(n, 1/2)$, and
note the functional equation
\begin{equation}
\label{eq:lambdaFE}
 \lambda_{\chi_1, \chi_2}(n,1-s) = \lambda_{\overline{\chi_2}, \overline{\chi_1}}(n, s).
\end{equation}
Now suppose $\chi_1,\chi_2$ are primitive, and observe that 
 \begin{equation*}
  \lambda_{\chi_1, \chi_2}(n,s)
  = \frac{n^{s-\frac12}}{\tau(\chi_2)} \sum_{c=1}^{\infty} \frac{\chi_1(c)}{c^{2s}} \sum_{r \shortmod{c q_2}} \chi_2(r) e_{c q_2}(nr),
 \end{equation*}
by splitting the sum over $r$ into residue classes modulo $q_2$. Next we
  factor out $a= \gcd(c,r)$ and change variables $c \rightarrow ac$ and $r \rightarrow ar$, giving 
\begin{equation}
\label{eq:lambdaAdditiveCharacterFormula}
 \lambda_{\chi_1, \chi_2}(n,s)
 = \frac{n^{s-\frac12}}{\tau(\chi_2)} L(2s, \chi_1 \chi_2) \sum_{c=1}^{\infty} \frac{\chi_1(c)}{c^{2s}} \sumstar_{r \shortmod{c q_2}} \chi_2(r) 
 e_{cq_2}(nr).
\end{equation}

Consider
\begin{equation*}
 \frac{1}{2 \pi i} \int_{(1)} \lambda_{\chi_1, \chi_2}(n, s+\tfrac12) \frac{G(s)}{s} ds.
\end{equation*}
Shifting the contour to $\real(s)=-1$, applying \eqref{eq:lambdaFE}, and changing variables $s \rightarrow -s$  gives
\begin{equation}
\label{eq:lambdachi1chi2formula}
 \lambda_{\chi_1,\chi_2}(n,1/2)  = 
 \frac{1}{2 \pi i} \int_{(1)} \lambda_{\chi_1, \chi_2}(n, s+\tfrac12) \frac{G(s)}{s} ds
+
\frac{1}{2 \pi i} \int_{(1)} \lambda_{\overline{\chi_2}, \overline{\chi_1}}(n, s+\tfrac12) \frac{G(s)}{s} ds.
 \end{equation}
Since both $\chi_1, \chi_2$ are primitive, we may insert \eqref{eq:lambdaAdditiveCharacterFormula} into the two integrals.  The first term in \eqref{eq:lambdachi1chi2formula} then equals
\begin{equation*}
 \frac{1}{\tau(\chi_2)} \sum_{c=1}^{\infty} \frac{\chi_1(c)}{c}
 \sumstar_{r \shortmod{c q_2}} \chi_2(r) e_{cq_2}(nr) 
 f\Big(\frac{c}{\sqrt{n}}\Big).
\end{equation*}
where
\begin{equation*}
 f(x) = \frac{1}{2 \pi i} \int_{(1)} x^{-2s} L(1+2s, \chi_1 \chi_2) \frac{G(s)}{s} ds,
\end{equation*} 
and the second term is similar.
The lemma then follows, taking $\chi_1 = \chi$, $\chi_2 = \overline{\chi}$.  
\end{proof}

\subsection{The large sieve inequality}
Let us denote by \begin{equation}\label{not:intleqT} \int_{* \leq T} \quad \text{ any of }  \quad \sum_{|t_j| \leq T},\, \sum_{k \leq T}, \,\text{ or }  \int_{|t| \leq T}\,dt\end{equation} according to whether $* = it_j,k,$ or $it, {\rm Eis}$.  
\begin{mylemma}[Spectral large sieve]
\label{lemma:largesieve}
For any sequence of complex numbers $a_n$, we have
\begin{equation*}
\int_{* \leq T} \sum_{\pi \in \mathcal{H}_{*}(q)} \Big| \sum_{n \leq N} a_n \lambda_{\pi}(n) \Big|^2\ll_\eps (T^2 q + N)(qTN)^{\varepsilon} \sum_{n \leq N} |a_n|^2.
  \end{equation*}
 \end{mylemma}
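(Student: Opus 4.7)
The plan is to prove this via duality and the Bruggeman–Kuznetsov formula (Theorem \ref{thm:KuznetsovTraceFormula}), following the classical approach of Deshouillers and Iwaniec. First I would fix a nonnegative smooth majorant $h$ of the indicator of the spectral window of size $T$ on the appropriate axis, with the property that its Kloosterman-side transform $\Phi$ is well-controlled (typically supported essentially on $[T^{-1}, T^{O(1)}]$ with polynomial derivative bounds). Opening the square and bounding the spectral indicator by $h$ yields
\[
\int_{*\leq T} \sum_{\pi \in \mathcal{H}_*(q)} \Big| \sum_n a_n \lambda_\pi(n) \Big|^2 \ll \sum_{m,n \leq N} a_m \overline{a_n} \sum_{\pi} h(t_\pi) \lambda_\pi(m) \overline{\lambda_\pi(n)},
\]
where the enlarged spectral sum on the right ranges over the full orthogonal basis at level $q$ (including oldforms), so that Kuznetsov applies directly.

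Applying Theorem \ref{thm:KuznetsovTraceFormula} at the cusps $\fa = \fb = \infty$ with trivial character transforms the inner spectral kernel into a diagonal term proportional to $\delta_{m=n}$ weighted by the archimedean mass, plus a sum over Kloosterman sums $S(m,n;c)$ with $q \mid c$, weighted by the geometric transform $\Phi$. The diagonal term contributes $\|a\|_2^2$ times the local spectral density; by Weyl's law for $\Gamma_0(q)\backslash \cH$ and the volume factor $V(q) \asymp q$ recalled in Section \ref{section:bkformula}, this density is $\asymp T^2 q$, producing the $T^2 q \|a\|_2^2$ piece of the bound.

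For the off-diagonal Kloosterman contribution, I would apply Weil's bound $|S(m,n;c)| \leq \tau(c) c^{1/2} (m,n,c)^{1/2}$, then sum over $q \mid c$ and $m,n \leq N$ using the support and smoothness of $\Phi$. Distributing the $(m,n,c)^{1/2}$ by factoring $(m,n,c) = de$ and applying Cauchy–Schwarz in the resulting bilinear form yields $O(N^{1+\varepsilon}\|a\|_2^2)$ after the $c$ integral is estimated using the rapid decay of $\Phi$ in its natural range. Combining the diagonal and off-diagonal bounds yields $(T^2 q + N)(qTN)^\varepsilon \|a\|_2^2$. The holomorphic and Eisenstein cases are handled analogously, using the Petersson formula (with its appropriate geometric transform $\mathcal{L}^{\rm hol}$) for weight-$k$ forms and the standard orthogonality of the Eisenstein spectrum (together with a convexity bound on $L(s, \pi)$ at $s=1$ to bound the residual contributions from $\mathscr{L}_\pi^*(1)^{-1}$).

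The main obstacle is the technical construction of $h$ and the careful bookkeeping on the Kloosterman side to achieve the additive separation $T^2 q + N$ rather than a multiplicative bound $T^2 q N$. Since this is essentially the classical Deshouillers–Iwaniec spectral large sieve, with the conductor $q$ playing the role of the level, I would adapt their argument directly, taking care with the normalizations of Hecke eigenvalues and Fourier coefficients fixed by Theorem \ref{thm:fe}.
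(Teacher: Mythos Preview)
The paper does not give a proof of this lemma; it is stated as a known result, with the standard reference being Deshouillers--Iwaniec \cite{DeshouillersIwaniec} (already in the bibliography). Your outline is essentially the classical Deshouillers--Iwaniec argument and is the correct route.

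A few remarks on your sketch. First, Theorem~\ref{thm:KuznetsovTraceFormula} as stated in the paper runs from a sum of Kloosterman sums to a spectral expansion; for the large sieve you need the opposite direction (spectral weights $h$ fed in, Kloosterman sums coming out). This reverse form is of course equally standard, but you should either cite it separately or explain the inversion of the Bessel/Kuznetsov transforms. Second, your off-diagonal bound ``Weil plus Cauchy--Schwarz gives $O(N^{1+\varepsilon}\|a\|_2^2)$'' is where essentially all of the content lies; as you correctly flag at the end, obtaining the additive shape $T^2 q + N$ rather than the trivial product requires the specific smoothing of $h$ and the careful treatment of small $c$ in \cite{DeshouillersIwaniec}. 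Finally, since the statement is for newforms $\pi \in \mathcal{H}_*(q)$ with Hecke eigenvalues $\lambda_\pi(n)$, you need the passage from the Fourier-coefficient form of the large sieve (which Kuznetsov gives directly for a full orthonormal basis) to Hecke eigenvalues of newforms; this uses the basis described in Section~\ref{section:basischoice} together with \eqref{eq:fcheckeevals} and \eqref{eq:HLIw}, and costs only a $(qT)^\varepsilon$.
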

\begin{proof}
 The spectral sum on the left hand side is estimated by
 \cite[Thm.\ 2]{DeshouillersIwaniec}, but with weights $\mathscr{L}_\pi^*(1)^{-1}$. These weights may be absorbed into the factor $(qT)^{\varepsilon}$ on the right hand side.  
 \end{proof}

\subsection{Additional spectral bounds} 

\begin{mylemma}
\label{lemma:spectralboundOneFourierCoefficient}
Suppose $(q_1, q_2) = 1$ and $(n,q_1 q_2) = 1$.  Then
\begin{equation}
\label{eq:SpectralBoundOneFourierCoefficient}
\sum_{|t_j| \leq T}
\psum_{\psi \shortmod{q_2}}
\sum_{\pi \in \mathcal{H}_{it_j}(q_1 q_2, \psi)}
|\lambda_{\pi}(n)|^2
\ll (T^2 q_1 q_2^2 + n^{1/2} q_2^{1/2}) (n T q_1 q_2)^{\varepsilon},
\end{equation}
where the $+$ indicates that the sum runs over even Dirichlet characters $\psi$.
\end{mylemma}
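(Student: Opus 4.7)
The plan is to apply the Bruggeman--Kuznetsov trace formula at level $q_1q_2$ with central character $\psi$, then sum over even $\psi \shortmod{q_2}$ to exploit character orthogonality in the resulting sum of Kloosterman sums. The first stated term $T^2 q_1 q_2^2$ will emerge from the diagonal (Plancherel) contribution, which picks up a factor of $\varphi(q_2) \asymp q_2$ over what the single-character spectral large sieve would give; the second term $n^{1/2}q_2^{1/2}$ will emerge from the character-averaged off-diagonal and reflects square-root cancellation among the Kloosterman sums $\{S_\psi(n,n;c)\}_\psi$ at common modulus $c$. Note that a direct application of Lemma \ref{lemma:largesieve} at level $q_1q_2$ summed over characters gives only $T^2 q_1 q_2^2 + q_2 n$, so the nontrivial work is improving $q_2 n$ to $(q_2 n)^{1/2}$.

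First I would majorize the left side by $\sum_{t_j} h(t_j) \psum_\psi \sum_\pi |\lambda_\pi(n)|^2$ for a smooth nonnegative weight $h$ with $h(t) \gg 1$ on $[-T,T]$ and rapid decay. For each even $\psi \shortmod{q_2}$, extended trivially to a character of $(\Z/q_1q_2\Z)^\times$, apply Bruggeman--Kuznetsov at level $q_1q_2$, central character $\psi$, cusp pair $(\infty,\infty)$, with $m=n$. Using $|\rho_f(n)|^2 = \mathscr{L}_\pi^*(1)^{-1}|\lambda_\pi(n)|^2$ for newforms together with Hoffstein--Lockhart \eqref{eq:HLIw}, and discarding the nonnegative Eisenstein and holomorphic contributions, this yields
\begin{equation*}
\sum_{t_j} h(t_j) \sum_{\pi \in \mathcal{H}_{it_j}(q_1q_2,\psi)} |\lambda_\pi(n)|^2 \ll (q_1q_2T)^{\eps}\, V(q_1q_2) \Big[ C_h + \Big|\sum_{c \equiv 0\,\shortmod{q_1q_2}} \frac{S_\psi(n,n;c)}{c}\check{h}_n(c)\Big|\Big],
\end{equation*}
where $C_h \asymp T^2$ is the Plancherel term and $\check{h}_n$ is the Bessel transform of $h$. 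Summing the Plancherel piece over the $\asymp q_2$ characters $\psi$ gives $\ll q_1q_2 \cdot q_2 T^2 = q_1q_2^2 T^2$, as required.

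For the off-diagonal, since $q_2 \mid c$ the character $\psi$ factors through $(\Z/q_2\Z)^\times$, and character orthogonality gives
\begin{equation*}
\psum_{\psi} S_\psi(n,n;c) = \frac{\varphi(q_2)}{2} \sum_{\substack{x\,\shortmod{c},\,(x,c)=1\\ x \equiv \pm 1\,\shortmod{q_2}}} e_c\bigl(n(x + \bar{x})\bigr).
\end{equation*}
Parametrizing $x = \pm 1 + q_2 y$ with $y$ modulo $c/q_2$ and expanding $\bar{x} \shortmod{c}$ as a Taylor series in powers of $q_2$ produces an incomplete exponential sum of length $c/q_2$ with a rational-function phase in $y$. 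Completing this sum in additive characters of modulus $c/q_2$ and applying Weil's bound for the resulting Kloosterman sums yields the uniform estimate $|\psum_\psi S_\psi(n,n;c)| \ll (nq_2c)^{\eps}(q_2c)^{1/2}$. Substituting this bound and summing $c = q_1q_2c'$ over the effective support of $\check{h}_n$ (namely $c \ll n$, with $\check{h}_n(c)$ of size $O(1)$ there) reduces the off-diagonal to $\sum_{c' \ll n/(q_1q_2)} (q_1c')^{-1/2} \ll q_1^{-1}(n/q_2)^{1/2}$; multiplying by $V(q_1q_2) \asymp q_1q_2$ gives the target $n^{1/2}q_2^{1/2}$.

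The main obstacle is the uniform square-root cancellation in the character-averaged Kloosterman sum: after completion one must bound a Weil-type sum in which the parameter $n$ interacts with the modulus $c/q_2$, and although the hypothesis $(n,q_1q_2)=1$ simplifies the primes dividing $q_1q_2$, the factor $(n,c')$ must be tracked and absorbed into $(nc)^{\eps}$ after summation over $c'$. A secondary technical point is verifying that the Bessel transform $\check{h}_n$ has the claimed effective support and size without leaking an extra factor of $T$; this is handled by a careful choice of $h$ and standard estimates for the $\mathcal{L}^{\pm}$ transforms, adapting the analysis underlying the proof of Lemma \ref{lemma:largesieve} to the character-averaged setting.
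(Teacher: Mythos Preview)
Your overall strategy---Bruggeman--Kuznetsov at level $q_1 q_2$ with central character $\psi$, average over even $\psi \pmod{q_2}$, and use orthogonality to reduce the Kloosterman side to the congruence $x \equiv \pm 1 \pmod{q_2}$---is exactly the paper's, and your treatment of the diagonal term is fine.

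The gap is in the off-diagonal analysis. The step ``expand $\bar{x} \pmod{c}$ as a Taylor series in powers of $q_2$'' is invalid: the geometric series $(\pm 1 + q_2 y)^{-1} = \pm 1 \mp q_2 y + \cdots$ converges $p$-adically only at primes dividing $q_2$, whereas at primes $p \mid c$ with $p \nmid q_2$ the element $q_2$ is a unit and no such expansion exists modulo $p$. Consequently your sum over $y \pmod{c/q_2}$ is already a complete sum with no global rational phase, and the ``completion plus Weil'' step has no content. In fact the pointwise bound $\bigl|\psum_\psi S_\psi(n,n;c)\bigr| \ll (q_2 c)^{1/2+\eps}$ is false whenever the $q_2$-part of $c$ exceeds $q_2$: for $c = c_1 q_2^2$ with $(c_1, q_2)=1$ one checks directly that $x + \bar{x} \equiv \pm 2 \pmod{q_2^2}$ for every $x \equiv \pm 1 \pmod{q_2}$, so there is no cancellation on that block and the averaged sum has genuine size $\asymp q_2^2 c_1^{1/2}$, not $q_2^{3/2} c_1^{1/2}$. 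The paper instead factors $c = c_1 c_2$ by the Chinese remainder theorem with $c_2 \mid q_2^{\infty}$ and $(c_1, q_2) = 1$; the constrained sum then splits as an ordinary Kloosterman sum modulo $c_1$ (Weil bound $\ll c_1^{1/2+\eps}(n,c_1)^{1/2}$) times a sum over $c_2/q_2$ residue classes bounded trivially. This gives the correct bound $c_1^{1/2+\eps}(n,c_1)^{1/2} c_2$, which agrees with your claim precisely when $c_2 = q_2$, the dominant term in the $c$-sum. Replacing your Taylor/completion step with this CRT factorization, and handling the Bessel weight via $B(x) \ll \min(V, x/V)$ as in \eqref{eq:Bbound} rather than the crude ``$O(1)$ on $c \ll n$'', repairs the argument and yields the stated estimate.
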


Remarks.  The case $q_1 = q_2 = 1$ can be found in \cite[Lem.\ 2.4]{Motohashi}, and the case $q_2 = 1$ is a special case of \cite[Lem.\ 12]{BlomerMili}.
The idea is to use the Bruggeman-Kuznetsov formula together with the Weil bound.  We have not stated the analogous bounds for holomorphic forms or Eisenstein series, since these cases follow immediately from $|\lambda_f(n)| \leq \tau(n)$ which is Deligne's bound in the holomorphic case, and directly established for Eisenstein series.

\begin{proof}
Weighting by $\mathscr{L}^*_\pi(1)$ and extending the newforms to an orthogonal basis of $S_{it_j}(q_1q_2,\psi)$ in an arbitrary way, we have by \eqref{eq:HLIw}, \eqref{eq:fcheckeevals} and positivity that the left hand side of \eqref{eq:SpectralBoundOneFourierCoefficient} is 
\begin{equation}\label{eq:onepf1} \ll (q_1q_2T)^{o(1)} \sum_{|t_j| \leq T}
\psum_{\psi \shortmod{q_2}}
\sum_{\phi \in \mathcal{B}_{it_j}(q_1q_2,\psi)} |\rho_\phi(n)|^2.
\end{equation}
Next we extend the sum over $t_j$ in \eqref{eq:onepf1} to the whole spectrum and insert the following smooth weights. To capture $t_j \ll (T q_1 q_2)^{\varepsilon}$, we attach the weight function $h_V(t) = (t^2 + 1/4) \exp(-t^2/V^2)$, with $V= (T q_1 q_2)^{\varepsilon}$ to the spectrum. For $t_j \gg (T q_1q_2)^\eps$ we attach a sum of weights of the form $h_{U,V}(t) = \sum_{\pm} \exp(-(\pm t-U)^2/V^2)$, with $(T q_1 q_2)^{\varepsilon} \ll U \ll T$ and $V = U^{1-\varepsilon}$.
We then apply the Bruggeman-Kuznetsov formula, showing that \eqref{eq:onepf1} is bounded by a sum of expressions of the form
\begin{equation*}
\psum_{\psi \shortmod{q_2}} (UV q_1 q_2 + q_1 q_2 K_{\psi}) (q_1 q_2 T)^{\varepsilon},
\end{equation*}
where
\begin{equation*}
K_{\psi} = \sum_{c \equiv 0 \shortmod{q_1 q_2}} c^{-1} S_{\psi}(n,n;c) B\Big(\frac{4\pi n}{c}\Big),
\qquad
S_{\psi}(m,n;c) = \sum_{y \shortmod{c}} \overline{\psi}(y) e_c(ym + \overline{y} n),
\end{equation*}
and $B(x)$ is the Bessel transform of either $h_V$ or $h_{U,V}$ that appears in the Bruggeman-Kuznetsov formula (see \cite[(9.10)]{IwaniecSpectralBook}).

For a spectral weight function of the type $h_{U,V}$, Jutila and Motohashi \cite{JutilaMotohashi}  showed the bound $B(x) \ll x^{-1/2} U^2$ for $x \gg U^{2+\varepsilon}$, and that $B(x)$ is very small otherwise.   For the case of $h_V$, one may also easily show two crude bounds as follows.  One simple bound is $B(x) \ll V^4$, using the easy bound $\frac{|J_{2it}(x)|}{\cosh(\pi t)} \ll 1$ and  which follows from the integral representation \cite[8.411.4]{GR}.  Hence $B(x) \ll V^4 \ll V (q_1 q_2 T)^{\varepsilon}$.  We also claim
 $B(x) \ll x V^{C}$ for some fixed $C>0$, which can be derived by shifting contours to the line $\real(2it) = 1$, in the integral representation \cite[(9.10)]{IwaniecSpectralBook}, and bounding the integral trivially (one can find more details in \cite[Pf.\ of Lem.\ 10.2]{PetrowYoung}).  Altogether, we derive the bound
\begin{equation}
\label{eq:Bbound}
 B(x) \ll (q_1 q_2 T)^{\varepsilon} \min\Big(V, \frac{x}{V}\Big),
\end{equation}
valid for both classes of test functions $h_{U,V}$ or $h_{V}$.

It suffices to bound the contribution from $K_{\psi}$.  We have
\begin{equation*}
K:=\sumprime_{\psi \shortmod{q_2}} K_{\psi} = 
\sum_{c \equiv 0 \shortmod{q_1 q_2}} c^{-1} B\Big(\frac{4\pi n}{c}\Big)
 \sum_{\psi \shortmod{q_2}} \tfrac12 (1+ \psi(-1)) 
\sum_{y \shortmod{c}} \overline{\psi}(y) e_c(y n + \overline{y} n).
\end{equation*}
The sum over $\psi$ detects the condition $y \equiv \pm 1 \pmod{q_2}$, giving
\begin{equation*}
K=\tfrac12 \varphi(q_2)
\sum_{\pm}
\sum_{c \equiv 0 \shortmod{q_1 q_2}} c^{-1} B\Big(\frac{4\pi n}{c}\Big)
\sum_{\substack{y \shortmod{c} \\ y \equiv \pm 1 \shortmod{q_2}}}  e_c(y n + \overline{y} n).
\end{equation*}
Write $c=c_1 c_2$ where $c_2 |q_2^{\infty}$ and $(c_1, q_2) = 1$.  
We claim
\begin{equation}
\label{eq:KloostermanSumSubsumWithCongruence}
\sum_{\substack{y \shortmod{c} \\ y \equiv \pm 1 \shortmod{q_2}}}  e_c(y n + \overline{y} n) \ll \tau(c_1) c_1^{1/2} (n, c_1)^{1/2} \frac{c_2}{q_2},
\end{equation}
as we now show.
The sum \eqref{eq:KloostermanSumSubsumWithCongruence} factors as $S_1 S_2$ where
\begin{equation*}
S_1 = \sumstar_{y \shortmod{c_1}} e_{c_1}(yn\overline{c_2} + \overline{y} n \overline{c_2}), \qquad 
S_2 = \sum_{\substack{y \shortmod{c_2} \\ y \equiv \pm 1 \shortmod{q_2}}}  e_{c_2}(y n\overline{c_1} + \overline{y} n \overline{c_1}).
\end{equation*}
By a trivial bound, we have $S_2 \ll \frac{c_2}{q_2}$, while $S_1 = S(n\overline{c_2}, n\overline{c_2};c_1)$ is the usual Kloosterman sum.  The Weil bound completes the proof of the claim.  

Therefore, we have
\begin{equation*}
K
\ll 
\sum_{\substack{c_2 \equiv 0 \shortmod{q_2} \\ c_2|q_2^{\infty}}}
\sum_{\substack{c_1 \equiv 0 \shortmod{q_1} \\ (c_1, q_2) = 1}}
 c_1^{-1/2+\varepsilon} (n, c_1)^{1/2}  \Big|B\Big(\frac{4\pi n}{c_1 c_2}\Big)\Big| 
\ll \frac{n^{1/2+\varepsilon} q_2^{1/2} (q_1 q_2 T)^{\varepsilon}}{q_1 q_2},
\end{equation*}
using \eqref{eq:Bbound},
which completes the proof.
\end{proof}

\begin{mytheo}
\label{thm:fourthmomentSpectralBound}
Suppose $q=q_1 q_2$ with $(q_1, q_2) = 1$ and $T \geq 1$.  Then
\begin{equation}
\label{eq:fourthmomenttwistedLHS}
 \sum_{\eta \shortmod{q_2}} 
 \sum_{|t_j| \leq T} 
 \sum_{\ell m = q} (\ell, q_1) 
 \sum_{\pi \in \mathcal{H}_{it_j}(m, \eta^2)} 
|L(1/2+it, \pi \otimes \overline{\eta} )|^4 \ll_t q_1 q_2^2 T^2 (q_1 q_2 T)^{\varepsilon},
\end{equation}
with polynomial dependence on $t$.  A similar bound holds true for holomorphic forms, as well as the Eisenstein series. 
\end{mytheo}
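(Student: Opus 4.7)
The plan is to apply the approximate functional equation to the degree-four $L$-function $L(s, f\otimes\overline\eta)^2$, reducing the fourth-moment estimate to a second moment of Dirichlet polynomials, which is then handled by the spectral large sieve (Lemma \ref{lemma:largesieve}), following the template of fourth-moment bounds such as \cite[Lem.\ 12]{BlomerMili}. Setting $\pi := f\otimes\overline\eta$, we have $\omega_\pi = \omega_f\cdot\overline\eta^2 = \eta^2\overline\eta^2 = 1$, so $\pi$ has trivial central character, and by Lemma \ref{lem:twistformula} together with \eqref{eq:twistformulaglobal}, $q(\pi)\mid q_1q_2^2$. The analytic conductor of $L(s,\pi)^2$ is thus $\ll q(\pi)^2(1+|t|+|t_j|)^4$, and the AFE yields
\[
|L(\tfrac12+it,\pi)|^4 \ll_\eps (q_1q_2T)^\eps \sup_{N\leq q(\pi)T^2}\Bigl|\sum_{n\asymp N}\frac{d_\pi(n)}{\sqrt n}V(n/N)\Bigr|^2,
\]
where $d_\pi(n) = \sum_{ab=n}\lambda_\pi(a)\lambda_\pi(b)$ and $V$ is a smooth bump function.

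Since $\pi$ has trivial central character, Hecke multiplicativity gives $d_\pi(n) = \sum_{d^2\mid n,\,(d,q(\pi))=1}d(n/d^2)\lambda_\pi(n/d^2)$, up to harmless boundary contributions at primes dividing $q(\pi)$. After substituting, dyadically decomposing, and applying Cauchy--Schwarz in the variable $d$, the task reduces to showing that for every dyadic $N'\leq q_1q_2^2T^2$,
\[
\mathcal M(N') := \sum_\eta\sum_{|t_j|\leq T}\sum_{\ell m=q}(\ell,q_1)\sum_{f\in\cH_{it_j}(m,\eta^2)}\Bigl|\sum_{n\asymp N'}\frac{d(n)\lambda_\pi(n)}{\sqrt n}V(n/N')\Bigr|^2 \ll q_1q_2^2T^2(q_1q_2T)^\eps.
\]

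To bound $\mathcal M(N')$, stratify the outer sum by $M := q(\pi)$, writing $M = M_1M_2$ with $M_1\mid q_1$ and $M_2\mid q_2^2$. Two structural observations are key: (i) the map $(\eta,f)\mapsto\pi$ has multiplicity $O(q_2^{1/2})$, since each preimage corresponds to a character $\eta\pmod{q_2}$ for which $\pi\otimes\eta$ is a newform of level dividing $q$, counted locally via Lemma \ref{lem:twistformula}; and (ii) the weight $(\ell,q_1)=q_1/M_1(\pi)$ depends only on the $q_1$-part $M_1(\pi)$ of $q(\pi)$. Applying Lemma \ref{lemma:largesieve} on each stratum gives $\sum_\pi|\cdots|^2 \ll (T^2M + N')(q_1q_2T)^\eps\log^4$, which when combined with the weight $q_1/M_1$, the multiplicity bound, and summation over $M_1,M_2$, yields the claimed estimate. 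The main obstacle is the careful local analysis required to establish (i)---computing the conductor of $\pi\otimes\eta$ using Lemma \ref{lem:twistformula} at each prime dividing $q_2$---and verifying that the weight-level cancellation functions as expected on each stratum. The holomorphic and Eisenstein cases are parallel, using the corresponding variants of Lemma \ref{lemma:largesieve}.
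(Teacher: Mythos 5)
Your overall strategy — approximate functional equation on $L(s,\pi)^2$, reduction to a second moment of Dirichlet polynomials, grouping by the twisted form $\pi = f\otimes\overline\eta$, spectral large sieve — is aligned with the paper's, which reduces the problem to the $q_2=1$ case (the ``standard'' fourth moment bound) via the same change of variables. The gap is in your observation (i): a uniform multiplicity bound of $O(q_2^{1/2})$ does not close the argument. Take $q_1 = 1$, $q_2 = p^2$. The conductor of $\pi = f\otimes\overline\eta$ ranges up to $q_2^2 = p^4$, so the large-sieve contribution from the top stratum alone is $\asymp T^2 q_2^2$; multiplying by the uniform multiplicity $q_2^{1/2}$ gives $T^2 q_2^{5/2}$, which is off by a factor $q_2^{1/2}$ from the target $T^2 q_2^2$.

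What rescues the argument is that the fiber size and the conductor of $\pi\otimes\overline\eta$ are inversely correlated, and this trade-off is invisible if one stratifies by $M=q(\pi\otimes\overline\eta)$ alone. The paper's Lemma \ref{lem:twistclasses} shows the fiber has size $\ll (\flrt(q),\,q/q(\eta_\cC^2))\,q^\eps$, which is near $q_2^{1/2}$ only when $q(\eta_\cC^2)$ is small; but $q(\eta_\cC)$ and $q(\eta_\cC^2)$ differ only by a squarefree factor at odd primes, so $q(\eta_\cC)$ is then also small, and by \eqref{eq:condofcc} the conductor $q(\pi\otimes\overline\eta) = [q(\pi_\cC),\,q(\eta_\cC)^2]$ drops to $\leq q_2$. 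Two fibers over the same $M$ can have very different sizes, so the correct stratification is over the triple $(q(\eta_\cC), q(\eta_\cC^2), q(\pi_\cC))$ for a twist-minimal representative of each fiber, after which one must verify the arithmetic inequality $\sum_{r,d,m}(\flrt(q),\,q/d)\,[m,r^2]\ll q^{2+\eps}$ prime-by-prime, tracking the relation between $q(\eta)$ and $q(\eta^2)$ (including the small deviations at $p=2$). This lemma and the bookkeeping that exploits it are the crux of the theorem, and your proposal as written does not supply them.
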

Remarks.  
If $q_2 = 1$, then this is a ``standard" fourth moment bound for automorphic $L$-functions, which follows from the spectral large sieve inequality (Lemma \ref{lemma:largesieve}).  It is thus the $q_2$-aspect that has novelty.  
Our proof of Theorem \ref{thm:fourthmomentSpectralBound} eventually reduces the problem to the case $q_2 = 1$.

To gauge the content of Theorem \ref{thm:fourthmomentSpectralBound}, it is helpful to discuss two special cases.  First, suppose that $q_1 =1$ and $q_2 = p$, prime.  The main contribution to \eqref{eq:fourthmomenttwistedLHS} comes from $m=p$ and $\eta$ non-trivial, in which case $\pi \otimes \overline{\eta} \in \mathcal{H}_{it_j}(p^2,1)$.   On such forms the map $(\pi,\eta) \rightarrow \pi \otimes \overline{\eta}$ is at most two-to-one (see Lemma \ref{lem:twistclasses} below), the multiplicity arising from a quadratic twist.  Hence Theorem \ref{thm:fourthmomentSpectralBound} follows from the standard fourth moment bound of level $p^2$.

Next consider the case $q_1=1$, $q_2 = p^2$, with $p$ prime.  If $m=p^2$ and $\eta$ is primitive modulo $p^2$, then $\pi \otimes \overline{\eta} \in \mathcal{H}_{it_j}(p^4, 1)$.  Again, the multiplicity of the map $(\pi, \eta) \rightarrow \pi \otimes \overline{\eta}$ is bounded, and the standard level $p^4$ fourth moment bound suffices to estimate the contribution of these forms to the left hand side of \eqref{eq:fourthmomenttwistedLHS}.  
Now consider the contribution from $\eta$ of conductor $p$.  Consider the typical case that $\pi$ is twist-minimal with $m=p^2$.  Then $\pi \otimes \overline{\eta} \in \mathcal{H}_{it_j}(p^2, 1)$, which is of lower-level than the previous case.  On the other hand, the map $(\pi, \eta) \rightarrow \pi \otimes \overline{\eta}$ has multiplicity $\gg p$, seen as follows.  Suppose $\pi \in \mathcal{H}_{it_j}(p^2, \eta^2)$, and suppose $\chi$ has conductor $p$.  Then $\pi_{\chi}:=\pi \otimes \chi \in \mathcal{H}_{it_j}(p^2, (\eta \chi)^2)$ and $\pi \otimes \overline{\eta} = \pi_{\chi} \otimes \overline{\eta \chi}$, so that there are $p-1$ distinct pairs $(\pi_\chi, \chi \eta)$ all having the same twisted form $\pi \otimes \overline{\eta}$. 
  Luckily, the extra multiplicity is compensated by the saving in the number of forms of level $p^2$ compared to those of level $p^4$.

\begin{proof} For simplicity of exposition, we only give a proof in the case that $q_1=1$ and $t=0$. The generalization to Theorem \ref{thm:fourthmomentSpectralBound} consists of only notational difficulties.  

Abusing notation, for the duration of this proof we denote by $\widehat{(\Z/q\Z)^\times}$ the group of finite-order Hecke characters of $\Q$ with conductor dividing $q$. (For intuition, note also that $\widehat{(\Z/q\Z)^\times}$ is naturally isomorphic to the group of Dirichlet characters modulo $q$.)
Define 
\begin{equation}\label{eq:cHtwdef}
\cH^{\rm tw}_{*\leq T}(q) := \bigcup_{\eta \in \widehat{(\Z/q\Z)^\times}} \bigcup_{* \leq T} \bigcup_{m |q } \{(\pi, \eta): \pi \in \cH_*(m, \eta^2)\},
\end{equation}
where $*$ is any of $it_j$, $k$, or $it, {\rm Eis}$ as in Section \ref{section:basischoice}, and $* \leq T$ denotes either $|t_j| \leq T$, $k \leq T$, or $|t|\leq T$ in each of the three cases of $*$, respectively.  

If there exists $\chi \in \widehat{(\Z/q\Z)^\times}$ such that $\eta_1\chi =  \eta_2$ and $\pi_1 \otimes \chi \simeq \pi_2$, then we say that $(\pi_1, \eta_1), (\pi_2, \eta_2) \in \cH^{\rm tw}_{*\leq T}(q)$ are \emph{twist-equivalent} and write $(\pi_1, \eta_1) \sim (\pi_2, \eta_2)$. The relation $\sim$ is an equivalence relation, and thus we may partition $\cH^{\rm tw}_{*\leq T}(q)$ into twist classes $\TC \in  \cH^{\rm tw}_{*\leq T}(q) / \sim$. The twist classes also arise naturally as the fibers of the map 
\begin{equation*}
\Phi: \cH^{\rm tw}_{*\leq T}(q) \to \bigcup_{* \leq T} \cH_*, \quad (\pi, \eta) \mapsto \pi \otimes \overline{\eta}.
\end{equation*}
With the notation defined in \eqref{not:intleqT}, we therefore have
\begin{equation}\label{eq:twistclassdec}
\sum_{\eta \in \widehat{(\Z/q\Z)^\times}} \int_{* \leq T}  \sum_{m | q} \sum_{\pi \in \cH_*(m, \eta^2)} | L(1/2, \pi \otimes \overline{\eta})|^4   = \sum_{\TC \in  \cH^{\rm tw}_{*\leq T}(q) / \sim}   |L(1/2,\Phi(\TC))|^4 |\TC|,
\end{equation}
where $L(1/2,\Phi(\TC))= L(1/2, \pi \otimes \overline{\eta})$ only depends on the twist class $\TC$.

Note that the automorphic representation $\Phi(\TC)$ has trivial central character and conductor dividing $q^2$. We now estimate the size of $|\TC|$ as well as its conductor in order to show that whenever the conductor of $\Phi(\TC)$ is large, then $|\TC|$ is small to compensate, and vice-versa. 

First of all, each twist class $\TC$ contains a pair $(\pi, \eta)$ for which $\pi$ is twist-minimal at all primes dividing $q$, and so we choose such a twist-minimal pair in each $\TC$, say $(\pi_\TC, \eta_\TC)$. By Lemma \ref{lem:twistformula} (i.e. \eqref{eq:twistformulaglobal}) we have for any $(\pi, \eta) \in \TC$
\begin{equation}\label{eq:condofcc}
 \cond(\pi \otimes \overline{\eta}) = \cond(\pi_\TC \otimes \overline{\eta}_{\TC}) = [ \cond(\pi_\TC),\cond(\eta_{\TC})^2].
\end{equation}

Secondly, to estimate the sizes of the twist classes $|\TC|$ we have the following estimate.
\begin{mylemma}
\label{lem:twistclasses}
For an integer $n \geq 1$, define $\flrt(n)$ to be the largest integer $d$ so that $d^2|n$. 
For $\TC \in  \cH^{\rm tw}_{*\leq T}(q) / \sim$ we have
 \begin{equation}
 \label{eq:cCbound}
  |\TC| \ll \Big(\flrt(q), \frac{q}{\cond(\eta_{\TC}^2)}\Big) q^{\varepsilon}.
 \end{equation}
\end{mylemma}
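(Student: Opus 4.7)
The plan is to parametrize the twist class $\cC$ by Dirichlet characters and reduce to a local counting problem at each prime dividing $q$. I would first fix a representative $(\pi_\cC, \eta_\cC) \in \cC$ with $\pi_\cC$ twist-minimal at every prime dividing $q$, and observe that the map $\chi \mapsto (\pi_\cC \otimes \chi, \eta_\cC \chi)$ defines a bijection between the set of finite-order Hecke characters $\chi$ of conductor dividing $q$ for which $(\pi_\cC \otimes \chi, \eta_\cC \chi) \in \cH^{\rm tw}_{*\leq T}(q)$ and the twist class $\cC$. Surjectivity is the definition of $\sim$, and injectivity holds since $\chi$ is recovered as $(\eta_\cC \chi)/\eta_\cC$. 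The central character condition is automatic because $\omega_{\pi_\cC \otimes \chi} = \eta_\cC^2 \chi^2 = (\eta_\cC \chi)^2$, and the spectral parameter (or weight) is preserved by twisting by a Dirichlet character, so the only nontrivial condition is $q(\pi_\cC \otimes \chi) \mid q$.

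Applying Lemma \ref{lem:twistformula} together with the twist-minimality of $\pi_\cC$ gives $c_p(\pi_\cC \otimes \chi) = \max(c_p(\pi_\cC), c_p(\chi)+c_p(\omega_{\pi_\cC}\chi))$, and since $c_p(\pi_\cC) \leq v_p(q)$ already, this reduces the global problem to a product of local conditions $c_p(\chi) + c_p(\omega_{\pi_\cC,p}\chi) \leq v_p(q)$ at each $p \mid q$. Writing $\beta = v_p(q)$, $\theta = \omega_{\pi_\cC,p} = \eta_{\cC,p}^2$, and $\gamma = c_p(\theta)$, the count factors over primes, so it suffices to bound the number of characters $\chi_p$ modulo $p^\beta$ satisfying $c_p(\chi_p) + c_p(\chi_p \theta) \leq \beta$ by $O(p^{\min(\lfloor \beta/2 \rfloor, \beta - \gamma)})$.

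This local bound is established by casework on $f := c_p(\chi_p)$ relative to $\gamma$. When $f > \gamma$, multiplicativity of conductors forces $c_p(\chi_p \theta) = f$, so $f \leq \lfloor \beta/2 \rfloor$, giving at most $p^{\lfloor \beta/2 \rfloor}$ choices; this subcase requires $\gamma < \lfloor \beta/2 \rfloor$, which in turn implies $\lfloor \beta/2 \rfloor \leq \beta - \gamma$. When $f < \gamma$, we have $c_p(\chi_p \theta) = \gamma$, so $f \leq \beta - \gamma$, yielding at most $p^{\min(\gamma-1, \beta-\gamma)}$ choices. When $f = \gamma$, the constraint becomes $c_p(\chi_p \theta) \leq \beta - \gamma$: if $\gamma \leq \beta - \gamma$ this is automatic and gives $p^\gamma$ choices, while if $\gamma > \beta - \gamma$ it forces $\chi_p$ into the coset $\overline{\theta} \cdot \{\chi' : c_p(\chi') \leq \beta - \gamma\}$ of size $p^{\beta - \gamma}$. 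Each of the three cases yields at most $O(p^{\min(\lfloor \beta/2 \rfloor, \beta - \gamma)})$ characters, and multiplying over the primes dividing $q$ produces $|\cC| \leq q^{o(1)} (\flrt(q), q/q(\eta_\cC^2))$, as required.

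The most delicate step is the subcase $f = \gamma > \beta - \gamma$, where the naive count of $p^\gamma$ primitive characters of conductor exactly $p^\gamma$ exceeds the target bound and must be cut down to $p^{\beta - \gamma}$ using the additional constraint on $c_p(\chi_p \theta)$; exhibiting the coset structure explicitly (for instance via the Postnikov formula of Lemma \ref{lemma:Postnikov}) is the key input. All other cases are essentially routine consequences of the triangle inequality for conductor exponents under multiplication of characters.
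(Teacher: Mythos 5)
Your proof is correct and follows essentially the same strategy as the paper: parametrize $\cC$ by the twisting characters $\chi$, invoke Lemma \ref{lem:twistformula} (with twist-minimality of $\pi_\cC$) to reduce to the condition $q(\chi)q(\eta_\cC^2\chi)\mid q$, reduce to prime powers, and bound by casework on $c(\chi)$ versus $c(\eta_\cC^2)$. Your three cases $f>\gamma$, $f<\gamma$, $f=\gamma$ (with the $f=\gamma$ case split by $c(\chi\theta)$) match the paper's three cases exactly, and your treatment of the "delicate" subcase via the coset $\overline{\theta}\cdot\{\chi':c(\chi')\leq\beta-\gamma\}$ is the same device the paper uses in its case (2).
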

\begin{proof}
Since every $(\eta, \pi)$ in $\TC$ is a twist of $(\eta_\TC,\pi_{\TC})$, we have
\begin{equation}
 \label{pftwistclasses}
 |\TC| = \#\{\chi \in \widehat{(\Z/q\Z)^\times} : \cond( \pi_\TC \otimes \chi) \mid q\}.
\end{equation}
By \eqref{eq:twistformulaglobal}, the condition $\cond(\pi_\TC \otimes \chi)|q$ is equivalent to $\cond(\chi) \cond(\eta_\TC^2 \chi) | q$.  Since we now see that $|\TC|$ is multiplicative in $\eta$ and $q$, and since the bound \eqref{eq:cCbound} is also, it suffices to prove the lemma under the assumption that $q$ is a prime power.

We now assume that $q=p^{v}$, and switch to the conductor exponent, $c$. For notational simplicity, replace $\eta_{\TC}$ by $\eta$.  Note $\flrt(p^v) = p^{\lfloor v/2 \rfloor}$.
By the previous discussion, we have $|\TC| = \# \{\chi: c(\chi)+c(\eta^2\chi) \leq v\}.$ There are three classes of $\chi$ to consider:
\begin{enumerate}
\item The case $c(\chi) \leq c(\eta^2)$ and $c(\chi \eta^2) = c( \eta^2),$ the latter condition being automatic if $c(\chi)< c(\eta^2)$.   Under this assumption, the condition $c(\chi)+c(\eta^2\chi) \leq v$ is equivalent to $c(\chi)+c(\eta^2) \leq v$, which in turn implies $c(\chi) \leq \min(c(\eta^2), v -c(\eta^2))$. This last quantity is always $\leq \lfloor v /2\rfloor$, so the number of characters $\chi$ satisfying the above hypotheses is bounded as claimed in the lemma.
\item The case $c(\chi) = c(\eta^2)$ and $c(\chi \eta^2) < c( \eta^2).$ Such $\chi$ are of the form $\chi = \overline{\eta}^2\chi'$ with $c(\chi')<c(\eta^2)$.
 Then $c(\chi)+ c(\eta^2 \chi) = c(\chi')+c(\eta^2)$, and so the number of  $\chi \in \TC$ satisfying the hypotheses of this case is bounded as in the previous case.
\item The case $c(\chi) > c(\eta^2)$. This hypothesis implies that $c(\chi \eta^2) = c(\chi)$, and so $c(\chi)+c(\eta^2\chi) = 2c(\chi)$. Thus any $\chi$ in this case which satisfies $c(\chi)+c(\eta^2\chi) \leq v$ also has $c(\chi) \leq \lfloor v/2 \rfloor$.  On the other hand, since  $c(\chi) > c(\eta^2)$ we also have 
$v \geq c(\chi) + c(\eta^2 \chi) > c(\chi) + c(\eta^2)$, so $c(\chi) < v - c(\eta^2)$,
finishing the proof.  \qedhere
\end{enumerate}
\end{proof}
Lastly, to control the set $ \cH^{\rm tw}_{*\leq T}(q)/ \sim$ we will use that it is in bijection with the image $\Phi( \cH^{\rm tw}_{*\leq T}(q)) $
 of $\Phi$. The conductors of the forms $\sigma \in \Phi( \cH^{\rm tw}_{*\leq T}(q))$ are then given by \eqref{eq:condofcc}. 

The above three facts, along with the spectral large sieve inequality (Lemma \ref{lemma:largesieve}) will suffice to finish the proof of Theorem \ref{thm:fourthmomentSpectralBound}. To implement them, we must parametrize the possible values of $\cond(\eta_{\TC})^2, \cond(\eta_{\TC}^2)$, and $\cond(\pi_\TC)$ that may occur as $\TC$ runs over $\cH^{\rm tw}_{*\leq T}(q) / \sim$. 
We thus write the right hand side of \eqref{eq:twistclassdec} as
\begin{equation}\label{eq:parametrize1}
\sum_{r \mid q} \sumprime_{d \mid r} \sum_{\substack{m \mid q \\ d \mid m}} \sum_{\substack{\TC : \cond(\eta_\TC) = r \\ \cond(\eta_\TC^2) = d \\ \cond(\pi_\TC) = m}}  |L(1/2,\Phi(\TC))|^4|\TC|,
\end{equation}
where the $'$ on the sum over $d$ indicates that there are some extra constraints on the parameters $r$ and $d$, which we now explicate for later use. 

A character $\eta \in \widehat{(\Z/q\Z)^\times}$ of conductor $r$ factors over places $\eta= \prod_p \eta_p$, where each $\eta_p:\Z_p^\times \to \C^\times$ and $\cond(\eta) = \prod_{p\mid r} p^{c_p(\eta_p)}$. For $p$ odd, $c_p(\eta_p) =c_p(\eta_p^2)$ unless $c_p(\eta_p)=1$ and $\eta_p$ is the Legendre symbol. When $p=2$, one may similarly check that if $c_2(\eta_2)=\beta$ with $\beta\geq 4$ then $c_p(\eta_2^2)=\beta-1$. Considering $\beta=2,3$ separately, we can conclude that $c_2(\eta_2) - c_2(\eta_2^2) \in \{1,2,3\}$. Therefore, the $'$ on the sum in \eqref{eq:parametrize1} indicates that the sum runs over those $d \mid r$ such that there exists $k \in \{1,2,3\}$ and an odd square-free integer $r'$ such that $r/d = 2^k r'$ and $(d, r')=1$. 

By positivity, Lemma \ref{lem:twistclasses}, and \eqref{eq:condofcc} we have that the sum in \eqref{eq:twistclassdec} is 
\begin{equation*}
 \ll q^\eps \sum_{r \mid q} \sumprime_{d \mid r} \sum_{\substack{m \mid q \\ d \mid m}} ( \flrt(q), \frac{q}{d})   \int_{* \leq T} \sum_{\sigma \in \cH_*([m,r^2])}  |L(1/2, \sigma)|^4.
\end{equation*}
Now we are in a position to apply the spectral large sieve Lemma \ref{lemma:largesieve}
(more precisely, the special case $q_2 =1$ of Theorem \ref{thm:fourthmomentSpectralBound}), giving that \eqref{eq:twistclassdec} is
\begin{equation}\label{eq:twistthm1}
\ll T^2 (qT)^\eps \sum_{r \mid q} \sumprime_{d \mid r} \sum_{\substack{m \mid q \\ d \mid m}}  ( \flrt(q), \frac{q}{d})  [m,r^2]  \ll qT^2 (qT)^\eps \sum_{r \mid q} \sumprime_{d \mid r} \frac{( \flrt(q) d, q)}{d}  \frac{r^2}{(q,r^2)} .
\end{equation}
Our goal is to show that this is $\ll (qT)^{2+\eps}$, so it suffices to show that the innermost double sum in \eqref{eq:twistthm1} is $\ll q^{1+\eps}$, and since both sides are multiplicative, it suffices to show it when $q$ is a prime power, which we now assume.  If $q$ is odd, then the conditions  indicated by the $'$ imply that either $d=r$, or $r$ is a prime and $d=1$. In the case that $d=r$, observe that $$r \frac{( \flrt(q) r,q)}{(r^2,q)} = [\flrt(q),r],$$ which implies the desired bound. The desired bound $\ll q$ is even easier to check in the case that $r$ is a prime and $d=1$. If $p=2$ then one uses that $\frac{r^2}{d} \mid 8 r$ along with the previous reasoning to obtain the desired bound.
\end{proof}

\section{Harmonic analysis steps}
\label{section:harmonicanalysis}
We now begin the proof of Theorem \ref{thm:ShiftedSumBounds}. 
The sequence of steps used in the proof is motivated in Section \ref{section:sketch}.
Let 
\begin{equation*}
S(\chi,h) =  \sum_n \tau(n+h)\chi(n+h) \overline{\chi}(n) \sum_{n_1n_2=n} w(n_1,n_2,h),
\end{equation*}
with $w(x,y,z)$ as in Section \ref{subsec:shifteddivisorwchar}, which in particular satisfies \eqref{eq:wderivativebounds}, and has support on $r \ll N$.
We also recall that $\chi$ is primitive modulo $q$.

\subsection{Approximate functional equation}
Applying Lemma \ref{lemma:divisorfunctionAFE} to $S(\chi,h)$, we obtain
\begin{equation}
\label{eq:SchihPostAFE}
S(\chi, h) = \frac{2}{\tau(\overline{\chi})} \sum_{c} \frac{\chi(c)}{c}  \sumstar_{r \shortmod{cq}} \overline{\chi}(r)
e_{cq}(hr)
\sum_{n_1, n_2} 
e_{cq}(n_1 n_2 r)
 \overline{\chi}(n_1 n_2) w(n_1, n_2,h) f\Big(\frac{c}{\sqrt{n_1 n_2 +h}} \Big).
\end{equation}

\subsection{Poisson summation}

\begin{mylemma}\label{lem:firstSchihlem}
 We have
\begin{equation}
\label{eq:SchihPostPoisson}
S(\chi, h) =   \sum_{(c,q)=1} 
\frac{2}{c^2 q^2} \sum_{n_1, n_2 \in \mz}  I(c,n_1, n_2,h)
S(h \overline{q}, -n_1 n_2 \overline{q};c) T_{\chi}(h,\overline{c} n_1, \overline{c} n_2),
\end{equation}
where
\begin{equation}
\label{eq:Idef}
 I(y,t_1, t_2,h) = \int_0^{\infty} \int_0^{\infty} w(x_1, x_2, h) 
 f\Big(\frac{y}{\sqrt{x_1 x_2 +h}} \Big)
 e_{yq}(-x_1 t_1 - x_2 t_2) dx_1 dx_2,
\end{equation}
and
\begin{equation}
\label{eq:Tchidef}
 T_{\chi}(h,m,n) = \sumstar_{x,y \shortmod{q}} 
 \chi(x+h) \overline{\chi}(x) e_q(m x \overline{y} + n y).
\end{equation}
\end{mylemma}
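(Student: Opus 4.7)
The plan is to execute a two-dimensional Poisson summation in the variables $n_1, n_2$ in \eqref{eq:SchihPostAFE}, followed by a Chinese Remainder Theorem (CRT) decomposition of the resulting complete character sums, reducing them to a Kloosterman sum modulo $c$ and to $T_\chi(h,\cdot,\cdot)$ modulo $q$. First, I would observe that $\chi(c)=0$ unless $(c,q)=1$, so the sum over $c$ in \eqref{eq:SchihPostAFE} is automatically restricted to $(c,q)=1$, which is what makes the subsequent CRT step valid.

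For each fixed $c$ with $(c,q)=1$, I would apply Poisson summation to each of $n_1, n_2$ with modulus $cq$, writing the summand as the $(cq)^2$-periodic mixed character $\overline{\chi}(n_1 n_2) e_{cq}(n_1 n_2 r)$ times the Schwartz function $\phi(x_1,x_2):=w(x_1,x_2,h)f(c/\sqrt{x_1 x_2+h})$. This transforms the $(n_1,n_2)$-sum into
\[
\frac{1}{(cq)^2}\sum_{m_1,m_2\in\mz}\widehat{\phi}\Big(\frac{m_1}{cq},\frac{m_2}{cq}\Big)\sum_{a_1,a_2\shortmod{cq}}\overline{\chi}(a_1 a_2) e_{cq}(a_1 a_2 r+m_1 a_1+m_2 a_2),
\]
and a direct check identifies the Fourier transform as $\widehat{\phi}(m_1/(cq),m_2/(cq))=I(c,m_1,m_2,h)$ from \eqref{eq:Idef}.

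Combining this with the outer $\sumstar_{r\shortmod{cq}}\overline{\chi}(r)e_{cq}(hr)$, I would evaluate the total complete character sum using the CRT isomorphism $\mz/cq\mz\cong\mz/c\mz\times\mz/q\mz$ and the identity $e_{cq}(X)=e_c(\overline{q}X)e_q(\overline{c}X)$ to split it into an independent product of a $c$-part and a $q$-part. The $c$-part, after summing over one of the $\alpha_j$'s to produce a congruence condition fixing the other, collapses to $c\cdot S(h\overline{q},-m_1 m_2\overline{q};c)$. For the $q$-part, I would first perform the Gauss sum in the $r_q$-variable, which by primitivity of $\chi$ evaluates to $\overline{\chi}(c)\tau(\overline{\chi})\chi(\beta_1\beta_2+h)$ (and vanishes unless $\beta_1\beta_2+h$ is a unit mod $q$, which is the correct support condition). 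Then the change of variables $x=\beta_1\beta_2$, $y=\beta_2$ (equivalently $\beta_1=x\overline{y}$) brings the remaining double sum exactly into the shape $\overline{\chi}(c)\tau(\overline{\chi})T_\chi(h,\overline{c}m_1,\overline{c}m_2)$ as defined in \eqref{eq:Tchidef}.

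Finally, collecting constants: the factor $\tau(\overline{\chi})$ cancels the $\tau(\overline{\chi})^{-1}$ in \eqref{eq:SchihPostAFE}, and $\overline{\chi}(c)$ cancels $\chi(c)$, leaving exactly the coefficient $2/(c^2 q^2)$; relabelling $m_j\mapsto n_j$ yields \eqref{eq:SchihPostPoisson}. The main technical nuisance is choosing the right change of variables in the $q$-part: the naive choice $y=\beta_1$ produces $T_\chi(h,\overline{c}m_2,\overline{c}m_1)$ rather than the stated $T_\chi(h,\overline{c}m_1,\overline{c}m_2)$, so one must instead take $y=\beta_2$. No deeper difficulty arises.
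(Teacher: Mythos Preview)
Your proposal is correct and follows essentially the same route as the paper's proof: Poisson summation in $n_1,n_2$ modulo $cq$, CRT to separate the $c$- and $q$-parts of the resulting complete sum, evaluation of the $r$-Gauss sum in the $q$-part producing the factor $\tau(\overline{\chi})\overline{\chi}(c)$, and the change of variables $x=\beta_1\beta_2$, $y=\beta_2$ to land on $T_\chi$. The only cosmetic difference is that the paper performs Poisson on the inner $(n_1,n_2)$-sum first and then brings in the $r$-sum as a separate CRT step, whereas you fold the $r$-sum in before doing CRT; the computations are identical, and your worry about $T_\chi(h,\overline{c}m_2,\overline{c}m_1)$ versus $T_\chi(h,\overline{c}m_1,\overline{c}m_2)$ is in any case moot by the symmetry \eqref{eq:TchiSymmetry}.
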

Remark. By integration by parts, and using \eqref{eq:fbound} it is easy to see that
for $r \ll N$,
\begin{equation}
\label{eq:Ibound}
 \frac{\partial^{k+j_1+j_2+\ell}}{\partial y^k \partial t_1^{j_1} \partial t_2^{j_2} \partial r^{\ell}} I(y, t_1, t_2,r)  \ll_{k,j_1, j_2, \ell, A_1, A_2, \varepsilon }  \frac{N^{1+\varepsilon} y^{-k} |t_1|^{-j_1} |t_2|^{-j_2} r^{-\ell} y^{-\varepsilon} q^{\varepsilon}}{(1+\frac{y^2}{N})^{A_1} (1+\frac{|t_1| N_1}{yq})^{A_2} (1+\frac{|t_2| N_2}{yq})^{A_2} } ,
\end{equation}
where $A_1, A_2 > 0$ may be taken to be arbitrarily large.
\begin{proof}
Consider a sum of the form
\begin{equation*}
V= \sum_{n_1, n_2} e_{cq}(n_1 n_2 r)  \overline{\chi}(n_1 n_2)  W(n_1, n_2),
\end{equation*}
where $W$ is smooth of compact support, and $(r,cq) = (c,q)= 1$.  We split the sum into arithmetic progressions modulo $cq$ and apply Poisson summation, giving
\begin{equation}
\label{eq:Vformula}
 V =  \frac{1}{c^2 q^2} \sum_{n_1, n_2} 
 A(n_1, n_2, c, q, \chi)
 I(n_1, n_2),
\end{equation}
where
\begin{equation*}
 A(n_1, n_2, c, q, \chi) = \sum_{x_1, x_2 \shortmod{cq}} \overline{\chi}(x_1 x_2) e_{cq}(x_1 x_2 r + x_1 n_1 + x_2 n_2),
\end{equation*}
and
\begin{equation*}
 I(n_1, n_2) = \int_0^{\infty} \int_0^{\infty} W(x_1, x_2) e_{cq}(-x_1 n_1 - x_2 n_2) dx_1 dx_2.
\end{equation*}
By the Chinese remainder theorem, we have
\begin{equation*}
 A(n_1, n_2, c, q, \chi) = A_q(n_1, n_2, \chi) A_c(n_1, n_2),
\end{equation*}
where
\begin{equation*}
 A_q(n_1, n_2, \chi) = \sum_{x_1, x_2 \shortmod{q}} 
\overline{\chi}(x_1 x_2)
e_{q}(x_1 x_2 \overline{c} r + n_1 \overline{c} x_1 + n_2 \overline{c} x_2),
\end{equation*}
and
\begin{equation*}
 A_c(n_1, n_2) = 
\sum_{x_1, x_2 \shortmod{c}} 
e_{c}(x_1 x_2 \overline{q} r + n_1 \overline{q} x_1+ n_2 \overline{q} x_2)
= c e_c(-n_1 n_2 \overline{rq}).
\end{equation*}

Now we insert into \eqref{eq:SchihPostAFE} the formula \eqref{eq:Vformula} and subsequent evaluations, obtaining
\begin{equation*}
S(\chi, h) =  \frac{2}{\tau(\overline{\chi})} \sum_{c} \frac{\chi(c)}{c^2 q^2}  \sumstar_{r \shortmod{cq}} \overline{\chi}(r)
e_{cq}(hr) 
 \sum_{n_1, n_2}  A_q(n_1, n_2, \chi)
 e_c(-n_1 n_2 \overline{rq})
 I(c,n_1, n_2,h) .
\end{equation*}
Next we evaluate the $r$-sum, that is
\begin{equation*}
B(n_1, n_2, c, q, \chi) := \sumstar_{r \shortmod{cq}} \overline{\chi}(r)
e_{cq}(hr) A_q(n_1,n_2, \chi) e_c(-n_1 n_2 \overline{rq}).
\end{equation*}
By the Chinese Remainder Theorem, we have $$B(n_1, n_2, c, q, \chi) = B_c(n_1, n_2) B_q(n_1, n_2, \chi),$$ where
\begin{equation*}
B_c(n_1, n_2) = \sumstar_{r \shortmod{c}} 
e_c(hr \overline{q}-n_1 n_2 \overline{rq})
= S(h \overline{q}, -n_1 n_2 \overline{q};c), 
\end{equation*}
and 
\begin{equation*}
B_q(n_1, n_2, \chi) 
= \sumstar_{r \shortmod{q}} \overline{\chi}(r) e_q(hr \overline{c}) \sum_{x_1, x_2 \shortmod{q}} 
\overline{\chi}(x_1 x_2)
e_{q}(x_1 x_2 \overline{c} r + n_1 \overline{c} x_1 + n_2 \overline{c} x_2).
\end{equation*}
Evaluating the $r$-sum with Lemma \ref{lemma:GaussSums}, we obtain
\begin{equation*}
B_q(n_1, n_2, \chi) = \tau(\overline{\chi}) \overline{\chi}(c)  \sum_{x_1, x_2 \shortmod{q}} 
 \chi(x_1 x_2 + h)
\overline{\chi}(x_1 x_2)
e_{q}(n_1 \overline{c} x_1 + n_2 \overline{c} x_2).
\end{equation*}
By a simple change of variables, we have
$ B_q(n_1, n_2, \chi) = \tau(\overline{\chi}) \overline{\chi}(c)  
 T_{\chi}(h, \overline{c} n_1 , \overline{c} n_2 ), $
which completes the proof.
\end{proof}

\subsection{Estimation of zero frequency terms}
We begin with an elementary bound.
\begin{mylemma}
\label{lemma:sumofRamanujanswithsomeCancellation}
 Suppose that $M \geq 1$ and $w= w_M$ 
 satisfies
 \begin{equation*}
 x^j w^{(j)}(x) \ll_j \Big(1+\frac{x}{M}\Big)^{-100}.
 \end{equation*}
Then for $q \neq 1$ we have 
 \begin{equation*}
  \sum_{m} w(m) S(m,0;q) \ll (Mq)^{\varepsilon} \min(M,q).
 \end{equation*}
\end{mylemma}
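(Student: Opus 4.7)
The first step is to use the identity $S(m,0;q) = c_q(m)$ (the Ramanujan sum) together with its associated Dirichlet series
\[
\sum_{m \geq 1} c_q(m)\, m^{-s} \;=\; \zeta(s)\, q^{1-s} \prod_{p \mid q} (1 - p^{s-1}) \;=:\; F(s).
\]
The structural fact to exploit is that the zero of $\prod_{p\mid q}(1-p^{s-1})$ at $s = 1$ cancels the pole of $\zeta(s)$, making $F(s)$ \emph{entire}; this uses the hypothesis $q > 1$. Applying Mellin inversion to $w$ then yields
\[
\sum_m w(m)\, S(m,0;q) \;=\; \frac{1}{2\pi i} \int_{(\sigma)} \widetilde{w}(s)\, F(s)\, ds, \qquad \widetilde{w}(s) = \int_0^\infty w(x)\,x^{s-1}\,dx,
\]
for any $\sigma > 0$, with $\widetilde{w}$ holomorphic in $\real s > 0$ by the decay hypothesis on $w$.

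The plan is then to bound this integral by shifting the contour. The key vertical-line estimates are $|\widetilde{w}(\sigma+it)| \ll_A M^\sigma (1+|t|)^{-A}$ for any $A \geq 0$ (from repeated integration by parts using $x^j w^{(j)}(x) \ll_j (1+x/M)^{-100}$); and, using the convexity bound for $\zeta$ together with the crude estimate $\prod_{p\mid q}(1+p^{\sigma-1}) \leq 2^{\omega(q)} \ll (Mq)^{\eps}$,
\[
|F(\sigma+it)| \ll q^{1-\sigma} (1+|t|)^{\max((1-\sigma)/2,\,0) + \eps} (Mq)^\eps.
\]
The integral on $\real s = \sigma$ is therefore $\ll M^\sigma q^{1-\sigma} (Mq)^\eps$ for any $\sigma > 0$, and horizontal segments between contours are negligible by the rapid decay of $\widetilde{w}$.

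Since $F$ and $\widetilde{w}$ have no poles in the right half-plane $\real s > 0$, the contour may be shifted freely there. Shifting to $\sigma = 1+\eps$ yields the bound $\ll M(Mq)^\eps$, while shifting to $\sigma = \eps$ yields $\ll q(Mq)^\eps$; taking the minimum gives $\ll \min(M,q)(Mq)^\eps$ as desired. The only conceptual point is the cancellation that makes $F$ entire, which is precisely where the hypothesis $q \neq 1$ is used; no substantial technical obstacles arise, and the argument is essentially a clean repackaging of the M\"obius cancellation $\sum_{d\mid q}\mu(q/d)=0$ already implicit in $F(s)$.
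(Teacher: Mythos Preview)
Your proof is correct and follows essentially the same approach as the paper: both use Mellin inversion, the identity $\sum_{m\geq 1} S(m,0;q)m^{-s} = \zeta(s)\sum_{a\mid q} a^{1-s}\mu(q/a)$ (your product form is the same expression), the key observation that the pole of $\zeta$ at $s=1$ is cancelled by the M\"obius factor when $q>1$, and then a contour shift to the $\varepsilon$-line or the $(1+\varepsilon)$-line.
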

\begin{proof}
By integration by parts, the derivative bounds on $w$ imply that for ${\rm Re}(s) = \sigma \in (0,2]$ the Mellin transform  of $w$ satisfies
\begin{equation*}
 \widetilde{w}(s) = \int_0^{\infty} w(x) x^s \frac{dx}{x} \ll_{\sigma} M^{\sigma} |s(s+1) \dots (s+100)|^{-1}.
\end{equation*}
 By Mellin inversion, we have
 \begin{equation*}
  \sum_{m}w(m) S(m,0;q)
  =\frac{1}{2 \pi i} \int_{(2)} \widetilde{w}(s) \Big(\sum_{m=1}^{\infty} \frac{S(m,0;q)}{m^s} \Big) ds.
 \end{equation*}
Note
\begin{equation*}
 \sum_{m=1}^{\infty} \frac{S(m,0;q)}{m^s}
 = \zeta(s) \sum_{a|q} a^{1-s} \mu(q/a).
\end{equation*}
 The crucial feature is that this Dirichlet series does not have a pole at $s=1$, by the M\"obius inversion formula, since $q > 1$.  We may then freely move the contour of integration to either the $\varepsilon$-line or the $(1+\varepsilon)$-line, leading to the claimed bound.
\end{proof}

\begin{mylemma}
\label{lemma:SchihAfterPoisson}
We have
\begin{equation}
\label{eq:SchihAfterPoisson}
 S(\chi,h) =  \sum_{(c,q)=1} 
\frac{2}{c^2 q^2} \sum_{n_1, n_2 \neq 0}  I(c,n_1, n_2,h)
S(h \overline{q}, -n_1 n_2 \overline{q};c) T_{\chi}(h,\overline{c} n_1, \overline{c} n_2)
+ O\Big( N \frac{(h,q)}{q} (qN)^{\varepsilon}\Big). 
\end{equation}
\end{mylemma}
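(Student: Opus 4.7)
The plan is to start from the formula in Lemma \ref{lem:firstSchihlem} and bound the contribution of the terms with $n_1 = 0$ or $n_2 = 0$ by $O(N(h,q)/q \cdot (qN)^\eps)$; removing these terms leaves precisely the main sum in \eqref{eq:SchihAfterPoisson}.

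The key preliminary step is to evaluate the degenerate character sums in closed form. Using the primitivity of $\chi$ modulo $q$ together with the Gauss sum identity $\overline\chi(x)\tau(\chi) = \sum_a \chi(a) e_q(ax)$, a short computation yields
\begin{equation*}
\sumstar_{x \shortmod{q}} \chi(x+h)\,\overline\chi(x) = c_q(h),
\end{equation*}
the Ramanujan sum. Since $c_q(mx) = c_q(m)$ whenever $(x,q) = 1$, opening the definition \eqref{eq:Tchidef} then gives
\begin{equation*}
T_\chi(h, 0, 0) = \varphi(q)\, c_q(h), \qquad T_\chi(h, 0, n) = T_\chi(h, n, 0) = c_q(h)\, c_q(n).
\end{equation*}

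With these identifications, the $n_1 = n_2 = 0$ contribution is bounded using $|c_q(h)| \leq (h,q)$, $|S(h\overline{q}, 0;c)| = |c_c(h)| \leq (c,h)$, and $|I(c, 0, 0, h)| \ll N(qN)^\eps$ with rapid decay for $c \gg \sqrt{N}$ (from \eqref{eq:Ibound}); summing $(c,h)/c^2$ over $c$ yields a contribution $\ll \tfrac{N(h,q)}{q}(qN)^\eps$. For the mixed case $n_1 = 0$, $n_2 \neq 0$ (and symmetrically $n_2 = 0$, $n_1 \neq 0$), the identity $c_q(\overline{c}\,n_2) = c_q(n_2) = S(n_2, 0;q)$ (valid since $(c,q)=1$) puts the $n_2$-sum into the form handled by Lemma \ref{lemma:sumofRamanujanswithsomeCancellation}. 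With effective length $M \asymp cq/N_2$ extracted from \eqref{eq:Ibound}, that lemma gives
\begin{equation*}
\sum_{n_2 \neq 0} I(c, 0, n_2, h)\, c_q(n_2) \ll N \min(cq/N_2, q)\, (qN)^\eps,
\end{equation*}
and splitting the sum over $c \ll \sqrt{N}$ at $c \asymp N_2$ again yields the required $\ll \tfrac{N(h,q)}{q}(qN)^\eps$ bound.

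The main technical obstacle is verifying that $I(c, 0, \cdot, h)$, after rescaling its $n_2$-variable by $cq/N_2$, satisfies the smoothness and decay hypotheses of Lemma \ref{lemma:sumofRamanujanswithsomeCancellation}. This reduces to routine integration by parts in $x_2$ in the integral representation \eqref{eq:Idef}, using the derivative bounds \eqref{eq:wderivativebounds} on $w$ and \eqref{eq:fbound} on $f$.
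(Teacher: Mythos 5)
Your proof is correct and follows essentially the same route as the paper: you begin from Lemma \ref{lem:firstSchihlem}, evaluate the degenerate character sums $T_\chi(h,0,0)$ and $T_\chi(h,0,n)$ in closed form, and bound the $n_1=n_2=0$ and mixed-zero contributions using the $I$-decay from \eqref{eq:Ibound} and Lemma \ref{lemma:sumofRamanujanswithsomeCancellation}. Your identifications $T_\chi(h,0,0)=\varphi(q)\,c_q(h)$ and $T_\chi(h,0,n)=c_q(h)\,c_q(n)$ are exactly the paper's $S(0,0;q)S(h,0;q)$ and $S(n,0;q)S(h,0;q)$ in Ramanujan-sum notation (indeed $c_q(m)=S(m,0;q)$), so the two arguments coincide step for step.
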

Remark.  This error term is consistent with \eqref{eq:SchiFinalBound}, since
\begin{equation}\label{eq:errortermsum}
 \sum_{\substack{1 \leq h \ll H \\ h \equiv 0 \shortmod{d}}} \frac{(h,q)}{q} \ll \frac{H}{q} (Nq)^{\varepsilon}.
\end{equation}
For simplicity, and since it suffices for our application of Theorem \ref{thm:ShiftedSumBounds},  we have bounded the main term trivially, but it could be extracted in explicit form with more work.

\begin{proof}
 We need to bound the terms in \eqref{eq:SchihPostPoisson} with some $n_i = 0$.  
Recall the bound on $I$ from \eqref{eq:Ibound}, and also observe that
 \begin{equation*}
  T_{\chi}(h, 0,n) = S(n,0;q) \sumstar_{x \shortmod{q}} \chi(x+h) \overline{\chi}(x) = S(n,0;q) S(h,0;q),
 \end{equation*}
where the $x$-sum may be evaluated by changing basis to additive characters.
Therefore the contribution to $S(\chi, h)$ from $n_1=n_2 = 0$ is
\begin{equation*}
  2 \sum_{(c,q)=1} 
\frac{\varphi(q)}{c^2 q^2}   I(c,0, 0, h)
S(h, 0;c) S(h,0;q) 
\ll N(Nq)^{\varepsilon} \frac{(h,q)}{q}.
\end{equation*}
Similarly, the contribution from $n_1=0$ and $n_2 \neq 0$ is
\begin{equation*}
 2 \sum_{(c,q)=1} 
\frac{S(h ,0 ;c) S(h,0;q)}{c^2 q^2} \sum_{n_2 \neq 0}  I(c,0, n_2, h)
 S(n_2,0;q).
\end{equation*}
Using Lemma \ref{lemma:sumofRamanujanswithsomeCancellation} and \eqref{eq:Ibound}, we deduce
\begin{equation*}
 \sum_{n_2 \neq 0}  I(c,0, n_2, h)
 S(n_2,0;q) \ll (Nq)^{\varepsilon} N \min\Big(\frac{cq}{N_2}, q \Big).
\end{equation*}
Therefore the contribution to $S(\chi,h)$ from $n_1=0$ and $n_2 \neq 0$ is
\begin{equation*}
 \ll N (Nq)^{\varepsilon} \frac{(h,q)}{q}
\sum_{c} \frac{(h,c)}{c^2} \Big(1+\frac{c^2}{N}\Big)^{-A} \min\Big(1, \frac{c}{N_2}\Big)
 \ll N_1 \frac{(h,q)}{q} (Nq)^{\varepsilon}.
\end{equation*}
Since $N_1 \ll N$, this is even better than the claimed bound that arose from $n_1 = n_2 = 0$.
By a symmetry argument, a similar bound holds for the terms with $n_2 = 0$ and $n_1 \neq 0$.
\end{proof}

At this point, we pause to record a crude bound for $S(\chi,h)$.
Using the trivial bound $|T_{\chi}(h, \overline{c} n_1, \overline{c} n_2)| \leq q^2$, the Weil bound for Kloosterman sums, and the bound \eqref{eq:Ibound} on $I$, we deduce from Lemma \ref{lemma:SchihAfterPoisson}:
\begin{mycoro}
\label{coro:SchihBoundTchiTriviallyBounded}
 We have
 \begin{equation*}
  S(\chi,h) \ll q^{2} N^{3/4} (qN)^{\varepsilon} +  N \frac{(h,q)}{q} (qN)^{\varepsilon}
 \end{equation*}
\end{mycoro}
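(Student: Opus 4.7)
The plan is to take the identity from Lemma \ref{lemma:SchihAfterPoisson} and bound each of the three ingredients in the main term by the most naive available estimate, tracking carefully the effective support in $c, n_1, n_2$ coming from the rapid decay in \eqref{eq:Ibound}. Concretely, I will use the trivial bound $|T_\chi(h, \overline{c} n_1, \overline{c} n_2)| \leq q^2$ that follows from \eqref{eq:Tchidef} (each of the $x,y$ sums has length at most $q$), the Weil bound $|S(h\overline{q}, -n_1 n_2 \overline{q}; c)| \ll c^{1/2+\varepsilon}$, and the pointwise bound $I(c, n_1, n_2, h) \ll N^{1+\varepsilon}$ from \eqref{eq:Ibound}.

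The error term from Lemma \ref{lemma:SchihAfterPoisson} already has the shape $N(h,q)q^{-1}(qN)^\varepsilon$ required by the corollary, so it remains only to treat the main term. The decay factors in \eqref{eq:Ibound} restrict the effective ranges to $c \ll \sqrt{N}(qN)^\varepsilon$, $|n_1| \ll cqN_1^{-1}(qN)^\varepsilon$, and $|n_2| \ll cq N_2^{-1}(qN)^\varepsilon$. Summing the decay factors over $n_1,n_2$ contributes at most a factor of $(cq/N_1)(cq/N_2) = (cq)^2/N$ (using $N_1 N_2 = N$), so that
\begin{equation*}
\sum_{n_1, n_2 \neq 0} |I(c,n_1,n_2,h)| \ll (cq)^2 (qN)^\varepsilon.
\end{equation*}
Inserting this together with the two arithmetic bounds into the main term of \eqref{eq:SchihAfterPoisson} produces
\begin{equation*}
\sum_{\substack{c \ll \sqrt{N}(qN)^\varepsilon \\ (c,q)=1}} \frac{1}{c^2 q^2} \cdot q^2 \cdot c^{1/2+\varepsilon} \cdot (cq)^2 (qN)^\varepsilon \ll q^2 (qN)^\varepsilon \sum_{c \ll \sqrt{N}(qN)^\varepsilon} c^{1/2+\varepsilon} \ll q^2 N^{3/4}(qN)^\varepsilon,
\end{equation*}
as claimed, after the $(cq)^{\pm 2}$ factors cancel.

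There is no substantive obstacle here: the corollary is a crude sanity-check estimate, useful mainly as a benchmark against the target bound \eqref{eq:SchiFinalBound} (note $q^2 N^{3/4}$ is much weaker than $N(1+H/q)$ in our range $N \ll q$). Obtaining the actual content of Theorem \ref{thm:ShiftedSumBounds} will require replacing the Weil bound on the $c$-sum by spectral methods, which is the task of the remainder of the paper.
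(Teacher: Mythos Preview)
Your proof is correct and follows exactly the approach indicated in the paper: trivial bound $|T_\chi|\leq q^2$, Weil bound on the Kloosterman sum, and the decay estimate \eqref{eq:Ibound} on $I$. One small point worth noting is that the Weil bound carries a factor $(h,n_1n_2,c)^{1/2}$ which you have silently dropped; this is harmless here since it is bounded by $(h,c)^{1/2}$ and $\sum_{c\leq X} c^{1/2}(h,c)^{1/2}\ll X^{3/2}h^{\varepsilon}\ll X^{3/2}N^{\varepsilon}$, so it is absorbed into the $(qN)^{\varepsilon}$.
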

Remarks. When $q=1$, this corresponds to the classical $N^{3/4+\varepsilon}$ error term for the smoothed shifted divisor problem, which follows from the Weil bound (e.g. see \cite[Thm.\ 1]{DFIQuadraticDivisor}).   

We emphasize  that the arithmetical conditions on $d,h,q$ appearing in the statement of Theorem \ref{thm:ShiftedSumBounds} have not been used yet
(though the archimedean condition $h \ll N$ was used in \eqref{eq:Ibound})
.

This bound is trivial for $q \gg N^{1/8}$, while the reader may recall that for Theorem \ref{thm:fourthmoment}, we need $N \ll q$.  Naturally, our next aim will be to improve this bound by proving a non-trivial bound on $T_{\chi}$.  An optimistic guess is that $T_{\chi}$ is $O(q^{1+\varepsilon})$ in a suitable average sense.  If true, this would only improve the error term $q^2 N^{3/4}$ to $q N^{3/4}$, which is still trivial for $q \gg N^{1/4}$, so more advanced techniques will be necessary.

\subsection{Properties of $T_{\chi}$}
Recall that $T_{\chi}$ was defined by \eqref{eq:Tchidef}.
First observe the symmetry
\begin{equation}
\label{eq:TchiSymmetry}
 T_{\chi}(h,m,n)  = T_{\chi}(h,n,m).
\end{equation}
Suppose that $\chi = \chi_1 \chi_2$ with $\chi_j$ modulo $q_j$ with $(q_1, q_2) = 1$.  The Chinese remainder theorem gives
\begin{equation}
\label{eq:TchiCRT}
 T_{\chi}(h,m,n) = 
 T_{\chi_1}(h,m \overline{q_2},n \overline{q_2})
 T_{\chi_2}(h,m \overline{q_1},n \overline{q_1}).
\end{equation}

So far the conditions $d|q$, $q^3 | d^2$, and $h \equiv 0 \pmod{d}$ appearing in Theorem \ref{thm:ShiftedSumBounds}
have not been used.  In the next lemma, we give a simplification of $T_{\chi}$ under the assumption $q|d^2$ (whence $q|h^2$), which is weaker than the condition $q^2|d^3$ (whence $q^2 | h^3$) that will be used later.
We write
\begin{equation*}
h_q = (h,q).
\end{equation*}
\begin{mylemma}
\label{lemma:Tchi}
 Suppose $q|h^2$.
If $q|h$ then
 \begin{equation}
 \label{eq:TchiqdivideshCase}
  T_{\chi}(h,m,n) = S(m,0;q) S(n,0;q).
 \end{equation}
If $v_p(h) < v_p(q)$ for each $p|q$, then $T_{\chi}(h,m,n) = 0$ unless $h_q | (m,n)$.  In that case, 
we 
have
\begin{equation}
\label{eq:Tchievaluation}
T_{\chi}( h,  m, n) = h_q^2 {\rm Kl}_3\Big(\ell_{\chi} \frac{h}{h_q}, \frac{m}{h_q}, \frac{n}{n_q};\frac{q}{h_q}\Big), 
\end{equation} 
where ${\rm Kl}_3(a,b,c;q)$ is the hyper-Kloosterman sum defined by
\begin{equation*}
 {\rm Kl}_3(a,b,c;q) = \sum_{\substack{x,y,z \shortmod{q} \\ xyz \equiv 1 \shortmod{q}}} e_q(ax + by + cz),
\end{equation*}
 and for $\ell_\chi$ a certain integer with $(\ell_\chi,q)=1$. 
\end{mylemma}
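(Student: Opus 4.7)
The plan is to handle the two cases separately. When $q \mid h$, the periodicity of $\chi$ modulo $q$ gives $\chi(x+h)\overline{\chi}(x) = 1$ for $(x,q)=1$, so
\[
T_\chi(h,m,n) = \sumstar_{x,y \shortmod{q}} e_q(mx\overline{y}+ny).
\]
The substitution $z = x\overline{y}$ (for fixed $y$) decouples the sum as $\bigl(\sumstar_y e_q(ny)\bigr)\bigl(\sumstar_z e_q(mz)\bigr) = S(n,0;q)\,S(m,0;q)$, proving \eqref{eq:TchiqdivideshCase}.

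For the remaining case $v_p(h) < v_p(q)$ at every $p \mid q$, I would first extract what the hypotheses $d \mid q$, $q \mid d^2$, and $d \mid h$ imply: namely $d \mid h_q$ and $q \mid h_q^2$. Writing $\beta_p = v_p(q)$ and $\alpha_p = v_p(h_q)$, this yields $2\alpha_p \geq \beta_p \geq \alpha_p+1$ at each $p \mid q$. Setting $h = h_q h'$ with $(h',q)=1$, the Postnikov formula (Lemma \ref{lemma:Postnikov}), combined with the quadratic truncation $\log_p(1+p^{\alpha_p} u) \equiv p^{\alpha_p} u \pmod{p^{2\alpha_p}}$ (valid because $2\alpha_p \geq \beta_p$), linearizes the character as
\[
\chi(1+h\overline{x}) = e_{q/h_q}(\ell_\chi h'\overline{x}),
\]
where $\ell_\chi$ is assembled via CRT from the local Postnikov data. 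Primitivity of $\chi$ ensures each $\ell_{\chi_p}$ is a unit modulo $p$, so $(\ell_\chi,q)=1$, and $\ell_\chi$ depends only on $\chi$ and $h_q$, not on $h'$. This is the conductor-dropping step underlying the sketch in Section \ref{section:sketch}.

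Next I would rewrite $T_\chi(h,m,n) = \sumstar_{x \shortmod{q}} \chi(1+h\overline{x})\,S(mx,n;q)$ and split $x = x_1 + (q/h_q)x_2$ with $x_1$ modulo $q/h_q$ and $x_2$ modulo $h_q$. Since $q$ and $q/h_q$ share the same prime divisors, the coprimality condition $(x,q)=1$ is equivalent to $(x_1,q/h_q)=1$. The linearized character depends only on $x_1$, while for each fixed $y$ the $x_2$-sum of $e_q(mx\overline{y})$ equals $h_q \cdot \delta(h_q \mid m\overline{y}) = h_q \cdot \delta(h_q \mid m)$. Hence $T_\chi(h,m,n)$ vanishes unless $h_q \mid m$, and by \eqref{eq:TchiSymmetry} also $h_q \mid n$.

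Assuming now $h_q \mid m$ and $h_q \mid n$, the identity $e_q(h_q k) = e_{q/h_q}(k)$ together with the analogous splitting of $y$ gives $S(h_q m x, h_q n; q) = h_q\,S(mx,n;q/h_q)$. Combined with the fact that the resulting $x$-integrand depends only on $x$ modulo $q/h_q$, the remaining $x$-sum collapses with an extra factor $h_q$, producing
\[
T_\chi(h_q h', h_q m, h_q n) = h_q^2 \sumstar_{x,y \shortmod{q/h_q}} e_{q/h_q}\bigl(\ell_\chi h'\overline{x} + mxy + n\overline{y}\bigr).
\]
The final step is the change of variables $u = \overline{x}$, $w = \overline{y}$, $v = \overline{uw}$, which satisfies $uvw \equiv 1 \pmod{q/h_q}$ and recognizes the right-hand side as $h_q^2\,\mathrm{Kl}_3(\ell_\chi h', m, n; q/h_q)$, yielding \eqref{eq:Tchievaluation}. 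I anticipate no serious obstacle; the only delicate bookkeeping is the prime-by-prime Postnikov linearization, and the hypothesis $q \mid d^2$ is precisely what makes that linearization valid.
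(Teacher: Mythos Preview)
Your proposal is correct and follows the same route as the paper. The only notable difference is in the linearization step: you invoke the Postnikov formula (Lemma~\ref{lemma:Postnikov}) prime by prime and reassemble via CRT, whereas the paper argues more directly that $q \mid h^2$ forces $(1+hu)(1+hv) \equiv 1 + h(u+v) \pmod{q}$, so $u \mapsto \chi(1+hu)$ is an additive character of $\mz/(q/h_q)\mz$ outright; this elementary argument has the small advantage of handling $p=2$ uniformly without appealing to the modified Postnikov lemma. After that, your splitting $x = x_1 + (q/h_q)x_2$ (and the symmetric treatment of $y$) is exactly what the paper compresses into the sentence ``This sum vanishes unless $h_q \mid (m,n)$, in which case it immediately gives \eqref{eq:Tchievaluation}.''
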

Remarks.
\begin{itemize}
\item  The two cases considered in Lemma \ref{lemma:Tchi}, along with \eqref{eq:TchiCRT}, are enough to completely evaluate $T_{\chi}$ (see Corollary \ref{coro:TchiEvaluation} below). 
\item The integer $\ell_\chi$  appearing in Lemma \ref{lemma:Tchi} is such that when $q \mid h_q^2$ we have $\chi(1+h_q t) = e_{q/h_q}(\ell_\chi t)$ for all $t \in \mz$. Warning: there is some subtlety involved in passing from the locally defined $\ell_\chi$ appearing in Lemma \ref{postnikov} to the global $\ell_\chi$ defined here.
\end{itemize}
\begin{proof}
If $q|h$, then \eqref{eq:TchiqdivideshCase} is immediate from \eqref{eq:Tchidef}.

 Next suppose that $1 \leq v_p(h) < v_p(q)$ for all $p|q$.   Since 
  $q|h^2$, we have
$\chi(1+hu) \chi(1+hv) = \chi(1+h(u+v))$, which means $\chi(1+hu)$, as a function of $u$, is an additive character modulo $q/h_q$.  
Therefore, there exists an integer $\ell_{\chi}$ so that $\chi(1+hu) = e_{q/h_q}(\ell_{\chi} h' u)$, where $h = h' h_{q}$.
Since $\chi$ is primitive, and $q/h_q$ shares the same set of prime factors as $q$, then $(\ell_{\chi}, q) = 1$.  In particular, 
$\chi(x+h) \overline{\chi}(x) = \chi(1+h \overline{x}) = e_{q/h_q}(\ell_{\chi} h' \overline{x})$, and so 
\begin{equation*}
 T_{\chi}(h_q h', m,n) = \sumstar_{x \shortmod{q}} \thinspace \sumstar_{y \shortmod{q}} e_{q/h_q}(\ell_{\chi} h' \overline{x}) e_q(mx \overline{y} + n y).
\end{equation*}
Changing variables $x \rightarrow x + \frac{q}{h_q}$ shows the sum vanishes unless $h_q | m$, in which case it is the same sum repeated $h_q$ times.  The same argument with $y$ shows it vanishes unless $h_q | n$, and then gives
\eqref{eq:Tchievaluation}.
\end{proof}

\begin{mycoro}
\label{coro:TchiEvaluation}
Let notation and assumptions be as in Lemma \ref{lemma:Tchi}.  
 Write $q = q_1 q_2$ where
\begin{equation}
\label{eq:q1q2def}
 q_1 = \prod_{\substack{p \mid q \\ v_p(q) \leq v_p(h)}} p^{v_p(q)}, \qquad q_2 = \prod_{\substack{ p \mid q \\ v_p(q)> v_p(h)}} p^{v_p(q)},
\end{equation}
so that $(q_1,q_2)=1$.   Write $\chi = \chi_1 \chi_2$ where $\chi_j$ has conductor $q_j$.  Let
\begin{equation*}
 h_{q_2} = (h, q_2), \qquad \text{so} \qquad (h,q) = q_1 h_{q_2}.
\end{equation*}
Then $T_{\chi}(h, \overline{c} n_1, \overline{c} n_2) = 0$ unless $h_{q_2}|(n_1, n_2)$, in which case  
\begin{equation*}
 T_{\chi}(h_{q_2}h', \overline{c} h_{q_2} n_1, \overline{c} h_{q_2} n_2) 
 =  
 h_{q_2}^2
 S(0, n_1;q_1) S(0,n_2;q_1) 
 {\rm Kl}_3\Big(\ell_{\chi_2} h', \overline{c q_1} n_1, \overline{c q_1} n_2; \frac{q_2}{h_{q_2}}\Big).
\end{equation*}
\end{mycoro}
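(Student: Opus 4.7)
The plan is to prove Corollary \ref{coro:TchiEvaluation} by factoring $T_\chi$ according to the coprime decomposition $q = q_1 q_2$ and then applying the two cases of Lemma \ref{lemma:Tchi} separately to the two factors. By construction, the primes dividing $q_1$ are precisely those where $v_p(q) \leq v_p(h)$, so $q_1 \mid h$, placing $T_{\chi_1}$ into the first case of Lemma \ref{lemma:Tchi} (i.e.\ $q \mid h$). Conversely, the primes dividing $q_2$ are exactly those with $v_p(h) < v_p(q_2)$, so $T_{\chi_2}$ falls under the second case. Thus the strategy is simply: apply \eqref{eq:TchiCRT}, then invoke the appropriate case of Lemma \ref{lemma:Tchi} on each factor, and finally normalize the arguments of the resulting Ramanujan and hyper-Kloosterman sums.

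Concretely, I would first apply \eqref{eq:TchiCRT} to write
\begin{equation*}
T_\chi(h,\bar c n_1,\bar c n_2) = T_{\chi_1}(h, \bar c \bar{q_2} n_1, \bar c \bar{q_2} n_2) \cdot T_{\chi_2}(h, \bar c \bar{q_1} n_1, \bar c \bar{q_1} n_2).
\end{equation*}
For the $q_1$-factor, \eqref{eq:TchiqdivideshCase} gives $S(\bar c \bar{q_2} n_1,0;q_1)\, S(\bar c \bar{q_2} n_2,0;q_1)$; since $(\bar c \bar{q_2}, q_1)=1$ and Ramanujan sums are invariant under multiplication of the argument by a unit (and symmetric in their two arguments), this simplifies to $S(0,n_1;q_1)\, S(0,n_2;q_1)$. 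For the $q_2$-factor, one must first verify that the hypotheses of Lemma \ref{lemma:Tchi} (with $q$ replaced by $q_2$) hold. Take $d_2 = (d,q_2)$: then $d_2 \mid h$ is inherited from $d\mid h$, and writing $d = d_1 d_2$ with $d_j = (d,q_j)$ (using $(q_1,q_2)=1$), the hypothesis $q\mid d^2$ yields $q_2 \mid d_2^2$. The uniform condition $v_p(h)<v_p(q_2)$ for all $p\mid q_2$ holds by definition of $q_2$. Thus the second case of Lemma \ref{lemma:Tchi} applies.

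Applied to the $q_2$-factor, this shows the vanishing unless $h_{q_2} \mid \bar c\bar{q_1} n_i$ for $i=1,2$, which, since $(\bar c\bar{q_1},h_{q_2})=1$, is equivalent to $h_{q_2}\mid n_i$. Writing $h = h_{q_2} h'$ and absorbing $h_{q_2}$ into the arguments as in \eqref{eq:Tchievaluation}, one obtains
\begin{equation*}
T_{\chi_2}(h_{q_2} h',\, h_{q_2}\bar c\bar{q_1} n_1,\, h_{q_2}\bar c\bar{q_1} n_2) = h_{q_2}^{2}\, \mathrm{Kl}_3\!\left(\ell_{\chi_2} h',\, \overline{c q_1} n_1,\, \overline{c q_1} n_2;\, \tfrac{q_2}{h_{q_2}}\right).
\end{equation*}
Multiplying with the $q_1$-factor yields the stated formula.

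I do not anticipate a significant obstacle here: the result is essentially a bookkeeping exercise combining the two cases of Lemma \ref{lemma:Tchi} via the Chinese Remainder Theorem. The only point requiring genuine care is checking that the arithmetic hypotheses of Lemma \ref{lemma:Tchi} transfer cleanly to the factor $q_2$, which reduces to observing that $q\mid d^2$ splits over coprime factors as $q_j \mid d_j^2$. Everything else is manipulation of the Ramanujan sums (using symmetry and unit-invariance of the argument) and of the hyper-Kloosterman sum (using $(\bar c\bar{q_1},q_2/h_{q_2})=1$).
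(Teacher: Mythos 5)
Your argument is correct and coincides with what the paper intends: the corollary follows from the Chinese Remainder Theorem factorization \eqref{eq:TchiCRT} together with the two cases of Lemma \ref{lemma:Tchi}, exactly as the remark preceding the corollary indicates. Your verification that the hypotheses of Lemma \ref{lemma:Tchi} descend to the $q_2$-factor (in particular $q_2 \mid d_2^2$ where $d_2 = (d,q_2)$, and hence $q_2 \mid h_{q_2}^2$) and the unit-invariance of the Ramanujan sum arguments are precisely the small points that need checking, and you handle them correctly.
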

Remark.  The factorization of $q$ as $q_1 q_2$ is very natural,
since the $q_1$-part of $\chi$ is almost irrelevant, since $\chi_1(n+h) \overline{\chi_1}(n) = 1$ for $(n,q_1) = 1$.   

Applying Corollary \ref{coro:TchiEvaluation} to \eqref{eq:SchihAfterPoisson}, with $h = h' h_{q_2}$ and with the replacements $n_i \mapsto h_{q_2} n_i$, we deduce
\begin{multline}
\label{eq:SchihWithKl3}
 S(\chi,h) =  \Big[  \sum_{(c,q)=1} 
\frac{2 h_{q_2}^2}{c^2 q^2} \sum_{n_1, n_2 \neq 0}  I(c,h_{q_2}n_1, h_{q_2}n_2, h' h_{q_2})
S(h_{q_2} h' \overline{q}, -n_1 n_2 h_{q_2}^2 \overline{q};c) 
\\
S(0, n_1;q_1) S(0,n_2;q_1)  
{\rm Kl}_3\Big(\ell_{\chi_2} h', \overline{c q_1} n_1, \overline{c q_1} n_2; \frac{q_2}{h_{q_2}}\Big)
\Big] 
+ O\Big( N \frac{(h,q)}{q} (qN)^{\varepsilon}\Big). 
\end{multline}

Again, we pause our analysis to record a simple bound on $S(\chi,h)$.
\begin{mycoro}
\label{cor:Schihbound2}
 Suppose 
 $d|(h,q)$ with $q|d^2$ and write $q = q_1 q_2$ as in \eqref{eq:q1q2def}.  Then
 \begin{equation*}
  S(\chi, h) \ll N^{3/4} \frac{q_2}{h_{q_2}} (qN)^{\varepsilon} + N \frac{(h,q)}{q} (qN)^{\varepsilon},
 \end{equation*}
and
\begin{equation}
\label{eq:SchiAfterTchiSimplifications}
 S(\chi) \ll (N^{3/4} \frac{Hq}{d^2} + N \frac{H}{q}) (qN)^{\varepsilon}.
\end{equation}
\end{mycoro}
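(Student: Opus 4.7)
The plan is to insert standard pointwise bounds into the identity \eqref{eq:SchihWithKl3}. For the classical Kloosterman sum $S(h_{q_2}h'\overline{q}, -n_1 n_2 h_{q_2}^2 \overline{q}; c)$ I invoke Weil's bound $\ll c^{1/2+\eps}$. For the hyper-Kloosterman sum, Deligne's bound (together with its standard prime-power refinements) yields $|\mathrm{Kl}_3(\ldots;\, q_2/h_{q_2})| \ll (q_2/h_{q_2})^{1+\eps}$; this applies because the first argument $\ell_{\chi_2}h'$ is coprime to $q_2/h_{q_2}$, which is sufficient to ensure square-root cancellation irrespective of the divisibility properties of $n_1, n_2$. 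The Ramanujan sums admit the pointwise estimate $|S(0, n_i; q_1)| \leq (n_i, q_1)$, which averages to $q_1^\eps$ times the length of summation.

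Combining these with $|I(c, h_{q_2}n_1, h_{q_2}n_2, h)| \ll N^{1+\eps}$ and the rapid-decay truncation from \eqref{eq:Ibound}, which restricts the effective ranges to $c \ll \sqrt{N}(qN)^\eps$ and $|n_i| \ll cq/(h_{q_2} N_i)(qN)^\eps$, a direct bookkeeping calculation telescopes to
\begin{equation*}
\frac{h_{q_2}^2}{c^2 q^2} \sum_{n_1, n_2 \neq 0} |I|\, |S(\cdots;c)|\, |S(0,n_1;q_1)|\, |S(0,n_2;q_1)|\, |\mathrm{Kl}_3| \ll c^{1/2} \frac{q_2}{h_{q_2}} (qN)^\eps,
\end{equation*}
after which $\sum_{c \ll \sqrt{N}} c^{1/2} \ll N^{3/4}$ produces $\ll N^{3/4} (q_2/h_{q_2})(qN)^\eps$. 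Adding the error term $O(N(h,q)q^{-1}(qN)^\eps)$ from Lemma \ref{lemma:SchihAfterPoisson} gives the first bound.

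For the bound on $S(\chi)$ I sum the first bound over $h \equiv 0 \pmod d$ with $h \ll H$. The clean identity $q_2/h_{q_2} = q/(h,q)$ is the key: by the definition \eqref{eq:q1q2def} of $q_1$ we have $q_1 \mid h$, whence $(h,q) = q_1 h_{q_2}$. Since $d \mid (h,q)$, this yields $q_2/h_{q_2} \leq q/d$, and together with the $\ll H/d$ admissible values of $h$ this contributes $\ll N^{3/4} \cdot (H/d) \cdot (q/d) = N^{3/4} Hq/d^2$ times $(qN)^\eps$ to the main term. The error sum is $\ll NH/q \cdot (qN)^\eps$ by \eqref{eq:errortermsum}. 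There is no genuine obstacle here; the substantive work was already done in assembling \eqref{eq:SchihWithKl3}, and the remaining argument is a careful combination of square-root-cancellation bounds with the gcd identity $q_2/h_{q_2} = q/(h,q)$.
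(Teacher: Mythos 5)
Your proposal is correct and follows the same route as the paper: you bound \eqref{eq:SchihWithKl3} termwise via Weil for the classical Kloosterman sum, the Smith/Deligne bound $\mathrm{Kl}_3(a,b,c;q) \ll q^{1+\eps}$ for $(a,q)=1$ (applicable since $(\ell_{\chi_2}h', q_2/h_{q_2})=1$), and the Ramanujan-sum and $I$-function truncations; the second bound then follows by summing over $h$ using $q_2/h_{q_2} = q/(h,q) \le q/d$. The only difference is that the paper states this in a single sentence citing Smith's theorem, while you spell out the bookkeeping explicitly.
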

\begin{proof}
Smith \cite[Thm.\ 6]{Smith} showed ${\rm Kl}_3(a,b,c;q) \ll q^{1+\varepsilon}$ assuming $(a,q) = 1$ (of course, the most difficult case where $q$ is prime and $(abc,q) = 1$ is due to Deligne \cite[Sommes trig. \S7]{SGA45}).  Applying this bound to \eqref{eq:SchihWithKl3}, noting that $(\ell_{\chi_2} h', \frac{q_2}{h_{q_2}}) = 1$, easily finishes the proof.
\end{proof}

Remarks.
\begin{itemize}
\item Observe the significant savings of the factor $\frac{q_2}{h_{q_2}}$ 
compared to the optimistic  factor of $q$ in the discussion following
Corollary \ref{coro:SchihBoundTchiTriviallyBounded}.  This arose in large part from the important feature that $h_{q_2}|(n_1, n_2)$ (which essentially cancelled against the $h_{q_2}^2$ factor in \eqref{eq:SchihWithKl3}) while the modulus of $T_{\chi}$ is greatly reduced to $q_2/h_{q_2}$.  We also save a $q_1$-factor from the boundedness (on average) of Ramanujan sums.  
\item The bound \eqref{eq:SchiAfterTchiSimplifications} is consistent with our ultimate goal \eqref{eq:SchiFinalBound} when $N \asymp q$ provided that $d \gg q^{7/8}$, which is still a bit too restrictive, since the condition $q^2|d^3$ means $d$ can be as small as $q^{2/3}$.
\item The bound \eqref{eq:SchiAfterTchiSimplifications} is compatible with the discussion in the sketch following \eqref{eq:SchiBoundIntroSketch}.
\end{itemize}

\subsection{Some properties of ${\rm Kl}_3$, and application to $S(\chi,h)$}
\begin{mylemma}
\label{lemma:Kl3FourierTransform}
 Suppose that $q \geq 1$ and $a,b,c \in \mathbb{Z}$.  Write $a=a_0 a'$, $b=b_0 b'$, $c=c_0 c'$ where $a_0b_0c_0|q^{\infty}$ and $(a'b'c', q) = 1$.  Then 
 \begin{equation*}
  {\rm Kl}_3(a,b,c;q) = \frac{1}{\varphi(q)} \sum_{\eta \shortmod{q}}
 \tau(\overline{\eta}, a_0) \tau(\overline{\eta}, b_0)\tau(\overline{\eta}, c_0) 
 \eta(a'b'c'),
 \end{equation*}
 where 
 for $n \in \mz$  the Gauss sum $\tau(\eta,n)$ was defined in Lemma \ref{lemma:GaussSums}. 
\end{mylemma}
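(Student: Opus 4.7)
The plan is to detect the constraint $xyz \equiv 1 \pmod{q}$ in the defining sum of ${\rm Kl}_3$ by orthogonality of multiplicative characters, which factors the triple sum into three independent one-variable Gauss sums. Since $xyz \equiv 1 \pmod q$ forces $(xyz, q) = 1$, the inner variables are automatically coprime to $q$, and we have the identity
\begin{equation*}
\delta(xyz \equiv 1 \shortmod{q}) = \frac{1}{\varphi(q)} \sum_{\eta \shortmod q} \overline{\eta}(x) \overline{\eta}(y) \overline{\eta}(z)
\end{equation*}
valid on the support of the sum. Substituting into the definition and interchanging the order of summation, I obtain
\begin{equation*}
 {\rm Kl}_3(a,b,c;q) = \frac{1}{\varphi(q)} \sum_{\eta \shortmod q} \tau(\overline{\eta}, a) \tau(\overline{\eta}, b) \tau(\overline{\eta}, c),
\end{equation*}
where the extension of the sum defining $\tau(\overline{\eta}, \cdot)$ from units to all of $\mz/q\mz$ is harmless because $\overline{\eta}(x) = 0$ whenever $(x,q) > 1$.

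The second step is to isolate the part of each index that is coprime to $q$. I claim that $\tau(\overline{\eta}, a) = \eta(a') \tau(\overline{\eta}, a_0)$, and analogously for $b$ and $c$. Writing $a = a_0 a'$ with $(a', q) = 1$ and substituting $x \mapsto \overline{a'} x$ in the sum defining $\tau(\overline{\eta}, a)$ (this is a bijection of $\mz/q\mz$ since $\overline{a'}$ is a unit), the exponential becomes $e_q(a_0 a' \overline{a'} x) = e_q(a_0 x)$ while the character transforms as $\overline{\eta}(\overline{a'} x) = \overline{\eta}(\overline{a'}) \overline{\eta}(x) = \eta(a') \overline{\eta}(x)$. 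Pulling the factor $\eta(a')$ out of the sum yields the claim. Combining the three factorizations and using multiplicativity of $\eta$ produces $\eta(a'b'c')$ and the claimed formula.

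This proof is entirely mechanical; there is no substantive obstacle, and one only needs to track the bookkeeping of conjugations. The key structural feature being used is just that $\eta$ vanishes off the units, so extending the Gauss sum to all of $\mz/q\mz$ is cost-free, and the hypothesis $(a'b'c', q) = 1$ is exactly what makes the unit substitution legitimate in each factor.
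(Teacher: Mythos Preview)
Your proof is correct and is precisely the standard calculation the paper has in mind; the paper does not give a proof, instead writing ``The proof is standard, a similar calculation appearing in e.g.\ \cite[\S 2]{Smith}.'' Your use of orthogonality of characters to detect $xyz\equiv 1$ and the subsequent change of variables $x\mapsto \overline{a'}x$ to extract $\eta(a')$ from the Gauss sum is exactly this standard argument.
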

\begin{proof}
The proof is standard, a similar calculation appearing in e.g. \cite[\S2]{Smith}.
\end{proof}

We apply Lemma \ref{lemma:Kl3FourierTransform} to \eqref{eq:SchihWithKl3}. We already saw in the proof of Corollary \ref{cor:Schihbound2} that $(\ell_{\chi_2} h', \frac{q_2}{h_{q_2}}) = 1$. To handle $n_1,n_2$, we write  $n_1 = n_{10} n_1'$ and $n_2= n_{20} n_2'$ where $n_{10} n_{20} | q_2^{\infty}$ and $(n_1' n_2', q_2) = 1$. These two conditions are equivalent to $(n_1'n_2' , q_2/h_{q_2})=1$ and $n_{10}n_{20} \mid (q_2/h_{q_2})^\infty$ since by definition \eqref{eq:q1q2def} of $q_2$, the numbers $q_2$ and $q_2/h_{q_2}$ have the same set of prime factors. Thus
\begin{multline*}
 {\rm Kl}_3\Big(\ell_{\chi_2} h', \overline{c q_1} n_1, \overline{c q_1} n_2; \frac{q_2}{h_{q_2}}\Big) \\
 =
 \frac{1}{\varphi(q_2/h_{q_2})}
 \sum_{\eta \shortmod{q_2/h_{q_2}}} \overline{\eta}^2(q_1) \tau(\overline{\eta})
 \tau(\overline{\eta}, n_{10}) 
\tau(\overline{\eta}, n_{20})  
\overline{\eta}^2(c) \eta(\ell_{\chi_2}h' n_1' n_2').
 \end{multline*}
Therefore, 
\begin{multline}
\label{eq:SchihSumOfKloostermans}
 S(\chi,h) = \Big(
 \frac{2 h_{q_2}^2}{ q^2\varphi(q_2/h_{q_2})} 
 \sum_{\eta \shortmod{q_2/h_{q_2}}} \overline{\eta}^2(q_1)  \eta(\ell_{\chi_2}h') 
 \sum_{n_{10}, n_{20} | q_2^{\infty}}
 \tau(\overline{\eta})
 \tau(\overline{\eta}, n_{10}) 
\tau(\overline{\eta}, n_{20}) \\
 \sum_{\substack{n_1', n_2' \neq 0}}
 S(0, n_1';q_1) S(0,n_2';q_1)  
 \eta( n_1' n_2')
 K
\Big) 
+ O\Big( N \frac{(h,q)}{q} (qN)^{\varepsilon}\Big),
\end{multline}
where $h = h' h_{q_2}$ and $K$ is shorthand for the following sum of Kloosterman sums:
\begin{equation}
\label{eq:Kdef}
 K = \sum_{(c,q)=1} 
\frac{I(c,h_{q_2}n_1, h_{q_2}n_2, h' h_{q_2})}{c^2 }   
S(h_{q_2} h' \overline{q} , -  n_1 n_2 h_{q_2}^2 \overline{q} ;c) 
\overline{\eta}^2(c),
\end{equation}
with $n_i = n_{i0} n_i'.$

\section{Spectral analysis of the shifted divisor sum}
\label{section:spectralanalysisofshiftedsum}
\subsection{Set-up for Bruggeman-Kuznetsov}
\label{section:BKsetup}
Our next major goal is to apply the Bruggeman-Kuznetsov formula to $K$ defined by \eqref{eq:Kdef}.  We begin with some simplifications.  The evaluation of $T_{\chi}$ appearing in Lemma \ref{lemma:Tchi} used that $q|d^2$, which is a weaker condition than what is assumed in Theorem \ref{thm:ShiftedSumBounds}, namely that $q^2|d^3$.  We will use this stronger assumption now.

Recall that we factor $q=q_1q_2$ according to the value of $h$ as in Corollary \ref{coro:TchiEvaluation}, and correspondingly set $h_{q_2}=(h,q_2)$ and $h=h_{q_2}h'$. By the definition of the factorization $q=q_1q_2$, this implies that $q_1 | h'$. 
Recalling that $d|h$, for $p|q_2$ we have $3v_p(h_{q_2}) \geq 2 v_p(q_2)$, and therefore $h_{q_2}^2/q_2$ is an integer. 
Note the hypothesis $(c,q)=1$ from \eqref{eq:Kdef} and observe
\begin{equation*}
 S(h_{q_2} h' \overline{q}, -n_1 n_2h_{q_2}^2 \overline{q};c) = 
 S\Big(\frac{h'}{q_1} , - n_1 n_2\frac{h_{q_2}^2}{q_2}  \overline{q_1}
 \thinspace \overline{(q_2/h_{q_2})};c\Big),
\end{equation*}
using that $S(ax,y;c) = S(x,ay;c)$ for $(a,c) = 1$.

Another small remark is that the condition $(c,q) = 1$ in the definition of $K$ is equivalent to the pair of conditions $(c,q_1) = 1$ and $(c,q_2) = 1$.  The latter of these is enforced by the presence of $\overline{\eta}^2(c)$, since $q_2/h_{q_2}$ has the same prime factors as $q_2$ (by definition of $q_2$). The condition $(c,q_1) = 1$ can be detected by multiplying \eqref{eq:Kdef} by the trivial character modulo $q_1$, which we denote $\chi_{0,q_1}$.

Taking these observations together, we derive that $K$ equals
\begin{equation}\label{eq:endS4}
 \sum_{(c,q) = 1} 
\frac{I(c,h_{q_2}n_1, h_{q_2}n_2, h' h_{q_2})}{c^2 }   
S\Big(\frac{h'}{q_1}, - n_1 n_2\frac{h_{q_2}^2}{q_2}  \overline{q_1} \thinspace \overline{(q_2/h_{q_2})};c\Big)
\overline{\eta}^2(c) \chi_{0,q_1}(c).
\end{equation}

Our next goal is to see that $K$ can be viewed as an instance of $\mathcal{K}$ (as in \eqref{eq:Kdefinfinity0}), with the following choices of parameters.  The level, say  $r$, is given by
\begin{equation*}
 r = q_1 \frac{q_2}{h_{q_2}} = \frac{q}{h_{q_2}} = \frac{q}{(h,q_2)},
\end{equation*}
the central character is $\eta^2 \chi_{0,q_1}$,  the pair of cusps is $\infty,0$ and $m=h'/q_1$, $n=- n_1 n_2 h_{q_2}^2/q_2$.   Define the weight function $\Phi(y) = \Phi(y,\cdot)$ (we temporarily suppress the dependence of $\Phi$ on the other variables) by
\begin{equation}
\label{eq:phidef}
\Phi(y) = \frac{r}{y^2} I( r^{-1/2} y, t_1, t_2, h), \quad  \text{ with } \quad t_i = h_{q_2} n_i.
\end{equation}
Therefore, $K = \mathcal{K}$ as in \eqref{eq:Kdefinfinity0}, and hence by Theorem \ref{thm:KuznetsovTraceFormulainf0}, we have
 $K = K_{\rm Maass} + K_{\rm Eis} + K_{{\rm hol}}$,
where
\begin{equation}
\label{eq:KMaassSpectralFirstFormula}
 K_{\rm Maass} = \sum_{t_j}  \mathcal{L}^\pm\Phi(t_j) 
 \sum_{\ell m = \frac{q}{h_{q_2}}} 
 \sum_{\pi \in \mathcal{H}_{it_j}(m, \eta^2)} \frac{4 \pi \epsilon_\pi^{(\pm)}}{V(q/h_{q_2}) \mathscr{L}_\pi^*(1)} 
 \sum_{\delta|\ell} 
 \overline{\lambda}_{\pi}^{(\delta)}\Big(\frac{h'}{q_1} \Big) \overline{\lambda}_{\pi}^{(\delta)}\Big(n_{10} n_{20} | n_1'  n_2'| \frac{h_{q_2}^2}{q_2}\Big).
\end{equation}
and similar formulas hold for $K_{\rm Eis}$ and $K_{{\rm hol}}$.

Inserting this into \eqref{eq:SchihSumOfKloostermans}, we correspondingly write
\begin{equation}
\label{eq:SchihSpectralDecomposition}
S(\chi,h) = S_{\rm Maass}(\chi,h) + S_{\rm Eis}(\chi, h) + S_{{\rm hol}}(\chi,h) + O( N  \frac{(h,q)}{q} (qN)^{\varepsilon}).
\end{equation}

\subsection{The behavior of the integral transforms}
\label{section:BKintegraltransforms}
Here we study the analytic behavior of the integral transforms $\mathcal{L}^{\pm}\Phi$ that occur in the Bruggeman-Kuznetsov formula.

\begin{mylemma}
\label{lemma:BKweightfunctionProperties}
For each $q \in \N$, let $I= I(y,t_1,t_2,h)$ be a smooth function on $\R_{>0} \times \R_{\neq 0}^2 \times \R_{>0}$ supported on $h \ll H \ll N$ in the last variable, and satisfying the bounds \eqref{eq:Ibound} with $N_1N_2=N$. Let $\Phi$ be defined in terms of $I$ by \eqref{eq:phidef}, which is a function of the variables $y,q,h,n_1,n_2$ via the substitutions $r = \frac{q}{h_{q_2}}$ and $t_i = h_{q_2} n_i$, $i=1,2$. Let us write \begin{equation*}
|mn| := \frac{ h' }{q_1} |n_1 n_2| \frac{h_{q_2}^2}{q_2} = \frac{h}{q} |n_1 n_2| h_{q_2} .
\end{equation*}
Then, there exists a function $H = H_{\pm}(s, t, t_1, t_2, h )$
so that
\begin{equation}
\label{eq:LplusminusMellinWithHplusminus}
  \mathcal{L}^{\pm}\Phi(t) = \int_{(\sigma)} H_{\pm}(s, t, t_1, t_2, h) (mn)^{-s/2} ds, 
\end{equation}
where $H_{\pm}$ is holomorphic in $s$ for $\real(s/2) > \theta,$ with $\theta$ any bound towards the Ramanujan conjecture. 
If $|t| \gg (Nq|n_1 n_2|)^{\varepsilon}$ then 
\begin{equation}
\label{eq:phiplusminusboundFortlarge}
 \mathcal{L}^{\pm}\Phi(t) \ll_{A, \varepsilon} (1+|t|)^{-A} (Nq
 |n_1 n_2|)^{-100}.
\end{equation}
If $|t|\ll (qN|n_1 n_2|)^\eps$, then for $\real(s) > 1$ it satisfies the bound 
\begin{equation}
\label{eq:Hbound}
 H_{\pm}(s, t, t_1, t_2, h)
\ll_{\sigma} (qN)^{\varepsilon}
(Nr)^{\frac12+\frac{\sigma}{2}} 
(1+|s|)^{-A} 
\Big(1+\frac{|t_1| N_1}{\sqrt{N} q}\Big)^{-A} \Big(1+\frac{|t_2|  N_2}{\sqrt{N} q}\Big)^{-A}.
\end{equation}

Similarly, there exists a function $H=H_{{\rm hol}}(s,k,t_1, t_2, h)$ that is holomorphic in $s$ for $\real(s) > 0$ so that
\begin{equation*}
 \mathcal{L}^{{\rm hol}}\Phi(k) = \int_{(\sigma)} H_{{\rm hol}}(s, k, t_1, t_2, h) (mn)^{-s/2} ds,
\end{equation*}
and satisfies the same bound as \eqref{eq:Hbound} for $\real(s) > 1$ when $k \ll (qN|n_1 n_2|)^\eps$. If $k \gg (Nq|n_1 n_2|)^{\varepsilon}$ then
\begin{equation*}
 \mathcal{L}^{{\rm hol}}\Phi(k) \ll_{A,\varepsilon} k^{-A} (qN|n_1 n_2|)^{-100}.
\end{equation*}
\end{mylemma}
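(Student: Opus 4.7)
The plan is to substitute the definition \eqref{eq:phidef} of $\Phi$ into the Mellin definitions \eqref{eq:plusminusIntegralTransformInMellin} and \eqref{eq:holomorphicIntegralTransformInMellin} of the integral transforms, read off $H_\pm$ and $H_{\rm hol}$, and then verify the claimed properties using Stirling's formula together with the regularity bound \eqref{eq:Ibound} on $I$. The key preliminary step is a change of variables $y = r^{1/2}u$ in the Mellin integral defining $\widetilde{\Phi}(s+1)$, which produces the clean identity
\[
\widetilde{\Phi}(s+1) = r^{(s+1)/2}\, \widetilde{I}_1(s-1, t_1, t_2, h),
\]
where $\widetilde{I}_1$ denotes the Mellin transform of $I$ in its first argument. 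Substituting this and pulling $(mn)^{-s/2}$ out of $(4\pi\sqrt{|mn|})^{-s}$ gives the required form \eqref{eq:LplusminusMellinWithHplusminus} with
\[
H_\pm(s,t,t_1,t_2,h) = \frac{(4\pi)^{-s}}{2\pi i}\, r^{(s+1)/2}\, h_\pm(s,t)\, \widetilde{I}_1(s-1,t_1,t_2,h),
\]
and an analogous formula for $H_{\rm hol}$ using the gamma-quotient from \eqref{eq:holomorphicIntegralTransformInMellin} in place of $h_\pm$.

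For the holomorphy, the bound \eqref{eq:Ibound} with $k=0$ gives $|I(y,\cdot)| \ll N^{1+\eps}y^{-\eps}(1+y^2/N)^{-A}$, so $\widetilde{I}_1(s-1,\cdot)$ is holomorphic in $\real(s)>0$. The only poles of $h_\pm(s,t)$ come from $\Gamma(s/2 \pm it)$ at $s = \mp 2it - 2k$, $k\geq 0$; since the Kim-Sarnak bound forces $|\imag(t_j)|\leq \theta = 7/64$ on any spectral parameter arising in the Bruggeman-Kuznetsov sum, the rightmost such pole has $\real(s) = 2\theta$, giving holomorphy of $H_\pm$ in $\real(s/2)>\theta$ as claimed. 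For $H_{\rm hol}$ the gamma-quotient $\Gamma(\tfrac{s+k-1}{2})/\Gamma(\tfrac{k+1-s}{2})$ has only left half-plane poles, so holomorphy in $\real(s)>0$ is immediate. To establish the bound \eqref{eq:Hbound} for $|t|\ll (qN|n_1n_2|)^\eps$ and $\real(s)=\sigma>1$, Stirling gives $|h_\pm(s,t)| \ll (1+|s|)^{O(\sigma)}$ (the exponential decay of the two $\Gamma$ factors on vertical lines is matched by $\cos(\pi s/2)$ in the $+$ case, while $\cosh(\pi t)$ is bounded in the $-$ case). Using \eqref{eq:Ibound}, one obtains $\widetilde{I}_1(s-1,\cdot) \ll N^{(\sigma+1)/2}(qN)^\eps$ with arbitrary polynomial decay in $(1+|s|)$ and in $|t_i|N_i/(\sqrt{N}q)$ via integration by parts in $y, x_1, x_2$ inside \eqref{eq:Idef}; combined with the $r^{(\sigma+1)/2}$ prefactor, this yields \eqref{eq:Hbound}.

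For the rapid decay \eqref{eq:phiplusminusboundFortlarge} when $|t| \gg (qN|n_1n_2|)^\eps$, I would shift the contour in \eqref{eq:plusminusIntegralTransformInMellin} far to the right; no poles are crossed by the holomorphy established above, and on the shifted line Stirling gives $|\Gamma(s/2+it)\Gamma(s/2-it)| \ll (1+|t|)^{\sigma-1}e^{-\pi|t|}$, producing the claimed $(1+|t|)^{-A}(qN|n_1n_2|)^{-100}$ decay (the factor $\cos(\pi s/2)$ contributes at most $e^{\pi|\imag(s)|/2}$, absorbed by further integration by parts in the Mellin variable). The holomorphic case is treated analogously using the Stirling asymptotic for the gamma-quotient. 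The only delicate piece of routine bookkeeping is matching the effective $y$-support $\sqrt N$ to the exponent $(Nr)^{1/2+\sigma/2}$ in \eqref{eq:Hbound}; the central conceptual feature is that the analyticity radius in $s$ coincides exactly with the Kim-Sarnak exponent $\theta$, a precision that the subsequent spectral analysis will exploit by shifting the contour down to $\real(s/2) = \theta + \eps$.
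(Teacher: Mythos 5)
The overall plan—substitute \eqref{eq:phidef} into the Mellin transforms, extract $H_\pm$, and verify holomorphy and bounds via Stirling and \eqref{eq:Ibound}—matches the paper, and your change of variables giving $\widetilde{\Phi}(s+1) = r^{(s+1)/2}\widetilde{I}_1(s-1,\cdot)$ and the identification of $H_\pm$ are exactly right. The holomorphy argument and the derivation of \eqref{eq:Hbound} are also essentially the paper's.

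However, your treatment of the rapid decay bound \eqref{eq:phiplusminusboundFortlarge} contains a genuine error: you shift the contour \emph{to the right} and rely on the factor $e^{-\pi|t|}$ from $\Gamma(\tfrac{s}{2}+it)\Gamma(\tfrac{s}{2}-it)$. But this exponential decay is completely cancelled by the remaining factor in $h_\pm(s,t)$: in the $-$ case, $\cosh(\pi t) \asymp e^{\pi|t|}$ kills it outright; in the $+$ case, $\cos(\pi s/2) \asymp e^{\pi|\imag(s)|/2}$, and on the range $|\imag(s)| \asymp 2|t|$ (which the $s$-integral must pass through) this is $\asymp e^{\pi|t|}$, again cancelling the Gamma decay. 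You write that this $e^{\pi|\imag(s)|/2}$ growth is ``absorbed by further integration by parts in the Mellin variable,'' but integration by parts produces only polynomial decay $(1+|s|)^{-j}$, which cannot absorb exponential growth. In fact, a careful Stirling estimate (as in \eqref{eq:hpmbound}) shows that $h_\pm(\sigma+iv,t)$ is \emph{only polynomially bounded} in $t$ on any vertical line, of size roughly $(1+|t|)^{\sigma-1}$ when $|v|\ll|t|$. For $\sigma>1$ this \emph{grows} with $|t|$, so shifting right makes matters worse, not better.

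The correct move, used by the paper, is to shift the contour far to the \emph{left}. One does cross the poles of $h_\pm$ at $s/2 \pm it \in \Z_{\leq 0}$, but these lie at height $|\imag(s)| \asymp |t| \gg (Nq|n_1n_2|)^\eps$, where $\widetilde{\Phi}(s+1)$ is negligible by the rapid decay in $\imag(s)$ coming from integration by parts. On the shifted line the factor $|t|^{\sigma-1}$ (now with $\sigma-1$ very negative) provides the decay, provided one verifies the crucial identity
\[
\frac{\sqrt{mn}}{a\sqrt{r}} = \frac{\sqrt{|n_1n_2|}\sqrt{h}}{\max(|n_1|N_1,|n_2|N_2)} \ll \frac{\sqrt{|n_1n_2|}\sqrt{N_1N_2}}{\max(|n_1|N_1,|n_2|N_2)} \leq 1,
\]
where $a = \max(|t_1|N_1/q, |t_2|N_2/q)$ and the last inequality uses $h \ll N$. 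This bound on the ratio is the real content of the rapid-decay claim and is entirely absent from your argument; without it, even the left-shift does not close. The same correction applies to your treatment of $\mathcal{L}^{\rm hol}\Phi(k)$: the gamma-quotient $\Gamma(\tfrac{s+k-1}{2})/\Gamma(\tfrac{k+1-s}{2})$ is of size $k^{\sigma-1}$ on the line $\real(s)=\sigma$, which grows with $k$ for $\sigma>1$; again you must shift left.
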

Remarks.  Since the claimed bounds are the same for the three choices of $\pm$, $\mathrm{hol}$, we may easily treat these cases in unison.
In addition, the assumption that $t_1, t_2 \neq 0$ is unobjectionable by the reduction in Lemma \ref{lemma:SchihAfterPoisson}.
\begin{proof}
We focus on the case of $\mathcal{L}^{\pm}$ first.
Recall that $\mathcal{L}^{\pm}\Phi(t)$ is given by \eqref{eq:plusminusIntegralTransformInMellin}, where
\begin{equation*}
 \widetilde{\Phi}(s+1)  = r \int_0^{\infty} I(r^{-1/2} y, t_1, t_2, h) y^{s-2} dy = r^{\frac{s+1}{2}} \int_0^{\infty} I( y, t_1, t_2, h) y^{s-1} \frac{dy}{y}.
\end{equation*}
By \eqref{eq:Ibound}, and using that $t_1 t_2 \neq 0$, the function $I$ has rapid decay at $0$ and $\infty$, and so $\widetilde{\Phi}$ is entire. 
One sees that $\mathcal{L}^{\pm}\Phi$ has an integral representation of the form \eqref{eq:LplusminusMellinWithHplusminus}, with
\begin{equation}
\label{eq:HplusminusInTermsofPhitilde}
 H_{\pm}(s, t, t_1, t_2, h) = \frac{(4\pi)^{-s}}{2 \pi i} \widetilde{\Phi}(s+1) h_{\pm}(s,t),
\end{equation}
where recall $h_{\pm}(s,t)$ is defined by \eqref{eq:hplusminusDefinition}.  It is easy to check  that $h_{\pm}(s,t)$ is analytic for $\real(s/2) > \theta$ since $t \in \mr$ or $-\theta \leq it \leq \theta$.

Next we work out bounds on $\widetilde{\Phi}$.
The decay of $I(y)$ from \eqref{eq:Ibound} means in practice that $y \ll \sqrt{N}$ and
$y \gg \max(\frac{|t_1| N_1}{q},\frac{|t_2| N_2}{q})$.
For any $a,b>0$, and $s \neq 1$ with $\mathrm{Re}(s) = \sigma$ we have 
$|\int_a^{b} y^{s-2} dy| \leq |s-1|^{-1}  (b^{\sigma-1} + a^{\sigma-1})$.
Integrating by parts, using the triangle inequality, and then using this bound, 
 we deduce for any $j\geq 0$ that
\begin{equation}
\label{eq:phitildebound}
 \widetilde{\Phi}(s+1) \ll_{j,\sigma, A} \frac{N r^{\frac12+\frac{\sigma}{2}} (Nq)^{\varepsilon} }{|(s-1) \dots (s+j-2)|}  
\frac{N^{\frac{\sigma-1}{2}} + \max\Big(\frac{|t_1| N_1}{q}, \frac{|t_2| N_2}{q}\Big)^{\sigma-1} }{\Big(1+\frac{|t_1| N_1}{\sqrt{N} q}\Big)^{A} \Big(1+\frac{|t_2| N_2}{\sqrt{N} q}\Big)^{A}}
 .
\end{equation}
For future use, we also record derivative bounds with respect to the other variables, which leads to the following minor generalization of \eqref{eq:phitildebound}, valid for $\real(s) = \sigma > 1$:
\begin{multline}
\label{eq:phitildebound2}
|t_1|^{j_1} |t_2|^{j_2} h^{j_3} \frac{\partial^{j_1+j_2+j_3} }{\partial t_1^{j_1} \partial t_2^{j_2} \partial h^{j_3}} 
\widetilde{\Phi}(s+1, t_1, t_2, h) \\
\ll_{j,\sigma, A} \frac{N r^{\frac12+\frac{\sigma}{2}} (Nq)^{\varepsilon} }{(1+|s|)^{A}}  
\frac{N^{\frac{\sigma-1}{2}} + \max\Big(\frac{|t_1| N_1}{q}, \frac{|t_2| N_2}{q}\Big)^{\sigma-1} }{\Big(1+\frac{|t_1| N_1}{\sqrt{N} q}\Big)^{A} \Big(1+\frac{|t_2| N_2}{\sqrt{N} q}\Big)^{A}}.
\end{multline}
In addition, it is supported on $h \ll H$.

We also need a bound on $h_{\pm}(s,t)$. Let $d(\cdot,\cdot)$ be the distance function on $\C$.
Stirling's approximation shows that for $\sigma$ fixed and with $d(\frac{\sigma}{2}, \mathbb{Z}_{\leq 0}) \geq \frac{1}{100}$  we have
\begin{equation}
\label{eq:hpmbound}
 h_{\pm}(\sigma + iv, t) \ll 
 (1 + |t+\tfrac{v}{2}|)^{\frac{\sigma-1}{2}} 
(1 + |t-\tfrac{v}{2}|)^{\frac{\sigma-1}{2}}.
\end{equation}
Now it is easy to derive the bound \eqref{eq:Hbound} by putting together \eqref{eq:HplusminusInTermsofPhitilde}, \eqref{eq:phitildebound}, and \eqref{eq:hpmbound}.

It remains to show \eqref{eq:phiplusminusboundFortlarge}.
To see this, suppose $|t| \gg (Nq|n_1 n_2|)^{\varepsilon}$ and shift the contour of integration in \eqref{eq:plusminusIntegralTransformInMellin} far to the left.  There are poles of $h_{\pm}(s,t)$ at $s/2 \pm it =0,-1,-2,\dots$, which have $|\text{Im}(s)| \asymp |t| \gg (Nq|n_1 n_2|)^{\varepsilon}$. Since $\widetilde{\Phi}(s+1)$ is small at this height, these residues give a contribution to $\mathcal{L}^{\pm}\Phi(t)$ that are consistent with \eqref{eq:phiplusminusboundFortlarge}.  From a trivial bound on the new line $\sigma$ with $d(\frac{\sigma}{2},\mz_{\leq 0}) \geq \frac{1}{100}$, we obtain
\begin{equation*}
\mathcal{L}^{\pm}\Phi(t) \ll \frac{N r}{\sqrt{mn}} \Big(\frac{|t| \sqrt{r} a}{\sqrt{mn}}\Big)^{\sigma-1} + O(t^{-A} (Nq|n_1 n_2|)^{-100}),\end{equation*}  
with
\begin{equation*} \quad a = \max\Big(\frac{|t_1| N_1}{q}, \frac{|t_2| N_2}{q} \Big).
\end{equation*}
 Here $a$ is temporary shorthand notation not used past \eqref{eq:somerandomformula} below.

Note that
\begin{equation}
\label{eq:somerandomformula}
\frac{\sqrt{mn}}{a \sqrt{r}}  = \frac{\sqrt{|n_1 n_2|} \sqrt{h}}{ \max(|n_1|N_1, |n_2|N_2)} 
\ll \frac{\sqrt{n_1 n_2} \sqrt{N_1 N_2}}{ \max(|n_1|N_1, |n_2|N_2)},
\end{equation}
where in the last bound we used $h  \ll N = N_1 N_2$.  
If $|t| \gg (qN |n_1 n_2|)^{\varepsilon}$, we may take
$\sigma$ far to the left to see that $\mathcal{L}^{\pm}\Phi(t)$ is very small, as desired.

Now we quickly treat the holomorphic case.
If $s=\sigma + iv$ with $\sigma$ fixed and $d(\frac{k+\sigma-1}{2}, \mathbb{Z}_{\leq 0}) \geq \frac{1}{100}$, then analogously to \eqref{eq:hpmbound}, we have
 \begin{equation}
 \label{eq:gammaratio}
\Big|2^{s-1} \frac{\Gamma(\tfrac{k}{2} + \tfrac{s-1}{2})}{\Gamma(\tfrac{k}{2} - \tfrac{s-1}{2})} \Big| \ll |k+iv|^{\sigma-1}.
\end{equation}
An essentially identical method now shows that $\mathcal{L}^{{\rm hol}}\Phi(k)$ is very small if $k \gg (qN|n_1 n_2|)^{\varepsilon}$.

\end{proof}

\subsection{The spectral expansion of   $S(\chi, h)$}
We will mainly focus on $S_{\rm Maass}(\chi,h)$. 

We apply Lemma \ref{lemma:BKweightfunctionProperties} to the function $I$ defined in \eqref{eq:Idef}, and insert the result in \eqref{eq:KMaassSpectralFirstFormula} to derive (with 
$t_i = h_{q_2} n_i$, 
$n_i = n_{i0} n_i'$)
\begin{multline}
\label{eq:KMaassSpectralRoughShape}
 K_{\rm Maass} = \sum_{t_j}  \int_{(2+\varepsilon)}   
 H_\pm(s,t_j,t_1, t_2, h_{q_2}h')  
 \Big(\frac{q}{h_{q_2}^2 h' n_{10} n_{20} |n_1' n_2'|}\Big)^{s/2}  ds 
 \\
\times \sum_{\ell m = \frac{q}{h_{q_2}}} 
 \sum_{\pi \in \mathcal{H}_{it_j}(m, \eta^2)}
 \frac{4 \pi \epsilon_\pi^{(\pm)}}{V(q/h_{q_2}) \mathscr{L}_\pi^*(1)}
 \sum_{\delta|\ell} 
 \overline{\lambda}_{\pi}^{(\delta)}\Big(\frac{h'}{q_1} \Big) \overline{\lambda}_{\pi}^{(\delta)}\Big(n_{10} n_{20} | n_1'  n_2'| \frac{h_{q_2}^2}{q_2}\Big) .
\end{multline}

Looking back to \eqref{eq:SchihSumOfKloostermans}, our next goal is to convert the sums over $n_1',n_2'$ to integrals of Dirichlet series, and to do so we must reduce to positive values of $n_i'$. We collect the contributions from the four quadrants of the $n_1',n_2'$ summation by setting for $t_1,t_2>0$ \begin{multline}\label{h1def}
 H_1(s,t,t_1,t_2,h) = H_+(s,t,t_1,t_2,h)  + \eta(-1) H_-(s,t,-t_1,t_2,h)\\ + \eta(-1) H_-(s,t,t_1,-t_2,h)+ H_+(s,t,-t_1,-t_2,h). 
\end{multline}
By Mellin inversion there exists a function $H_2$ such that 
\begin{equation}\label{h1invert}
H_1(s,t,t_1,t_2,h) = \int_{\bf{u}} t_1^{-u_1} t_2^{-u_2} h^{-u_3} H_2(s,t,u_1, u_2, u_3) d \bf{u},
 \end{equation}
where we write $\bf{u} = (u_1, u_2, u_3)$, and which is absolutely convergent for $\real(u_j) > 0$ for all $j$. Using \eqref{eq:phitildebound2}, we have for $\real(s) > 1$,  $\real(u_j)>0$, and $t_j \ll (Nq)^{\varepsilon}$  
\begin{equation}
\label{eq:H2bound}
 H_2(s,t_j,u_1, u_2, u_3)
 \ll (Nr)^{\frac{1+\sigma}{2}} (Nq)^{\varepsilon}
 M_1^{\alpha_1} M_2^{\alpha_2} H^{\alpha_3}
 (1+|s|)^{-A} \prod_{j=1}^{3}(1+|u_j|)^{-A},
\end{equation}
where $\alpha_j = \real(u_j)$, and
\begin{equation}
\label{eq:Mdef}
 M_j = \frac{\sqrt{N} q}{N_j}, \quad  j=1,2.
\end{equation}
Gathering together \eqref{eq:SchihSumOfKloostermans}, \eqref{eq:SchihSpectralDecomposition}, \eqref{eq:KMaassSpectralRoughShape}, \eqref{h1def}, and \eqref{h1invert} we get
\begin{multline}
\label{eq:SMaassFormula}
 S_{\rm Maass}(\chi,h_{q_2}h') = 
 \frac{2 h_{q_2}^2}{ q^2\varphi(q_2/h_{q_2})} 
 \sum_{\eta \shortmod{q_2/h_{q_2}}} \overline{\eta}^2(q_1)  \eta(\ell_{\chi_2}h') 
 \sum_{t_j} 
 \sum_{\ell m = \frac{q}{h_{q_2}}} \sum_{\pi \in \mathcal{H}_{it_j}(m, \eta^2)} \\
  \frac{4 \pi \epsilon_\pi^{(\pm)}}{V(q/h_{q_2}) \mathscr{L}_\pi^*(1)} 
   \int_{\bf{u}} \int_{(2+\varepsilon)}
  \Big(\frac{q}{h_{q_2}^2}\Big)^{s/2} 
  \frac{H_2(s,u_1, u_2, u_3)}{h_{q_2}^{u_1 + u_2 +u_3} ( h')^{\frac{s}{2}+u_3}} 
 Z(s,u_1, u_2 ;h_{q_2} h') 
  ds\,  d \bf{u},
\end{multline} 
where 
\begin{multline}
\label{eq:Zdef}
 Z(s,u_1, u_2;h_{q_2} h')= \sum_{\delta|\ell} 
 \sum_{n_{10} n_{20} | q_2^{\infty}}
 \frac{\tau(\overline{\eta}) \tau(\overline{\eta}, n_{10}) 
\tau(\overline{\eta}, n_{20}) }{n_{10}^{\frac{s}{2}+u_1} n_{20}^{\frac{s}{2}+u_2}}
\\
\sum_{\substack{n_1', n_2' \geq 1}}
\frac{ S(0, n_1';q_1) S(0,n_2';q_1)}
    {(n_1')^{\frac{s}{2}+u_1} (n_2')^{\frac{s}{2}+u_2}} 
 \eta( n_1' n_2')
  \overline{\lambda}_{\pi}^{(\delta)}\Big(\frac{h'}{q_1} \Big)
  \overline{\lambda}_{\pi}^{(\delta)}\Big(n_{10} n_{20} n_1' n_2' \frac{h_{q_2}^2}{q_2}\Big)
  .
\end{multline}

For clarity, we recollect the origin of the relevant variable names.  Firstly, $q$ and $d$ are given integers with $d|q$, $q^2|d^3$.  For Theorem \ref{thm:ShiftedSumBounds}, we want to sum over $h \equiv 0 \pmod{d}$.  In our analysis so far, without summation over $h$, we factored $q$ according to $h$, via $q = q_1 q_2$ where $q_1 | h$ and $1 \leq v_p(h) < v_p(q_2)$ for all $p|q_2$. 

 Our next task is to sum \eqref{eq:SMaassFormula} over $h = h_{q_2} h' \equiv 0 \pmod d$ and move the sum over $h'$ to the inside. To implement this swap of the order of summation, we parametrize
over all factorizations $q= q_1 q_2$ with $(q_1, q_2)=1$ and all possible values of $h_{q_2}$ satisfying  $h_{q_2}|q_2$ with $v_p(h_{q_2}) < v_p(q_2)$ for all $p|h_{q_2}$.  Note that these constraints enforce $(h, q_2) = h_{q_2}$, and $h \equiv 0 \pmod{q_1}$.  We may also factor $d=d_1 d_2$ where $d_1 | q_1$ and $d_2|q_2$, and then only sum over $h_{q_2}$ with $d_2|h_{q_2}$.  In this way, we obtain 
\begin{equation}
\label{eq:SMaassSummedOverhUsefulRef}
\sum_{\substack{h \equiv 0 \shortmod{d}}} S_{\rm Maass}(\chi,h)
= \sum_{\substack{q_1 q_2 = q \\ (q_1, q_2) = 1}} 
\sum_{\substack{h_{q_2} |q_2 \\ v_p(h_{q_2}) < v_p(q_2) \\ d_2 | h_{q_2}}} 
\sum_{\substack{h' \equiv 0 \shortmod{q_1} \\ (h', q_2) = 1}}
S_{\rm Maass}(\chi, h_{q_2} h').
\end{equation}
Caution: the status of $h_{q_2}$ has changed. Prior to \eqref{eq:SMaassSummedOverhUsefulRef}, $h_{q_2}$ was a function of $q$ and $h$, whereas now it is only a summation variable constrained by the conditions indicated above.
Thus,
\begin{multline}
\label{eq:SMaassFormula2}
\sum_{\substack{h' \equiv 0 \shortmod{q_1} \\ (h', q_2) = 1}} S_{\rm Maass}(\chi,h_{q_2}h') = 
 \frac{2 h_{q_2}^2}{ q^2\varphi(q_2/h_{q_2})} 
 \sum_{\eta \shortmod{q_2/h_{q_2}}} \overline{\eta}^2(q_1)  \eta(\ell_{\chi_2})
 \sum_{t_j} 
 \sum_{\ell m = \frac{q}{h_{q_2}}} 
 \sum_{\pi \in \mathcal{H}_{it_j}(m, \eta^2)} \\
 \frac{4 \pi \epsilon_\pi^{(\pm)}}{V(q/h_{q_2}) \mathscr{L}_\pi^*(1)}
   \int_{\bf{u}} \int_{(2+\varepsilon)}
  \Big(\frac{q}{h_{q_2}^2}\Big)^{s/2}
   \frac{H_2(s,u_1, u_2, u_3)}{h_{q_2}^{u_1 + u_2 +u_3}}
 \mathcal{Z}(s,u_1, u_2, u_3;h_{q_2}) 
  ds \, d \bf{u},
\end{multline} 
where
\begin{equation}
\label{eq:ZfancyDef}
 \mathcal{Z}(s,u_1, u_2, u_3;h_{q_2}) = \sum_{\substack{h' \equiv 0 \shortmod{q_1} \\ (h', q_2)=1}} \frac{\eta(h') Z(s, u_1, u_2, u_3;h' h_{q_2})}{(h')^{s/2+u_3}}.
\end{equation}

\subsection{Properties of $Z$ and $\mathcal{Z}$}
Define 
\begin{equation*}
 \lambda_\pi^*(n) = \sum_{d|n} |\lambda_\pi(d)|.
\end{equation*}

Recalling $\ell m = q_1 \frac{q_2}{h_{q_2}}$, where $(q_1, q_2) = 1$, write
 \begin{equation*}
  \ell = \ell_1 \ell_2, \qquad m = m_1 m_2, 
  \quad
  \text{where}
  \quad
  \ell_1 m_1 = q_1,  \qquad \ell_2 m_2 = \frac{q_2}{h_{q_2}},
 \end{equation*}
 and note $(\ell_1 m_1, \ell_2 m_2) = 1$.  Recall also that $(h', q_2) = 1$, 
 $q_1|h'$, 
 that $\eta$ has modulus $q_2/h_{q_2}$, and that $q_2$ shares the same prime factors as $q_2/h_{q_2}$.  In Lemma \ref{lemma:Zproperties} below, we will also make use of the assumptions $q_2^2 \mid d_2^3$ and $d_2 \mid h_{q_2}$. 
 
 Finally, we mention that $\pi$ is an automorphic representation/newform of conductor $m = m_1 m_2$ and central character $\eta^2$.
 
\begin{mylemma}
\label{lemma:Zproperties}
 Let $Z(s,u_1,u_2 ; h_{q_2} h')$ be defined by \eqref{eq:Zdef}, initially with $\real(s/2 +u_i)$ large, $i=1,2$.  Then $Z$ has a factorization
 $Z = Z_{\text{good}} Z_{\text{bad}}$, where 
 \begin{equation*}
 Z_{\text{good}}(s,u_1,u_2 ;h_{q_2}h') 
 =
 L(s/2+u_1, \overline{\pi} \otimes \eta) 
  L(s/2+u_2, \overline{\pi} \otimes \eta), 
\end{equation*}
 and for $\real(s/2+u_i) \geq \sigma/2 >  1/2$, $i=1,2$ the series $Z_{\text{bad}}$ is holomorphic, and we have 
 \begin{equation}
 \label{eq:ZbadIndividualh'}
  Z_{\text{bad}}(s,u_1,u_2;h_{q_2}h')  \ll_{\sigma} (qN)^{\varepsilon}
  (\frac{h'}{q_1}, \ell_1)^{1/2-\theta} q_1 \ell_1 \ell_2^{1/2} 
  \Big(\frac{q_2}{h_{q_2}}\Big)^{3/2} \lambda_\pi^*\Big(\frac{h'}{q_1} \frac{h_{q_2}^3}{q_2^2}\Big).
 \end{equation}
 \end{mylemma}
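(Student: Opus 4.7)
The series $Z(s,u_1,u_2;h_{q_2}h')$ is of divisor-type, and I expect to identify it with a product of two $L$-functions times a manageable ``bad'' local piece by exploiting the multiplicativity of all of its ingredients along the decomposition $q=q_1q_2$. Write $s_i = s/2+u_i$, split $\delta = \delta_1\delta_2$ with $\delta_j \mid \ell_j$, and decompose $n_i' = m_ir_i$ with $m_i \mid q_1^\infty$ and $(r_i, q)=1$. Since $(h'/q_1, q_2) = 1$, $n_0 := n_{10}n_{20}h_{q_2}^2/q_2 \mid q_2^\infty$, and $(r_1r_2, q)=1$, the joint multiplicativity of $\lambda_\pi^{(\delta)}$ in $(\delta, n)$ (Lemma \ref{lem:xigdBound}(1)) factors
\begin{equation*}
\overline{\lambda_\pi^{(\delta)}(h'/q_1)\lambda_\pi^{(\delta)}(n_0 m_1 m_2 r_1 r_2)} = \overline{\xi_{\delta_2}(1)\,\lambda_\pi^{(\delta_1)}(h'/q_1)\,\lambda_\pi^{(\delta_1)}(m_1 m_2)\,\lambda_\pi^{(\delta_2)}(n_0)\,\lambda_\pi(r_1 r_2)}.
\end{equation*}
Combined with the corresponding factorizations of $\eta$, of $S(0, \cdot; q_1)$ (which only depends on the $q_1$-part of its argument), and of the Gauss sums, this writes $Z = \mathcal{R}\cdot \mathcal{S}$, where $\mathcal{R}$ is the $(r_1, r_2)$-sum (independent of $\delta$, $h'$, $h_{q_2}$) and $\mathcal{S}$ is a finite sum over divisors of $\ell$, $q_1^\infty$, and $q_2^\infty$. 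The Hecke relation $\lambda_\pi(r_1r_2) = \sum_{d\mid(r_1,r_2),\,(d,q)=1}\mu(d)\eta^2(d)\lambda_\pi(r_1/d)\lambda_\pi(r_2/d)$, together with $|\eta^2(d)| = 1$ for $(d,q)=1$, collapses $\mathcal{R}$ to $L^{(q)}(s_1,\overline\pi\otimes\eta)L^{(q)}(s_2,\overline\pi\otimes\eta)/\zeta^{(q)}(s_1+s_2)$. Completing each partial $L$-function by multiplying and dividing by the local Euler factors $L_p(s_i,\overline\pi\otimes\eta)$ at $p \mid q$ then produces $Z = Z_{\text{good}}Z_{\text{bad}}$ with $Z_{\text{good}}$ as claimed. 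The factor $Z_{\text{bad}}$ consists of $1/\zeta^{(q)}(s_1+s_2)$, the entire polynomials $L_p(s_i,\overline\pi\otimes\eta)^{-1}$ for $p \mid q$, and the finite sum $\mathcal{S}$; hence it is holomorphic in $\real(s_i) > 1/2$, where $\real(s_1+s_2) > 1$ keeps $\zeta^{(q)}$ bounded away from zero.

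For the quantitative bound \eqref{eq:ZbadIndividualh'}, I would estimate $Z_{\text{bad}}$ prime-by-prime. At $p \mid q_2$: each of the three Gauss sums $\tau(\overline\eta)$, $\tau(\overline\eta, n_{10})$, $\tau(\overline\eta, n_{20})$ is bounded by $(q_2/h_{q_2})^{1/2}(qN)^\eps$ via Lemma \ref{lemma:GaussSums}, giving the $(q_2/h_{q_2})^{3/2}$ factor; the oldform weights $\xi_{\delta_2}(d)d^{1/2}$ together with $\overline{\lambda_\pi^{(\delta_2)}(n_0)}$ contribute $\ell_2^{1/2}\lambda_\pi^*(h_{q_2}^3/q_2^2)(qN)^\eps$ using Lemma \ref{lem:xigdBound}(2) and \eqref{eq:RamanujanBoundRamifiedPrimes}. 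At $p \mid q_1$: the bound $|S(0, m_i; q_1)| \leq (m_i, q_1)$ summed over $m_i \mid q_1^\infty$ against $m_i^{-\sigma_i}$ yields $q_1^{(1-\sigma_1)+(1-\sigma_2)+\eps} = q_1^{1+\eps}$ at $\sigma_i = 1/2$, and the $\xi_{\delta_1}$ weights contribute $\ell_1^{1/2+\eps}$, combining to $q_1\ell_1(qN)^\eps$. Finally, the external factor $\overline{\lambda_\pi^{(\delta_1)}(h'/q_1)}$ is bounded by $(h'/q_1,\ell_1)^{1/2-\theta}\lambda_\pi^*(h'/q_1)(qN)^\eps$ by expanding $\lambda_\pi^{(\delta_1)}$ via \eqref{lambda(delta)def}, using Lemma \ref{lem:xigdBound}(2) to estimate $\xi_{\delta_1}(d)$, and applying the Kim--Sarnak bound $\theta = 7/64$ to $\lambda_\pi((h'/q_1)/d)$ pointwise. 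Multiplying these contributions yields \eqref{eq:ZbadIndividualh'}.

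The main obstacle is bookkeeping the cancellations between the reciprocal local $L$-factors at ramified primes $p\mid q$ and the remaining local sums inside $\mathcal{S}$: when $\overline\pi\otimes\eta$ is ramified at some $p\mid q$, the local $L$-factor has reduced (or even zero) degree, so the completion step produces fewer cancellations than in the unramified case and one must directly bound the residual local contributions using \eqref{eq:RamanujanBoundRamifiedPrimes} together with the Gauss-sum estimates. A secondary subtlety is that the oldform weights $\xi_\delta(d)$ are only known to be polynomially bounded (Lemma \ref{lem:xigdBound}(2)), so any power-savings in \eqref{eq:ZbadIndividualh'}---in particular the crucial $(h'/q_1,\ell_1)^{1/2-\theta}$---must come from the structural factors and the Kim--Sarnak bound rather than from cancellation in the $\xi$-weights themselves.
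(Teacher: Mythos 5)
Your overall plan — factor $Z$ into a generic Euler product over primes coprime to $q$ and a finite bad part supported on $q_1^\infty q_2^\infty$, identify the generic part with $L(s/2+u_1,\overline\pi\otimes\eta)L(s/2+u_2,\overline\pi\otimes\eta)/\zeta^{(q)}(s+u_1+u_2)$ via the Hecke relation, and then estimate the bad part prime-by-prime — matches the paper's proof, which writes $Z = Z_0 Z_1 Z_2$ with $Z_0$ the generic part, $Z_1$ the $q_1$-local piece, and $Z_2$ the $q_2$-local piece. The $q_2$-analysis you sketch (Gauss sum bounds give $(q_2/h_{q_2})^{3/2}$, oldform weights give $\ell_2^{1/2}$, and the divisibility $(q_2/h_{q_2})\mid h_{q_2}^2/q_2$ coming from $q_2^2\mid d_2^3$ controls the $\lambda_\pi^*$ factor) is also essentially what the paper does, modulo the case split it performs between $\eta$ primitive and $\eta$ trivial modulo $q_2/h_{q_2}$.

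However, there is a genuine gap in how you obtain the factor $(\frac{h'}{q_1},\ell_1)^{1/2-\theta}\lambda_\pi^*(\frac{h'}{q_1})$. You assert that expanding $\lambda_\pi^{(\delta_1)}(h'/q_1)$ and applying Kim–Sarnak pointwise to $\lambda_\pi((h'/q_1)/d)$ yields
$\lambda_\pi^{(\delta_1)}(h'/q_1) \ll (h'/q_1,\ell_1)^{1/2-\theta}\,\lambda_\pi^*(h'/q_1)\,(qN)^\eps$. What that expansion actually gives is
\begin{equation*}
\bigl|\lambda_\pi^{(\delta_1)}(n)\bigr| \ll (q)^\eps\sum_{d\mid(n,\delta_1)} d^{1/2}\bigl(\tfrac{n}{d}\bigr)^{\theta+\eps} \ll (qn)^\eps\, n^{\theta}\,(n,\ell_1)^{1/2-\theta}, \qquad n=h'/q_1,
\end{equation*}
and $n^\theta$ is \emph{not} $\ll\lambda_\pi^*(n)(qN)^\eps$ in general: $\lambda_\pi^*(n)$ can be as small as $\asymp d(n)$ when the Hecke eigenvalues at primes dividing $n$ are near zero, whereas $n^\theta$ grows. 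So the stated pointwise bound on the external factor is false, and your route does not prove \eqref{eq:ZbadIndividualh'} as written. The paper avoids this by \emph{not} using Kim–Sarnak on the external factor; instead it bounds $\lambda_\pi^{(\delta_1)}(h_1)$ via positivity (which gives exponent $1/2$, not $1/2-\theta$, but keeps the $\lambda_\pi^*(h_1)$), applies Kim–Sarnak to the $\lambda_\pi(mn/e)$ that sits \emph{inside} the $q_1$-local sum, and then trades savings algebraically: after locating the dominant term $m=n=e=\delta_1=\ell_1$ (when $\ell_1=q_1$, i.e.\ $\pi$ unramified at $p\mid q_1$), it shows $Z_1\ll(h_1,\ell_1)^{1/2}\ell_1^{3/2+\theta}\lambda_\pi^*(h_1)$ and converts via $(h_1,\ell_1)^{1/2}\ell_1^{3/2+\theta}\le(h_1,\ell_1)^{1/2-\theta}\ell_1^{3/2+2\theta}\le(h_1,\ell_1)^{1/2-\theta}\ell_1^2$, using $\tfrac32+2\theta\le2$. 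Your proposal omits this trade-off entirely, omits the necessary case analysis between $\pi$ ramified (where \eqref{eq:RamanujanBoundRamifiedPrimes} applies) and $\pi$ unramified at $p\mid q_1$, and also tacitly misadds exponents (you record that the Ramanujan sums give $q_1$ and the $\xi$-weights give $\ell_1^{1/2}$ and yet claim the product is $q_1\ell_1$; the paper in fact needs $\ell_1^2$ here, not $\ell_1^{3/2}$, and obtains it from the internal $m,n$-sum as described). You would need to replace the pointwise Kim–Sarnak step with the paper's positivity-plus-internal-Kim–Sarnak argument, together with the $m_1\ne1$ vs.\ $\ell_1=q_1$ dichotomy, to close the gap.
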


\begin{proof}
To begin, we recall the definition of $\lambda_\pi^{(\delta)}$ from \eqref{lambda(delta)def} and the estimate of Lemma \ref{lem:xigdBound}(2), which together with Hecke multiplicativity imply
\begin{equation}
\label{eq:lambdafgBound}
 \lambda_{\pi}^{(\delta)}(n) \ll n^{\varepsilon} \sum_{d|(n,\delta)} d^{1/2} |\lambda_\pi(n/d)| 
\leq n^{\varepsilon} \sum_{d|(n,\delta)} d^{1/2} \lambda_\pi^*(n/d).
\end{equation}
The function $e \rightarrow \lambda_{\pi}^*(e)$ is approximately sub-multiplicative in the sense that $\lambda_{\pi}^*(de) \ll_{\varepsilon} (de)^{\varepsilon} \lambda_{\pi}^*(d) \lambda_{\pi}^*(e)$.  Moreover, $\lambda_{\pi}^*(d) \ll d^{\theta + \varepsilon} \ll d^{1/2}$.
These facts imply that the function $d \rightarrow d^{1/2} \lambda_{\pi}^*(n/d)$ is essentially (i.e.\ up to a factor $d^{\varepsilon}$) monotonically increasing in $d$.  For example, we may deduce $\lambda_{\pi}^{(\delta)}(n) \ll n^{\varepsilon} (n,\delta)^{1/2} \lambda_{\pi}^*(\frac{n}{(n,\delta)})$.
This monotonicity property will be used repeatedly in the proof below.

The Dirichlet series $Z = Z(s,u_1,u_2;h_{q_2}h')$ (see \eqref{eq:Zdef}) factors as follows. 
 Write $\delta$ uniquely as $\delta = \delta_1 \delta_2$ with $\delta_1 | \ell_1$ and $\delta_2 |\ell_2$, and let
 \begin{equation*}
h' = q_1 h_1 h'', \quad \text{where} \quad h_1 | q_1^{\infty} 
\quad
\text{and} 
\quad
(h'', q) = 1.
\end{equation*}
 Then
$Z = Z_0  Z_1 Z_2 $ where 
\begin{equation}
\label{eq:Z'def}
 Z_0(s, u_1, u_2; h'') = 
 \sum_{\substack{(n_1 n_2, q) = 1}}
\frac{\eta( n_1 n_2)}
    {n_1^{\frac{s}{2}+u_1} n_2^{\frac{s}{2}+u_2}} 
 \overline{\lambda}_{\pi}(h'' ) \overline{\lambda}_{\pi}(n_1 n_2),
\end{equation}
\begin{equation*}
 Z_1(s, u_1, u_2;h_1)  =  \sum_{\substack{\delta_1 | \ell_1 }} 
 \sum_{\substack{n_1, n_2|q_1^{\infty}}}
\frac{ S(0, n_1;q_1) S(0,n_2;q_1)}
    {n_1^{\frac{s}{2}+u_1} n_2^{\frac{s}{2}+u_2}} 
 \eta( n_1 n_2)
  \overline{\lambda}_{\pi}^{(\delta_1)}(h_1)
  \overline{\lambda}_{\pi}^{(\delta_1)}(n_1 n_2)
  ,
\end{equation*}
and
 \begin{equation}
 \label{eq:Z2def}
Z_2(s, u_1, u_2; h_{q_2})
=
\sum_{\substack{\delta_2 | \ell_2 }} 
\sum_{n_1, n_2 | q_2^{\infty}}
 \frac{\tau(\overline{\eta}) \tau(\overline{\eta}, n_1) 
\tau(\overline{\eta}, n_2) }{n_1^{\frac{s}{2}+u_1} n_2^{\frac{s}{2}+u_2}} 
\overline{\lambda}_{\pi}^{(\delta_2)}\Big(n_1 n_2 \frac{h_{q_2}^2}{q_2} \Big).
\end{equation}

Let us begin with $Z_0$, for which it is not hard to see that
\begin{equation}
\label{eq:Z'formula}
 Z_0 = \overline{\lambda_\pi}(h'') \frac{L^{(q)}(s/2+u_1, \overline{\pi} \otimes \eta)  L^{(q)}(s/2+u_2, \overline{\pi} \otimes \eta)}{\zeta^{(q)}(s+u_1+u_2)},
\end{equation}
where $L^{(q)}(s,\pi)$ denotes the $L$-function $L( s,\pi)$  with the Euler factors at the primes dividing $q$ removed.

For $Z_1$, we claim
\begin{equation}
\label{eq:Z1bound}
 Z_1 \ll (h_1 q)^{\varepsilon} (h_1, \ell_1)^{1/2-\theta} q_1 \ell_1 \lambda_\pi^*(h_1),
\end{equation}
as we now proceed to show.  Let $\sigma$ be such that $\real(s/2+u_i) \geq \sigma/2 > 1/2$.
By the first estimate of \eqref{eq:lambdafgBound}, 
\begin{equation}
\label{eq:Z1BoundPreSimplified}
 Z_1 \ll h_1^\eps  \sum_{\delta_1 | \ell_1} 
 \sum_{\substack{n_1,n_2|q_1^{\infty}}}
\frac{ (n_1, q_1) (n_2, q_1)}
    {(n_1 n_2)^{\sigma/2-\eps}} 
  \sum_{\substack{d|(h_1 , \delta_1) \\ e|(n_1 n_2, \delta_1)}} (de)^{1/2} 
  \Big|\lambda_\pi\Big(\frac{h_1}{d}\Big)\Big|
  \cdot
  \Big|\lambda_\pi\Big(\frac{n_1 n_2}{e}\Big)\Big|.
\end{equation}
Since $Z_1$ as well as its claimed upper bound \eqref{eq:Z1bound} are multiplicative, it suffices to show \eqref{eq:Z1bound} for $q_1$ a prime power.  

In the case that $m_1 \neq 1$ 
we may use $|\lambda_{\pi}(p)| \leq 1$ for $p|m_1$ (see \eqref{eq:RamanujanBoundRamifiedPrimes}) leading quickly to \eqref{eq:Z1bound}. 
In the case $\ell_1=q_1$, we have by positivity
the bound $d^{1/2} |\lambda_{\pi}(h_1/d)| \leq (h_1,\ell_1)^{1/2} \lambda_{\pi}^*(h_1)$, and  the bound  $|\lambda_\pi(n_1 n_2/e)| \ll  (n_1 n_2/e)^{\theta+\varepsilon}$  towards the Ramanujan conjecture. Currently any $\theta > 7/64$ is admissible.
Since $q_1=\ell_1$, observe that the summand in \eqref{eq:Z1BoundPreSimplified} is maximized when $n_1=n_2=e=\delta_1 = \ell_1$. 
Bounding all terms by the $e=\delta_1 = \ell_1$ term, the two preceding bounds show  $Z_1 \ll (h_1 q)^{\varepsilon} (h_1, \ell_1)^{1/2} \ell_1^{3/2+\theta} \lambda_\pi^*(h_1)$; we then derive \eqref{eq:Z1bound} using $(h_1, \ell_1)^{1/2} \ell_1^{3/2+\theta} \leq (h_1, \ell_1)^{1/2-\theta} \ell_1^{2}$, since $\frac32 + 2 \theta \leq 2$.

Next we estimate $Z_2$. We claim
\begin{equation}
\label{eq:Z2bound}
 Z_2 \ll (q h_{q_2})^{\varepsilon} \Big(\frac{q_2}{h_{q_2}} \Big)^{3/2}  \ell_2^{1/2} \lambda_\pi^*(h_{q_2}^3/q_2^2)
 .
\end{equation}
The Dirichlet series $Z_2$ can be factored into prime powers, provided that we correspondingly factor $h$ and $\eta$; the Gauss sums are multiplicative, up to a root of unity.  Since the claimed bound on $Z_2$ is multiplicative, and so is $|Z_2|$, it suffices to check it when $q_2$ is a prime power.

We will use estimates for the Gauss sums from Section \ref{section:GaussSums}. Recall in particular from Corollary \ref{cor:GaussSums} that the product of Gauss sums $\tau(\overline{\eta}) \tau(\overline{\eta}, n_1) \tau(\overline{\eta}, n_2)$ vanishes except in the two cases

\begin{enumerate}
\item
 $\eta$ is primitive modulo $q_2/h_{q_2}$ and $(n_1 n_2,q_2) = 1$ 
 \item $\eta$ is trivial, and $q_2/h_{q_2} = p$, prime.
\end{enumerate}

Case 1) Suppose $\eta$ is primitive modulo $q_2/h_{q_2}$.   Then only the terms $n_1=n_2=1$ contribute to $Z_2$, and we have by the bound \eqref{eq:lambdafgBound} for $\lambda_\pi^{(\delta)}(n)$
\begin{equation}
\label{eq:Z2boundPreSimplified1}
 Z_2 \ll \cond(\eta)^{3/2} q_2^\eps \sum_{\delta_2|\ell_2} \sum_{d|(\delta_2, \frac{h_{q_2}^2}{q_2})} d^{1/2} \lambda_\pi^* \Big(\frac{h_{q_2}^2/q_2}{d}\Big).
\end{equation}
Note that $\ell_2 | \frac{q_2}{h_{q_2}}$ and $\frac{q_2}{h_{q_2}} \mid \frac{h_{q_2}^2}{q_2}$, since we recall $q_2^2 \mid d_2^3$ and $d_2\mid h_{q_2}$. Therefore, the largest value of $d$ appearing in the bound \eqref{eq:Z2boundPreSimplified1} above is $d=\ell_2$.   Since the summand in \eqref{eq:Z2boundPreSimplified1} is essentially monotonic, we have
\begin{equation*}
 Z_2 \ll q_2^{\varepsilon} \cond(\eta)^{3/2} \ell_2^{1/2} \lambda_\pi^*\Big(\frac{h_{q_2}^2/q_2}{\ell_2}\Big).
\end{equation*}
If $m_2 \neq 1$, then $|\lambda_\pi(p)| \leq 1$ at primes dividing $q_2$ (see \eqref{eq:RamanujanBoundRamifiedPrimes})
and we easily obtain \eqref{eq:Z2bound}.  If $m_2 = 1$, then $\ell_2  = q_2/h_{q_2}$, and the bound simplifies as
\begin{equation*}
 Z_2 \ll q_2^{\varepsilon} \cond(\eta)^{3/2} \ell_2^{1/2} \lambda_\pi^*(h_{q_2}^3/q_2^2).
\end{equation*}
Substituting $\cond(\eta) = q_2/h_{q_2}$ gives the desired bound. 

Case 2)  Suppose $\eta$ is the trivial character and $q_2/h_{q_2} = p$ is prime.  We obtain by  \eqref{eq:lambdafgBound}
\begin{equation}
\label{eq:Z2boundPreSimplified2}
 Z_2 \ll q_2^\eps \sum_{\delta_2|\ell_2} 
 \sum_{\substack{n_1,n_2|p^{\infty} }} \frac{(n_1,p)(n_2,p)}{(n_1 n_2)^{\sigma/2-\eps}}
 \sum_{d|(\delta_2, n_1 n_2 \frac{h_{q_2}^2}{q_2})} d^{1/2}\lambda_\pi^*\Big(\frac{n_1 n_2 h_{q_2}^2/q_2}{d}\Big).
\end{equation}
The largest value of $d$ appearing in the above sum is at most $\ell_2$ regardless of $n_1, n_2$, and the summand is monotonic increasing in $d$. Similarly, the summand in \eqref{eq:Z2boundPreSimplified2} is monotonically decreasing as a function of $n_1,n_2$ as soon as $n_1,n_2\geq p$ by current progress towards Ramanujan.  Hence
\begin{equation*}
 Z_2 \ll q_2^{\varepsilon} p \ell_2^{1/2} \lambda_\pi^*(\frac{p^2 h_{q_2}^2/q_2}{\ell_2}) = q_2^{\varepsilon} p \ell_2^{1/2} \lambda_\pi^*\Big(\frac{q_2}{\ell_2}\Big).
\end{equation*} 
We can further simplify this bound as follows. 
If $m_2 \neq 1$, then $|\lambda_{\pi}(p)| \leq 1$ for $p|m_2$.  If $m_2 =1$, then $\ell_2 = q_2/h_{q_2} = p$. In either case, we obtain
\begin{equation*}
 Z_2 \ll 
 q_2^{\varepsilon} p \ell_2^{1/2} \lambda_\pi^*(h_{q_2} ).
\end{equation*}
This bound can in turn be absorbed by  \eqref{eq:Z2bound} using current progress towards Ramanujan, since $h_{q_2} = \frac{h_{q_2}^3}{q_2^2} \frac{q_2^2}{h_{q_2}^2}$, so that
$ \lambda_\pi^*(h_{q_2}) \leq \lambda_\pi^*(h_{q_2}^3/q_2^2) (q_2/h_{q_2})^{2\theta}$, and $1+2\theta \leq 3/2$.

Putting together the previous estimates finishes the proof.
\end{proof}

\begin{mylemma}
\label{lemma:ZpropertiesSummedoverh'}
 Suppose that $q = q_1 q_2$ with $(q_1, q_2) = 1$, and suppose that $h_{q_2}$ is an integer as in \eqref{eq:SMaassSummedOverhUsefulRef}.   
Then
\begin{equation}
\label{eq:Zsummedoverh'Formula}
  \mathcal{Z}(s,u_1, u_2, u_3; h_{q_2}) =
 L(s/2+u_1, \overline{\pi} \otimes \eta) 
  L(s/2+u_2, \overline{\pi} \otimes \eta)
  L(s/2+u_3, \overline{\pi} \otimes \eta) 
  \mathcal{Z}_{\text{bad}},
\end{equation}
where for $\real(s/2+u_i) \geq \sigma/2 > 1/2$, $i=1,2,3$ the series $\mathcal{Z}_{\text{bad}}$ is holomorphic, and we have
\begin{equation}
\label{eq:ZbadSummedOverh'}
\mathcal{Z}_{\text{bad}} \ll_{\sigma} 
(q N)^{\varepsilon} q_1^{-\sigma/2} q_1 \ell_1 \ell_2^{1/2} \Big(\frac{q_2}{h_{q_2}}\Big)^{3/2} 
\lambda_\pi^*\Big(\frac{h_{q_2}^3}{q_2^2}\Big).
\end{equation}
\end{mylemma}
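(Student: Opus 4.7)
The plan is to use the factorization $Z = Z_{\text{good}} Z_{\text{bad}}$ from Lemma \ref{lemma:Zproperties} inside the defining sum \eqref{eq:ZfancyDef}. Since $Z_{\text{good}}(s,u_1,u_2) = L(s/2+u_1, \overline{\pi}\otimes\eta)L(s/2+u_2,\overline{\pi}\otimes\eta)$ is independent of $h'$, it comes out of the sum, furnishing two of the three $L$-factors in \eqref{eq:Zsummedoverh'Formula}. To produce the third, I will substitute $h' = q_1 h_1 h''$ with $h_1 | q_1^{\infty}$ and $(h'',q)=1$, exactly as in the proof of Lemma \ref{lemma:Zproperties}. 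The entire dependence of $Z_{\text{bad}}$ on the coprime-to-$q$ variable $h''$ is through the factor $\overline{\lambda_\pi}(h'')$ coming from $Z_0$ (see \eqref{eq:Z'formula}); so the sum over $h''$ telescopes into an Euler product
\begin{equation*}
\sum_{(h'',q)=1} \frac{\eta(h'')\overline{\lambda_\pi}(h'')}{(h'')^{s/2+u_3}} = L^{(q)}(s/2+u_3, \overline{\pi}\otimes\eta).
\end{equation*}
Multiplying and dividing by the missing Euler factors at primes dividing $q$ produces the complete $L$-function $L(s/2+u_3, \overline{\pi}\otimes\eta)$ appearing in \eqref{eq:Zsummedoverh'Formula}, with the local factors at primes $p \mid q$ reabsorbed into $\mathcal{Z}_{\text{bad}}$.

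After these steps, $\mathcal{Z}_{\text{bad}}$ is the product of $q_1^{-s/2-u_3} \eta(q_1) Z_2(s,u_1,u_2;h_{q_2})$, a finite product of local Euler factors at $p \mid q$ (coming from the various $L_q$ and $\zeta^{(q)}$ factors that do not depend on $h'$), and the sum over $h_1 \mid q_1^{\infty}$ of $\eta(h_1)\, Z_1(s,u_1,u_2;h_1)/h_1^{s/2+u_3}$. The bound \eqref{eq:ZbadSummedOverh'} then comes from three ingredients: (i) the prefactor $q_1^{-s/2-u_3}$ directly contributes the $q_1^{-\sigma/2}$ gain in \eqref{eq:ZbadSummedOverh'}; (ii) $Z_2(s,u_1,u_2;h_{q_2})$ is bounded by $(qh_{q_2})^\eps \ell_2^{1/2}(q_2/h_{q_2})^{3/2}\lambda_\pi^*(h_{q_2}^3/q_2^2)$ by \eqref{eq:Z2bound}, which is exactly the last two factors in \eqref{eq:ZbadSummedOverh'}; (iii) the $h_1$-sum together with $Z_1$ must be bounded by $(qN)^\eps q_1 \ell_1$.

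The verification of item (iii) is where I expect the real work, since the pointwise estimate $Z_1 \ll q^\eps (h_1,\ell_1)^{1/2-\theta}q_1\ell_1\lambda_\pi^*(h_1)$ from \eqref{eq:Z1bound} together with progress toward Ramanujan $|\lambda_\pi(p^k)| \ll p^{k\theta+\eps}$ gives, for each prime $p \mid q_1$,
\begin{equation*}
\sum_{k\geq 0} \frac{(p^k,\ell_1)^{1/2-\theta} \lambda_\pi^*(p^k)}{p^{k\sigma/2}} \ll 1
\end{equation*}
only provided $\sigma/2 > 1/2 + \theta$, which is marginally worse than the hypothesis $\sigma/2 > 1/2$. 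I plan to circumvent this either by exploiting $|\lambda_\pi(p)|\leq 1$ at ramified primes $p\mid m_1$ (cf.~\eqref{eq:RamanujanBoundRamifiedPrimes}) to cancel the $(h_1,\ell_1)^{1/2-\theta}$ saving against $\lambda_\pi^*(h_1)$, and by truncating the $h_1$-sum at $h_1 = q_1^{O(1)}$ using that both $(h_1,\ell_1)$ and $\lambda_\pi^*$ stabilize once $h_1 > q_1$; then the remaining tail converges absolutely for $\sigma/2 > 1/2$, with any losses absorbed into $(qN)^\eps$.

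Finally, collecting all the local factors at $p \mid q$ produced by reconstructing the third $L$-function, by the bound on $Z_2$, and by the truncated $h_1$-sum, I will check that their product is at most $(qN)^\eps q_1\ell_1$ times the factors explicitly displayed in \eqref{eq:ZbadSummedOverh'}. The main obstacle is the careful tracking of the local Euler factors at ramified primes and the marginal convergence of the $h_1$-sum; once these are handled, the statement of Lemma \ref{lemma:ZpropertiesSummedoverh'} follows by inspection.
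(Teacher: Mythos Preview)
Your approach is essentially the same as the paper's: factor $h' = q_1 h_1 h''$, sum over $h''$ coprime to $q$ to produce the third $L$-function, bound the $h_1$-sum using \eqref{eq:Z1bound}, and carry over \eqref{eq:Z2bound} for $Z_2$ unchanged. The paper writes this as a factorization $\mathcal{Z} = \mathcal{Z}_0 \mathcal{Z}_1 \mathcal{Z}_2$ with $\mathcal{Z}_1 = \sum_{h_1 | q_1^\infty} \eta(q_1 h_1)(q_1 h_1)^{-s/2-u_3} Z_1$ and handles each piece in one line.

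Your concern in item (iii) is overstated. The factor $(h_1,\ell_1)^{1/2-\theta}$ does not grow like $h_1^{1/2-\theta}$: it is bounded by $\ell_1^{1/2-\theta}$, since $(h_1,\ell_1) \leq \ell_1$. At a prime $p \mid q_1$ with $a = v_p(\ell_1)$, the local sum splits as
\[
\sum_{k \leq a} p^{k(1/2-\theta)} \frac{\lambda_\pi^*(p^k)}{p^{k\sigma/2}} + p^{a(1/2-\theta)} \sum_{k > a} \frac{\lambda_\pi^*(p^k)}{p^{k\sigma/2}}.
\]
The first sum has $O(a)$ terms each $\ll a$ (using $\lambda_\pi^*(p^k) \ll (k+1) p^{k\theta}$ and $\sigma/2 > 1/2$), contributing $\ll (\log q_1)^2$. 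In the second sum the gcd is frozen at $p^a$, and the tail converges for $\sigma/2 > \theta$; its size is $\ll p^{a(\theta-\sigma/2)}$, so the whole second piece is $\ll p^{a(1/2-\sigma/2)} \leq 1$. Thus the $h_1$-sum is $\ll q_1^{\varepsilon}$ directly, with no need for truncation or for the ramified bound $|\lambda_\pi(p)| \leq 1$. (Your claim that $\lambda_\pi^*$ ``stabilizes'' is not correct, but as shown it is unnecessary.) Once you see this, the paper's one-line estimate $\mathcal{Z}_1 \ll q_1^{\varepsilon - \sigma/2} q_1 \ell_1$ follows immediately from \eqref{eq:Z1bound}.
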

Remark.  Lemma \ref{lemma:ZpropertiesSummedoverh'} implies that $\mathcal{Z}$ has analytic continuation to the region of $\mc^4$ with $\mathrm{Re}(s/2+u_i) > 1/2$, $i=1,2,3$.

\begin{proof}
As in the proof of Lemma \ref{lemma:Zproperties}, we have a factorization of the left hand side of \eqref{eq:Zsummedoverh'Formula} as 
$\mathcal{Z}_0 \mathcal{Z}_1 \mathcal{Z}_2$, analogously to \eqref{eq:Z'def}--\eqref{eq:Z2def}. For instance,
\begin{equation*}
 \mathcal{Z}_1 = \sum_{h_1 |q_1^{\infty}} \frac{\eta(q_1 h_1)}{(q_1 h_1)^{s/2 + u_3}} Z_1(s,u_1, u_2;h_1).
\end{equation*}
Then using \eqref{eq:Z1bound}, we have
\begin{equation*}
\mathcal{Z}_1 \ll q_1^{\varepsilon-\sigma/2} q_1 \ell_1 \sum_{h_1 | q_1^{\infty}} \frac{\lambda_\pi^*(h_1)}{h_1^{\sigma/2}}  (h_1, \ell_1)^{1/2-\theta} \ll q_1^{\varepsilon-\sigma/2} q_1 \ell_1.
\end{equation*}
The case of $\mathcal{Z}_2$ is easy now, because $\mathcal{Z}_2$ is identical to \eqref{eq:Z2def}, since $(h', q_2) = 1$.  Therefore, the bound \eqref{eq:Z2bound} holds for $\mathcal{Z}_2$.  Finally, it is easy to see from 
\eqref{eq:Z'formula} that
\begin{equation*}
\mathcal{Z}_0  = \sum_{(h'', q_2) = 1} \frac{\eta(h'')}{(h'')^{s/2+u_3}} Z_0(s,u_1,u_2,u_3;h'') = 
\frac{\prod_{j=1}^{3}L^{(q)}(s/2+u_j, \overline{\pi} \otimes \eta)  
}{\zeta^{(q)}(s+u_1+u_2)}. \qedhere
\end{equation*}
\end{proof}

\subsection{Properties of $Z$, Eisenstein case}
 Suppose that $\eta$ is a Dirichlet character modulo $ q_2/h_{q_2}$, which we may also identify with a character of $\widehat{\Z}^\times$. For a Hecke character $\mu$, we write $\mu \otimes \eta$ for the twist of $\mu$ by the finite order Hecke character corresponding to $\eta$. Let $Z$ be defined by \eqref{eq:Zdef}, but where $\pi$ is a global principal series/newform Eisenstein series of conductor $m$, central character $\eta^2$, and spectral parameter $it$.  Similarly define $\mathcal{Z}$ for $\pi$ a global principal series/newform Eisenstein series.
\begin{mylemma}
\label{lemma:ZpropertiesEisensteinCase}
 The following properties hold for $Z$ and $\mathcal{Z}$ in the Eisenstein case:
\begin{enumerate}
\item Lemmas \ref{lemma:Zproperties} and \ref{lemma:ZpropertiesSummedoverh'} carry over verbatim.
\item The series $\mathcal{Z}$ has meromorphic continuation to the region of $\mc^4$ with $\text{Re}(s/2+u_i) >1/2$, $i=1,2,3$.   Its only possible poles are at $s/2+u_i \pm it = 1$, which occur
if and only if $\pi \simeq \pi(\mu_1,\mu_2)$  
 with $\mu_i$ unitary Hecke characters of conductors $r_i$, $i=1,2$ such that $\eta = \mu_1 \vert_{\widehat{\Z}^\times} = \mu_2  \vert_{\widehat{\Z}^\times}$. 
 
 In particular, only if $r_1 = r_2$ and $r_1 r_2 = m$, so $m_1 =1$, and $m$ is a square. 
 \item Factoring $\mathcal{Z}= \mathcal{Z}_{\text{good}}\mathcal{Z}_{\text{bad}}$ as in \eqref{eq:Zsummedoverh'Formula},  for $\real(s/2 + u_i) \geq 1 + \varepsilon$, $i=1,2, 3$, we have
\begin{equation}
\label{eq:ZbadBoundEisensteinIndividualh}
  \mathcal{Z}_{\text{bad}}  \ll (q N)^{\varepsilon} 
  q_1^{-1} \ell_1
   \ell_2^{1/2}
  \cond( \eta)^{3/2}.
 \end{equation}
\end{enumerate}

\end{mylemma}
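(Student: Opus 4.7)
The approach is to run the proofs of Lemmas \ref{lemma:Zproperties} and \ref{lemma:ZpropertiesSummedoverh'} essentially verbatim, with two modifications: track the poles coming from the now non-entire $L$-factors, and simplify the bounds using absolute convergence in the advertised range $\real(s/2+u_i)\geq 1+\eps$. For $\pi = \pi(\mu_1,\mu_2)$ a newform Eisenstein series with $\mu_1\mu_2 = \eta^2$, the Hecke eigenvalues factor as $\lambda_\pi(n) = \sum_{ab=n}\mu_1(a)\mu_2(b)$, so
\[L(s,\overline{\pi}\otimes\eta) = L(s,\overline{\mu}_1\eta)\,L(s,\overline{\mu}_2\eta).\]
The decomposition $\mathcal{Z} = \mathcal{Z}_0 \mathcal{Z}_1 \mathcal{Z}_2$ from the cuspidal proof goes through unchanged, because it uses only multiplicativity of $\lambda_\pi$, the bound $|\lambda_\pi(p)|\leq 1$ at primes dividing $q(\pi)$ (which for principal series reduces to $|\mu_k(p)|\leq 1$ at ramified primes of $\mu_k$, automatic for unitary $\mu_k$), and current progress toward Ramanujan (trivial for Eisenstein). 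The analogue of \eqref{eq:Z'formula} then yields
\[\mathcal{Z}_0 = \frac{\prod_{j=1}^3 L^{(q)}(s/2+u_j,\overline{\pi}\otimes\eta)}{\zeta^{(q)}(s+u_1+u_2)},\]
which provides the meromorphic continuation to $\real(s+u_i)>0$.

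For the pole analysis, the only candidate poles of $\mathcal{Z}$ arise from the factors $L(s/2+u_j,\overline{\mu}_k\eta)$. Writing $\mu_k = \chi_k\,\alpha^{s_k}\sgn^{\epsilon_k}$ via the dictionary of Section \ref{section:basischoice}, a pole at $s/2+u_j \pm it = 1$ requires the finite-order component of $\overline{\mu}_k\eta$ to be trivial, i.e.\ $\mu_k = \eta\,\alpha^{\mp it}\sgn^\epsilon$ for one of the signs. Combined with $\mu_1\mu_2 = \eta^2$, this enforces the matched pair $\mu_1 = \eta\,\alpha^{-it}\sgn^\epsilon$ and $\mu_2 = \eta\,\alpha^{it}\sgn^\epsilon$ (up to interchange), whence $r_1 = q(\mu_1) = q(\eta) = q(\mu_2) = r_2$ and $m = r_1 r_2$ is a perfect square dividing $q_2/h_{q_2}$. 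Since $(q_1,q_2)=1$ and $m_1\mid q_1$, we conclude $m_1 = 1$.

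For the size estimate \eqref{eq:ZbadBoundEisensteinIndividualh}, the range $\real(s/2+u_i)\geq 1+\eps$ sits in the region of absolute convergence of every Dirichlet series in play, which makes the analysis of Lemma \ref{lemma:Zproperties} go through in a simpler form. In particular $1/\zeta^{(q)}(s+u_1+u_2)$ and $\lambda_\pi^*(h_{q_2}^3/q_2^2)$ each contribute only $(qh)^\eps$, while direct Dirichlet series manipulations at the $q_1$- and $q_2$-local pieces (parallel to \eqref{eq:Z1bound}, \eqref{eq:Z2bound}) yield $\mathcal{Z}_1 \ll q_1^{-1+\eps}\ell_1$ and $\mathcal{Z}_2 \ll (qh)^\eps q(\eta)^{3/2}\ell_2^{1/2}$; multiplying these produces \eqref{eq:ZbadBoundEisensteinIndividualh}. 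The principal new point relative to the cuspidal case is the pole classification; the quantitative estimate is a routine transcription.
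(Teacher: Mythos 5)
Your proposal is correct and follows essentially the same route as the paper's own proof: you factor $L(s,\overline{\pi}\otimes\eta)=L(s,\overline{\mu_1}\eta)L(s,\overline{\mu_2}\eta)$, use $\mu_1\mu_2=\eta^2$ to classify the poles and deduce $r_1=r_2$ and $m_1=1$, and obtain the size estimate by rerunning the $\mathcal{Z}_1,\mathcal{Z}_2$ bounds from Lemmas \ref{lemma:Zproperties} and \ref{lemma:ZpropertiesSummedoverh'} in the absolutely convergent range $\real(s/2+u_i)\geq 1+\eps$, which lets you replace the $\theta$-dependent estimates with the trivial (Ramanujan) bound. The only cosmetic wrinkle is the intermediate claim that ``$m=r_1 r_2$ is a perfect square dividing $q_2/h_{q_2}$'' before you've established $m_1=1$; the paper instead argues directly that $q(\eta)\mid q_2/h_{q_2}$ is coprime to $q_1$, hence $m_1=1$, and only then concludes $m=q(\eta)^2$ divides $q_2/h_{q_2}$—but this is a matter of presentation, not substance.
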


\begin{proof}
Part (1) follows from inspection of the proofs of Lemmas \ref{lemma:Zproperties} and \ref{lemma:ZpropertiesSummedoverh'}.  This also gives the meromorphic continuation of $\mathcal{Z}$ with possible poles only at the poles of $L(s/2+u_i, \overline{\pi} \otimes \eta)$ for $i=1,2,3$.

 Write $\pi  = \pi(\mu_1,\mu_2)$ where $\mu_1, \mu_2$ have conductors $r_1, r_2$, respectively.  
 Therefore, 
 $\mu_1\mu_2= \eta^2$, $r_1 r_2 = m$, and 
 \begin{equation*}
  L(s, \overline{\pi} \otimes \eta) = L(s, \overline{\mu_1} \otimes \eta) L(s, \overline{\mu_2} \otimes \eta).
 \end{equation*}
This $L$-function has a pole if and only if the conductor $\cond(\overline{\mu_1} \otimes \eta) = 1$, equivalently if and only if $\cond(\overline{\mu_2} \otimes \eta)=1$.  In this case, since $\overline{\mu_1}  \otimes \eta$ is unramified at all finite places and $\eta$ is finite order, we have  $\eta = \mu_1\vert_{\widehat{\Z}^\times} = \mu_2\vert_{\widehat{\Z}^\times}$.  In particular, the possible locations of the poles are as stated in the lemma. 
Moreover, since $\eta$ has conductor dividing $q_2/h_{q_2}$ (which is coprime to $q_1$), this implies $m_1 = 1$. Additionally, $r_1 = r_2$, and so $m = r_1^2$.

Finally, to show \eqref{eq:ZbadBoundEisensteinIndividualh}, we simply revisit the estimates of $\mathcal{Z}_1$ and $\mathcal{Z}_2$ from the proofs of Lemmas \ref{lemma:Zproperties} and \ref{lemma:ZpropertiesSummedoverh'}, but now the estimates occur slightly to the right of the $1$-line instead of slightly to the right of the $1/2$-line, and we may additionally use the Ramanujan bound for simplicity.  Indeed, inspection of \eqref{eq:Z1BoundPreSimplified} shows $Z_1 \ll (h_1 q)^{\varepsilon} \ell_1$, and hence $\mathcal{Z}_1 \ll (h_1 q)^{\varepsilon} q_1^{-1} \ell_1$.
Similarly, \eqref{eq:Z2boundPreSimplified1} and \eqref{eq:Z2boundPreSimplified2} lead to $\mathcal{Z}_2 = Z_2 \ll q_2^{\varepsilon} \cond(\eta)^{3/2} \ell_2^{1/2}$. 
\end{proof}

\subsection{Using the spectral bounds}
\label{sec:usingspectral}
\begin{myprop}
\label{prop:SMaassBound}
 With notation as in this section, we have
\begin{equation}
\label{eq:SMaassBoundSummedOverh}
 \sum_{\substack{h \equiv 0 \shortmod{d}}} S_{\rm Maass}(\chi, h)
 \ll
 N (qN)^{\varepsilon}
.
\end{equation}
\end{myprop}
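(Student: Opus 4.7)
The starting point is the double identity given by \eqref{eq:SMaassSummedOverhUsefulRef} and \eqref{eq:SMaassFormula2}, which expresses $\sum_{h \equiv 0 \shortmod d} S_{\rm Maass}(\chi,h)$ as a finite sum over factorizations $q=q_1q_2$ and admissible divisors $h_{q_2}$ of the double integral
\[
\int_{\bf u}\int_{(2+\varepsilon)} (q/h_{q_2}^2)^{s/2}\, h_{q_2}^{-u_1-u_2-u_3}\, H_2(s,u_1,u_2,u_3)\, \mathcal Z(s,u_1,u_2,u_3;h_{q_2})\,ds\,d\mathbf u,
\]
multiplied by spectral weights. The plan is to shift the $s$-contour to $\real(s)=\varepsilon$ while keeping $\real(u_i)=\frac12+\varepsilon$, so that each of the three factors $L(\tfrac{s}{2}+u_i,\overline{\pi}\otimes\eta)$ in $\mathcal Z_{\text{good}}$ is being evaluated slightly to the right of the critical line, and Lemma \ref{lemma:ZpropertiesSummedoverh'} applies. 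No pole is crossed because on the Maass spectrum $\mathcal Z_{\text{good}}$ is entire in this region, and $\mathcal Z_{\text{bad}}$ is holomorphic there by the same lemma. The rapid decay of $H_2$ in both $s$ and $\mathbf u$ (cf. \eqref{eq:H2bound}) truncates the contours at polylog height in $qN$, and similarly truncates the spectral parameter $t_j$ at $T\ll (qN)^\varepsilon$ via Lemma \ref{lemma:BKweightfunctionProperties}.

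The key arithmetic step is to control the spectral sum $\sum_\pi \mathscr L_\pi^*(1)^{-1}\,|L_1L_2L_3|\,|\mathcal Z_{\text{bad}}|$ by a fourth-moment bound. Following the Blomer--Mili\'cevi\'c device recalled in the introduction, we use H\"older's inequality with four factors of the form $a_\pi^{1/4},a_\pi^{1/4}|L_1|,a_\pi^{1/4}|L_2|,a_\pi^{1/4}|L_3|$, where $a_\pi=\lambda_\pi^*(h_{q_2}^3/q_2^2)/\mathscr L_\pi^*(1)$, to obtain
\[
\sum_{\pi}a_\pi|L_1L_2L_3| \;\leq\; \Bigl(\sum_\pi a_\pi\Bigr)^{1/4}\prod_{i=1}^{3}\Bigl(\sum_\pi a_\pi |L_i|^{4}\Bigr)^{1/4}.
\]
The factor $a_\pi$ is bounded pointwise by $(qN)^\varepsilon$ using \eqref{eq:HLIw} together with the trivial bound $\lambda_\pi^*(K)\ll K^{\theta+\varepsilon}$, and therefore may be pulled out of the spectral sum. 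After doing so, the first factor is a Weyl-type count while each of the three factors $(\sum_\pi |L_i|^4)^{1/4}$ is controlled by Theorem \ref{thm:fourthmomentSpectralBound} applied at level $q/h_{q_2}=q_1(q_2/h_{q_2})$, where the factorization into coprime pieces matches exactly the $(q_1,q_2)$ splitting in that theorem. Note that the bound in Theorem \ref{thm:fourthmomentSpectralBound} already incorporates the sum over oldclasses $\ell m = q/h_{q_2}$ with the weight $(\ell,q_1)$, which is crucial here because the $\delta\mid \ell$ sums inherited from the Atkin--Lehner--Li bases appear inside $\mathcal Z_{\text{bad}}$ via Lemmas \ref{lemma:Zproperties}--\ref{lemma:ZpropertiesSummedoverh'}.

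Assembling the pieces, we multiply the output of H\"older by the prefactor $h_{q_2}^2/(q^2\varphi(q_2/h_{q_2})V(q/h_{q_2}))\asymp h_{q_2}^4/(q^3q_2)$, by the bound $H_2\ll (Nr)^{1/2+\varepsilon/2}$ with $r=q/h_{q_2}$, and by the bound \eqref{eq:ZbadSummedOverh'} on $\mathcal Z_{\text{bad}}$; summing the resulting expression trivially over $\ell_1\mid q_1,\ell_2\mid q_2/h_{q_2},\,h_{q_2}\mid q_2$ and factorizations $q=q_1q_2$ costs only $(qN)^\varepsilon$. A direct bookkeeping in powers of $q_1$, $q_2/h_{q_2}$, and $h_{q_2}$ shows that all of these $q$-factors cancel up to $(qN)^\varepsilon$, leaving the clean bound $N\cdot (qN)^\varepsilon$ as required (using that $T\ll (qN)^\varepsilon$ kills the $T^2$ that emerges from the fourth moment). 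The main obstacle is the careful bookkeeping in the last step: one must verify that the hypothesis $q^2\mid d^3$, which forced the shape of $\mathcal Z_{\text{bad}}$ and the power $h_{q_2}^3/q_2^2$ appearing in $\lambda_\pi^*$, is precisely sharp enough for the powers of $q_1,q_2,h_{q_2}$ to balance; and to arrange for the application of H\"older to avoid any dependence on progress towards Ramanujan, which is exactly why the four-factor form of H\"older (rather than a naive three-factor form) is used.
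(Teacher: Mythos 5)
Your outline tracks the paper's argument at a high level (contour shift, H\"older with four factors, fourth moment via Theorem \ref{thm:fourthmomentSpectralBound}), but there is a serious error at the critical step, and a secondary issue with the contour placement.

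The fatal problem is the assertion that $a_\pi = \lambda_\pi^*(h_{q_2}^3/q_2^2)/\mathscr{L}_\pi^*(1)$ is ``bounded pointwise by $(qN)^\varepsilon$.'' This is false. The inverse $\mathscr{L}_\pi^*(1)^{-1}$ is indeed $q^{o(1)}$ by \eqref{eq:HLIw}, but $\lambda_\pi^*(K)$ with $K = h_{q_2}^3/q_2^2$ need not be. The hypotheses $q_2^2 \mid d_2^3$ and $d_2 \mid h_{q_2} < q_2$ only force $K \geq 1$; in general $K$ can be as large as a positive power of $q_2$ (take $q_2 = p^\beta$ and $h_{q_2} = p^{\beta-1}$, so $K = p^{\beta - 3}$). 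The best unconditional pointwise bound is $\lambda_\pi^*(K) \ll K^{\theta + \varepsilon}$, which is a genuine power of $q$, not $(qN)^\varepsilon$. Pulling this out of the spectral sum therefore \emph{does} introduce dependence on $\theta$ — directly contradicting your own concluding sentence, which asserts the four-factor H\"older is arranged ``to avoid any dependence on progress towards Ramanujan.'' As it happens, the corrected pointwise factor $K^{4\theta}$ combined with the raw count $\sum_\pi 1 \ll q_1(q_2/h_{q_2})^2$ is bounded by the paper's $A_1 \ll q_1 h_{q_2}$ precisely because $\theta \leq 1/4$; so a salvageable (but Ramanujan-\emph{dependent}) argument exists. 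However, that is not what you wrote, nor is it what the paper does: the paper keeps the entire weight $\lambda_\pi^*(K)^4 \ll K^\varepsilon|\lambda_\pi^*(K^2)|^2$ inside the first H\"older factor $A_1$ and estimates it \emph{on average} by the spectral large sieve for one Fourier coefficient, Lemma \ref{lemma:spectralboundOneFourierCoefficient}, which you never invoke. That lemma is the ingredient that makes the argument unconditional and $\theta$-free, and omitting it is the essential gap.

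A secondary issue: you shift to $\real(s) = \varepsilon$, $\real(u_i) = \tfrac12 + \varepsilon$, whereas the paper takes $\real(s) = 1+\varepsilon$, $\real(u_j) = \varepsilon$. The bound \eqref{eq:H2bound} on $H_2$, which you cite, is derived from \eqref{eq:phitildebound2} and is stated only for $\real(s) > 1$; at $\real(s) = \varepsilon$ the Mellin transform $\widetilde{\Phi}(s+1)$ and hence $H_2$ obey a different estimate that you would need to derive from scratch. Both choices place each $\real(s/2+u_i)$ just to the right of $1/2$, so the final exponent bookkeeping must come out equivalently, but the claimed bound on $H_2$ is not available at your contour without additional work.
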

Observe that Proposition \ref{prop:SMaassBound} is consistent with Theorem \ref{thm:ShiftedSumBounds}.

\begin{proof}
We take the expression \eqref{eq:SMaassFormula2}, move the contours of integration so $\real(s) = 1 + \varepsilon$, $\real(u_j) = \varepsilon$, $j=1,2,3$, and apply the triangle inequality.   We use \eqref{eq:H2bound}  and Lemma \ref{lemma:ZpropertiesSummedoverh'} to bound $H_2$ and $\mathcal{Z}$, respectively.  Altogether, we obtain 
\begin{multline}
\label{eq:SMaassSummedOverhUsefulRef2}
 \sum_{\substack{h' \equiv 0 \shortmod{q_1} \\ (h', q_2) = 1}} S_{\rm Maass}(\chi,h_{q_2}h')
  \ll  \frac{h_{q_2}^2 q_1^{-1/2} (qN)^{\varepsilon}}{ q^2\varphi(q_2/h_{q_2})} 
 \sum_{\eta \shortmod{q_2/h_{q_2}}}  \frac{h_{q_2}}{q}
\Big(\frac{q}{h_{q_2}^2}\Big)^{1/2} \frac{Nq}{h_{q_2}}  
 \sum_{t_j \ll (qN)^{\varepsilon}}  \sum_{\ell m = \frac{q}{h_{q_2}}}  \\
 q_1 \ell_1 \ell_2^{1/2}
  \Big(\frac{q_2}{h_{q_2}}\Big)^{3/2}
 \sum_{\pi \in \mathcal{H}_{it_j}(m, \eta^2)}  \lambda_\pi^*\Big(\frac{h_{q_2}^3}{q_2^2}\Big)
\int 
\prod_{i=1}^{3} |L(s/2+u_i, \overline{\pi} \otimes \eta)|ds\,  d \bf{u}
,
\end{multline} 
plus a small error term.
The limits on the integral sign are not displayed; to be definite, the integrals (over $s, u_1, u_2, u_3$) have real parts as fixed above.  Since the function $H_2$ has rapid decay along vertical lines, the integrals may be truncated at $(qN)^{\varepsilon}$ with a very small error term.

Before applying any more advanced tools to bound this expression, we first clean it up with trivial simplifications, giving
\begin{multline}
\label{eq:SMaassLfunctions}
 \sum_{\substack{h' \equiv 0 \shortmod{q_1} \\ (h', q_2) = 1}} S_{\rm Maass}(\chi,h_{q_2}h') \ll 
 \frac{h_{q_2}^{1/2}  N (qN)^{\varepsilon}}{ q^{} } 
 \sum_{\eta \shortmod{q_2/h_{q_2}}} 
 \sum_{t_j \ll (qN)^{\varepsilon}}
 \\ 
 \sum_{\ell m = \frac{q}{h_{q_2}}}   \ell_1 \ell_2^{1/2}
 \sum_{\pi \in \mathcal{H}_{it_j}(m, \eta^2)}  \lambda_\pi^*\Big(\frac{h_{q_2}^3}{q_2^2}\Big)
\int 
\prod_{i=1}^{3} |L(s/2+u_i, \overline{\pi} \otimes \eta)| ds\,  d\bf{u}
,
\end{multline} 
plus a small error term.  Inspired by the method of \cite{BlomerMili}, we apply H\"{o}lder's inequality with exponents $(4,4,4,4)$.
Note that $(\lambda_\pi^*(m))^2 \ll m^{\varepsilon} \lambda_\pi^*(m^2)$, using the Hecke relations.  This gives
\begin{equation*}
 \sum_{\substack{h' \equiv 0 \shortmod{q_1} \\ (h', q_2) = 1}} S_{\rm Maass}(\chi,h_{q_2}h') 
 \ll 
 \frac{h_{q_2}^{1/2}  N (qN)^{\varepsilon}}{ q^{} } 
 A_1^{1/4} A_2^{1/4} A_3^{1/4} A_4^{1/4},
\end{equation*} 
where
\begin{equation*}
 A_1 = \sum_{t_j \ll (qN)^{\varepsilon}} 
 \sum_{\ell m = \frac{q}{h_{q_2}}}   \ell_1  \ell_2^{2}
 \sum_{\eta \shortmod{q_2/h_{q_2}}} 
 \sum_{\pi \in \mathcal{H}_{it_j}(m, \eta^2)}  
 \Big|\lambda_\pi^*\Big(\frac{h_{q_2}^6}{q_2^4}\Big)\Big|^2,
\end{equation*}
and for $i=2,3,4$,
\begin{equation*}
 A_i = \sum_{\eta \shortmod{q_2/h_{q_2}}} 
 \sum_{t_j \ll (qN)^{\varepsilon}} 
 \\
 \sum_{\ell m = \frac{q}{h_{q_2}}}   \ell_1 
 \sum_{\pi \in \mathcal{H}_{it_j}(m, \eta^2)}  
\int |L(s/2+u_i, \overline{\pi} \otimes \eta)|^4 ds\,  du_i.
\end{equation*}
Note that we arranged $\ell_1$ evenly in each $A_i$, but took $(\ell_2^{1/2})^4 =\ell_2^2$ in $A_1$.  

Now we turn to the estimates for each $A_i$, starting with $A_1$.  Our aim is to apply Lemma \ref{lemma:spectralboundOneFourierCoefficient}, but $A_1$ is not quite in the correct form. This can be easily remedied by arranging $$\{\eta \shortmod{ q_2/h_{q_2}}\} = \bigcup_{\substack{\psi \shortmod {q_2/h_{q_2}} \\ \text{even}}} \{\eta: \eta^2 = \psi\}.$$ Note that the interior set on the right hand side is of cardinality $\ll q_2^\eps$, since it is a coset of the subgroup of characters modulo $q_2/h_{q_2}$ of order dividing 2.   With this observation,
Lemma \ref{lemma:spectralboundOneFourierCoefficient} gives 
\begin{equation*}
 A_1 \ll 
(qN)^{\varepsilon} \sum_{\ell_1 m_1 = q_1} \ell_1
 \sum_{\ell_2 m_2 = \frac{q_2}{h_{q_2}}} \ell_2^2
 \Big(m_1 m_2^2 + 
 \Big(\frac{h_{q_2}^3}{q_2^2}\Big) 
 m_2^{1/2}
 \Big),
\end{equation*}
Recalling that $(q_2/h_{q_2})^2 \leq h_{q_2}$, we deduce
\begin{equation*}
 A_1 \ll (qN)^{\varepsilon} q_1 h_{q_2}.
\end{equation*}
For $A_i$, $i=2,3,4$, by Theorem \ref{thm:fourthmomentSpectralBound} we have
\begin{equation*}
 A_i \ll (qN)^{\varepsilon} \frac{q_1 q_2^2}{h_{q_2}^2}.
\end{equation*}
Therefore,
\begin{multline}
\label{eq:SMaassSumOverh'}
\sum_{\substack{h' \equiv 0 \shortmod{q_1} \\ (h', q_2) = 1}} S_{\rm Maass}(\chi,h_{q_2}h') \ll 
 \frac{h_{q_2}^{1/2}  N (qN)^{\varepsilon}}{ q^{} } 
 q_1^{1/4} h_{q_2}^{1/4} \Big(\frac{q_1 q_2^2}{h_{q_2}^2}\Big)^{3/4}   = (qN)^{\varepsilon} N \frac{q_2^{1/2}}{h_{q_2}^{3/4}}.
\end{multline} 

Finally, we insert \eqref{eq:SMaassSumOverh'} into \eqref{eq:SMaassSummedOverhUsefulRef}, giving
\begin{equation*}
 \sum_{\substack{ h \ll H \\ h \equiv 0 \shortmod{d}}} S_{\rm Maass}(\chi,h) \ll (qN)^{\varepsilon} N
 \sum_{\substack{q_1 q_2 = q \\ (q_1, q_2) = 1}} 
\sum_{\substack{h_{q_2} |q_2 \\ v_p(h_{q_2}) < v_p(q_2) \\ d_2 | h_{q_2}}} \frac{q_2^{1/2}}{h_{q_2}^{3/4}}.
\end{equation*}
 The summand is largest when $h_{q_2} = d_2$, and since $q_2^2 \mid d_2^3$, we have $d_2^{3/4} \geq q_2^{1/2}$,  which completes the proof of Proposition \ref{prop:SMaassBound}.
\end{proof}

\begin{myprop}\label{prop:SholBound}
 The bound stated in Proposition \ref{prop:SMaassBound} holds for $S_{{\rm hol}}$.
 \end{myprop}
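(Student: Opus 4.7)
The plan is to follow the proof of Proposition \ref{prop:SMaassBound} essentially verbatim, substituting the holomorphic spectrum for the Maass spectrum throughout. Starting from the decomposition \eqref{eq:SchihSpectralDecomposition}, the holomorphic contribution $S_{{\rm hol}}(\chi,h)$ arises from the term $\mathcal{K}_{\rm hol}$ in Theorem \ref{thm:KuznetsovTraceFormulainf0}, yielding a formula analogous to \eqref{eq:SMaassFormula2} with $\sum_{t_j}$ replaced by a sum over even weights $k > 0$, with $\mathcal{H}_{it_j}(m,\eta^2)$ replaced by $\mathcal{H}_k(m,\eta^2)$, and with $H_{\pm}$ replaced by $H_{\rm hol}$. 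By Lemma \ref{lemma:BKweightfunctionProperties}, $\mathcal{L}^{\rm hol}\Phi(k)$ admits the same Mellin representation and satisfies the same bound \eqref{eq:Hbound} as its Maass counterpart, and is negligibly small for $k \gg (qN|n_1 n_2|)^{\varepsilon}$; hence the sum over $k$ is effectively truncated to $k \ll (qN)^{\varepsilon}$.

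Next, the factorizations of the Dirichlet series $Z$ and $\mathcal{Z}$ in Lemmas \ref{lemma:Zproperties} and \ref{lemma:ZpropertiesSummedoverh'} carry over without change to holomorphic $\pi$, since the proofs rely only on $|\lambda_\pi(p)| \leq 1$ at ramified primes (a feature of newvectors in all generic representations, cf.\ \eqref{eq:RamanujanBoundRamifiedPrimes}), the Hecke relations, and bounds towards Ramanujan. For holomorphic $\pi$ the full Ramanujan bound $\theta = 0$ holds unconditionally, which if anything only simplifies the estimates. Thus the bounds \eqref{eq:ZbadIndividualh'} and \eqref{eq:ZbadSummedOverh'} hold verbatim.

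Following Section \ref{sec:usingspectral}, one shifts contours, applies the triangle inequality, and invokes H\"older's inequality with exponents $(4,4,4,4)$ to reduce the problem to bounding the second-moment sum $A_1$ and the three fourth-moment sums $A_2, A_3, A_4$ over the holomorphic spectrum. For $A_i$ with $i = 2,3,4$, the holomorphic analog of Theorem \ref{thm:fourthmomentSpectralBound}, guaranteed by the remark following its statement, gives $A_i \ll (qN)^\varepsilon q_1 q_2^2 / h_{q_2}^2$. For $A_1$, the spectral large sieve Lemma \ref{lemma:spectralboundOneFourierCoefficient} is not needed: Deligne's bound yields $|\lambda_\pi^*(h_{q_2}^6/q_2^4)|^2 \ll (qN)^{\varepsilon}$, and a straightforward dimension count $\sum_\eta |\mathcal{H}_k(m,\eta^2)| \ll (qN)^\varepsilon k m$ (valid because each central character has a bounded number of square roots in the character group modulo $q_2/h_{q_2}$) together with the truncation $k \ll (qN)^\varepsilon$ yields $A_1 \ll (qN)^\varepsilon q_1 (q_2/h_{q_2})^2 \leq (qN)^\varepsilon q_1 h_{q_2}$, where the last inequality uses $q_2^2 \mid h_{q_2}^3$. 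Combining these inputs exactly as in the Maass case produces the holomorphic analog of \eqref{eq:SMaassSumOverh'}, and summing over $h$ via \eqref{eq:SMaassSummedOverhUsefulRef} yields the claim. No new obstacle arises; the holomorphic case is in fact slightly easier than the Maass case, since Ramanujan holds unconditionally and the dimension formula substitutes for the spectral large sieve.
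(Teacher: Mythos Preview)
Your proposal is correct and follows essentially the same approach as the paper, which disposes of the holomorphic case by declaring the proof ``nearly identical, changing the obvious things that need to be changed.'' Your explicit observation that Deligne's bound replaces Lemma~\ref{lemma:spectralboundOneFourierCoefficient} in the estimation of $A_1$ is exactly what the paper intends (see the remark following Lemma~\ref{lemma:spectralboundOneFourierCoefficient}).

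One small correction: your dimension count $\sum_{\eta} |\mathcal{H}_k(m,\eta^2)| \ll (qN)^\varepsilon k m$ is misstated. Grouping by $\psi=\eta^2$ reduces to $\sum_{\psi}|\mathcal{H}_k(m,\psi)|$ with $\psi$ ranging over even characters of conductor dividing $m_2$, and this sum is $\ll (qN)^\varepsilon k m_1 m_2^2$, not $\ll (qN)^\varepsilon k m$ (there are up to $\varphi(m_2)$ admissible $\psi$, each contributing $\ll k m$). This matches the main term $T^2 m_1 m_2^2$ of Lemma~\ref{lemma:spectralboundOneFourierCoefficient}. Fortunately this slip is harmless: inserting either bound into the sum $\sum_{\ell_2 m_2 = q_2/h_{q_2}} \ell_2^2(\cdots)$ yields $A_1 \ll (qN)^\varepsilon q_1 (q_2/h_{q_2})^2 \leq (qN)^\varepsilon q_1 h_{q_2}$, so your conclusion stands.
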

\begin{proof}
 The proof is nearly identical, changing the obvious things that need  to be changed.  
\end{proof}

\subsection{The Eisenstein series contribution}
The contribution of the Eisenstein series $S_{\rm Eis}$ is largely similar to the Maass form case, with one important difference.  In the Maass case, we related $K_{\rm Maass}$ to a Mellin integral involving twisted $L$-functions, which we shifted slightly to the right of the critical line.  In the Eisenstein case, we will perform the same steps, but there will be a polar contribution arising since the twisted $L$-functions are now products of Dirichlet $L$-functions, which may have a pole when the character is trivial.  We denote by $S_{\rm Pole}$  the contribution from this pole.
 \begin{myprop}\label{prop:SEisBound}
The same bound stated in Proposition \ref{prop:SMaassBound} holds for $S_{\rm Eis} - S_{\rm Pole}$.
 \end{myprop}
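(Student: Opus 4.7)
The plan is to follow the Maass form argument of Section \ref{sec:usingspectral} essentially verbatim, with the one new ingredient being the meromorphic behaviour of $\mathcal{Z}$ described in Lemma \ref{lemma:ZpropertiesEisensteinCase}. First I would write down the Eisenstein analogue of \eqref{eq:SMaassFormula2}: the discrete sum over $t_j$ becomes a continuous integral over $t \in \R$, and the sum over $\pi$ runs over $\mathcal{H}_{it, \mathrm{Eis}}(m, \eta^2)$. The Mellin contours are initially placed at $\real(s)=1+\varepsilon$ and $\real(u_j)=\varepsilon$, on which Lemma \ref{lemma:ZpropertiesEisensteinCase} guarantees that $\mathcal{Z}$ is holomorphic.

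Next I would shift the $u_1,u_2,u_3$ contours leftward so that $\real(s/2+u_j) = 1/2+\varepsilon$, the target line already used in the Maass case. The only singularities crossed are the simple poles of $\mathcal{Z}$ at $s/2 + u_j \pm it = 1$, which by Lemma \ref{lemma:ZpropertiesEisensteinCase} arise only when $\pi \simeq \pi(\mu_1,\mu_2)$ with $\eta = \mu_1 \alpha^{it}\sgn^\epsilon = \mu_2 \alpha^{-it}\sgn^\epsilon$ (in particular $m_1=1$ and $m$ a square).  By definition, the collected residues form $S_{\mathrm{Pole}}$, so on the shifted contour what remains is exactly $S_{\mathrm{Eis}} - S_{\mathrm{Pole}}$.

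After this shift, the expression for $S_{\mathrm{Eis}} - S_{\mathrm{Pole}}$ takes the same shape as \eqref{eq:SMaassLfunctions}, except that each factor $L(s/2+u_i, \overline{\pi}\otimes\eta)$ is now a product of two Dirichlet $L$-functions $L(s/2+u_i, \overline{\mu_1}\eta)L(s/2+u_i,\overline{\mu_2}\eta)$. I would then apply H\"older's inequality with exponents $(4,4,4,4)$ exactly as in the proof of Proposition \ref{prop:SMaassBound}: three factors are controlled by the Eisenstein case of Theorem \ref{thm:fourthmomentSpectralBound} (which, under the identification of $\pi$ with a pair of Hecke characters, reduces to a fourth moment of Dirichlet $L$-functions twisted along the coset of characters $\mu \eta$), and the remaining factor, carrying the Hecke-coefficient sum $|\lambda_\pi^*(h_{q_2}^3/q_2^2)|^2$, is controlled by the Eisenstein analogue of Lemma \ref{lemma:spectralboundOneFourierCoefficient}, whose proof is almost identical (the Kloosterman sum analysis of $K_\psi$ is unchanged). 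Combining these and summing over $h_{q_2}$ as at the end of Section \ref{sec:usingspectral} yields the required bound $\ll N(qN)^\varepsilon$.

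The main technical obstacle is the contour-shifting step: one must verify that each residue encountered while moving the $u_j$ can be extracted cleanly, without introducing spurious cross-terms when two of the $s/2+u_j \pm it = 1$ poles coincide, and that the resulting $S_{\mathrm{Pole}}$ is of a manageable form for later analysis. Since the poles of $\mathcal{Z}$ are simple and by Lemma \ref{lemma:ZpropertiesEisensteinCase} only occur on the precise diagonal $\eta = \mu_1\alpha^{it}\sgn^\epsilon = \mu_2\alpha^{-it}\sgn^\epsilon$ (which is essentially a set of measure zero in $t$), this reduces to careful but routine bookkeeping, and does not require any new estimates beyond those already in place.
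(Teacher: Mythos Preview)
Your approach is correct and matches the paper's: repeat the Maass-case contour manipulation, collect the residues at the poles described in Lemma \ref{lemma:ZpropertiesEisensteinCase} as $S_{\mathrm{Pole}}$, and bound the remaining integral exactly as in Proposition \ref{prop:SMaassBound}. One small bookkeeping slip: the $s$-contour in \eqref{eq:SMaassFormula2} sits at $\real(s)=2+\varepsilon$, not $1+\varepsilon$, so the single shift to $\real(s)=1+\varepsilon$, $\real(u_j)=\varepsilon$ already lands at $\real(s/2+u_j)\approx 1/2+\varepsilon$ and crosses the poles at $\real(s/2+u_j)=1$ in one pass---your ``second shift'' is therefore redundant; also, the Eisenstein analogue of Lemma \ref{lemma:spectralboundOneFourierCoefficient} is immediate from $|\lambda_\pi(n)|\le d(n)$ and needs no Kloosterman analysis.
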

 \begin{proof} 
  This is clear from Lemma \ref{lemma:ZpropertiesEisensteinCase}.
 \end{proof}

\begin{myprop}\label{prop:SPoleBound}
 We have
\begin{equation*}
 S_{\rm Pole}(\chi) \ll d^{-1/8} \frac{NH}{q} (Nq)^{\varepsilon} .
\end{equation*}
\end{myprop}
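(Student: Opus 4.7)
The plan is to revisit the contour-shift analysis of Proposition \ref{prop:SEisBound}, isolating the residues that are not encountered in the Maass-case argument of Proposition \ref{prop:SMaassBound}. Starting from the Eisenstein analogue of \eqref{eq:SMaassFormula2}, one applies Lemma \ref{lemma:ZpropertiesEisensteinCase} to write $\mathcal{Z} = \mathcal{Z}_{\text{good}} \cdot \mathcal{Z}_{\text{bad}}$, and shifts the $s$-contour from $\real(s)=1+\eps$ down to $\real(s)=1/2+\eps$. By Lemma \ref{lemma:ZpropertiesEisensteinCase}, the only poles encountered occur for the very restricted Eisenstein series $\pi \simeq \pi(\eta\alpha^{-it}\sgn^\epsilon, \eta\alpha^{it}\sgn^\epsilon)$, which satisfy $m_1=1$, $m=q(\eta)^2 \mid q_2/h_{q_2}$, and for which $L(s, \overline{\pi}\otimes\eta) = \zeta(s+it)\zeta(s-it)$ has simple poles at $s = 1 \pm it$. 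The quantity $S_{\rm Pole}(\chi)$ is then a finite linear combination (over the six choices $s/2+u_j = 1\pm it$, $j=1,2,3$) of such residue terms.

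Next I would estimate the size of a typical residue. At such a residue, $s$ is fixed with $\real(s)\geq 2-2\eps$, so the factor $(q/h_{q_2}^2)^{s/2}$ in \eqref{eq:SMaassFormula2} contributes its full size $\asymp q/h_{q_2}^2$ rather than the square-root $\sqrt{q/h_{q_2}^2}$ of the Maass case. On the other hand, because $w(x,y,z)$ is supported in $z\ll H$, the weight function $H_2$ acquires the factor $H^{\real(u_3)}$ along the $u_3$-contour (cf. \eqref{eq:H2bound}), and at the residue $\real(u_3) \approx 1$, yielding an additional $H$. Combining these with the simplified bound $\mathcal{Z}_{\rm bad} \ll (qN)^\eps \ell_2^{1/2} q(\eta)^{3/2}$ coming from Lemma \ref{lemma:ZpropertiesEisensteinCase} (using $m_1=1$), and running the H\"older argument of Section \ref{sec:usingspectral} with the classical fourth-moment bound $\int_{|t|\leq T}|\zeta(1/2+it)|^4 dt \ll T(\log T)^4$ in place of Theorem \ref{thm:fourthmomentSpectralBound}, one obtains a bound of the shape $\ll (NH/q)\cdot(Nq)^\eps \cdot A(q_1,q_2,h_{q_2})$ for each configuration.

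The main obstacle will be extracting the factor $d^{-1/8}$ from the remaining sum $\sum_{q_1q_2=q}\sum_{h_{q_2}} A(q_1,q_2,h_{q_2})$. The constraint $q_2^2\mid d_2^3$ — equivalently $q_2/h_{q_2}\leq d_2^{1/2}$ — together with the divisibility $d_2 \mid h_{q_2}$ means that $h_{q_2}$ is essentially forced to equal $d_2$, which forces $q_2$ to be close to $d_2^{3/2}$. A H\"older interpolation between the bound just derived and a cruder bound on $\mathcal{Z}_{\rm bad}$ (in which one foregoes the $\eta$-sum fourth moment, bounding each $|\zeta(1/2+it)|$ pointwise by the convexity estimate $(1+|t|)^{1/4+\eps}$) should then produce a power saving of the form $d^{-1/8}$. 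Balancing the H\"older exponents to produce precisely $1/8$ is the most delicate step; the exponent is surely not optimal, but any positive power of $d^{-1}$ would suffice for the application to Theorem \ref{thm:ShiftedSumBounds}, since the bound $S_{\rm Pole}\ll NH/q\cdot(Nq)^\eps$ is of exactly the same shape as the main bound \eqref{eq:SchiFinalBound}, and only a small power saving is needed to absorb $S_{\rm Pole}$ into the error term.
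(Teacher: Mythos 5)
Your plan diverges substantially from the paper's argument, and it contains several miscalibrations that would need to be fixed before it could close.

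The paper does not compute residues or invoke any $L$-function moment bound at all. Instead, $S_{\mathrm{Pole}}$ is estimated by evaluating the Eisenstein analogue of \eqref{eq:SMaassFormula2} on contours $\real(s)=1+\varepsilon$, $\real(u_i)=1/2+\varepsilon$ for all $i$, which places all arguments $s/2+u_i$ slightly to the \emph{right} of $1$, in the region of absolute convergence of the Dirichlet series. There the zeta/L-factors are $O(1)$, so no fourth moment, no H\"older inequality, and no convexity bound are needed. The polar term is bounded (up to the already-controlled integral on the inner contour $\real(u_i)=\varepsilon$, which is $S_{\rm Eis}-S_{\rm Pole}$) by the outer integral. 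The bound on $\mathcal{Z}_{\text{bad}}$ from Lemma \ref{lemma:ZpropertiesEisensteinCase} — namely \eqref{eq:ZbadBoundEisensteinIndividualh}, with $m_1=1$, $\ell_2 q(\eta)^2=q_2/h_{q_2}$ — is exactly what enters, just as you anticipate. But the $d^{-1/8}$ is then not the ``main obstacle'' at all: after recording $M_1M_2=q^2$ and $\ell_2^{1/2}v^{5/2}\leq(q_2/h_{q_2})^{5/4}$, one arrives at $S_{\rm Pole}(\chi) \ll \frac{NH}{q}(Nq)^\varepsilon \sum_{q_1q_2=q} \frac{q_2^{3/4}}{q_1^{1/2}H^{1/2}d_2^{3/4}}$, and the inequalities $H\geq q_1d_2$ (from the support of the $h$-sum) and $q_2\leq d_2^{3/2}$ give $d^{-1/8}$ directly. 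No balancing of H\"older exponents is needed.

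Concretely, here are the specific errors in your residue-based route. First, your claim that at the residue $\real(s)\geq 2-2\varepsilon$ is wrong: the poles lie at $s/2+u_i\pm it=1$, so with $\real(u_i)=1/2+\varepsilon$ fixed, the $s$-pole sits near $\real(s)=1$, not $2$. Consequently the factor $(q/h_{q_2}^2)^{s/2}$ contributes $(q/h_{q_2}^2)^{1/2}$, exactly as in the Maass case — there is no extra ``full size $q/h_{q_2}^2$.'' Second, and correspondingly, your claim that at the residue $\real(u_3)\approx 1$, giving an extra factor of $H$, is also wrong: $\real(u_3)$ is near $1/2$, giving $H^{1/2}$, which is precisely the factor the paper finds. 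Third, after extracting one residue from, say, $L(s/2+u_1,\overline{\pi}\otimes\eta)=\zeta(s/2+u_1+it)\zeta(s/2+u_1-it)$, you would be left with \emph{two} remaining $L$-factors (in $u_2,u_3$), each a product of two zeta values — not with a clean $|\zeta(1/2+it)|^4$ to which the classical fourth-moment bound applies. Your H\"older arrangement as written does not match the expression that actually results from the residue, and your interpolation against a convexity bound is both unnecessary and not specified enough to verify. The moral is that staying on the absolute-convergence contour avoids all of this.
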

This bound is more than satisfactory for Theorem \ref{thm:ShiftedSumBounds}.
\begin{proof}
The term $S_{\rm Pole}(\chi)$ only arises when $\mathcal{Z}$ has a pole.  By Lemma \ref{lemma:ZpropertiesEisensteinCase}, such  poles arise at $\frac{s}{2}+u_i \pm it =1$ precisely when $\pi \simeq \pi(\mu_1,\mu_2)$ 
 with $\eta = \mu_1 \vert_{\widehat{\Z}^\times} = \mu_2 \vert_{\widehat{\Z}^\times}.$.  In particular, $\cond(\pi) = \cond(\eta)^2$, and $\ell_2 \cond(\eta)^2  = \frac{q_2}{h_{q_2}}$.  The contribution of the polar terms may be estimated by setting the integrals in the Eisenstein analogue of \eqref{eq:SMaassFormula2} to $\real(s)=1+\varepsilon$, and $\real(u_i) = 1/2 + \varepsilon$ for $i=1,2, 3$, and using \eqref{eq:ZbadBoundEisensteinIndividualh} to estimate $\mathcal{Z}_{\text{bad}}$ in lieu of \eqref{eq:ZbadSummedOverh'}.  Writing $\cond(\eta)= v$, and using \eqref{eq:H2bound} we obtain 
\begin{multline*}
\sum_{\substack{h' \equiv 0 \shortmod{q_1} \\ (h', q_2) = 1 }} S_{\rm Pole}(\chi, h_{q_2} h') \ll \frac{h_{q_2}^2}{q^2 \frac{q_2}{h_{q_2}}}
 \sum_{\ell_2 v^2 = \frac{q_2}{h_{q_2}}} 
  \frac{\ell_1}{q_1} \ell_2^{1/2} v^{3/2}
\\ \times \sumstar_{\eta \shortmod{v}} \frac{(q/h_{q_2}^2)^{1/2}}{q/h_{q_2}}   \Big(\frac{N q_2}{h_{q_2}}\Big) 
  \frac{(M_1 M_2 H)^{1/2}}{h_{q_2}^{3/2} } (Nq)^{\varepsilon}.
\end{multline*}
Recalling that $M_1 M_2  = q^2$ (see \eqref{eq:Mdef}) and simplifying gives
\begin{equation*}
 \sum_{\substack{h' \equiv 0 \shortmod{q_1} \\ (h', q_2) = 1 }} S_{\rm Pole}(\chi, h_{q_2} h') \ll \frac{N H^{1/2} h_{q_2}^{1/2} (Nq)^{\varepsilon}}{   q^{3/2} }  
 \sum_{\ell_2 v^2 = \frac{q_2}{h_{q_2}}} \ell_2^{1/2} v^{5/2}.
\end{equation*}
Note $\ell_2^{1/2} v^{5/2} 
\leq (q_2/h_{q_2})^{5/4}$.  Therefore,
\begin{equation*}
 \sum_{\substack{h' \equiv 0 \shortmod{q_1} \\ (h', q_2) = 1 }} S_{\rm Pole}(\chi, h_{q_2} h') \ll \frac{N H^{1/2}   (Nq)^{\varepsilon}}{   q_1^{3/2} q_2^{1/4} h_{q_2}^{3/4} } .
\end{equation*}
Including the outer summations appearing in \eqref{eq:SMaassSummedOverhUsefulRef}, we then deduce
 \begin{equation*}
 S_{\rm Pole}(\chi) \ll 
 \sum_{\substack{q_1 q_2 = q \\ (q_1, q_2) = 1}} 
 \frac{N H^{1/2}   (Nq)^{\varepsilon}}{   q_1^{3/2} q_2^{1/4} d_{2}^{3/4} } = \frac{NH (Nq)^{\varepsilon}}{q}  \frac{q_2^{3/4}}{q_1^{1/2} H^{1/2} d_2^{3/4}}.
\end{equation*}
Using $H \geq q_1 d_{2}$ (since $h \ll H$ and $h \equiv 0 \pmod{q_1 d_2}$) and $q_2 \leq d_{2}^{3/2}$ shows the claimed bound.
 \end{proof}
 
 Applying Propositions \ref{prop:SMaassBound}, \ref{prop:SholBound}, \ref{prop:SEisBound}, \ref{prop:SPoleBound} and \eqref{eq:errortermsum} to the sum over $h \equiv 0 \pmod d$ of \eqref{eq:SchihSpectralDecomposition}, we conclude the proof of Theorem \ref{thm:ShiftedSumBounds}.


\begin{thebibliography}{99}
\bibitem[AK]{AndersenKiral}
N.~Andersen and E.~M. K\i ral.
\newblock Level reciprocity in the twisted second moment of {R}ankin-{S}elberg
  {$L$}-functions.
\newblock {\em Mathematika}, 64(3):770--784, 2018.
  
 \bibitem[AtLe]{AtkinLehner}
A.~O.~L. Atkin and J.~Lehner.
\newblock Hecke operators on {$\Gamma _{0}(m)$}.
\newblock {\em Math. Ann.}, 185:134--160, 1970.

\bibitem[AtLi]{AtkinLi}  A.~O.~L. Atkin and W.-Ch.~W. Li.
Twists of newforms and pseudo-eigenvalues of $W$-operators.
{\em Invent. Math.}, 48(3): 221--243, 1978. 

\bibitem[BH]{BushnellHenniartRamification}
Colin~J. Bushnell and Guy Henniart.
\newblock Higher ramification and the local {L}anglands correspondence.
\newblock {\em Ann. of Math. (2)}, 185(3):919--955, 2017.

\bibitem[BHKM]{BHKM}
V. Blomer, P. Humphries, R. Khan, and M. Milinovich.
\newblock Motohashi's fourth moment identity for non-archimedean test functions
  and applications.
\newblock {\em Compos. Math.}, 156(5):1004--1038, 2020.

\bibitem[BK]{BlomerKhan}
V. Blomer and R. Khan.
\newblock Twisted moments of L-functions and spectral reciprocity.
\newblock {\em Duke Math J.}, 168(6):1109--1177, 2019. 

\bibitem[BM]{BlomerMili}
V. Blomer and Dj. Mili\'{c}evi\'{c}.
\newblock The second moment of twisted modular {$L$}-functions.
\newblock {\em Geom. Funct. Anal.}, 25(2):453--516, 2015.



\bibitem[BLS]{BLS}
A.~R. Booker, M. Lee, and A. Str\"ombergsson.
\newblock Twist-minimal trace formulas and the {S}elberg eigenvalue conjecture.
\newblock {\em J. Lond. Math. Soc. (2)}, no. 3, 1067--1134.


\bibitem[B]{Bump}
D. Bump.
\newblock {\em Automorphic forms and representations}, volume~55 of {\em
  Cambridge Studies in Advanced Mathematics}.
\newblock Cambridge University Press, Cambridge, 1997.

\bibitem[C]{Casselman}
W. Casselman.
\newblock On some results of {A}tkin and {L}ehner.
\newblock {\em Math. Ann.}, 201:301--314, 1973.

  \bibitem[CI]{CI}
J.~B. Conrey and H.~Iwaniec.
\newblock The cubic moment of central values of automorphic {$L$}-functions.
\newblock {\em Ann. of Math. (2)}, 151(3):1175--1216, 2000.


\bibitem[CS]{CorbettSaha}
A.~Corbett and A.~Saha.
\newblock On the order of vanishing of newforms at cusps.
\newblock {\em Math. Res. Lett.}, 25(6):1771--1804, 2018.

\bibitem[Da]{Davenport}
H. Davenport.
\newblock {\em Multiplicative number theory}, volume~74 of {\em Graduate Texts
  in Mathematics}.
\newblock Springer-Verlag, New York, third edition, 2000.

\bibitem[De1]{DeligneAntwerpII}
P.~Deligne.
\newblock Formes modulaires et repr\'{e}sentations de {${\rm GL}(2)$}.
\newblock In {\em Modular functions of one variable, {II} ({P}roc. {I}nternat.
  {S}ummer {S}chool, {U}niv. {A}ntwerp, {A}ntwerp, 1972)}, pages 55--105.
  Lecture Notes in Math., Vol. 349. Springer, Berlin, 1973.


\bibitem[De2]{SGA45}
P.~Deligne.
\newblock {\em Cohomologie \'{e}tale}, volume 569 of {\em Lecture Notes in
  Mathematics}.
\newblock Springer-Verlag, Berlin, 1977.
\newblock S\'{e}minaire de g\'{e}om\'{e}trie alg\'{e}brique du Bois-Marie SGA
  $4\frac{1}{2}$.

\bibitem[DI]{DeshouillersIwaniec}
J.-M. Deshouillers and H.~Iwaniec.
\newblock Kloosterman sums and {F}ourier coefficients of cusp forms.
\newblock {\em Invent. Math.}, 70(2):219--288, 1982/83.  
  
\bibitem[DFI]{DFIQuadraticDivisor}
W.~Duke, J.~B. Friedlander, and H.~Iwaniec.
\newblock A quadratic divisor problem.
\newblock {\em Invent. Math.}, 115(2):209--217, 1994.


\bibitem[Fr]{Frolenkov}
D.~Frolenkov.
\newblock The cubic moment of automorphic {$L$}-functions in the weight aspect.
\newblock {\em J. Number Theory}, 207:247--281, 2020.







\bibitem[GR]{GR}
I.~S. Gradshteyn and I.~M. Ryzhik.
\newblock {\em Table of integrals, series, and products}.
\newblock Elsevier/Academic Press, Amsterdam, seventh edition, 2007.


\bibitem[G]{Guo} J. Guo. 
\newblock On the positivity of the central critical values of automorphic {$L$}-functions for {${\rm GL}(2)$}.
\newblock {\em Duke Math. J.}, 83(1):157--190,  1996.  

\bibitem[H-B1]{HeathBrownHybrid}
D.~R. Heath-Brown.
\newblock Hybrid bounds for {D}irichlet {$L$}-functions.
\newblock {\em Invent. Math.}, 47(2):149--170, 1978.

\bibitem[H-B2]{HeathBrownFourthMoment}
D.~R. {Heath-Brown}.
\newblock The fourth power moment of the {R}iemann zeta function.
\newblock {\em Proc. London Math. Soc. (3)}, 38(3):385--422, 1979.

 \bibitem[HL]{HoffsteinLockhart}
J. Hoffstein and P. Lockhart.
\newblock Coefficients of {M}aass forms and the {S}iegel zero.
\newblock {\em Ann. of Math. (2)}, 140(1):161--181, 1994.
\newblock With an appendix by Dorian Goldfeld, Hoffstein and Daniel Lieman.  


\bibitem[Iv]{Ivic}
A. Ivi\'{c}.
\newblock On sums of {H}ecke series in short intervals.
\newblock {\em J. Th\'{e}or. Nombres Bordeaux}, 13(2):453--468, 2001.

\bibitem[IK]{IK}
H. Iwaniec and E. Kowalski.
\newblock {\em Analytic number theory}, volume~53 of {\em American Mathematical
  Society Colloquium Publications}.
\newblock American Mathematical Society, Providence, RI, 2004.

\bibitem[ILS]{ILS}
H. Iwaniec, W. Luo, and P. Sarnak.
\newblock Low lying zeros of families of {$L$}-functions.
\newblock {\em Inst. Hautes \'{E}tudes Sci. Publ. Math.}, (91):55--131 (2001),
  2000.


\bibitem[Iw1]{IwaniecZeta}
H. Iwaniec.
\newblock Fourier coefficients of cusp forms and the {R}iemann zeta-function.
\newblock In {\em Seminar on {N}umber {T}heory, 1979--1980 ({F}rench)}, pages
  Exp. No. 18, 36. Univ. Bordeaux I, Talence, 1980.

\bibitem[Iw2]{IwaniecSmallValuesofLaplace}
H. Iwaniec.
\newblock Small eigenvalues of {L}aplacian for {$\Gamma_0(N)$}.
\newblock {\em Acta Arith.}, 56(1):65--82, 1990.

  \bibitem [Iw3]{IwaniecClassical}
H. Iwaniec.
\newblock {\em Topics in classical automorphic forms}, volume~17 of {\em
  Graduate Studies in Mathematics}.
\newblock American Mathematical Society, Providence, RI, 1997.


\bibitem[Iw4]{IwaniecSpectralBook}
H. Iwaniec.
\newblock {\em Spectral methods of automorphic forms}, volume~53 of {\em
  Graduate Studies in Mathematics}.
\newblock American Mathematical Society, Providence, RI; Revista Matem\'{a}tica
  Iberoamericana, Madrid, second edition, 2002.


\bibitem[JL]{JacquetLanglands}
H. Jacquet and R. P. Langlands.
\newblock {\em Automorphic forms on {${\rm GL}(2)$}}, volume~114 of {\em Lecture Notes in Mathematics}.
\newblock Springer-Verlag, Berlin-New York, 1970.


\bibitem[JM1]{JutilaMotohashiCrelle}
M. Jutila and Y. Motohashi.
\newblock Mean value estimates for exponential sums and {$L$}-functions: a
  spectral-theoretic approach.
\newblock {\em J. Reine Angew. Math.}, 459:61--87, 1995.


\bibitem[JM2]{JutilaMotohashi}
M. Jutila and Y. Motohashi.
\newblock Uniform bound for {H}ecke {$L$}-functions.
\newblock {\em Acta Math.}, 195:61--115, 2005.



\bibitem[KMN]{KMN}
R. Khan, Dj. Mili\'{c}evi\'{c}, and H.~T. Ngo.
\newblock Non-vanishing of {D}irichlet {$L$}-functions in {G}alois orbits.
\newblock {\em Int. Math. Res. Not. IMRN}, (22):6955--6978, 2016.



 \bibitem[KY1]{KiralYoung5th}
E.~M. K\i{}ral and M.~P. Young.
\newblock The fifth moment of modular {$L$}-functions.
\newblock {\em J. Eur. Math. Soc. (JEMS)} 23 (2021), no. 1, 237--314. 

 
 \bibitem[KY2]{KiralYoungKlEis}
E.~M. K\i{}ral and M.~P. Young.
\newblock Kloosterman sums and {F}ourier coefficients of {E}isenstein series.
\newblock {\em Ramanujan J.}, 49(2):391--409, 2019.
 
\bibitem[KL1]{KnightlyLi}
A.~Knightly and C.~Li.
\newblock {\em Traces of {H}ecke operators}, volume 133 of {\em Mathematical
  Surveys and Monographs}.
\newblock American Mathematical Society, Providence, RI, 2006. 

\bibitem[KL2]{KnightlyLiKuznetsov}
A.~Knightly and C.~Li.
\newblock Kuznetsov's trace formula and the {H}ecke eigenvalues of {M}aass
  forms.
\newblock {\em Mem. Amer. Math. Soc.}, 224(1055):vi+132, 2013.
 
\bibitem[KMS]{KMS}
E. Kowalski, Ph. Michel, and W. Sawin.
\newblock Bilinear forms with {K}loosterman sums and applications.
\newblock {\em Ann. of Math. (2)}, 186(2):413--500, 2017. 

\bibitem[La]{LapidOnAnIneqOfBH}
E.~Lapid.
\newblock On an inequality of {B}ushnell-{H}enniart for {R}ankin-{S}elberg
  conductors.
\newblock {\em Israel J. Math.}, 2021.
 
\bibitem[Li1]{LiNewFormsandFE}
W.-Ch.~W. Li.
\newblock Newforms and functional equations.
\newblock {\em Math. Ann.}, 212:285--315, 1975.

\bibitem[Li2]{LiRankin}
W.-Ch.~W. Li.
\newblock {$L$}-series of {R}ankin type and their functional equations.
\newblock {\em Math. Ann.}, 244(2):135--166, 1979.

\bibitem[MV]{MichelVenkateshGL2}
Ph. Michel and A. Venkatesh.
\newblock The subconvexity problem for {${\rm GL}_2$}.
\newblock {\em Publ. Math. Inst. Hautes \'{E}tudes Sci.}, (111):171--271, 2010.

\bibitem[Mi]{Milicevic}
Dj. Mili\'{c}evi\'{c}.
\newblock Sub-{W}eyl subconvexity for {D}irichlet {$L$}-functions to prime
  power moduli.
\newblock {\em Compos. Math.}, 152(4):825--875, 2016.

\bibitem[MW]{MilicevicWhite}
Dj. Mili\'{c}evi\'{c} and D. White.
\newblock Twelfth moment of {D}irichlet {$L$}-functions to prime power moduli.
\newblock {\em Ann. Sc. Norm. Super. Pisa Cl. Sci. (5)}, 22(4):1879--1898,
  2021.


\bibitem[Mo]{Motohashi}
Y. Motohashi.
\newblock {\em Spectral theory of the {R}iemann zeta-function}, volume 127 of
  {\em Cambridge Tracts in Mathematics}.
\newblock Cambridge University Press, Cambridge, 1997.







\bibitem[NPS]{NPS}
P.~D. Nelson, A. Pitale, and A. Saha.
\newblock Bounds for {R}ankin-{S}elberg integrals and quantum unique ergodicity
  for powerful levels.
\newblock {\em J. Amer. Math. Soc.}, 27(1):147--191, 2014.

\bibitem[Nel]{Nelson}
P.~D. Nelson.
\newblock Eisenstein series and the cubic moment for {${\rm PGL}(2)$}.
\newblock arXiv:1911.06310, 2019.

\bibitem[Neu]{Neukirch}
J. Neukirch.
\newblock {\em Algebraic number theory}, volume 322 of {\em Grundlehren der
  Mathematischen Wissenschaften}.
\newblock Springer-Verlag, Berlin, 1999.

\bibitem[Nun]{Nunes}
R.~M. Nunes.
\newblock The twelfth moment of {D}irichlet {$L$}-functions with smooth moduli.
\newblock {\em Int. Math. Res. Not. IMRN}, (12):9180--9202, 2021.


\bibitem[P1]{PetrowMotohashi} 
I. Petrow.
\newblock A twisted Motohashi formula and Weyl-subconvexity for $L$-functions of weight two cusp forms. 
\newblock {\em Math. Ann.},  363(1-2): 175--216, 2015. 

\bibitem[P2]{PetrowPetersson}
I. Petrow.
\newblock Bounds for traces of {H}ecke operators and applications to modular
  and elliptic curves over a finite field.
\newblock {\em Algebra Number Theory}, 12(10):2471--2498, 2018.




\bibitem[PY1]{PetrowYoungPeterssonFormula}
I. Petrow and M.~P. Young.
\newblock A generalized cubic moment and the {P}etersson formula for newforms.
\newblock {\em Math. Ann.}, 373(1-2):287--353, 2019.

\bibitem[PY2]{PetrowYoung}
I. Petrow and M.~P. Young.
\newblock The {W}eyl bound for {D}irichlet {$L$}-functions of cube-free
  conductor.
\newblock {\em Ann. of Math. (2)}, 
 192 (2020), no. 2, 437--486.

\bibitem[SST]{SarnakShinTemplier}
P. Sarnak, S. Shin, and N. Templier.
\newblock Families of {$L$}-functions and their symmetry.
\newblock {\em Families of automorphic forms and the trace formula},
Simons Symp., Springer, [Cham], 2016. 


\bibitem[Sch]{SchmidtRemarksonLocalNewforms}
R. Schmidt.
\newblock Some remarks on local newforms for {$\rm GL(2)$}.
\newblock {\em J. Ramanujan Math. Soc.}, 17(2):115--147, 2002.

\bibitem[SPY]{SPYbasis}
R. Schulze-Pillot and A. Yenirce.
\newblock Petersson products of bases of spaces of cusp forms and estimates for
  {F}ourier coefficients.
\newblock {\em Int. J. Number Theory}, 14(8):2277--2290, 2018.

\bibitem[Se]{SerreCinA}
J.-P. Serre.
\newblock {\em A course in arithmetic}, volume~7 of {\em Graduate Texts
  in Mathematics}.
\newblock Springer-Verlag, New York-Heidelberg, 1973.


\bibitem[Sm]{Smith}
R.~A. Smith.
\newblock On {$n$}-dimensional {K}loosterman sums.
\newblock {\em J. Number Theory}, 11:324--343,
  1979.

  \bibitem[So]{SoundFourth}
K.~Soundararajan.
\newblock The fourth moment of {D}irichlet {$L$}-functions.
\newblock In {\em Analytic number theory}, volume~7 of {\em Clay Math. Proc.},
  pages 239--246. Amer. Math. Soc., Providence, RI, 2007.

  
\bibitem[T]{TunnellLLC}
J.~B. Tunnell.
\newblock On the local {L}anglands conjecture for {$GL(2)$}.
\newblock {\em Invent. Math.}, 46(2):179--200, 1978.


\bibitem[Y1]{Young4th}
M.~P. Young.
\newblock The fourth moment of {D}irichlet {$L$}-functions.
\newblock {\em Ann. of Math. (2)}, 173(1):1--50, 2011.


\bibitem[Y2]{YoungEisenstein}
M.~P. Young.
\newblock Explicit calculations with {E}isenstein series.
\newblock {\em J. Number Theory}, 199:1--48, 2019.

\bibitem[Z]{Zacharias}
R.~Zacharias.
\newblock Periods and reciprocity {I}.
\newblock {\em Int. Math. Res. Not. IMRN},  2021, no. 3, 2191--2209.


 \end{thebibliography}
\end{document}